\newcommand{\TODO}[2][]{TODO $\Big\langle${\ifthenelse{\equal{#1}{P}}{\color{green}}{\ifthenelse{\equal{#1}{B}}{\color{blue}}{\color{red}}}{#2}}$\Big\rangle$}
\newcommand{\dfn}[1]{\textbf{#1}}
\newcommand{\xpr}[1]{``{#1}''}
\newcommand{\resp}[1]{\ (resp. #1)}
\newcommand{\ie}{\textit{i.e.}, }
\newcommand\impl{\Rightarrow}
\newcommand\defeq{:=}
\newcommand\eqdef{=:}
\newtheorem{theorem}{Theorem}
\newtheorem{proposition}[theorem]{Proposition}
\newtheorem{lemma}[theorem]{Lemma}
\newtheorem{corollary}[theorem]{Corollary}
\newtheorem{remark}[theorem]{Remark}
\newtheorem{fact}[theorem]{Fact}
\newcommand{\sett}[2]{\left\{\left.#1\vphantom{#2}\right|#2\right\}}
\newcommand{\set}[3]{\sett{#1\in #2}{#3}}
\newcommand{\card}[1]{\left|#1\right|}
\newcommand{\restr}[1]{_{\left|#1\right.}}
\newcommand\subfini{\subset_{\text{finite}}}
\newcommand\Rb{\mathbbm P}
\newcommand\NE{\mathcal{N}}
\newcommand\lin[1]{\mathbf{#1}}
\newcommand{\ifnv}[2]{\ifthenelse{\equal{#1}{}}{}{#2}} 
\newcommand{\mappl}[4][]{\begin{array}{rrcl}\ifnv{#1}{#1:}&#2&\to&#3\\
\displaystyle #4\end{array}}
\newcommand{\appl}[5][]{\mappl[#1]{#2}{#3}{&#4&\mapsto&#5}}
\newcommand{\pmappl}[4][]{\begin{array}{rrcl}\ifnv{#1}{#1:}&#2&\pto&#3\\
\displaystyle #4\end{array}}
\newcommand{\pappl}[5][]{\pmappl[#1]{#2}{#3}{&#4&\mapsto&#5}}
\newcommand\si{\text{ if }}
\newcommand\pto{\nrightarrow} 
\newcommand{\both}[1]{\left\{\everymath{\displaystyle\everymath{}}\begin{array}{l}#1\end{array}\right.}
\newcommand{\bothrl}[1]{\left\{\everymath{\displaystyle\everymath{}}\begin{array}{rl}#1\end{array}\right.}
\newcommand{\soit}[1]{\left|\everymath{\displaystyle\everymath{}}\begin{array}{ll}#1\end{array}\right.}
\DeclareMathOperator*{\id}{id}
\newcommand{\dom}{\mathcal D}
\newcommand{\rock}[2][]{\tilde{\mathcal D}^{#1}(#2)}
\newcommand{\rocks}[3][]{\tilde{\mathcal D}^{#1}_{#2}(#3)}
\newcommand\I[1][]{\mathcal I_{#1}}
\newcommand\simu[1][]{\underset{#1}\succeq}
\newcommand{\Z}{\mathbb Z}
\newcommand{\N}{\mathbb N}
\newcommand{\MM}{\mathbb M}
\newcommand{\Ns}{{\mathbb N}_1}
\newcommand{\R}{\mathbb R}
\newcommand{\Q}{\mathbb Q}
\newcommand{\ipart}[1]{\left\lfloor #1\right\rfloor}
\newcommand{\spart}[1]{\left\lceil #1\right\rceil}
\newcommand{\abs}[1]{\left|#1\right|}
\newcommand{\co}[2]{\left\llbracket #1,#2\right\llbracket}
\newcommand{\cc}[2]{\left\llbracket #1,#2\right\rrbracket}
\newcommand\A{\mathcal A}
\newcommand\B{\mathcal B}
\newcommand{\C}[1][]{\mathcal C_{{#1}}}
\newcommand{\haine}[1]{\mathbbm F_{#1}}
\newcommand{\f}{\mathcal{F}}
\newcommand{\lang}{\mathcal L}
\newcommand\motvide\epsilon
\newcommand{\length}[1]{\left|#1\right|}
\newcommand\norm[1]{\left\Vert #1\right\Vert}
\newcommand{\bina}[1]{\underline{#1}}
\newcommand{\anib}[2][]{\overline{#2}^{#1}}
\newcommand\sh[2][]{{\left\langle#2\right\rangle_{#1}}}
\newcommand{\Chi}[2][]{\chi^{#1}\left(#2\right)}
\newcommand{\hs}[1]{{\left\rangle#1\right\langle}}
\renewcommand{\vec}[1]{\mathbf{#1}}
\newcommand{\seq}[1]{\ifthenelse{\equal{#1}{\Phi}}{\Phi}{\ifthenelse{\equal{#1}{\tau}}{\tau}{\mathfrak{#1}}}}
\newcommand{\parint}{\mathbf{\mathcal{B}}}
\newcommand{\am}{\A^{\Z^2}}
\newcommand{\azd}{\A^{\Z^d}}
\newcommand{\az}{\A^{\Z}}
\newcommand{\bz}{\B^{\Z}}
\newcommand{\dinf}[1]{\vphantom{#1}^\infty{#1}^\infty}
\newcommand{\uinf}[1]{#1^\infty}
\newcommand{\pinf}[1]{\vphantom{#1}^\infty{#1}}
\newcommand\orb[2][]{\mathcal O_{#2}\ifnv{#1}{(#1)}{}}
\newcommand\bulk[2][]{#2_{[#1]}}
\newcommand\X{\mathfrak Z}
\newcommand{\emp}[2][]{\mathcal{S}^{#1}_{#2}}
\newcommand{\per}[2][]{\mathcal{P}^{#1}_{#2}}
\newcommand\col[2]{B_{#1}^{#2}}
\newcommand{\gr}[3]{\Sigma^{#1,#2}_{#3}}
\newcommand{\gra}[4]{\Sigma^{#1,#2,#3,#4}}
\newcommand{\grs}[2]{\Sigma^{#1,#2}}
\newcommand{\M}{\mathcal{M}}
\newcommand\U{\mathcal U}
\newcommand\halt[3]{\mathcal H_{#1}^{#2}(#3)}
\newcommand{\maxaddr}{\texttt{MAddr}}
\newcommand{\maxage}{\texttt{MClock}}
\newcommand{\addr}{\texttt{Addr}}
\newcommand{\age}{\texttt{Clock}}
\newcommand{\info}{\texttt{Tape}}
\newcommand{\mail}{\texttt{Tape}}
\newcommand{\prog}{\texttt{Prog}}
\newcommand{\newinfo}{\texttt{NTape}}
\newcommand{\schedule}{\texttt{Alarm}}
\newcommand\revprog{\texttt{RProg}}
\newcommand{\otherinfo}{\texttt{OTape}}
\newcommand{\other}{\texttt{Other}}
\newcommand{\lev}{\texttt{Level}}
\newcommand{\head}{\texttt{Head}}
\newcommand\field{\texttt{Field}}
\newcommand\double[1]{\overline{#1}^{\haine5}}
\newcommand{\mshift}{\texttt{MShift}}
\newcommand{\mhist}{\texttt{MHist}}
\newcommand{\selfs}{\texttt{Self}}
\newenvironment{algo}[3]{\begin{framed}
$\hypertarget{#1}{#2}[#3]$\vspace{5pt}\begin{algorithmic}[1]
}{\end{algorithmic}\end{framed}}
\newcommand\incr{\textbf{incr}}
\newcommand\compute{{\hyperlink{compute}{Compute}}}
\newcommand\chekk{\hyperlink{chekk}{Check}}
\newcommand\rite{\hyperlink{rite}{Write}}
\newcommand\unive{\hyperlink{unive}{Simulate}}
\newcommand\shift{\hyperlink{shift}{MacroShift}}
\newcommand\exch{\hyperlink{exch}{Swap}}
\newcommand\coordi{\hyperlink{coordi}{Grid}}
\newcommand\chekka{\hyperlink{chekka}{CkAlph}}
\newcommand\hier{\hyperlink{hier}{HCheck}}
\newcommand\hsim{\hyperlink{hsim}{HSimul}}
\newcommand\intru{\hyperlink{intru}{Univ}}
\newcommand\reali{\hyperlink{reali}{Reali}}
\newcommand\syncomp{\hyperlink{syncomp}{SyncComput}}
\begin{document}
\bibliographystyle{plain}

\title{Hierarchy and Expansiveness in Two-Dimensional Subshifts of Finite Type}

\author{Charalampos Zinoviadis}


\maketitle

\pagestyle{plain}
\setcounter{page}{1}
\pagenumbering{roman}


\chapter*{Abstract}

Subshifts are sets of configurations over an infinite grid defined by a set of forbidden patterns. In this thesis, we study two-dimensional subshifts of finite type ($2$D SFTs), where the underlying grid is $\Z^2$ and the set of forbidden patterns is finite. We are mainly interested in the interplay between the computational power of $2$D SFTs and their geometry, examined through the concept of expansive subdynamics. $2$D SFTs with expansive directions form an interesting and natural class of subshifts that lie between dimensions $1$ and $2$. An SFT that has only one non-expansive direction is called extremely expansive. We prove that in many aspects, extremely expansive $2$D SFTs display the totality of behaviours of general $2$D SFTs.

For example, we construct an aperiodic extremely expansive $2$D SFT and we prove that the emptiness problem is undecidable even when restricted to the class of extremely expansive $2$D SFTs. We also prove that every Medvedev class contains an extremely expansive $2$D SFT and we provide a characterization of the sets of directions that can be the set of non-expansive directions of a $2$D SFT. Finally, we prove that for every computable sequence of $2$D SFTs with an expansive direction, there exists a universal object that simulates all of the elements of the sequence. We use the so called hierarchical, self-simulating or fixed-point method for constructing $2$D SFTs which has been previously used by G\'{a}cs, Durand, Romashchenko and Shen.

\tableofcontents
\cleardoublepage

\pagenumbering{arabic}

\chapter{Historical overview}

This thesis is about two-dimensional subshifts of finite type ($2$D SFTs), and more specifically, the behaviour of 2D SFTs with respect to a dynamical-geometrical notion called expansive subdynamics.

The mathematical study of 2D SFTs began with the paper of Wang \cite{wang}. A \dfn{Wang tile set} consists of a finite number of unit squares with coloured edges, which are called \dfn{tiles}. A valid tiling is a way to fill the entire plane with tiles such that the squares are edge-to-edge and such that the colors of abutting edges are the same. Wang asked the following question about Wang tile sets, which is called the \dfn{tiling problem}: Does there exist an algorithm that takes as input an arbitrary Wang tile set and decides whether it admits a valid tiling? He conjectured that the answer to this question is positive and proved that the problem is strongly correlated to the problem of the existence of an \dfn{aperiodic tile set}, that is a tile set that admits some valid tiling but no periodic valid tiling.

However, Berger \cite{berger} proved that this is not the case. In fact, he proved that the tiling problem is undecidable. In addition, his proof contained an explicit construction of an aperiodic tile set. Later, several authors have given alternative constructions of  aperiodic tile sets and proofs of the undecidability of the tiling problem \cite{robinson,jarkkosmall,jarkkoundec}.

There is an alternative way of looking at and talking about the same problem. Let $\A$ be a finite set, called the \dfn{alphabet}. A (two-dimensional, or $2$D) \dfn{configuration} is a map $c \colon \Z^2 \to \A$. The set of all configurations $\A^{\Z^2}$ is called the \dfn{full shift}. A \dfn{pattern} is a map $p \colon D \to \A$, where $D \subseteq \Z^2$ is a \emph{finite} set. Let $\f$ be a set of \dfn{forbidden} patterns. The corresponding ($2$D) \dfn{subshift} $X_{\f}$ is the set of all  configurations that avoid the patterns of $\f$: for all finite $D \subseteq \Z^2$ and $c \in X_{\f}$, $c\restr{D} \notin \f$. If $X=X_{\f}$ for some \emph{finite} set of forbidden patterns, then it is called a 2D subshift of finite type (SFT). In this thesis, we will only talk about $2$D subshifts and SFTs, so that we will usually omit the dimension, except in statements of theorems.

It is not difficult to see that the set of valid tilings of a Wang tile set is an SFT. In addition, for every SFT, we can construct a Wang tile set whose set of valid tilings is, in some sense, equivalent to the given SFT. The tiling problem can thus be rephrased as the \dfn{emptiness problem} for SFTs: Given a finite set of forbidden patterns, can we algorithmically decide whether $X_{\f} \neq \emptyset$? The undecidability of the tiling problem then is then immediately translated to the undecidability of the emptiness problem of SFTs.

Wang tiles and forbidden patterns give a geometrical definition of SFTs, but there also exists an equivalent dynamical definition. First of all, the full shift can be endowed with the product topology of the discrete topology on $\A$. This gives rise to a compact, metrizable topological space. The \dfn{horizontal} and \dfn{vertical shifts}, which consist in moving a configuration one step to the left and up, respectively, are continuous with respect to this topology and obviously commute. This defines a $\Z^2$ action over the full shift and we can study it using the usual tools of topological dynamics.

For example, one can prove that subshifts are exactly the closed, shift-invariant subsets of the full shift, or, equivalently the subsystems of the full shift. SFTs correspond to the chain-mixing subsystems of the full shift. More importantly, for the purposes of this thesis, we can study $2$D SFTs from the point of view of their \dfn{expansive subdynamics}. This notion was defined by Boyle and Lind \cite{expsubd} as a tool for studying multidimensional dynamical systems by looking at the (lower-dimensional) actions induced by the subgroups of the original action. Intuitively, this is the same as when we look at the lower-dimensional projections of a surface in order to understand some of its properties. 

The general definition of expansive subdynamics and the main results of \cite{expsubd} fall out of the scope of this thesis. However, for $2$D subshifts there exists an equivalent, natural geometrical definition.  Let $X \subseteq \A^{\Z^2}$ be a subshift, $l\in\Rb\defeq \R \sqcup \{\infty\}$ a slope and $\lin{l}\subset\R^{2}$ the corresponding line that passes through the origin. We say that $l$ is an \dfn{expansive direction} of $X$ if there exists a finite shape $V\subset\R^2$ such that, for all $x,y \in X$,
\[x\restr{(\lin{l}+V) \cap \Z^2}=y\restr{(\lin{l}+V) \cap \Z^2}\impl x=y~.\]
In other words, there exists a fixed width $b >0$ such that every configuration of $X$ is uniquely defined by its restriction to the strip of slope $l$ and width $b$ that goes through the origin (in fact, by shift invariance, by any strip). Geometrically, this means that  in $X$ the ($2$D) information of the configuration is \xpr{packed} inside the one-dimensional strip of slope $l$. In some sense, even though $X$ is a two-dimensional object, it is determined by a one-dimensional strip, so that subshifts with directions of expansiveness are somewhere between dimensions $1$ and $2$.

A direction that is not expansive is called \dfn{non-expansive}. Let $\NE(X)$ be the set of non-expansive directions of $X$.
Boyle and Lind proved that $\NE(X)$ is closed in the one-point compactification topology of $\Rb$ and that $\NE(X) \neq \emptyset$ if and only if $X$ is infinite. Since finite subshifts are rather trivial, the most restricted non-trivial case with respect to non-expansive directions is the case when $X$ has a unique direction of non-expansiveness. We call such a subshift \dfn{extremely expansive}. Extremely expansive SFTs form the main object of interest in this thesis. We prove that in many aspects, extremely expansive SFTs are computationally as powerful as general SFTs.

Before stating the results, we find it useful to talk about another class of SFTs with many directions of non-expansiveness, namely those that arise from  deterministic tile set. A tile set is called \dfn{NW-deterministic} (the initials stand for North and West) if every tile is uniquely determined by the colors of its top and left sides\cite{nilpind}. Similarly, we can define SW, SE and NE deterministic tile sets (S and E stand for South and East, respectively). A tile set is called \dfn{4-way deterministic} if it is SW,NW,SE and NE deterministic \cite{karipapasoglou}. One can easily see that for the SFT associated to a 4-way deterministic tile set and for every direction $l$ that is not the vertical or the horizontal one (slopes $\infty$ and $0$, respectively), $l$ is an expansive direction. Guillon, Kari and Zinoviadis recently proved \cite{pierreunpub} that the vertical and the horizontal direction must indeed be non-expansive unless the associated SFT is in some sense trivial, namely vertically or horizontally periodic. 

We can now start stating the results of the thesis. The first result concerns the existence of an aperiodic extremely expansive SFT. As mentioned earlier, for the unrestricted case, there exist various constructions of aperiodic SFTs. Kari and Papasoglou \cite{karipapasoglou} have constructed an aperiodic 4-way deterministic tile set. According to what was said in the previous paragraph, the SFT associated to this tile set has exactly two non-expansive directions, the vertical and the horizontal one. We prove that

\begin{theorem}
There exists an aperiodic extremely expansive $2$D SFT.
\end{theorem}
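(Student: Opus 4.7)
The plan is to build the desired SFT via the hierarchical self-simulating (fixed-point) method of G\'acs, Durand, Romashchenko and Shen. I would start from a generic self-simulating SFT $X$ whose configurations carry a canonical partition into $N\times N$ macro-tiles, each of which encodes in its interior a single symbol together with the local rules of a parent SFT that coincides with $X$ itself. This forces a nested hierarchy of macro-tiles at scales $N,N^{2},N^{3},\ldots$ in every configuration of $X$. Aperiodicity is automatic from this hierarchy: any non-trivial period would have to be a multiple of the macro-tile side at every level, contradicting the strictly increasing scales. This is the starting skeleton; the real work is tuning it so that $\NE(X)=\{l_{0}\}$ for a single slope $l_{0}$.

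The key step is to pick a distinguished slope $l_{0}$ (most naturally, a well-chosen irrational one) and to arrange the computational layout inside each macro-tile so that the \emph{only} ambiguity in a configuration comes from a one-dimensional stream of information flowing along $\lin{l_{0}}$. Concretely, I would have each level-$k$ macro-tile run, in its internal computation zone, a simulation that reads its local seed from a thin band of slope $l_{0}$ and propagates the remaining content of the macro-tile off that band. Once this is done, every configuration is determined by the restriction of its level-$k$ seed bands to any strip that meets them, for every $k$. That $l_{0}$ is indeed non-expansive follows from the fact that the seed can be shifted freely along $\lin{l_{0}}$, producing two distinct configurations agreeing on an arbitrarily wide $\lin{l_{0}}$-strip.

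The hard part is proving expansiveness of every other direction. Given a slope $l\neq l_{0}$, I need to exhibit a uniform width $b=b(l)$ such that the restriction of any $x\in X$ to the strip $(\lin{l}+V)\cap\Z^{2}$ determines $x$. The plan is to pick $k=k(l)$ large enough that a strip of slope $l$ of some width $b(l)$ is guaranteed to intersect the seed band of some complete level-$k$ macro-tile; since the seed determines the macro-tile, and the level-$k$ macro-tile structure tiles the plane, this determines $x$ by the self-simulation rule applied at all lower and higher levels. The geometric estimate relating $|l-l_{0}|$ to the needed level $k$ (and hence to $b$) is the main technical obstacle, because it must hold uniformly as $l\to l_{0}$, where $b(l)\to\infty$; here one must carefully balance the size of the macro-tile seed bands against the slope-dependent intersection geometry.

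Finally, I would verify that the self-simulating SFT obtained is consistent (non-empty), by running the standard fixed-point argument of Durand--Romashchenko--Shen to produce a valid configuration containing a well-formed infinite hierarchy. Aperiodicity is preserved, non-expansiveness of $l_{0}$ is immediate from the construction, and expansiveness of all other directions follows from the geometric estimate above, giving $\NE(X)=\{l_{0}\}$ and proving the theorem.
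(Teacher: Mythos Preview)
Your general framework (hierarchical self-simulation \`a la Durand--Romashchenko--Shen, aperiodicity from unbounded scales) is correct and matches the paper. But the mechanism you propose for achieving extreme expansiveness is not the one that works here, and the ``seed band'' picture contains a real gap.

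The paper does \emph{not} engineer a thin band of slope $l_0$ carrying freely shiftable seed data. Instead, the SFT is built as the set of space-time diagrams $\orb{F}$ of a \emph{reversible partial cellular automaton} $F$ of bi-radius $1$. This immediately yields expansiveness of the horizontal direction and indeed $\NE(F)\subseteq[-1,1]$. The key lemma (Lemma~\ref{lem:relsimulexp}) is purely algebraic: if $F\simu[S,T,0]G$ completely and exactly, then $\NE(F)=\tfrac{S}{T}\NE(G)$. Applying this to a self-simulation $F\simu[S,T,0]F$ with $S<T$ gives $\NE(F)=\tfrac{S}{T}\NE(F)$, hence $\NE(F)\subseteq\bigcap_n(\tfrac{S}{T})^n[-1,1]=\{0\}$. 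Non-emptiness and aperiodicity force $\orb{F}$ to be infinite, so by Boyle--Lind $\NE(F)\neq\emptyset$, whence $\NE(F)=\{0\}$. No geometric estimate relating $|l-l_0|$ to a level $k(l)$ is needed; the squeezing is automatic from the aspect ratio of the macro-tiles.

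Your proposed argument for non-expansiveness of $l_0$ is also not right: in a self-simulating SFT the hierarchical skeleton is rigid, and there is no ``seed that can be shifted freely along $\lin{l_0}$'' in the way you describe. The paper never exhibits two configurations agreeing on an $l_0$-strip directly; non-expansiveness of the vertical comes for free from Proposition~\ref{p:atleastone} once all other directions are shown expansive. Likewise, choosing $l_0$ irrational is unnecessary for this theorem (and would require the much heavier machinery of Chapter~\ref{sec:expdir}); the vertical direction $0$ is the natural choice once one works with RPCA.
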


Of course, our construction does not use a 4-way deterministic tile set. It might seem that this result is strictly better than the one using 4-way deterministic tile sets, since we have one non-expansive direction less. However, there exists a small nuance here: 4-way deterministic tile sets give rise to SFTs with so-called \dfn{bounded radii of expansiveness}, while our construction does not have this property. In addition, in \cite{pierreunpub} it is also proved that every aperiodic SFT with bounded radii of expansiveness must have at least two non-expansive directions. Therefore, the 4-way deterministic construction is also optimal, in the class of SFTs with bounded radii of expansiveness, and it might be more precise to say that the two results are incomparable.

As mentioned already, the existence of an aperiodic tile set was originally constructed in order to prove that the tiling problem is undecidable. Kari \cite{nilpind} prove that the tiling problem for NW-deterministic tile sets is undecidable. In addition, Lukkarila \cite{lukkarila} used the 4-way deterministic tile set of Kari and Papazoglou in order to prove that the tiling problem is undecidable for 4-way deterministic tile sets as well. As the reader has probably guessed already, we prove that

\begin{theorem}
The emptiness problem of extremely expansive $2$D SFTs is undecidable. More precisely, the emptiness problem is undecidable for $2$D SFTs such that the vertical direction is the only non-expansive direction.
\end{theorem}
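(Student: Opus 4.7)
My plan is a computable reduction from the halting problem of deterministic Turing machines on empty input to the emptiness problem for $2$D SFTs whose unique non-expansive direction is the vertical one. Given a machine $M$, I will construct a finite set of forbidden patterns defining an SFT $X_M$ that is extremely expansive with the vertical direction as the only non-expansive direction, and such that $X_M\neq\emptyset$ iff $M$ does not halt on empty input.

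The construction would build directly on the aperiodic extremely expansive SFT of the previous theorem. That SFT is obtained by the fixed-point self-simulation method: macro-tiles of exponentially increasing sizes $N_1<N_2<\dots$ simulate a tile set isomorphic to the one being defined, and a small amount of free bits inside each macro-tile may carry auxiliary information. At every level $k$ of the hierarchy I would plug a simulation of $M$ into those free bits, using horizontal rows of macro-tiles at level $k$ to represent tape configurations of $M$ of length $N_k$ and vertical succession to represent time steps. Synchronization fields carried across levels would glue the computations at different scales together consistently. The halting state of $M$ is then declared a forbidden local pattern, so that emptiness of $X_M$ becomes equivalent to the halting of $M$ on empty input. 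This is the same mechanism used by Durand, Romashchenko and Shen for the classical undecidability of the tiling problem via the fixed-point method, transplanted onto the hierarchy of the previous theorem.

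The point I expect to be the main obstacle is verifying that adding this computation layer preserves extreme expansiveness with the vertical direction as the sole non-expansive direction. The vertical direction remains non-expansive because the original aperiodic SFT already has this feature and the added constraints are local and deterministic in the horizontal direction; therefore the configurations that witness non-expansiveness in the original construction still witness it after augmentation. For any non-vertical direction, the original SFT was designed so that a sufficiently wide strip of that slope determines the full combinatorial skeleton of the hierarchy; once this skeleton is known, the locality and horizontal determinism of the Turing machine transition propagate the observed computation data throughout the plane, reconstructing the content of the computation layer everywhere. The delicate case concerns directions very close to the vertical: one must coordinate the growth rate $N_k$ of macro-tile sizes with the width of observation strips so that even a near-vertical strip eventually crosses entire macro-tiles at some sufficiently high level and thereby captures the full local state of the embedded $M$, which then propagates deterministically through the hierarchy. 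This geometric bookkeeping, inherited and adapted from the previous theorem, is the heart of the argument.
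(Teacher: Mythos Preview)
Your approach—embed a space-time diagram of $M$ into free bits of the hierarchy and forbid the halting state—is the classical Berger/DRS strategy, but the paper takes a different and much cleaner route that sidesteps precisely the expansiveness difficulties you flag. No run of $M$ is embedded inside the configurations at all. Instead, at level $n$ of the hierarchy one adjoins a trivial PPA $G_n$ on three extra fields whose permutation $\alpha_n$ is the identity on the single letter $(0,0,0)$ if $M$ has \emph{not} halted within $n$ steps, and is everywhere undefined otherwise. Since ``does $M$ halt in $n$ steps?'' is decidable in time polynomial in $n$, the sequence $(\alpha_n)_n$ is polynomially computable and fits the framework of Lemma~\ref{lem:univppa}. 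If $M$ halts, some $\Omega_{G_n}$ is empty and the whole tower collapses (Remark~\ref{rem:univenonempt}); if $M$ never halts, every $\Omega_{G_n}$ is a \emph{singleton}, so every simulation in the tower is \emph{exact}, and Proposition~\ref{prop:hochman} immediately yields $\NE(F_0)=\{0\}$ (Remark~\ref{rem:univextrexp}).

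Your sketch has a real gap at exactly the point you identify as the main obstacle. Extreme expansiveness of the base construction is not proved by an ad hoc strip argument but by exactness of every simulation in the nested tower—that is the hypothesis of Proposition~\ref{prop:hochman}. Adding a computation layer with nontrivial content destroys exactness unless that content is uniquely determined by the skeleton, and you do not establish this. Moreover, your appeal to ``horizontal determinism of the Turing machine transition'' is backwards: a TM space-time diagram is deterministic in the \emph{vertical} (time) direction, so even horizontal expansiveness—which the paper obtains for free from the RPCA structure—would require its own reversibility argument in your scheme (something like the Bennett trick of Section~\ref{sec:singlepermutation}). The paper's device of pushing the halting test into the definition of the level-$n$ permutation rather than into the configurations keeps the extra layer constant, preserves exactness throughout, and eliminates the geometric bookkeeping entirely.
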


One should understand the previous statement in the following sense: even if one is given an SFT $X$ (as a finite set of forbidden patterns) and is given the additional information that $X$ is either empty or extremely-expansive (and in this case $\NE(X)=\{\infty\}$), even then it is not possible to decide whether $X=\emptyset$. In other words, it is not possible to algorithmically separate the sets of forbidden patterns that define empty SFTs from those that define extremely expansive non-empty SFTs.

The third result can be considered a stronger version of the undecidability of the emptiness problem. We prove that there exist extremely expansive SFT whose configurations are computationally as complicated as possible.

In order to describe this result, we need to introduce some classical notions of computation theory. For the purposes of this introduction, a \dfn{computable function} will mean a function $f \colon \A^{\N} \to \A^{\N}$ such that there exists a Turing Machine that outputs $f(c)$ when originally its reading tape contains $c$ (\ie it outputs $f(c)$ with oracle $c$). Using an effective enumeration of $\Z^2$, it is possible to talk about computable functions with domain or range $\A^{\Z^2}$, and in general $\A^\MM$, where $\MM$ is any effectively enumerable set.

We say that $d \in \A^{\MM}$ is \dfn{reducible} to $c \in \A^{\MM'}$ if there exists a computable function $f$ such that $f(c)=d$. This means that $c$ is computationally at least as complicated as $d$, since it is possible to obtain $d$ using $c$ and a computable function. A subset $Y \subseteq \A^{\MM}$ is called \dfn{Medvedev} reducible to $X \subseteq \A^{\MM'}$ if every point of $Y$ is reducible to some point of $X$. Intuitively, we can compute any point of $Y$ with the help of a suitable point of $X$ and a computable function. The relation of Medvedev reducibility is a pre-order on subsets.

Two sets are called Medvedev equivalent if they are Medvedev reducible to each other. This is an equivalence relation, whose equivalence classes are called \dfn{Medvedev degrees}. There exists a partial order on the set of Medvedev degrees given by the natural lift of the Medvedev reducibility pre-order. Computable sets are the least element of this order and, in a certain sense, the higher a set is in this hierarchy, the more difficult it is to compute a point of this set relative to the sets that lie lower in the hierarchy. The survey \cite{hinman} contains a thorough study of Medvedev degrees.

A set $X \subseteq \A^{\MM}$ is called \dfn{effectively closed} if its complement is semi-decidable. Effectively closed sets form the so-called $\Pi_{0}^{1}$ sets and they play a very important role in computation theory. It is easy to see that SFTs are effectively closed, even though there exist many effectively closed sets (and even effectively closed subshifts) that are not SFTs. However, Simpson \cite{simpson} proved that every effective Medvedev degree (\ie the Medvedev degree of an effectively closed set) contains a 2D SFT. Therefore, in some sense, not only is the emptiness problem undecidable for 2D SFTs, but their points can be as difficult to compute as possible. We improve this result to the extremely expansive case:

\begin{theorem}
Every effective Medvedev degree contains an extremely expansive 2D SFT. In other words, for every effectively closed set $Z\subseteq \A^{\MM}$, there exists an extremely expansive 2D SFT $Y$ that is Medvedev equivalent to $Z$.
\end{theorem}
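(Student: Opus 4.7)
The plan is to combine the hierarchical self-simulating machinery that was used to prove the undecidability of the emptiness problem (Theorem~2) with an extra layer that certifies membership in an effectively closed set. One direction of the Medvedev equivalence, namely $Y \preceq Z$, will follow by writing down an explicit computable map sending points of $Z$ to configurations of $Y$; the other direction, $Z \preceq Y$, will follow by decoding an element of $Z$ out of any legal configuration. The nontrivial point is that the decoding and encoding must be implemented while keeping the vertical direction as the unique non-expansive one.

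First, I would replace $Z$ by a Medvedev-equivalent one-dimensional effectively closed subshift $\tilde Z \subseteq \B^{\Z}$ over a finite alphabet $\B$. This step is a standard padding argument and can be found in the literature on Medvedev degrees of $\Pi^0_1$ subshifts (it is also the first step of Simpson's theorem). By definition, there is a recursively enumerable set $\f$ of finite $\B$-words such that $\tilde Z = \sett{x \in \B^{\Z}}{\text{no factor of $x$ lies in $\f$}}$. From this $\f$, together with a standard universal enumerator, I would extract a sequence of finite \emph{forbidden window} tests $\f_1 \subseteq \f_2 \subseteq \cdots$ with $\bigcup_n \f_n = \f$, such that $\f_n$ is computable uniformly in $n$.

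Next, I would take the extremely expansive self-simulating SFT $Y_0$ constructed for Theorem~2 and modify its macrotile program. In that construction, each configuration is organised into a hierarchy of macrotiles stacked along the unique non-expansive vertical direction, and each macrotile at level $n$ carries a computation tape on which a universal Turing machine is simulated. I would superimpose on $Y_0$ an additional sofic layer that writes a sequence $x \in \B^{\Z}$ along the central vertical strip and copies increasingly long factors of $x$ up the hierarchy of macrotiles (using the usual communication channels between adjacent macrotiles that already exist in the fixed-point construction). At level $n$, the macrotile program is extended so that, in addition to performing the self-simulation check, it runs $n$ steps of the enumerator of $\f$, assembles $\f_n$, and rejects if any factor of the portion of $x$ visible to it lies in $\f_n$. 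Call the resulting SFT $Y$. Global consistency of the layer then forces the strip $x$ to avoid every word of $\f$, i.e.\ $x \in \tilde Z$.

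Finally I would verify the two Medvedev inequalities and the expansiveness claim. For $Z \preceq Y$: from any $y \in Y$, read off the vertical strip $x$; this is a computable function of $y$ and produces an element of $\tilde Z$, hence (via the equivalence $Z \equiv \tilde Z$) of $Z$. For $Y \preceq Z$: given $x \in \tilde Z$, the standard fixed-point argument produces a canonical $y \in Y$ extending $x$, because the hierarchical structure (macrotile addresses, clocks, sub-tape contents, etc.) is entirely determined by the self-simulation rules plus $x$; this assignment is computable uniformly in $x$. For $\NE(Y)=\{\infty\}$: the added sofic layer is a factor of $Y_0$ plus a strip, both of whose non-expansive sets reduce to $\{\infty\}$, and the product operation used to combine them preserves this property. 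The main obstacle, as in the proof of Theorem~2, is verifying that the enriched fixed-point construction still admits a solution (so $Y \ne \emptyset$ whenever $\tilde Z \ne \emptyset$) and remains rigid enough that all of $y$ is computable from $x$; this is handled by the same machinery of oblique tests and synchronised computation that already underlies the extremely expansive variant of the hierarchical method.
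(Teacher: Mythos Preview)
Your high-level plan—hierarchical self-simulation plus a level-$n$ check against a recogniser for $Z$—matches the paper, but your mechanism for storing the point of $Z$ is different and, as written, has a genuine gap. You propose a free level-$0$ layer carrying $x\in\tilde Z$, with bits of $x$ transported up the hierarchy for checking. There is first a geometric slip: in all of the paper's constructions the unique non-expansive direction is $0$ (vertical), not $\infty$, and writing a free $x$ ``along a vertical strip'' with $x_j$ at height $j$ is incompatible with being an RPCA, since row $j+1$ must be determined by row $j$. More seriously, once a free layer is present the nested simulations $F_n\succeq F_{n+1}$ are no longer \emph{exact}, so Proposition~\ref{prop:hochman} no longer applies; your fallback (``the added sofic layer is a factor of $Y_0$ plus a strip, and the product operation preserves $\NE$'') is not correct either, because the $x$-layer is not a factor of $Y_0$, and once the transport fields are introduced—a mechanism you leave entirely unspecified—the system is no longer a product of two $\NE=\{0\}$ systems. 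Making that transport reversible while controlling $\NE$ is essentially the extremely-expansive realisation of effective subshifts, which the paper's concluding section lists as open.

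The paper avoids all of this by not introducing a free layer. It works directly with $Z\subseteq\haine2^\N$ and adds a horizontally constant field $\mhist$ of length exactly $n$ at level $n$ (hence empty at level $0$). The hierarchical check forces the level-$n$ value of $\mhist$ to be a prefix of the level-$(n+1)$ value, and a one-line check rejects if the recogniser for $Z$ halts within $n$ steps on that prefix. Thus $z\in Z$ is encoded not in the level-$0$ cells but as the sequence of branchings in the simulation tree $F_w\succeq\bigsqcup_{a\in\haine2}F_{wa}$; every simulation stays exact and complete, $\Omega_{F_\motvide}=\bigsqcup_{z\in Z}\rocks[\infty]z{\seq\Phi}$ is a closed disjoint union of pieces each with $\NE=\{0\}$, and Lemma~\ref{lem:basicstuffaboutNE} yields $\NE(F_\motvide)=\{0\}$. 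The Medvedev equivalence is then read off from the computable, left-invertible map $\Omega_{F_\motvide}\to Z\times\haine2^\N$ sending a configuration to its stored sequence $z$ together with its skeleton $(\seq s,\seq t)$; the extra $\haine2^\N$ factor only adds Turing-cones above each degree and does not change the Medvedev class.
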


In fact, we prove something stronger, giving a complete characterization of the so-called \dfn{Turing degrees} of $Y$ relative to those of $Z$, but it is not necessary to go into these details here.

The next result is of a dynamical flavour and it does not concern extremely expansive SFTs, but sequences of SFTs with a common rational direction of expansiveness. It also uses the notion of simulation, which is of central importance in the proofs of the previous results and, in general, for the whole thesis, even though it wasn't mentioned until now.

We say that subshift $X \subseteq \A^{\Z^2}$ \dfn{simulates} subshift $Y \subseteq \B^{\Z^2}$ with parameters $(S,T)$ if there exists a $\B$-colouring of the $S \times T$ blocks of $X$ with the following property: Every configuration of $X$ can be partitioned in a unique way into $S \times T$ rectangles such that when we color these rectangles with the $\B$-colouring we obtain a configuration of $Y$. Inversely, every configuration of $Y$ can be obtained in this way. 

This is weaker than the notion of simulation that we actually use, but it follows from it,  is enough to describe the result and is much easier to describe. It corresponds to the definitions in \cite{drs}.

It was proved in \cite{laffite} that for every computable sequence of SFTs, there exists an SFT that simulates all of them. This is a surprising and really strong result. We prove a version of it in the case where all the SFTs of the sequence have a common, rational expansive direction (which without loss of generality we assume to be the horizontal one):

\begin{theorem}
Let $X_0,X_1, \ldots$ be a computable sequence of 2D SFTs such that $0 \in \NE(X_i)$, for all $i \in \N$. Then, there exists a 2D SFT $X$ such that $X$ simulates $X_i$ for all $i \in \NE$ and $0 \in \NE(X)$.
\end{theorem}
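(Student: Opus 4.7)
The plan is to adapt the self-simulating/fixed-point SFT construction of Gács, Durand, Romashchenko and Shen---the central technical tool of the thesis---to the expansive setting. Classically, this method produces an SFT $X$ whose configurations carry a canonical hierarchical structure: the plane is partitioned into macrotiles of size $N_1$, these are grouped into macro-macrotiles of size $N_2$, and so on, each level running a local Turing computation that actually checks the local rules. One has considerable freedom to install extra checks at each level; following the Laffitte--Weiss proof of the unconstrained universal-simulation theorem, I would force the level-$i$ macrotiles to also satisfy the local rules of $X_i$, reading the code of $X_i$ from $i$ via the hypothesised computable enumeration of the sequence. From such a construction the fact that $X$ simulates $X_i$ comes essentially for free: projecting a configuration of $X$ onto the level-$i$ symbols carried by its macrotiles and partitioning the plane accordingly produces a configuration of $X_i$, and any configuration of $X_i$ can be lifted by filling in valid data at the other levels.

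The main content of the theorem is the additional requirement that $0\in\NE(X)$, and this is where the hypothesis $0\in\NE(X_i)$ enters. For each $i$ let $w_i$ be a width such that $X_i$ is determined by any horizontal strip of thickness $w_i$; this $w_i$ is computable from the description of $X_i$, so each level-$i$ macrotile can store and use it. I would modify the base layer of the self-simulating construction so that all synchronisation signals used to glue macrotiles together propagate only inside horizontal strips of bounded height, making the raw base-level SFT already horizontally expansive. Expansiveness of $X$ itself would then be proved by induction: given $x,y\in X$ that agree on a horizontal strip of some fixed width $b$, one shows by induction on $k$ that they coincide on every macrotile of level $\leq k$ whose support meets the strip. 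The base case is ensured by the horizontal signalling convention; the inductive step uses the induction hypothesis to reconstruct the level-$(k+1)$ macrotiles that cross the strip, observes that these macrotiles form a horizontal strip of a valid $X_{k+1}$-configuration, and then invokes $0\in\NE(X_{k+1})$ to propagate equality to all of level $k+1$.

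The hard part will be ensuring that a single width $b$ works uniformly across all levels, rather than $b$ depending on $k$ (and in particular on $w_k$, which may grow arbitrarily with $k$). This is exactly what the horizontal-only signalling convention at the base is designed to buy: once the strip is wide enough to determine the base-level configuration, the entire hierarchical decomposition is already fixed inside the strip, and the $w_k$'s enter only through the $X_k$-data packaged inside macrotiles of level $k$, whose vertical extent scales so rapidly with $k$ that a fixed horizontal strip of $X$ intersects them in relative strips much thicker than $w_k$. Verifying that the hierarchy parameters $N_k$ can be chosen to absorb the $w_k$'s while remaining compatible with the fixed-point equation of the self-simulating construction, and that the horizontal-signalling modification does not break the ability of the Turing computation inside each macrotile to enumerate the rules of $X_k$, is where essentially all of the technical work is expected to lie.
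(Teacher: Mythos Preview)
Your overall architecture—hierarchical self-simulation with a copy of $X_n$ running on a dedicated layer at level $n$—is exactly the paper's architecture. The gap is in the expansiveness argument.

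Consider the inductive step you describe. Knowing all level-$k$ macrotiles that meet the base strip of width $b$ gives you a horizontal band of the level-$k$ configuration of width roughly $b/H_k$, where $H_k=\prod_{i<k}T_i$ is the height of a level-$k$ macrotile. To invoke the horizontal expansiveness of $X_{k+1}$ you would need this band to contain at least $w_{k+1}$ full rows of level-$(k{+}1)$ macrotiles, i.e.\ $b\ge w_{k+1}H_{k+1}$; since $H_{k+1}\to\infty$, no finite $b$ works. Your heuristic that the ``relative strip'' becomes thick is backwards: $b/H_k\to 0$. Enlarging the $N_k$ only makes $H_k$ grow faster. More fundamentally, making the base-level synchronisation signals horizontal does not by itself make the full base \emph{SFT} horizontally expansive: the $X_k$-symbol of a level-$k$ macrotile lives on the info tape of that macrotile, and unless the internal Turing computation at \emph{every} level is reversible, a single row of a macrotile does not determine its info tape, so you cannot even read off one row of level-$k$ symbols from one row of base cells.

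The paper sidesteps the whole induction by working entirely in the category of reversible partial cellular automata (PPA). The hypothesis that the horizontal direction is expansive for $X_i$ is used not as a width bound $w_i$ to be absorbed, but to \emph{recode} $X_i$ as $\orb{G_i}$ for a PPA $G_i$ of bi-radius $1$. The universal object is then itself a PPA $F_0$: at each level $n$, $F_n$ carries three extra fields on which it simply applies the permutation of $G_n$ in parallel with the standard hierarchical machinery, so that $F_n$ factors onto $G_n$ while $F_n$ completely simulates $F_{n+1}$. Because $F_0$ is a PPA, $\orb{F_0}$ has the horizontal direction expansive automatically—one row determines the entire orbit—so no level-by-level argument is needed and the $w_i$ never appear in any inequality. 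The only extra constraint beyond the usual hierarchical ones is that the three extra fields at level $n$ be wide enough to hold one $G_n$-symbol, which the computability of the sequence handles as in your proposal.

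In short, the missing idea is to use the expansiveness of each $X_i$ to turn it into a reversible CA $G_i$, and then carry out the entire self-simulation inside that category, where horizontal expansiveness comes for free rather than having to be propagated through the hierarchy.
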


We note that there cannot exist a 2D SFT with an expansive direction that simulates all 2D SFTs with the same expansive direction, because this would imply the decidability of the emptiness problem for extremely expansive SFTs, according to an argument of Hochman \cite{hochmanuniv}.

The final result of the thesis answers a natural question which arises immediately after the construction of an extremely expansive SFT. As stated already, the unique non-expansive direction of the SFT that we construct is the vertical one. Which other directions can be the unique direction of non-expansiveness for 2D SFTs? Obviously, we can achieve any rational direction by rotating with elements of $SL_2(\Z)$, but can we do more? More generally, what are the sets of directions that can be the set of non-expansive directions of a 2D SFT?

Hochman \cite{nexpdir} proved that for general 2D subshifts (not necessarily of finite type, or even effective), any closed set of directions can be the set of non-expansive directions, while any direction can be the unique direction of non-expansiveness. Recall that Boyle and Lind proved that the sets of non-expansive directions must be closed, so it turns out that in the case of general subshifts this necessary topological condition is also sufficient.

In the case of SFTs, there is an additional necessary condition, namely that the set of non-expansive directions be \dfn{effectively closed}, which is equivalent to saying that its complement is the union of an increasing, computable sequence of open intervals. It turns out that this condition is necessary and sufficient for 2D SFTs:

\begin{theorem}
A set of directions $\NE$ is the set of non-expansive directions of a 2D SFT if and only if it is effectively closed. More precisely, a direction $l$ is the unique direction of non-expansiveness of a 2D SFT if and only if it is computable.
\end{theorem}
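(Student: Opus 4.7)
The plan is to prove the two implications separately, using the standard characterisation that a closed subset of $\Rb$ is effectively closed exactly when its complement is the union of a computable sequence of open intervals with computable endpoints.

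For necessity, I would show that $\NE(X)$ is effectively closed for any $2$D SFT $X$ given by a finite set of forbidden patterns. I would enumerate all pairs $(J, V)$ with $J$ a rational open interval and $V \subset \R^2$ a finite shape, and semi-decide the predicate \xpr{$V$ witnesses expansiveness for every $l \in J$}. By compactness this predicate is equivalent to: for each $M \in \N$ there exists $N \ge M$ such that any two patterns of $X$ on $\cc{-N}{N}^2$ which agree on $(\lin{l}+V) \cap \cc{-N}{N}^2$ for some $l \in J$ must agree on $\cc{-M}{M}^2$; since only finitely many distinct subsets of $\cc{-N}{N}^2$ arise as $l$ varies in $J$, this is semi-decidable. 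Boyle and Lind's theorem on openness of the expansive set then ensures that every expansive direction lies in some witnessed $J$, so the union of the witnessed intervals is exactly the complement of $\NE(X)$.

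For sufficiency, I would write the complement of $\NE$ as $\bigcup_{n \in \N} I_n$ for a computable sequence of open intervals with computable endpoints, and construct the target SFT $X$ via the self-simulating method. The idea is that the level-$n$ macro-tiles would carry a sublayer forcing every direction of $I_n$ to be expansive. This sublayer would be modelled on the aperiodic extremely expansive SFT of the first theorem of the thesis, in which a single strip of bounded width determines the configuration, but conjugated by an $SL_2(\Z)$-style rotation whose slope lies inside $I_n$. Provided the absolute width of each witness stays bounded, two global configurations that agree on a strip of slope $l \in I_n$ of that width would coincide on the level-$n$ macro-tile skeleton and, by induction up the hierarchy, everywhere. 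Directions $l \in \NE$ would remain non-expansive because the self-simulation continues to inject genuinely $2$D freedom along every direction never covered by any $I_n$.

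The main obstacle I anticipate is the coordination of these sublayers across levels: the witnesses for $I_n$ must have uniformly bounded absolute width, yet distinct $n$ impose conflicting rigid structures that must cohabit in a single SFT. My plan is to resolve this by running, inside each macro-tile, the computation that enumerates the endpoints of $I_0, \ldots, I_n$ and draws the corresponding skeletons, inheriting and combining them through the hierarchy while still preserving nontrivial freedom along every direction of $\NE$. The \xpr{more precisely} assertion then follows by specialising to singletons, since $\{l\}$ is effectively closed exactly when $l$ is a computable real.
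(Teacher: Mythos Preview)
Your necessity argument is essentially the paper's Lemma~\ref{lem:nonexpsftrestr}, phrased slightly differently; this part is fine.

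For sufficiency, however, you are proposing a mechanism that is not the paper's, and it has a real gap. The paper does \emph{not} try to force expansiveness of the complement intervals $I_n$ by attaching sublayers at each level. Instead it exploits the third parameter $Q$ in the simulation relation: by Lemma~\ref{lem:relsimulexp}, if $F\simu[S,T,Q]G$ exactly and completely then $\NE(F)=\frac1T(Q+S\,\NE(G))$. Iterating this with $Q_n=D_nS_n$ along a nested simulation gives, via Proposition~\ref{prop:hochman}, a single non-expansive direction $\theta_{\seq\varepsilon}(\seq d)$ that depends only on the \emph{directive sequence} $\seq d=(D_n,W_n)_n$. The construction then stores the growing prefix of $\seq d$ in a field $\mhist$, runs a Turing machine with program $p'$ on it to reject those sequences whose limit direction lies outside $\NE_0$, and uses Lemma~\ref{lem:basicstuffaboutNE} on the resulting \emph{disjoint union} over surviving sequences to conclude $\NE(F_\motvide)=\bigcup_{z\in\X_{p'}}\{\theta_{\seq\varepsilon}(z)\}=\NE_0$. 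Lemma~\ref{lem:geomstuff} is what guarantees that the finite alphabet $\parint$ of directive letters, together with a controlled error sequence $\seq\varepsilon$, suffices to hit every direction in $[0,1/2]$.

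Your scheme fails at the point you yourself flag. A ``sublayer'' at level $n$ that witnesses expansiveness for $I_n$ lives at the scale of level-$n$ macrotiles; translated back to level~$0$ its witness shape has width of order $\prod_{i<n}S_i$, not a uniform bound. There is no reason the intersection over $n$ of these constraints yields expansiveness with a \emph{single} bounded shape $V$ for any $l$ in the complement. Moreover, $SL_2(\Z)$ rotations only realise rational slopes, so you cannot literally conjugate the extremely expansive SFT to have its unique non-expansive direction at an arbitrary point of $I_n$; and even if you could, you would still need to argue that directions in $\NE_0$ remain non-expansive after all these sublayers are imposed, which your ``genuinely $2$D freedom'' remark does not establish. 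The missing idea is precisely the use of the shift parameter $Q$ to \emph{steer} the non-expansive direction rather than to \emph{kill} expansive ones.
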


This answers Question~11.2 in Boyle's Open Problems for Symbolic Dynamics \cite{opsd}.

Using our methods, we could easily prove Theorems~1-3 for SFTs whose unique direction of non-expansiveness is $l$, where $l$ is any computable direction. This is a stronger version of the results, which we do not prove for lack of space. In any case, once one has mastered our method, it is possible to prove various new results and variants of already proved ones. Since this method is as important (if not more) as some of our results, it is probably worth saying some words about its history, too. 

It is the so-called \dfn{fixed-point tile} or \dfn{self-simulating} method for constructing 2D SFTs. It was firstly described by Kurdyumov \cite{kurdyumov} in order to give a counterexample to the Positive Rates conjecture, even though only a sketch of a proof was included in this paper. It was G\'{a}cs \cite{gacs1} who elaborated Kurdyumov's idea into a full proof of the positive rates conjecture and formalized the notion of a hierarchy of simulating SFTs (he talks about 1D cellular automata, but this does not make a big difference). Later, he significantly improved his construction and the result in a notoriously lengthy and difficult paper \cite{gacs}. Gray's reader guide to that paper \cite{gray} and the description therein of self-simulation and the problems one encounters when trying to construct a self-simulating SFT are also a very useful exposition of the ideas of G\'{a}cs and Kurdymov. It was not until the work of Durand, Romashchenko and Shen \cite{drs} that the method became accessible to a broader mathematical audience. They work in the framework of 2D SFTs, which allows for a more clear, geometrical description of the basic ideas.

G\'{a}cs' construction did not have any direction of expansiveness, because it was a non-reversible cellular automaton. Nonetheless, it had the horizontal direction as a direction of \xpr{semi-expansiveness}. On the other hand, the construction of Durand, Romashchenko and Shen did not have neither directions of expansiveness neither directions of \xpr{semi-expansiveness}. A large part of this thesis consists in making their construction expansive in the horizontal direction. We need to introduce some tricks in order to do this, but once we achieve it, then self-simulation and a previous result of Hochman immediately give an extremely expansive aperiodic SFT. Something similar was also done in \cite{zinoviadis1}, but the construction of that paper was significantly easier because we dealt with non-reversible cellular automata, so that we only needed a direction of \xpr{semi-expansiveness}. Our current construction can be seen as an improvement of the construction of that paper, and using it we can easily retrieve its main result, which was a characterization of the numbers that can appear as the topological entropy of a (not necessarily reversible) CA.

One thing that all the constructions have in common, including ours, is that they are complicated and rather difficult to explain (for the writer) and understand (for the reader). This is unavoidable, in some degree, and the author's personal opinion is that there does not exist a \xpr{perfect} way to write them. Either the exposition is very formal, covering all details and defining every little thing, which is the road that we have chosen, or the construction is informal, in which case it is not clear what exactly the constructed SFT is, over which alphabet it is defined etc., which is the choice made by Durand, Romashchenko and Shen. Taking the middle road, as was more or less done by G\'{a}cs, does not help very much, either.

Our opinion is that the best thing is to be familiar with \emph{all} the constructions and use them accordingly. On the one hand, the constructions of Durand, Romashchenko and Shen are convincing for someone already familiar with the technique and they allow to explain a new idea concisely and efficiently, as was recently done in \cite{drs2}, while on the other hand our more formal presentation can be used to acquire mastery with the technique by dealing with all the unexpected little problems that arise during the construction and to convince those people who want to understand all the details.

Let us now describe the structure of the thesis:

In Chapter~\ref{s:prelim}, we give the basic definition that we will need throughout the paper. In Chapter~\ref{s:simul}, we define the precise notion of simulation that we will use and give some of its properties. We believe that some of the results of this chapter are of independent interest. In Chapter~\ref{c:programming}, we describe a pseudo-programming language that will be used to describe 2D SFTs in a concise way. In Chapter~\ref{construction}, we construct a family of SFT (which depend on the parameters $S,T$) with $0$ as a direction of expansiveness which are, in some sense, universal: They can simulate every SFT with $0$ as a direction of expansiveness, provided that its alphabet size is small compared to $S,T$ and it can be computed fast compared to $S,T$. This family of SFTs is of great importance for all subsequent constructions. This is the part of the thesis where we modify the construction of Durand, Romashchenko and Shen so as to make it reversible. In Chapter~\ref{s:hierarchy}, we prove Theorems~1 - 4. The constructions and the proofs all follow the same pattern, but we give as many details as possible for all of them for reasons of completeness. Finally, in Chapter~\ref{sec:expdir}, we prove Theorem~5. This proof is a modification of the proof of the result in \cite{nexpdir}. We try to explain what are the differences between that construction and ours and why the changes that we make are necessary.

Finally, let us mention that all of the aforementioned results have been obtain in collaboration with Pierre Guillon during various visits by him in Turku as well as of the author in Marseille. Currently, a series of joint papers is under construction that will contain even more applications of our method. Theorem~1 has also appeared in \cite{zinoviadis2}, even though because of lack of space, most of the details of the construction do not appear in that paper.

\chapter{Preliminaries}\label{s:prelim}

\section{Basic definitions}\label{s:encoding}
We will denote by $\Z$, $\N$, $\Ns$, $\Q$ and $\R$ the sets of integers, non-negative integers, positive integers, rational and real numbers, respectively, by $\co ij$  and $\cc ij$ the integer intervals $\{i,\ldots,j-1\}$ and $\{i,\ldots,j\}$, respectively, while $[\varepsilon,\delta]$ will denote an interval of real numbers. If $f,g \colon \N \to \N$, then we will use the classical $f \in O(g)$ notation to denote that $f(n) \le c g(n)$, for some constant $c$ and all $n \in \N$.

If $f \colon X \pto Y$ is a partial function, then its \dfn{domain} $\dom(f) \subseteq X$ is the set of elements of $X$ whose image through $f$ is defined. Two partial functions are equal when they have the same domain and they agree on their common domain. If $f \colon X \pto Y$ and $g \colon Y \pto Z$ are partial functions, then $g\circ f \colon X \pto Z$ is the partial function defined in the usual way (\ie $g(f(w))$ does not exist if either $w \notin \dom(f)$ or $f(w) \notin\dom(g)$).
A \dfn{partial permutation} is a bijection over its domain onto its range, \ie an injective partial map.
In the following, when defining a partial function, it will be implicit that any non-treated argument has an undefined image, and that saying that two partial functions are equal means in particular that their domains are the same.
If $Z\subset X$ and $f:X\pto Y$, we may abusively consider $f\restr{Z}$ as a partial map from $X$ to $Y$ whose domain is $Z\cap\dom(f)$.

For $m\ge n$, $\vec T\defeq(T_i)_{0\le i<m}$ and $\vec t\defeq(t_i)_{0\le i< n}$ such that for all $i$, $0\le t_i<T_i$, we note $\anib[\vec T]{\vec t}\defeq\sum_{0\le i < n}t_i\prod_{0\le j \le i}T_j$ the numeric value represented by the adic representation $\vec t$ in base $\vec T$. In general, $T_i$ and $t_i$ can belong in $\R$, not necessarily in $\N$.
By convention, if $\vec t$ has length $0$, then $\anib[\vec T]{\vec t}\defeq0$.
Similarly, for a sequence $\seq T\defeq(T_i)_{i\in\N}$, we note $\anib[\seq T]{\vec t}\defeq\anib[T_{\co0n}]{\vec t}$. For a sequence $\seq t \defeq(t_i)_{i \in \N}$, we note $\anib[\seq T]{\seq t}\defeq\lim\anib[T_{\co0n}]{t_{\co{0}{n}}}$, when this limit exists.

An \dfn{alphabet} is any finite set, whose elements are often called \dfn{symbols}.
If $\A$ is an alphabet, $\A^*\defeq\bigcup_{n\in\N}\A^n$ denotes the set of finite \dfn{words} over $\A$, and $\A^{**} \defeq \bigcup_{m\in\N}{(\A^*)^m}$ the set of finite tuples of words. (Notice that the notation $\A^{**}$ is a little ambiguous as it could also stand for the set $\bigcup_{m\in\N}{(\A^m)^*}$. Obviously, the two interpretations are isomorphic, but they are different objects.) The \dfn{empty} word is denoted by $\motvide \in \A^*$.

If $w \in \A^n$, we write $w=w_0\cdots w_{n-1}$, and call $\length{w}\defeq n$ the \dfn{length} of $w$. 
If $\vec u\in (\A^*)^m$, we write $\vec u=(u_0,\ldots,u_{m-1})$, and $\length{\vec u}\defeq(\length{u_0},\ldots,\length{u_{m-1}})$. 
For every $i \in \N$, we define the projection $\pi_i$ as a partial function $\pi_i \colon \A^{**} \pto \A^{*}$: $\pi_i(\vec u)=u_i$ if $\vec u \in (\A^{*})^m$ with $m\geq i$ (and $\pi_i(\vec u)$ is undefined otherwise).
A \dfn{field} is a projection $\pi_i$ together with a label \field, written in type-writer form. 
The notion of fields is simply a convenient way of talking about tuples of words. The names of the fields will be chosen so as to reflect the role that the field plays in the construction.

We note $\N^*\defeq\bigcup_{m\in\N}\N^m$ the set of integer tuples of any dimension, where $m$ is the \dfn{dimension} of the tuple $\vec k\defeq(k_0,\ldots,k_{m-1})\in\N^m$. Let $\A^{\vec{k}}\defeq\A^{k_0}\times\ldots\times\A^{k_{m-1}}$;
any subalphabet of $\A^{\vec{k}}$ is said to have \dfn{constant lengths}.

We will mainly use the special alphabets $\haine n\defeq\{0,\ldots,n-1\}$, for $n\in\{2,3,4,5\}$. Of course, instead of $\haine5$ we could use any alphabet with $n$ letters. However, since some letters will have a fixed role throughout the thesis, it is better to fix the notation and get used to these roles.

The non-negative integers can be easily embedded into $\haine2^*$ thanks to the injection $n\mapsto\anib n$ which gives the shortest binary representation of $n \geq 1$. $\norm n\defeq\length{\anib n}=\spart{\log_2n}+1$ is the \dfn{length of $n$}. By definition, $\anib{0} \defeq \motvide$ and $\norm{\motvide} \defeq 0$.  Inversely, if $u \in \haine2^*$, then $\bina u$ is the number represented by $u$ in the binary representation system: for all $ u\in\haine2^*,\anib{\bina u}$ is the suffix of $u$ that is obtained after removing the initial $0$s. (The \xpr{lower bar} is applied before the \xpr{top bar}.)

We will also need to embed some finite sets in $\haine2^*$. For instance, we will say that $\{-1,+1\}$ is $\haine2$ by identifying $-1$ with $0$ and $+1$ with $1$. Finite alphabets of bigger cardinality can be embedded into $\haine2^k$, for some suitable $k$.


Now, in the perspective of computing functions with many arguments, we are going to use symbol $2$ to encode tuples into words.
If $\vec u\in(\haine5^*)^m$ for some $m\in\N$, then $\Chi{\vec u}$ is defined as the concatenation 
\begin{equation*}
\Chi{u_0}\Chi{u_1}\ldots\ldots\Chi{u_{m-1}}\in\haine3^*,
\end{equation*}
where $\Chi v\defeq2\double{v}$, and $v\mapsto\double v$ is some monoid injection (\ie code) from $\haine5^*$ to $\haine2^*$. In this paper, we will use the code defined by $\double0\defeq000$, $\double1\defeq001$, $\double2\defeq010$, $\double3\defeq011$, $\double4\defeq100$.
Note that the structure of the encoding of word tuples depends only on $\length{\vec{u}}$. 
We can also define $\Chi{\seq u}\defeq\Chi{u_0}\Chi{u_1}\ldots\in\haine3^\N$ for $\seq u\in\haine5^{\N}$. 

Let us now prove a basic fact about $\Chi{\cdot}$. Namely, for every $\vec{k} \in \N^*$, there exists an easily computable function that gives the positions of the $2$s in encodings of $\haine5^{\vec{k}}$ and the positions of the encodings of the components of a letter. 

\begin{fact}\label{f:encodings}
Let $M \in \N$ and $\vec{k} \in \N^M$. For all $0 \le i <M$, let us define $l_{\vec{k},i}\defeq3\sum_{j=0}^{i-1}{k_j}+i$. Then, for all $\vec{u} \in \haine5^{\vec{k}}$:
			\begin{enumerate}
				\item $\norm{\Chi{\vec u}}=l_{\vec{k},M}$,
				\item ${\Chi{\vec u}}_{\co{l_{\vec{k},i}}{l_{\vec{k},i+1}}}= 2 \double{\pi_i(\vec{u})}$
			\end{enumerate}
\end{fact}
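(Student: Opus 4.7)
The plan is to derive both parts directly from the definitions by counting lengths of the encoded blocks $\Chi{u_i}$, with essentially no subtlety involved beyond careful bookkeeping of indices.

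First, I would compute $\norm{\Chi v}$ for a single word $v\in\haine5^k$. Since $v\mapsto\double v$ is a monoid injection from $\haine5^*$ to $\haine2^*$ defined letter-wise by 3-bit codewords, we have $\norm{\double v}=3\length v=3k$, and consequently $\norm{\Chi v}=\norm{2\double v}=1+3k$. This single identity is the engine of the whole proof.

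For part (1), I would apply the definition $\Chi{\vec u}=\Chi{u_0}\Chi{u_1}\cdots\Chi{u_{M-1}}$, so that
\[
\norm{\Chi{\vec u}}=\sum_{i=0}^{M-1}\norm{\Chi{u_i}}=\sum_{i=0}^{M-1}(1+3k_i)=M+3\sum_{j=0}^{M-1}k_j=l_{\vec k,M}.
\]
For part (2), the position at which the block $\Chi{u_i}$ starts inside the concatenation is
\[
\sum_{j=0}^{i-1}\norm{\Chi{u_j}}=\sum_{j=0}^{i-1}(1+3k_j)=i+3\sum_{j=0}^{i-1}k_j=l_{\vec k,i},
\]
and its length is $1+3k_i=l_{\vec k,i+1}-l_{\vec k,i}$, so it occupies exactly the positions in $\co{l_{\vec k,i}}{l_{\vec k,i+1}}$. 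Therefore
\[
{\Chi{\vec u}}_{\co{l_{\vec k,i}}{l_{\vec k,i+1}}}=\Chi{u_i}=2\double{u_i}=2\double{\pi_i(\vec u)},
\]
using $\pi_i(\vec u)=u_i$.

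There is no real obstacle here; the statement is a direct unwinding of the definition of $\Chi\cdot$, and the only thing to be careful about is to keep the \xpr{$+i$} term in $l_{\vec k,i}$ (which accounts for one marker symbol $2$ per component) separate from the \xpr{$3\sum k_j$} term (which accounts for the three bits used per symbol of $\vec u$). If desired, one can package the argument as an induction on $M$, with the induction step reducing to the length identity $\norm{\Chi{u_{M-1}}}=1+3k_{M-1}$, but the direct computation above already suffices.
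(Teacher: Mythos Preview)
Your proof is correct and is exactly the straightforward unwinding of the definitions that the statement calls for; the paper itself states this as a Fact without proof, so there is nothing further to compare.
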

These statements correspond to what Durand, Romashchenko and Shen refer to as ``the TM know the place where such and such information is held in the encoding''.

Symbol $3$ will be used in Subsection~\ref{s:turing} to encode the start and the end of the tape of a Turing machine.

Symbol $4$ will be used in order to construct alphabets with constant lengths.
In the computation, we indeed want words of various lengths to be able to represent the same objects.
For this, we define $\sh[l]u\defeq4^{l-\length u}u$, for every $l\in\N$ and $u\in\haine4^*$ with $\length{u} \le l$ ($\sh[l]u$ is undefined otherwise).
For instance, $\sh[\norm n]{\anib n}=\anib n$ for any integer $n\in\N$, and the encoding $\sh[l]\motvide=4^l$ of the empty word is a sequence of $4$s.
It is clear that the partial function \[\pappl
{\N\times\haine4^*}{4^*\haine4^*}{(l,u)}{\sh[l]u}\] is injective (over its domain) and surjective; let us write $\hs w\in\haine4^*$ for the longest suffix in $\haine4^*$ of a word $w\in4^*\haine4^*$, in such a way that $\hs{\sh[l]u}=u$ for any $l\geq \length{u}$ and $u\in\haine4^*$.
These two maps can be adapted to vectors in the obvious way: $\sh[\vec k]{\vec u}\defeq(\sh[k_0]{u_0},\ldots,\sh[k_{m-1}]{u_{m-1}})$ for any $\vec k\in\N^m$, $m\in\N$ and $\vec u\defeq(u_0,\ldots,u_{m-1})\in\haine4^{*m}$.
Note that this is defined if and only if $\vec k\ge\length{\vec u}$.
Similarly, $\hs{\vec w}\defeq(\hs{w_0},\ldots,\hs{w_{m-1}})$ for any $\vec w\defeq(w_0,\ldots,w_{m-1})\in(4^*\haine4^*)^m$.


Recall that a partial permutation is simply an injective partial map. If $\alpha:\haine4^{**}\pto\haine4^{**}$ is a partial permutation that preserves the number of fields (\ie $\alpha(\haine4^*)^l \subseteq (\haine4^*)^l$ for all $l \in \N$), we can transform it into an equivalent permutation that also preserves the lengths:
\[\pappl[\sh\alpha]{(4^*\haine4^*)^*}{(4^*\haine4^*)^*}{\vec w}{\sh[\length{\vec w}]{\alpha(\hs{\vec w})}~.}\]
\begin{remark}\label{sharpization}~
\begin{itemize}
\item For any $\vec k\in\N^*$, $\sh{\alpha}$ is also a partial permutation.
\item The restriction of $\alpha$ to any subalphabet is implemented by that of $\sh\alpha$ to large enough words:
\[\forall \vec{u}\in\haine4^{**},\forall
\vec k\ge\max\{\length{\vec{u}},\length{\alpha(\vec{u})}\},\sh\alpha(\sh[\vec k]{\vec{u}})=\sh[\vec k]{\alpha(\vec{u})}~.\]
\end{itemize}\end{remark}

\begin{proof}
For the first part, assume that $\sh\alpha(\vec w)=\sh\alpha(\vec{w'})$. This implies that $\length{\vec w}=\length{\vec{w'}}$. In addition, $\alpha(\hs{\vec{w}})=\hs{\sh\alpha(\vec w)}=\hs{\sh\alpha(\vec w')}=\alpha(\hs{\vec{w'}})$. Since $\alpha$ is a partial permutation, this implies that $\hs{\vec{w}}=\hs{\vec{w'}}$. Therefore, $\vec{w}=\sh[\length{\vec{w}}]{\hs{\vec{w}}}=\sh[\length{\vec{w'}}]{\hs{\vec{w'}}}=\vec{w'}$.

For the second part, let $\vec{u}\in\haine4^{**}$ and $\vec k\ge\max\{\length{\vec{u}},\length{\alpha(\vec{u})}\}$. Then, $\sh[\vec{k}]{\vec{u}}$ and $\sh[\vec{k}]{\alpha(\vec{u})}$ exist and $\length{\sh[\vec{k}]{\vec{u}}}=\length{\sh[\vec{k}]{\alpha(\vec{u}})}=\length{\vec{k}}$. Therefore, $\sh{\alpha}(\sh[\vec{k}]{\vec{u}})=\sh[\vec{k}]{\alpha(\hs{\sh[\vec{k}]{\vec{u}}})}=\sh[\vec{k}]{\alpha(\vec{u})}$.
\end{proof}

In the rest of the paper, we will often implicitly use Remark~\ref{sharpization} both to construct partial permutations that preserve the lengths of the fields, as well as to state and prove things about them. It allows us to describe the behaviour of a partial permutation $\alpha$, and then translate this result into the behaviour of $\sh{\alpha}$, provided that the lengths of the fields are sufficiently large, thus omitting the (confusing) $\hs{\cdot}$ and $\sh{\cdot}$ symbols. 

Let $i_1, \ldots, i_l$ be a set of fields, and $w\in\haine5^*$. Then, 
\begin{equation*}
\emp[w]{i_1,\ldots,i_l} \defeq \set{\vec{u}}{\haine5^{**}}{\hs{\pi_{i_k}(u)}=w, \text{ for } k=1, \ldots,l}
\end{equation*}
 is the set of all symbols that have fields $i_1,\ldots, i_l$ equal to $w$ (up to the application of $\hs{\cdot}$).
If $n\in\N$, let 
\begin{equation*}
\emp[n]{i_1,\ldots,i_l}\defeq\set{\vec u}{\haine5^{**}}{\bina{\hs{\pi_{i_k}(u)}}=n, \text{ for } k = 1, \ldots, l}
\end{equation*}
be the set of all symbols who have the values $n$ (in binary form) in the fields $i_1,\ldots,i_l$.

\section{Computation}
\subsection{Turing machines}\label{s:turing}

The reader is assumed to be familiar with classical concepts in computability theory. We just fix some terminology and give a variant of a definition of Turing machines, imposing some additional technical restrictions which, however, do not restrict the computational power.

A \dfn{Turing machine} (TM) is a partial (\xpr{global}) map $\M$ from $\haine4^\Z\times Q\times\Z$ into itself, where $Q\subset\haine2^*$ is a finite set of \dfn{states} containing the \dfn{initial} state $0$ and the \dfn{accepting} state $\motvide$, and depending on a partial \dfn{transition map} $\delta_\M:\haine4\times Q\setminus\{\motvide\}\pto\haine4\times Q\times\{-1,+1\}$ such that: 

\[\M(z,q,j)=\soit{(z,q,j)&\si q=\motvide\\(z',q',j')& \text{ otherwise, where } (z'_j,q',j'-j)=\delta_\M(z_j,q)\\ & \text{ and } z'_i=z_i, \forall i\ne j~,}\]
for any $(z,q,j)\in\haine4^\Z\times Q\times\Z$, which will sometimes be called a \dfn{machine configuration}, the first component being the \dfn{tape content}, the second the (head) \dfn{internal state}, the third the \dfn{head position}.

The model of TM that we use satisfies the following assumptions, which, as can be easily seen, do not restrict the computational power of TM.
\begin{itemize}
\item There is only one tape, from which the TM reads the input and on which it writes the output.
\item The internal states are words of $\haine2^*$ (this is just a semantic restriction).
\item All machines have the same initial and accepting states $0$ and $\motvide$, respectively.
\item T
he global map is still defined after having accepted, and is then equal to the identity.
\item There is no precise rejecting state (instead, we use undefined transitions over non-accepting states).
\item In every accepting transition, the head disappears and moves to the right. In other words, every accepting transition is of the form $\delta(q,a)=(a',\motvide,+1)$. This is a technical assumption which simplifies the construction of an IPPA that simulates $\M$ in Section~\ref{Jarkko}.
\end{itemize}

We denote by $\M^t$ the $t$'th power of the (global) map $\M$. If 
$\M^t(\pinf 3. \Chi{\vec{u}}3^{\infty},0,0)= 
 (\pinf 3. \Chi{\vec{u'}}3^{\infty},\epsilon,j)$, for some $t \in \N, j \in \Z$, then we say that $\M$ \dfn{halts} over (or \dfn{accepts}) \dfn{input} $\vec{u}\in\haine5^{**}$, and \dfn{outputs} $\vec{u'}\in\haine5^{**}$, and we define $f_\M(\vec{u})\defeq \vec{u'}$ and $t_\M(\vec{u})$ as the minimal $t$ for which this holds (if this never holds, or if $\pinf 3. \Chi{\vec{u}}3^{\infty}$ is rejected, then $t_\M(\vec{u})$ is undefined). 

Notice that $f_{\M}(\vec{u})$ is well-defined, since when the accepting state $\motvide$ appears, the machine configuration is no more modified.

We say that $\M$ \dfn{computes} the partial map $f_\M:\haine5^{**}\pto\haine5^{**}$, with time \dfn{complexity} \[\appl[t_\M]\N\N n{\max_{\length{\Chi{\vec{u}}}=n}t_\M(\vec{u})~,}\]

where, by definition, the $\max$ is taken only over \emph{accepted} inputs. $t_{\M}$ is well-defined since there are only finitely many accepted inputs of each length.

\subsection{Computability}\label{ss:comput}

A partial function $f:\haine5^{**}\pto\haine5^{**}$ is called \dfn{computable} if there exists a TM $\M$ such that $f=f_\M$.
Recall that integers (and finite sets) can be identified 
to words, hence allowing us to talk about computable maps between Cartesian products involving $\N$ and finite sets.
We also say that a set $X \subseteq \haine4^{**}$ is \dfn{computable} if its characteristic function $\iota_X\colon \haine4^{**} \to \haine2$ is computable, and that it is \dfn{computably enumerable} if it is the domain of a computable 
function.
We will say that a partial function $f:X\pto\haine5^{**}$, with $X\subset\haine5^{**}$ is \dfn{computable} if both $X$ and the extension of $f$ to $\haine5^{**}$ (by not defining images outside of $X$) are computable.

A partial function $\Phi:\haine2^\N\pto\haine2^{\N}$ is called \dfn{computable} if there exists a TM $\M$ such that $x\in\dom(\Phi)$ if and only if for all $n\in\N$, there exists $m\in\N$ such that $f_\M(x_{\co{0}{m}},n)$ is defined, in which case it is equal to $\Phi(x)_n$. Finally, by parametrizing $\Z$ with $\N$, we can talk about computable functions $\Phi:\haine2^\Z\pto\haine2^{\Z}$. An equivalent definition is that  $\Phi:\haine2^\Z\pto\haine2^{\Z}$ is computable if there exists a TM $\M$ such that  $x\in\dom(\Phi)$ if and only if for all $n\in\N$, there exists $m\in\N'$ such that $f_\M(x_{\co{-m}{m}},n)$ is defined, in which case it is equal to $\Phi(x)_n$.

Since $\R$ can be identified with $\haine2^{\N}$, we can also talk about computable functions of real numbers. A partial function $\Psi:\R\pto\R$ is \dfn{computable} if there exists a computable function $f \colon \R \times \N \to \Q$ with the following property: $\abs{\Psi(x)-f(x,n)} < 2^{-n}$. This is the classical definition of computability for real functions and it says that we can compute better and better approximations of $x$. 

If $\M$ is a TM, let 
\begin{equation*}
\X_{\M}\defeq \set {z}{\haine2^\N}{\forall t\in\N,\M^t(\uinf3.z,0,0) \text{ exists  and is not in } \haine4^{\Z}\times\{\motvide\}\times\Z}
\end{equation*}
  be the set of one-sided binary sequences over which $\M$ runs for an infinite amount of time.
We say that a subset $X\subset\haine2^{\N}$ is \dfn{effectively closed} (or $\Pi^0_1$) if $\Chi{X}=\X_{\M}$ for some TM $\M$, or equivalently if the set of words that do not prefix any sequence in it is computably enumerable.
This can be extended to sets of sequences that can be encoded with words, in particular over finite alphabets: a subset $X\subset\prod_{t\in\N}\A_t$, where $\A_t$ is a finite subalphabet of $\haine5^*$, is \dfn{effectively closed} if $\Chi X=\X_{\M}$ for some program $\M$ (we encode every finite alphabet with $\haine2^k$, for some suitable $k$ which depends on $t \in \N$).

$\M$ is called \dfn{polynomial} if $t_{\M} \in O(P)$, for some polynomial $P$. A partial function $f$ is called \dfn{polynomially computable} if $f_{\M}=f$ for some polynomial TM $\M$. It is easy to see that the class of (polynomially) computable functions with this version of TM corresponds to the classical one.
Analogously, $X$ is a polynomially computable set if its characteristic function $\iota_X$ is polynomially computable.
We say that a function (or sequence) $f$ is \dfn{polynomially checkable} if it can be computed in time $O(P(\log{f}))$, for some polynomial $P$. The terminology comes from the fact that even though $f$ might not be polynomially computable, its graph (\ie the set of pairs element-image) is a polynomially computable set. For example $f(n) = 2^{2^n}$ is a polynomially checkable sequence even though it is not polynomially computable.

Instead of a universal TM, we use the following essentially equivalent:
\begin{fact}
There exists an injection that associates to each TM $\M$ a \dfn{program} $p_{\M} \in \haine4^{*}$ such that
if we denote by $Q_p$ the state set of the TM corresponding to program $p$,
then 
\begin{itemize}
\item The language $\sett{p_{\M}}{\M \text{ is a TM}} \subseteq \haine4^*$ is polynomially decidable.
\item The characteristic function $(p,q)\mapsto\iota_{Q_p}(q)$ that checks whether $q \in Q_p$ is polynomially computable.
\item The \xpr{universal} transition rule \[\pappl[\delta_\U]{\haine4\times\haine4^*\times\haine4^*}{\haine4\times\haine4^*\times\{-1,+1\}}{(a,q,p_\M)}{\delta_\M(a,q)~}\]
is polynomially computable.
\item In addition, $\card{Q_p} \le \length{p}$. (We can assume that $p$ contains a list of the states of $Q_p$.)
\end{itemize}
\end{fact}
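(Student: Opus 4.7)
The plan is to exhibit an explicit encoding of Turing machines as words of $\haine4^*$, then verify the four properties by direct inspection of the parsing procedure. The task is organizational rather than deep: once the encoding is pinned down, all four claims reduce to standard linear or quadratic time parsing.

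First, I would encode a TM $\M$ with state set $Q_\M=\{q_0,\ldots,q_{n-1}\}\subseteq\haine2^*$ (with $q_0=0$ and some $q_i=\motvide$) and transition entries $\delta_\M(a_j,r_j)=(a'_j,r'_j,d_j)$ for $0\le j<m$ by
\[p_\M\defeq \Chi{q_0,\ldots,q_{n-1}}\,3\,\Chi{\Chi{a_0,r_0,a'_0,r'_0,d_0},\ldots,\Chi{a_{m-1},r_{m-1},a'_{m-1},r'_{m-1},d_{m-1}}}~,\]
the symbol $3\in\haine4$ serving as the delimiter between the state list and the transition list. The outer $\Chi{\cdot}$ structure makes the encoding self-delimiting, and each transition is itself encoded with $\Chi{\cdot}$ so that its variable-length components can be recovered by locating the $2$-markers. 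Directions in $\{-1,+1\}$ are identified with $\haine2$ as in the preliminaries, and the alphabet symbols $a_j,a'_j$ are embedded in $\haine2\subseteq\haine4$. Injectivity of $\M\mapsto p_\M$ is immediate from the injectivity of $\Chi{\cdot}$ together with the fact that states in $Q_\M$ are pairwise distinct.

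Next, I would argue that $\sett{p_\M}{\M\text{ is a TM}}$ is polynomially decidable by describing a TM that, given $p\in\haine4^*$, verifies in polynomial time that (a) the pattern of $2$s, $3$s and binary symbols matches the format above; (b) the decoded state list contains $0$ and $\motvide$ and has pairwise distinct entries in $\haine2^*$; (c) each decoded transition $(a,r,a',r',d)$ has the correct types, with $r,r'$ appearing in the state list and $r\ne\motvide$; (d) no two transitions share the same left-hand side $(a,r)$. Item (a) is linear, while (b)--(d) are quadratic via pairwise comparison, so the total cost is $O(\length p^2)$. The bound $\card{Q_p}\le\length p$ follows because each listed state occupies at least one cell in $p$.

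The remaining two items are straightforward lookups. For $(p,q)\mapsto\iota_{Q_p}(q)$, the parser extracts the state-list prefix of $p$ and scans it for $q$, taking time polynomial in $\length p+\length q$. For $\delta_\U$, the parser jumps past the state list, decodes each transition in turn, and returns the right-hand side of the unique transition (if any) whose left-hand side matches $(a,q)$; since there are at most $\length p$ transitions, each of length at most $\length p$, this lookup is polynomial in $\length p+\length q$. The only genuine design choice, and hence the mild obstacle of the proof, is to pick an encoding that is simultaneously self-delimiting, parseable without pre-computed length information, and compact enough to ensure $\card{Q_p}\le\length p$; the nested $\Chi{\cdot}$ construction together with the $3$-delimiter accomplishes all three at once.
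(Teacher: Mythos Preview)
Your proposal is correct. The paper, however, does not prove this Fact at all: it is stated without proof and treated as a standard folklore statement about G\"odel-style encodings of Turing machines (the paper introduces it with ``Instead of a universal TM, we use the following essentially equivalent:'' and moves on immediately to notation). So there is no approach to compare against; your explicit encoding via nested $\Chi{\cdot}$ with a $3$-delimiter is a perfectly acceptable way to fill in the details the paper leaves implicit, and it transparently realises the parenthetical hint ``we can assume that $p$ contains a list of the states of $Q_p$''.
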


We will use the following notations: If $p$ is the program of a TM that computes a reversible function $f$, then $p^{-1}$ will denote the program of the inverse function $f^{-1}$ (it will always be computable in our constructions). Also, $t_p$ and $\X_p$ will be used to denote $t_{\M_p}$ and $\X_{\M_p}$, where $\M_p$ is the TM that corresponds to the program $p$.

The first examples of polynomially computable functions, which will be most useful in the sequel, are the encodings presented in Subsection~\ref{s:encoding}.
Clearly, $\sh[\cdot]\cdot$ and its (right) inverse $\hs\cdot$ are polynomially computable.
Moreover, the projections $\pi_i:\haine5^{**}\to\haine5^*$, for $i\in\N$, are polynomially computable and so are the functions $(\vec{k},i) \to l_{\vec{k},i}$ (as defined in Fact~\ref{f:encodings}) and $\Chi{\cdot}$. 

\subsection{Degrees}


In the following,  $\MM$ and $\MM'$ can stand for either $\N$ or $\Z$.

Two sets $X,Y \in \haine2^{\MM}$ are \dfn{computably homeomorphic} if there exists a computable bijection between them.

 We say that $d\in\haine2^{\MM'}$ is \dfn{Turing-reducible} to $c\in\haine2^{\MM}$ if $d = \Phi(c)$, for some computable function $\Phi$.
This yields a preorder over configurations, whose equivalence classes are called \dfn{Turing degrees}. If $d$ is Turing-reducible to $c$, then in a computational sense, $c$ is more complicated than $d$. A \dfn{cone} over degree $d$ is the set of Turing degrees that are higher than $d$.
 
 Moreover, we say that subset $Y\subset\haine2^{\MM'}$ is \dfn{Medvedev-reducible} to subset $X\subset\haine2^{\MM}$ if there is a computable partial function $\Phi:\haine2^{\MM}\pto\haine2^{\MM'}$  such that $\dom(\Phi) \supseteq X$ and $\Phi(X)\subseteq Y$. This also yields a pre-order over sets, whose equivalence classes are called \dfn{Medvedev degrees}.
Finally, we say that subset $Y\subset\haine2^{\MM'}$ is \dfn{Mu\v cnik-reducible} to subset $X\subset\haine2^\MM$ if every point of $X$ is Turing-reducible to some point of $Y$ (but not in a uniform way, as in Medvedev-reducibility). This again yields a pre-order over sets, whose equivalence classes are called \dfn{Mu\v cnik degrees}.

Medvedev and Mu\v cnik degrees of a set are an attempt to formalize the notion of how computationally difficult it is to compute a point of the set.
Of course, computable homeomorphism implies having the same Turing degrees, which implies Medvedev-equivalence, which in turns implies Mu\v cnik-equivalence.

%

We do not get too much into details, but the notion holds in the large setting of effective topological spaces (see for instance \cite{gacshoyruprojas}).
\section{Symbolic dynamics}

$\azd$ is the set of $d$-dimensional \dfn{configurations}, endowed with the product of the discrete topology, and with the \dfn{shift} dynamical system $\sigma$, defined as the action of $\Z^d$ by $(\sigma^\vec{i})_{\vec{i} \in \Z^d}$, where $\sigma^\vec{i}(x)_\vec{k}\defeq x_{\vec{i}+\vec{k}}$ for any configuration $x\in\azd$ and any $\vec{i},\vec{k}\in\Z^d$.

A \dfn{pattern} over a (usually finite) \dfn{support} $D\subset\Z^d$ is a map 
$p\in\A^D$.

Two patterns $u_1\colon D_1 \to \A$ and $u_2 \colon D_2 \to \A$ are called \dfn{disjoint} if $D_1$ and $D_2$ are disjoint shapes of $\Z^d$. If $u_1,u_2$ are disjoint, let $u_1\vee u_2$ be the pattern over shape $D_1\sqcup D_2$ defined by $(u_1\vee u_2)(\vec{i})=u_{j}(\vec{i})$, if $\vec{i}\in D_j$, $j=1,2$. Inductively, we can define $\bigvee_{1\le i\le k}u_i$, when $u_1,\ldots,u_k$ are mutually disjoint pattens.

Let $E,D\subset\Z^2$ be two shapes, and $u\in\A^D$ be a 2D pattern. We denote $u_E$ the restriction of $u$ to $D\cap E$ (this is a pattern with support $D \cap E$). 

If $I \subseteq \Z$ and $(c_i)_{i \in I}$ is a family of configurations of $\A^{\Z}$, $|(c_i)_{i \in I}$ denotes the (possibly infinite) pattern $u \colon \Z \times I \to \A$ such that $u_{\Z \times \{i\}} = c_i$, for all $i \in I$. Here we implicitly identify patterns on horizontal strips up to vertical translation. Formally, the domains of $u_{\Z \times \{i\}}$ and $c_i$ are not the same. 

If $I = \co{0}{n}$, then $|(c_0,\ldots,c_{n-1})$ is the horizontal strip of width $n$ obtained by putting $c_0, \ldots, c_{n-1}$ on top of each other (in this order). If $I = \Z$, then we obtain a configuration in $\A^{\Z^2}$.

Let $x \in \azd$ and $\vec S\defeq(S_0,\ldots,S_{d-1}) \in \Ns^{d}$. The \dfn{$\vec S$-bulking} (or higher-power representation) of $x$ is the configuration $\bulk[\vec S]x\in
(\A^{S_0\times\ldots S_{d-1}})^{\Z^d}$ such that for any $\vec i=(i_0,\ldots,i_{d-1})\in\Z^d$, 
\begin{equation*}
{\bulk[\vec S]x}_{\vec i}\defeq x_{\co{i_0S_0}{(i_0+1)S_0}\times\ldots\times\co{i_{d-1}S_{d-1}}{(i_{d-1}+1)S_{d-1}}}.
\end{equation*}

A ($d$-dimensional) \dfn{subshift} is a closed set $X\subset\azd$ such that $\sigma^\vec{i}(X)=X$ for all $\vec{i}\in\Z^d$.
Equivalently, $X$ is a subshift if and only if there exists a family of patterns $\f\subset\bigcup_{D\subfini\Z^d}{\A^{D}}$ such that 
\begin{equation*}
X=\set x{\A^{\Z^d}}{\forall \vec{i}\in \Z^d,\forall D\subfini\Z^d,\sigma^{\vec{i}}(x)\restr{D}\notin\f}.
\end{equation*}
If $\f$ can be chosen finite, we say that $X$ is a \dfn{subshift of finite type} (SFT).

If $\f$ can be chosen computably enumerable, then $X$ is called an \dfn{effective subshift}. 

A continuous map $\Phi$ from subshift $X$ to subshift $Y$ is a \dfn{morphism} if $\Phi\sigma=\sigma\Phi$. If it is surjective, then it is a \dfn{factor map}, and $Y$ is a \dfn{factor} of $X$ (this defines a preorder); if it is bijective, then it is a \dfn{conjugacy}, and $X$ and $Y$ are \dfn{conjugate} (this defines an equivalence relation). A subshift $Y \subseteq \A^{\Z^d}$ is called \dfn{sofic} if it is a factor of some SFT, which is then called a \dfn{cover} for $Y$. 

A configuration $x \in \A^{\Z^d}$ is called \dfn{periodic} with \dfn{period} $\vec{j} \neq \vec{0}\in \Z^d$ if $\sigma^\vec{j}(x)=x$. A subshift $X$ is called \dfn{aperiodic} if it does not contain any periodic configurations.

Abusing notation, we use the notations $\emp[w]{i_1,\ldots,i_l}$ and $\emp[n]{i_1, \ldots, i_l}$ (where $w \in \haine5^{**}$ and $n \in \N$) also for configurations. For example, if $c \in (\haine5^{**})^{\Z}$, we will say that $c \in \emp[w]{i_1,\ldots,i_l}$ if $c_i \in \emp[w]{i_1,\ldots,i_l}$ for all $i \in \Z$. Finally, for $N \in \N$  and $n \in \co{0}{N}$, let 
\begin{equation*}
\per[n,N]{i_1,\ldots,i_l} \defeq \{c \in (\haine5^{**})^{\Z} \colon \bina{\hs{\pi_{i_k}(c_j)}}=j+n \mod N, \text{ for all } j \in\Z, 1 \le k \le l\}
\end{equation*}
  be the set of all configurations such that 
\begin{equation*}  
  \pi_{i_k}(c)=.\dinf{(n\ldots(N-1)01\ldots(n-1))}, \text{ for } 1 \le k \le l,
\end{equation*}
where for all $w$, $.\dinf{w}$ denotes the configuration $c$ which satisfies that $c_{\co{j\length{w}}{(j+1)\length{w}}}=w$, for all $j \in \Z$.

\section{Cellular automata}\label{s:ca}
A (1D) \dfn{partial cellular automaton} (PCA) is a partial (\xpr{global}) continuous function $F:\az\pto\az$ whose domain is an SFT, and such that $F\sigma=\sigma F$.
Equivalently by some extension of the so-called Curtis-Lyndon-Hedlund theorem, there exist a \dfn{neighbourhood} $V\subfini\Z$ and a partial \dfn{local rule} $f:\A^{V}\pto\A$ such that for all $z\in\az$, $F(z)$ is defined if and only if $f(z\restr{i+V})$ is defined for all $i\in\Z$, in which case $F(z)_i \defeq f(z\restr{i+V})$. 
If $V \subseteq \cc{-r}{r}$, then $r$ is  called a \dfn{radius} of the PCA. The radius of a PCA is not uniquely determined.

A PCA is called \dfn{reversible} (RPCA) if it is injective. In this case, it is known that there exists another RPCA, denoted by $F^{-1}$, such that $FF^{-1}$ and $F^{-1}F$ are restrictions of the identity, and $\dom(F^{-1})=F(\az)$  (the argument for this is similar to the one in \cite{hedlund}). In particular, there exist so-called inverse radius and inverse local rule.
If $r$ is both a radius and an inverse radius for an RPCA $F$, we call it a \dfn{bi-radius} for $F$.
In the rest of the paper, we only consider RPCA with bi-radius $1$. This is not a significant restriction, since these PCA and RPCA exhibit the whole range of computational and dynamical properties of general PCA and RPCA.

For $t\in\N$, the $t^{\text{th}}-$\dfn{order range} of $F$ is the (sofic) subshift $\Omega_F^t\defeq F^t(\az)\cap F^{-t}(\az)$ and its \dfn{limit set} is the (effective) subshift $\Omega_F\defeq\Omega_F^\infty\defeq\bigcap_{t\in\Z}\Omega_F^t$, containing all the configurations that are \emph{not} \dfn{ultimately rejected} (either in the past or the future).
There is a canonical way to associate a 2D SFT $\orb F$ to an RPCA $F$: it consists of the infinite space-time diagrams of the configurations that are not ultimately rejected. Formally, $\orb F\defeq\sett{\orb[x]F}{x\in\Omega_F}$, where $\orb[x]F\defeq|(F^t(x))_{t\in\Z}\in\am$ for any $x\in\Omega_F$.
One can see that $\orb F$ is conjugate to the $\Z^2$-action of $(F,\sigma)$ over $\Omega_F$.
Note nevertheless that the same SFT may correspond to distinct RPCA (if the RPCA have different transient phases, \ie they reject some configurations after different amounts of steps).

A pattern $w\in\A^D$, with $D\subset\Z^2$, is \dfn{locally valid} for $f$ if for any $(i,t)\in D$ such that $C\defeq(i+\cc{-1}1)\times\{t-1\}\subset D$, we have $p_{(i,t)}=f(p\restr C)$. Note that, in general, this notion depends on the local rule and not only on the RPCA. By compactness, if there exist locally valid square patterns of arbitrarily large height and width, then $\orb F\ne\emptyset$, \ie there are configurations which are never rejected. If $x\in F^{-t}(\az)$, then $|(x,F(x),\ldots,F^t(x))$ is a \dfn{locally valid horizontal strip} of height $t+1$.
The notion of a locally-valid horizontal strip depends only on the RPCA and not on the local rule, \ie it is a ''global`` notion.

For every $m\in \N$, $\vec{\delta}=(\delta_0,\ldots,\delta_{m-1})\in \{-1,0,1\}^m$, 
we define the shift product $\sigma^{\vec{\delta}}
= \sigma^{\delta_0} \times \ldots \times \sigma^{\delta_{m-1}}$. 
A \dfn{partial partition} (cellular) \dfn{automaton} (PPA) is a PCA $F=\sigma^{\vec\delta}\circ\alpha$ over some alphabet $\A=\A_0\times\ldots\times \A_{m-1}$, where $\alpha$ is (the parallel synchronous application of) a partial permutation of $\A$. $-\delta_i$ is called the \dfn{direction} of field $i$. The (counter-intuitive) \xpr{$-$} is due to the fact that the normal definition of $\sigma$ shifts everything to the left, while we are used to thinking of the positive direction as going to the right. So, if we want to have a field with speed $+1$, then we should apply $\sigma^{-1}$ to it.

Every PPA is a RPCA with bi-radius $1$ 
 and conversely every RPCA is essentially a PPA (see for instance \cite[Proposition~53]{jarkkoppa}).
Note, however, that the inverse of a PPA is not, formally, exactly a PPA: the permutation is performed after the shifts, in the form $\alpha^{-1}\circ\sigma^{-\vec{\delta}}$. Nevertheless, it is conjugate, via $\alpha$, to the corresponding PPA.

In order to define families of PPA that are somehow uniform, we consider the corresponding objects acting on infinite alphabets.
A \dfn{partial partition automaton with infinite alphabet} (IPPA) is a partial map $F:(\haine5^{*m})^\Z\pto(\haine5^{*m})^\Z$, where $m\in\N$, $F=(\sigma^{\delta_0}\times\ldots\times\sigma^{\delta_{m-1}})\circ\alpha$, the $\sigma^{\delta_j}$ are shifts over infinite $\haine5^*$ (that is $\sigma(y)_i=y_{i+1}$ for any $y\in(\haine5^*)^\Z$ and $i\in\Z$), and $\alpha \colon \haine5^{*m} \to \haine5^{*m}$ is a partial (infinite) \dfn{permutation}.
By restricting the domain and the co-domain of an IPPA to finite subsets of $\haine5^{*m}$, we obtain normal (finite) PPA. In our constructions, the permutation $\alpha$ will always be length-preserving and the restriction will be taken over an alphabet of the form $\haine5^{\vec{k}}$.

If $F \colon \A^{\Z} \to \A^{\Z}$ and $G \colon \B^{\Z} \to \B^{\Z}$ are PCA, then we say that $G$ is a \dfn{factor} of $F$ if there exists a continuous map $H \colon \A^{\Z} \to \B^{\Z}$ such that $GH=HF$. If $F$ and $G$ are RPCA and $F$ factors onto $G$, then it is easy to see that $\orb{F}$ factors onto $\orb{G}$ through the map that sends $\orb{x}$ to $\orb{H(x)}$, for all $x \in \A^{\Z}$. However, the notion of factoring for RPCA is stronger, since it also takes into account the transient times of the RPCA, \ie the number of steps for which the image of an ultimately rejected configuration is defined before it is rejected (which are not relevant in the corresponding 2D SFTs).

Let $F_0,\ldots,F_{n-1}$ be RPCA such that $\dom(F_i) \cap \dom(F_j) = \emptyset$, for all $i \neq j$. Then, $\bigsqcup_{i\in \co{0}{n}}F_i$ denotes the map with domain $\bigsqcup_{i \in \co{0}{n}}\dom(F_i)$ and that agrees with $F_i$ on $\dom(F_i)$, for all $i \in \co{0}{n}$. $\bigsqcup_{i\in \co{0}{n}}F_i$ is not always an RPCA, since there might be a configuration that is not in any $\dom(F_i)$ but that is locally everywhere in the domains (which are SFTs). However, and this will always be the case in this paper,  $\bigsqcup_{i\in \co{0}{n}}F_i$ is also an RPCA if $\dom(F_i)$ and $\dom(F_j)$ are over disjoint alphabets, for $i \neq j$. In this case, $\Omega_{\bigsqcup_{i\in \co{0}{n}}F_i}=\bigsqcup_{i \in \co{0}{n}} \Omega_{F_i}$ and $\orb{\bigsqcup_{i\in \co{0}{n}}F_i}=\bigsqcup_{i \in \co{0}{n}}\orb{F_i}$.

\subsection{Expansiveness}
The projective line $\Rb\defeq\R\sqcup\{\infty\}$ 
is seen as the set of \dfn{slopes} to the \emph{vertical} direction.
Here, quite unconventionally, the horizontal direction is represented by $\infty$ and the vertical one by $0$.
The relevance of this choice will appear later, but in any case it does not affect any set-theoretical, topological or computable property because the inversion map over $\Rb$ is a computable homeomorphism. 

The projective line $\Rb$ admits a natural effective topology if seen as the quotient of the circle by central symmetry: a subset is effectively closed if the corresponding subset of the circle is effectively closed as a subset of $[0,1]^2$. This topology is equivalent to the one-point compactification of the $\R$ and  renders $\Rb$ a compact, metric space.

Let $X$ be a 2D subshift, $l\in\Rb$ a slope and $\lin{l}\subset\R^{2}$ the corresponding vectorial line.
We say that direction $l$ 
is \dfn{expansive} for $X$ if there exists a bounded shape $V\subset\R^2$ 
such that, for all $x,y \in X$,
\[x\restr{(\lin{l}+V)\cap \Z^2}=y\restr{(\lin{l}+V) \cap \Z^2}\impl x=y~.\]
We denote by $\NE(X)$ the {set of \dfn{non-expansive} directions} (\ie the set of directions that are not expansive). 
The terminology comes from the fact that if $l=p/q$ is rational (or infinite), then $l$ is expansive for $X$ if and only if the dynamical system $(X,\sigma^{(p,q)})$ is expansive, in the classical sense of expansive dynamical systems.

Expansive directions were first introduced by Boyle and Lind \cite{expsubd} in a more general setting.
The following fact is a particular case of \cite[Theorem~3.7]{expsubd}.
\begin{proposition}\label{p:atleastone}
Let $X$ be a 2D subshift.
Then, $\NE(X)$ is closed. In addition, $\NE(X)$ is empty if and only if $X$ is finite.
\end{proposition}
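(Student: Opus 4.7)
I would handle the two assertions separately. The closedness of $\NE(X)$ I would prove by showing its complement is open: if $l_0$ is expansive with shape $V$, I would use compactness of $X$ in the product topology to obtain, via a diagonal argument, a finite reformulation of the form ``for every $n \in \N$, there exists $N_n \in \N$ such that whenever $x, y \in X$ agree on $(\lin{l_0}+V) \cap \cc{-N_n}{N_n}^2$, they also agree on $\cc{-n}{n}^2$''. Then for $l$ sufficiently close to $l_0$ I would use a slight fattening $V' \defeq V + \cc{-1}{1}^2$ of $V$ and observe that $(\lin{l_0}+V) \cap \cc{-N_n}{N_n}^2 \subseteq \lin{l}+V'$ as soon as $\abs{l-l_0}$ is small enough; combining this with the shift-invariance of $X$ transfers expansiveness from $l_0$ to nearby $l$.

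For the second statement, the easy direction ``$X$ finite $\implies \NE(X) = \emptyset$'' follows by noting that a finite shift-invariant subshift consists of configurations with finite $\Z^2$-orbits, hence doubly periodic with period bounded by $\card{X}$; any sufficiently thick strip then contains a fundamental domain, so every direction is expansive. For the converse I would argue by contradiction: assume $X$ is infinite. By compactness of $X$, there is a sequence $x_n \to x_\infty$ in $X$ with $x_n \ne x_\infty$. Let $r_n$ be the maximal radius such that $x_n$ and $x_\infty$ agree on $\cc{-r_n}{r_n}^2$, and pick a disagreement point $i_n \in \Z^2$ with $\abs{i_n} = r_n+1$, so $\abs{i_n} \to \infty$. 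After extracting a subsequence along which $i_n / \abs{i_n}$ converges to a unit vector $v$, let $l$ be its direction. I would then show $l \in \NE(X)$: shifting by $i_n$ gives $(y_n, z_n) \defeq (\sigma^{i_n} x_n, \sigma^{i_n} x_\infty) \in X \times X$, disagreeing at $0$ and agreeing on the translated box $\cc{-r_n}{r_n}^2 - i_n$; extracting a further convergent subsequence yields $(y, z) \in X \times X$ with $y_0 \ne z_0$ agreeing on the half-plane $\{p \in \Z^2 \colon p \cdot v \le -1\}$, and in particular on any half-strip along $\lin{l}$. A second, iterated compactness step, shifting along $\lin{l}$ and taking further limits, would combine this half-strip with its mirror image to produce distinct configurations agreeing on a full strip $\lin{l}+V$ for arbitrary bounded $V$, contradicting the assumed expansiveness of $l$.

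The hard part will be the conversion from half-strip agreement to full-strip agreement: a single compactness limit only controls one side of $\lin{l}$. I expect to overcome this either by an iterated limit combining two half-strip pairs via a further subsequence argument, or by choosing $x_\infty$ inside the Cantor--Bendixson perfect kernel of $X$, so that symmetric approximating sequences control disagreement points on both sides of $\lin{l}$ simultaneously.
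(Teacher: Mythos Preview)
The paper does not actually prove this proposition: it is stated as a particular case of \cite[Theorem~3.7]{expsubd} and no argument is given. So there is nothing in the paper to compare your approach against; I will just comment on the proposal itself.

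Your outline for the closedness of $\NE(X)$ and for the implication ``$X$ finite $\Rightarrow \NE(X)=\emptyset$'' is standard and works as written.

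For the converse, your construction contains a genuine slip, though the fix is simple and in fact removes the difficulty you worried about. You take $l$ to be the direction \emph{of} $v$, but with that choice the strip $\lin l+V$ is unbounded in the $v$-direction and can never be translated into the half-plane $\{p:p\cdot v\le-1\}$; this is exactly why you cannot upgrade half-strip to full-strip agreement. The right choice is the direction \emph{orthogonal} to $v$. Then for any bounded $V$ one has $\lin l+V\subseteq\{p:\abs{p\cdot v}\le C\}$ for some $C$; picking $m\in\Z^2$ with $m\cdot v\le -C-1$ gives $\sigma^m(y),\sigma^m(z)\in X$ agreeing on $\{p:p\cdot v\le-1-m\cdot v\}\supseteq\lin l+V$ and disagreeing at $-m\notin\lin l+V$. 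That is already non-expansiveness of $l$, with no iterated limits or Cantor--Bendixson kernel needed.

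Two smaller points. First, disagreement at the origin is an \emph{open} condition, so when you extract a subsequential limit $(y,z)$ of $(y_n,z_n)$ you should first refine so that $(y_n)_0$ and $(z_n)_0$ are constant (possible since $\A$ is finite), which forces $y_0\ne z_0$ in the limit. Second, if you use the boxes $\cc{-r_n}{r_n}^2$ to define $r_n$, then when $i_n/\abs{i_n}$ converges to a corner of the sup-norm sphere the limiting agreement region is only a quarter-plane rather than a half-plane, and the shift trick above fails; using Euclidean balls to define $r_n$ avoids this, since every limit direction on the round circle yields a genuine half-space.
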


We say that $X$ is \dfn{extremely expansive} if $\card{\NE(X)}=1$, which is, according to Proposition~\ref{p:atleastone}, the most constrained non-trivial case.

In the case of SFTs (actually, of all effective subshifts), we have an additional restriction on the set of non-expansive directions that comes from computation theory, as is usually the case, see \cite{projsft, entrsft}.

A direction $l \in\Rb$ can be represented as the pair of coordinates of the intersection of the line $\lin{l}$ with the unit circle. This gives two (symmetric with respect to the origin) representations for each direction which are computably equivalent. Computability questions about expansive directions can then be transferred to computability questions about pairs of real numbers, which we already know how to deal with.

It can be noted that effectively closed subsets that do not contain $\{\infty\}$ are exactly the effectively closed subsets of $\R$.
The restriction map from $\Rb$ (with the above-defined effective topology) onto $\R$ is actually computable, and it can be noted that the pre-image of an effectively closed set by a computable function is effectively closed.

\begin{lemma}\label{lem:nonexpsftrestr}
Let $X$ be a 2D SFT. Then, $\NE(X)$ is effectively closed.
\end{lemma}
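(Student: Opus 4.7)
The plan is to show that the complement $\Rb\setminus\NE(X)$ is effectively open, i.e., a computable union of open rational intervals, by exhibiting a semi-decision procedure. Fix a finite set of forbidden patterns $\f$ defining $X$ and let $R$ be the maximum radius of a pattern in $\f$. Write $W_N\defeq\cc{-N}{N}^2$ for the centered discrete window, and $S_{l,n,N}\defeq(\lin l+\cc{-n}{n}^2)\cap W_N$ for the truncated strip of width $n$ around direction $l$.

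The first step is an effective characterisation of expansiveness: $l$ is expansive for $X$ if and only if there exist integers $n,N$ such that for every pair of $\f$-locally valid patterns $p,q\colon W_N\to\A$ with $p\restr{S_{l,n,N}}=q\restr{S_{l,n,N}}$ one has $p(\vec 0)=q(\vec 0)$. The ``if'' direction follows by taking any two configurations $x,y\in X$ agreeing on $(\lin l+\cc{-n}{n}^2)\cap\Z^2$, restricting them to $W_N$, applying the hypothesis at every translate, and using shift invariance to conclude $x=y$. The ``only if'' direction is a standard compactness argument on the full shift: if no $(n,N)$ works, one extracts through a diagonal subsequence of pairs of $\f$-locally valid patterns on growing windows a pair of limit configurations, automatically in $X$ (since local validity passes to the limit and $\f$ is finite), which agree on the full strip around $\lin l$ but differ at $\vec 0$, contradicting expansiveness.

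The second step upgrades this to compact rational intervals. By the Boyle--Lind compactness theorem for expansive subdynamics, a compact $K\subset\Rb$ is contained in $\Rb\setminus\NE(X)$ if and only if there is a single shape $V$ witnessing expansiveness uniformly for every $l\in K$. Combining this with Step~1, for a rational compact interval $[a,b]$ we have $[a,b]\cap\NE(X)=\emptyset$ if and only if there exist $n,N$ such that the finite combinatorial condition of Step~1 holds simultaneously for every $l\in[a,b]$. For fixed $n,N$ and rational $a,b$, this is decidable: as $l$ ranges over $[a,b]$, the discrete set $S_{l,n,N}\subset W_N$ takes only finitely many distinct values (the window $W_N$ is finite), and these values can be enumerated effectively from $a,b,n,N$; for each candidate strip, one then checks the implication over the finite collection of $\f$-locally valid patterns on $W_N$.

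Finally, the semi-decision procedure dovetails over all quadruples $(a,b,n,N)$ of rationals/integers with $a<b$, and for each output the open interval $(a,b)$ whenever the decidable check of Step~2 succeeds on $[a,b]$ with parameters $(n,N)$. The union of all outputs is exactly $\Rb\setminus\NE(X)$: for any expansive $l$, since $\Rb\setminus\NE(X)$ is open by Proposition~\ref{p:atleastone} (the fact that $\NE(X)$ is closed), $l$ lies in some rational open neighbourhood $(a,b)$ contained in $\Rb\setminus\NE(X)$; applying Boyle--Lind compactness to any rational $[a',b']$ with $l\in(a',b')\subset[a',b']\subset(a,b)$ yields a uniform $V$, and Step~1 then produces witnesses $(n,N)$ that the procedure eventually finds. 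The main delicate point is the effective characterisation of Step~1, and in particular the fact that it suffices to quantify over \emph{locally} valid patterns (rather than over patterns extendible to $X$, which would be only $\Pi_1^0$); this is exactly what the limit-extraction argument provides, and it is what makes the otherwise $\Pi_1^0$ notion of expansiveness collapse to a $\Sigma_1^0$ one.
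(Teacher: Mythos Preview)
There is a genuine gap in Step~1. Since $\lin l$ is the vectorial line through the origin, one always has $\vec 0\in\lin l\subset\lin l+\cc{-n}{n}^2$, hence $\vec 0\in S_{l,n,N}$. Thus the implication ``$p\restr{S_{l,n,N}}=q\restr{S_{l,n,N}}\impl p(\vec 0)=q(\vec 0)$'' holds trivially for \emph{every} $n,N$, so your characterisation does not separate expansive from non-expansive directions; both directions of Step~1 become vacuous (in particular, in the ``only if'' extraction the limit configurations agree on the full strip, which already contains $\vec 0$, so they cannot ``differ at~$\vec 0$'').

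The intended statement should be a spreading condition: agreement on $S_{l,n,N}$ forces agreement on $S_{l,n+1,N}$. The ``if'' direction then becomes an honest inductive spreading argument (with some care for irrational $l$, where $\lin l\cap\Z^2=\{\vec 0\}$ and one must translate by integer approximations to vectors in $\lin l$). The ``only if'' direction also needs a tweak: the disagreement points in $S_{l,n+1,N}\setminus S_{l,n,N}$ may escape to infinity along the strip as $N\to\infty$, so one should first translate along $\lin l$ to pin them near the origin before extracting a limit. With these repairs your Steps~2--3 go through. For comparison, the paper takes a rather different route: it cites semi-decidability of expansiveness of a single rational direction from Boyle--Lind and semi-decidability of two expansive directions sharing a component from Nasu, then enumerates rational directions and outputs the full interval between any pair found to lie in the same expansive component.
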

In particular, if an SFT $X$ has a unique direction of non-expansiveness, then this direction must be computable.
\begin{proof}

The statement follows from the following two facts: First, it is semi-decidable whether a direction is expansive, \ie there exists a TM that takes as input a (rational direction) and halts if the direction is expansive. This follows from \cite[Lemma 3.2]{expsubd}. Secondly, it is semi-decidable whether two \emph{expansive} directions belong in the same expansive component. (The expansive component of an expansive direction is the largest connected set that includes the direction and is included in the set of expansive directions. One can see that it is always an open interval.) This follows from \cite{nasu154}, as described in \cite[Appendix C]{opsd}.

Having these two facts in mind, it is not difficult to see that the following algorithm enumerates a sequence of intervals whose union is the complement of $\NE(X)$: For each rational direction, check whether it is expansive. Every time you find an expansive direction, check whether it is in the same component with one of the expansive directions that you have already found. Every time this is the case, output the whole interval of directions that is between them.

\end{proof}

A subshift $Y$ is called \dfn{extremely-expansively sofic} if there exists an extremely expansive SFT that factors onto $Y$. Since expansive directions are not preserved through block maps, an extremely-expansively sofic subshift need not be extremely expansive itself. In fact, as we will see, there exist extremely-expansively sofic subshifts that do not have any direction of expansiveness.

\begin{lemma}\label{lem:basicstuffaboutNE}
Let $X_0,X_1,\ldots$ be 2D subshifts over the same alphabet $\A$.
\begin{itemize}
\item If $X_0\subseteq X_1$, then $\NE(X_0)\subseteq\NE(X_1)$.
\item If $\NE(X_0)\cap\NE(X_1)=\emptyset$, then $X_0\cap X_1$ is a finite subshift.
\item If $\bigsqcup_wX_w$ is a closed disjoint (possibly uncountable) union, then $\NE(\bigsqcup_wX_w)=\bigcup_w\NE(X_w)$.
\end{itemize}\end{lemma}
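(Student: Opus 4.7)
The first bullet should be immediate: any finite witness shape $V$ for expansiveness of $l$ on $X_1$ will also work for any sub-subshift $X_0 \subseteq X_1$, whence by contrapositive $\NE(X_0) \subseteq \NE(X_1)$. The second bullet then falls out as a one-line corollary: $X_0 \cap X_1$ is a subshift contained in both $X_i$, so the first bullet gives $\NE(X_0 \cap X_1) \subseteq \NE(X_0) \cap \NE(X_1) = \emptyset$, and Proposition~\ref{p:atleastone} forces $X_0 \cap X_1$ to be finite.

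For the third bullet, the inclusion $\bigcup_w \NE(X_w) \subseteq \NE(\bigsqcup_w X_w)$ will follow from another application of the first bullet, since each $X_w$ embeds in the total union. For the reverse inclusion I plan to argue contrapositively: assuming $l$ is expansive for every $X_w$, with witness shape $V_w$ for each $w$, my goal is to exhibit expansiveness of $l$ for $X \defeq \bigsqcup_w X_w$. If this failed, for each $n$ I could pick $x_n \neq y_n$ in $X$ agreeing on $(\lin{l} + \cc{-n}{n}^2) \cap \Z^2$. Compactness of $X \times X$ delivers a convergent subsequence $(x_n, y_n) \to (x^*, y^*)$, and because every $p \in \Z^2$ lies in the growing strip from some rank on (its $\ell^\infty$-distance to $\lin{l}$ being fixed), we get $x^*(p) = y^*(p)$ for every $p$, and hence $x^* = y^* \eqdef z \in X$. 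This $z$ belongs to a unique piece $X_{w^*}$.

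The crux --- and the main obstacle --- is then to extract a subsequence along which \emph{both} $x_n$ and $y_n$ lie in $X_{w^*}$: once achieved, picking $n$ large enough that $\cc{-n}{n}^2 \supseteq V_{w^*}$ forces $x_n = y_n$ by expansiveness of $X_{w^*}$, contradicting $x_n \neq y_n$. The extraction relies on reading the ``closed disjoint union'' hypothesis in the topological (coproduct) sense, so that each $X_w$ is actually clopen inside $X$: being a subshift it is closed, and its complement $\bigsqcup_{w' \neq w} X_{w'}$ is likewise a disjoint union of subshifts which should inherit closedness from the total union. Once the clopen decomposition is granted, the convergence $x_n \to z \in X_{w^*}$ eventually confines $x_n$ inside $X_{w^*}$, and the same for $y_n$, which closes the argument. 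Carefully justifying this clopen decomposition in the possibly uncountable regime --- that is, showing that removing a single piece from a closed disjoint union leaves a closed set --- is the subtle point I expect to demand an essential use of the total closedness hypothesis.
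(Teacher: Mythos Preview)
Your treatment of the first two items and of the easy inclusion in the third item matches the paper's. The gap is in the hard inclusion of the third item: the clopen claim you isolate as ``the subtle point'' is in fact \emph{false} in general, so your step of eventually confining $x_n$ and $y_n$ to $X_{w^*}$ cannot be completed. Concretely, look at the very decompositions to which this lemma is later applied in the paper, say $\Omega_{F_\motvide} = \bigsqcup_{z \in Z} \rocks[\infty]z{\seq\Phi}$ with $Z$ an infinite closed subset of a Cantor set. There one has a continuous index map $x \mapsto z(x)$, so whenever $z_n \to z$ in $Z$ with $z_n \neq z$, any limit point of a sequence $x_n \in X_{z_n}$ lands in $X_z$; thus $X_z$ is not open in the union. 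Removing a single piece does \emph{not} leave a closed set, and there is no uniform expansiveness constant across uncountably many pieces to rescue the argument either.

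The paper avoids this entirely by arguing the reverse inclusion directly. It invokes the (standard, Boyle--Lind) half-plane characterisation: if $l \in \NE(X)$ then some pair $x \neq y$ in $X$ agrees on a full open half-plane $H_l$ of slope $l$. Shifting both configurations ever deeper into $H_l$ and extracting a convergent subsequence yields a \emph{common} limit point $z$ of the two orbits. Since each piece $X_w$ is a subshift --- closed and shift-invariant --- the orbit closure of $x$ (hence $z$) lies in the piece containing $x$, and likewise $z$ lies in the piece containing $y$. Disjointness then forces these two pieces to coincide, so $x$ and $y$ already live in a single $X_{w'}$, witnessing $l \in \NE(X_{w'})$. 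The crucial difference from your plan is that the paper exploits \emph{closedness and shift-invariance} of the pieces to trap the common limit $z$, rather than openness to trap the approximating pairs.
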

\begin{proof}
The first claim follows immediately from the definitions.

For the proof of the second claim, we have that $\NE(X_0 \cap X_1) \subseteq \NE(X_0) \cap \NE(X_1) = \emptyset$ according to the first claim. Therefore, $\NE(X_0 \cap X_1) = \emptyset$, and since $X_0 \cap X_1$ is a subshift, Proposition~\ref{p:atleastone} gives that it is finite.

Finally, for the last claim, the inclusion $\NE(X_w) \subseteq \NE(\bigsqcup_wX_w)$ comes from the first point.

For the other inclusion, assume $l\in\NE(\bigsqcup_wX_w)$ 
Then, there exist $x,y\in\bigsqcup_wX_w$ which coincide over an open half-plane $H_l \subseteq \R^2$ of slope $l$ and disagree somewhere outside it. The orbits of $x$ and $y$ under the shift action have a common limit point $z$. Then, $z$ is in the intersection $X_x \cap X_y$ of the subshifts that contain $x$ and $y$, respectively. By disjointness, we get that $X_x=X_y=X_{w'}$, for some $w'$, which means that $l \in \NE(X_{w'})$.

\end{proof}

If $F$ is an RPCA, then we denote $\NE(F)\defeq\NE(\orb F)$.
It is straightforward that the horizontal direction (which according to our definition is $\infty$) is expansive for $F$. It is not much more complicated to see that, if the bi-radius is $1$, 
$\NE(F) \subseteq [-1,1]$ (directions around the horizontal are expansive). 

Conversely, it can be shown that, up to a recoding, every 2D SFT for which the horizontal direction is expansive is equal to $\orb F$, for some RPCA $F$.
\chapter{Simulation}\label{s:simul}

\section{Simulation}

If $S,T\in\Ns$ and $Q\in\Z$, we say that RPCA $F:\az\pto\az$ \dfn{$(S,T,Q)$-simulates} RPCA $G:\bz\pto\bz$ if there is a partial continuous \dfn{decoding} surjection $\Phi:\az\pto\bz$ such that $\sigma\Phi=\Phi\sigma^S$, $G\Phi=\Phi\sigma^QF^T$, $G^{-1}\Phi=\Phi\sigma^{-Q}F^{-T}$ and the \dfn{simulating} subshift $\rock\Phi\defeq\bigsqcup_{\begin{subarray}c0\le t<T\\0\le s<S\end{subarray}}\sigma^sF^t(\dom(\Phi))$ is a disjoint union.
In other words, $1$ step of $G$ is encoded into $T$ steps of $F$, up to some shift by $Q$, and the intermediary steps used are not valid encodings.
We note $F\simu[S,T,Q,\Phi]G$, or when some parameters are clear from the context or not so important, $F\simu[S,T,\Phi]G$, $F\simu[S,T,Q]$, $F\simu[S,T]G$, or $F\simu G$ (each time this symbol will be used, $F$ and $G$ are meant to be RPCA).

We remind the reader that according to our notations, $\sigma\Phi=\Phi\sigma^S$ and $G\Phi=\Phi\sigma^QF^T$ and $G^{-1}\Phi=\Phi\sigma^{-Q}F^{-T}$ imply that the domains of the two partial functions are identical. This is in fact crucial for understanding the notion of simulation and it will be used extensively in the proofs and constructions to come. For example, this means that the equality $G\Phi=\Phi\sigma^QF^T$ does not immediately imply $G^{-1}\Phi=\Phi\sigma^{-Q}F^{-T}$, because the domains of $G^{-1}\Phi$ and $\Phi\sigma^{-Q}F^{-T}$ might be different (if we only had the equality $G\Phi=\Phi\sigma^QF^T$, it could happen that $x \in \dom(G^{-1}\Phi)$ but $x \notin F^T\sigma^Q(\A^\Z)$).

In fact, one can see that the couple of conditions $G\Phi=\Phi\sigma^QF^T$ and $G^{-1}\Phi=\Phi\sigma^{-Q}F^{-T}$ is equivalent to the triple of conditions $G\Phi=\Phi\sigma^QF^T$, $\dom(G\Phi)=\dom(\Phi\sigma^{-Q}F^{-T})$ and $\dom(G^{-1}\Phi) = \dom(\Phi F^{T} \sigma^Q)$.

$F$ \dfn{exactly simulates} $G$ if $\Phi$ is actually bijective.
In other words, there exists a well-defined \dfn{encoding} function $\Phi^{-1}:\bz\to\dom(\Phi)$.
$F$ \dfn{completely simulates} $G$ if, besides, $\Omega_F\subset\rock\Phi$.
In other words, every bi-infinite orbit of $F$ will eventually encode some orbit of $G$.
Actually, in our constructions we will even have the stronger $\dom(F^{t'})\subset\rock\Phi$, for some $t' \in \Z$.


\begin{remark}\label{r:simsynchr}~\begin{enumerate}
	\item $
	\dom(\Phi)=\sigma^S(\dom(\Phi))$.
	\item $F\simu[S,T,DS]G$ if and only if $F\simu[S,T,0]\sigma^{D}G$.
	\item For any $s\in\co0S,t\in\co0T$, $\bulk[S]{\sigma^sF^t(\dom(\Phi))}$ is an SFT.
	\item Since the union $\rock\Phi$ is disjoint, there exists a shape $U\subfini\Z$ such that for any $x\in\rock\Phi$ 
	, $x\restr U$ determines the (unique) $s\in\co0S$ and $t\in\co0T$ such that $x\in\sigma^sF^t(\dom(\Phi))$.
\end{enumerate}\end{remark}

\begin{proof}
The first two claims follow immediately from the definitions.

For the third claim, notice that since $\Phi$ is continuous and $\sigma \Phi = \Phi \sigma^S$, this means that $\dom(F)$ is the domain of a PCA over $\bulk[S]{\A^{\Z}}$, so it is an SFT. Since $\sigma$ and $F$ are invertible maps and the property of being an SFT is preserved under invertible maps, we have that $\bulk[S]{\sigma^sF^t(\dom(\Phi))}$ is an SFT for all $s \in \co{0}{S}$ and $t \in \co{0}{t}$.

The last claim follows easily from the disjointness using a classical compactness argument.
\end{proof}

We can prove an analogue of Curtis-Lyndon-Hedlund theorem for decoding and encoding functions.
\begin{remark}\label{r:simhedlund}
	The decoding function $\Phi$ admits a {neighbourhood} $V\subfini\Z$ and a partial \dfn{bulked local rule} $\phi:\A^{V}\pto\B$ such that for all $x\in\az$, $\Phi(x)$ is defined if and only if $\phi(x\restr{iS+V})$ is defined for any $i\in\Z$, in which case the latter is equal to $\Phi(x)_i$.
	\\
	If the simulation is exact, the encoding function $\Phi^{-1}$ admits a {neighbourhood} $V\subfini\Z$ and a partial \dfn{unbulked local rule}, abusively noted $\phi^{-1}:\B^{V}\pto\A^S$ such that for all $y\in\B^\Z$, $\Phi^{-1}(y)$ is defined if and only if $\phi^{-1}(y\restr{i+V})$ is defined for any $i\in\Z$, in which case the latter is equal to $\Phi^{-1}(y)_{\co{iS}{(i+1)S}}$.
\end{remark}

Exact complete vertical (\ie $Q=0$) simulation is stronger than most notions found in the literature. In particular:
\begin{itemize}
	\item $\orb F$ simulates $\orb G$ in the sense of \cite{drs}.
    \item The $\Z^2$-action $(F,\sigma)$ over the limit set $\Omega_F$ (or the 2D SFT $\orb F$) is conjugate to a suspension of $\Omega_G$ in the sense of a homeomorphism \[\appl[\Psi]{\Omega_F}{\Omega_G\times\co0S\times\co0T}x{(\Phi F^{-t}\sigma^{-s}(x),s,t),~\text{where}~F^{-t}\sigma^{-s}(x) \in \dom(\Phi)}\]
    \item The $\Z^2$-action $(G,\sigma)$ over the limit set $\Omega_G$ (or the 2D SFT $\orb G$) is conjugate to the $\Z^2$-action $(F^T,\sigma^S)$ restricted to $\dom(\Phi) \cap \Omega_F$ (see \cite{gacs}); 
    \item $G$ is a sub-automaton of a rescaling of $F$, so that $F$ simulates $G$ according to the definition of simulation given in \cite{ollingersimulation}. While it is not necessary to formally define  this notion of simulation, we can intuitively say that rescaling corresponds to the role of parameters $S$ and $T$ in our definition, while the sub-automaton condition corresponds to the decoding function $\Phi$. We notice, however, that Ollinger's definition is more general than ours, since it does  not require  $\rock\Phi$ to be a disjoint union, while the simulated can also be rescaled.
\end{itemize}

But the definition above also involves the transient part: every locally valid horizontal strip of height $t+1$ for $G$ gives a locally valid horizontal strip of height $Tt+1$ for $F$.

The following facts about our notion of simulation follow directly from the definition:
\begin{itemize}
\item Each kind of simulation is a conjugacy invariant.

\item If $F$ simulates $G$\resp{exactly}, then it simulates\resp{exactly} any of its subsystems (but clearly, completeness is not preserved). If $F$ factors onto $G$, then $F \simu[1,1,0] G$ completely. 

\item $F\times G\simu[1,1,0]F$ completely if $G$ does not have empty domain. Also $F\times G\simu[1,1,0]F$ exactly if $G$ includes a singleton subsystem (recall that $G$ is a PCA, so that it does not necessarily have periodic points). The simulation is simultaneously exact and complete if $G$ is a singleton system.

\item The surjectivity of $\Phi$ implies that only systems with empty domain can be simulated by systems with empty domain.
\end{itemize}
We will mainly focus on \dfn{non-trivial} simulations: this means that $S,T>1$ and $G$ does not have empty domain.

\begin{remark}\label{r:locvalid}
If $F\simu[S,T,Q,\Phi]G$ non-trivially, then for all $j \in \cc{0}{T}$,
$F^{j}(\dom(G\Phi))=\sigma^{-Q}F^{-(T-j)}(\dom(G^{-1}\Phi) \neq \emptyset$.
\end{remark}

More specifically, a configuration \xpr{in the middle} of the work period, \ie when $j = \ipart{T/2}$ has at least $\ipart{T/2}$ forward and backward images, or, in other words, it belongs to $\Omega_F^{\ipart{T/2}}$.

The following lemma states that the limit sets correspond, in the case of complete simulation. It is a more mathematical and detailed version of the comment that we made earlier, that a valid strip horizontal of height $t+1$ in $G$ gives a valid horizontal strip of height $Tt+1$ in $F$ (provided that the strip is simulated).

\begin{lemma}\label{l:simlim}
Assume $F\simu[S,T,Q,\Phi]G$.
\begin{enumerate}
\item\label{i:simo} If $j\in\N\sqcup\{\infty\}$, then
\[\rock[j]\Phi\defeq\bigsqcup_{\begin{subarray}c0\le t<T\\0\le s<S\end{subarray}}\sigma^sF^t\Phi^{-1}(\Omega_G^j)
\] is a disjoint union and a subshift, included in $\Omega_F^{(j-1)T+1}$. In addition, $\rock[j]\Phi\supset\rock[j+1]\Phi$ and $\rock[\infty]\Phi=\bigcap_{j\in\N}\rock[j]\Phi$.
\item $\Omega_F\supset\rock[\infty]\Phi$.
\item\label{i:limlim} If the simulation is complete, then $\Omega_F=\rock[\infty]\Phi$.
\end{enumerate}\end{lemma}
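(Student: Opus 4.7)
The plan is to prove the three items in order, with the coordinate structure of the disjoint union $\rock\Phi$ (Remark~\ref{r:simsynchr}) doing most of the geometric work. Items~1 and 2 are essentially unpacking, while item~3 requires a coordinate-tracking induction that combines the hypothesis of complete simulation with the disjointness of $\rock\Phi$.

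For item~\ref{i:simo}, I would first observe that the union defining $\rock[j]\Phi$ is disjoint as a sub-family of the already disjoint union $\rock\Phi$, since $\Phi^{-1}(\Omega_G^j) \subseteq \dom(\Phi)$. Closedness of $\rock[j]\Phi$ follows from continuity of $\Phi$ together with closedness of $\Omega_G^j$, while $\sigma$-invariance is checked by verifying that shifting the index $s$ by one either stays within $\co{0}{S}$ or wraps around via $\sigma^S\Phi^{-1}(\Omega_G^j) = \Phi^{-1}(\sigma\Omega_G^j) = \Phi^{-1}(\Omega_G^j)$ (using $\sigma\Phi = \Phi\sigma^S$ and $\sigma$-invariance of $\Omega_G^j$). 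The inclusion in $\Omega_F^{(j-1)T+1}$ comes from the fact that for $y \in \Phi^{-1}(\Omega_G^j)$, the simulation equalities $G^{\pm j}\Phi = \Phi\sigma^{\pm jQ}F^{\pm jT}$ force $y$ to admit $jT$ forward and backward $F$-iterates, so that $x = \sigma^s F^t(y)$ has at least $jT - t \ge (j-1)T+1$ forward and $jT + t \ge (j-1)T+1$ backward iterates under $F$. Monotonicity is immediate from $\Omega_G^{j+1} \subseteq \Omega_G^j$, and $\rock[\infty]\Phi = \bigcap_j \rock[j]\Phi$ is obtained by interchanging the finite disjoint union with the countable intersection, which is legitimate because each $\sigma^s F^t$ is injective and the pieces are pairwise disjoint. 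Item~2 then follows by intersecting: $\rock[\infty]\Phi = \bigcap_j \rock[j]\Phi \subseteq \bigcap_j \Omega_F^{(j-1)T+1} = \Omega_F$.

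The main work is in item~\ref{i:limlim}, where only the inclusion $\Omega_F \subseteq \rock[\infty]\Phi$ has content. Given $x \in \Omega_F$, completeness provides unique coordinates $(s,t) \in \co{0}{S}\times\co{0}{T}$ with $x \in \sigma^s F^t(\dom(\Phi))$, and one sets $y = \sigma^{-s}F^{-t}(x) \in \dom(\Phi) \cap \Omega_F$. It then suffices to show $\Phi(y) \in \Omega_G$. The plan is to track the coordinates $(s_k, t_k)$ of $F^k(y) \in \Omega_F \subseteq \rock\Phi$ and to prove by induction on $k \in \N$ that $t_{k+1} = (t_k + 1) \bmod T$, with $s_{k+1} = s_k$ if $t_k + 1 < T$ and $s_{k+1} \equiv s_k - Q \pmod{S}$ otherwise. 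In the wrap-around case the representative $z \in \dom(\Phi)$ defined by $F^k(y) = \sigma^{s_k} F^{t_k}(z)$ must satisfy $\Phi(z) \in \dom(G)$; this is forced because $F^{k+1}(y) = \sigma^{s_k} F^T(z)$ already lies in $\rock\Phi$, and matching its coordinate decomposition against the consequence of $G\Phi = \Phi\sigma^Q F^T$ rules out any second coordinate $t' > 0$ by uniqueness. Once this is in place, at $k = jT$ one has $\sigma^{jQ}F^{jT}(y) \in \dom(\Phi)$ and hence $G^j\Phi(y) = \Phi\sigma^{jQ}F^{jT}(y)$ is defined; the same argument carried out backward through $G^{-1}\Phi = \Phi\sigma^{-Q}F^{-T}$ handles negative $j$. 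Therefore $\Phi(y) \in \Omega_G$ and $x \in \sigma^s F^t\Phi^{-1}(\Omega_G) \subseteq \rock[\infty]\Phi$.

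The subtle point is precisely this coordinate-tracking induction. The simulation identities only express equalities of partial maps with identical domains, so they do not a priori propagate the condition $\Phi(z) \in \dom(G)$ forward in time. The resolution is to close the loop by playing the disjointness of $\rock\Phi$ (Remark~\ref{r:simsynchr}) together with completeness of the simulation: since $F^{k+1}(y) \in \Omega_F \subseteq \rock\Phi$, its coordinate decomposition is both unique and consistent with what the simulation equation predicts, leaving the wrap-around case no option but the predicted one.
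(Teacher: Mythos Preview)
Your treatment of items~\ref{i:simo} and~2 is correct and essentially identical to the paper's.

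For item~\ref{i:limlim}, your overall structure agrees with the paper's (take $y=\sigma^{-s}F^{-t}(x)\in\dom(\Phi)\cap\Omega_F$, then track the $(s_k,t_k)$-coordinates of $F^k(y)$ by induction). However, your resolution of the wrap-around step has a genuine gap. You correctly deduce $t'=0$ from disjointness, but you then assert $s_{k+1}\equiv s_k-Q\pmod S$ (equivalently $\Phi(z)\in\dom(G)$) by ``matching against the consequence of $G\Phi=\Phi\sigma^QF^T$''. That equation alone cannot do this: its domain equality $\dom(G\Phi)=\dom(\Phi\sigma^QF^T)$ only says that \emph{if} $z\in\dom(G\Phi)$ \emph{then} $F^T(z)\in\sigma^{-Q}\dom(\Phi)$, and conversely; it does not rule out the alternative $z\notin\dom(G\Phi)$, which is perfectly compatible with $F^{k+1}(y)\in\sigma^{s'}\dom(\Phi)$ for some $s'\not\equiv s_k-Q$. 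Disjointness and completeness give you $t'=0$ but say nothing about which $s'$ occurs.

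The missing ingredient is the \emph{backward} equation $G^{-1}\Phi=\Phi\sigma^{-Q}F^{-T}$, which you only invoke later for negative $j$. Use it here: given $z\in\dom(\Phi)$ with $F^T(z)$ defined, set $w':=\sigma^QF^T(z)$; then $\sigma^{-Q}F^{-T}(w')=z\in\dom(\Phi)$, so $w'\in\dom(\Phi\sigma^{-Q}F^{-T})=\dom(G^{-1}\Phi)\subseteq\dom(\Phi)$. Hence $\sigma^QF^T(z)\in\dom(\Phi)$, i.e.\ $z\in\dom(\Phi\sigma^QF^T)=\dom(G\Phi)$, which gives $\Phi(z)\in\dom(G)$ and simultaneously pins down $s'\equiv s_k-Q\pmod S$. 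With this in hand, your induction goes through and the rest of your argument is fine. (The paper compresses this into ``disjointness and a direct induction'', which is admittedly sketchy; your more explicit attempt exposes exactly where both simulation equalities are needed.)
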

\begin{proof}~\begin{enumerate}
\item It is clear that $\rock[j]\Phi$ is a disjoint union and a subshift, each subset in the union being (syntactically) included in one in the expression of $\rock\Phi$.
Assume that $F\simu[S,T,Q,\Phi]G$ for some $Q\in\Z$.
Now,
\begin{eqnarray*}
\Phi^{-1}(\Omega_G^j)&=&
\dom(G^j\Phi)\cap\dom(G^{-j}\Phi)
\\&=&
\dom(\Phi\sigma^{jQ}F^{jT})\cap\dom(\Phi\sigma^{-jQ}F^{-jT})
\\&\subset&
\dom(F^{jT})\cap\dom(F^{-jT})=\Omega_F^{jT}~.
\end{eqnarray*}
Hence, for any $s\in\co0S$ and any $t\in\co0T$, $\sigma^sF^t\Phi^{-1}(\Omega_G^j)\subset\Omega_F^{jT-T+1}$. 
The other claims follow from the definitions.
\item It is obvious from the previous point that $\bigcap_{j\in\N}\Omega_F^{jT}\supset\bigcap_{j\in\N}\rock[j]\Phi=\rock[\infty]\Phi$.
\item Conversely, assume $x\in\Omega_F$, so that clearly $\forall k\in\Z,F^k(x)\in\Omega_F$. By completeness, there exist $y\in\dom(\Phi)$ and $s\in\co0S,t\in\co0T$ such that $\sigma^sF^t(y)=x$. Disjointness and a direct induction give that for all $k\in\Z$, $F^k(y)\in F^{k\bmod T}\sigma^{Q\lfloor k / T \rfloor}(\dom(\Phi))$. In particular, for all $j\in\Z$, $G^j\Phi(y)=\Phi F^{jT}\sigma^{jQ}(y)$ is defined. This gives that $\Phi(y)\in\Omega_G$, so $x \in \sigma^{-s}F^{-t}\Phi^{-1}(\Omega_G)= \sigma^{S-s}F^{T-t}\Phi^{-1}(\Omega_G)$.
\end{enumerate}\end{proof}

The following remark links the periodic points of the simulating and simulated systems. It is essential for proving aperiodicity of the subshifts that we construct. The same result appears in \cite{drs,twobytwo}, even though the argument essentially goes back to the kite-and-dart tile set of Penrose. We give a slightly more general version of the usual result also takes into consideration the shift by $Q$.

\begin{remark}\label{r:penrose}
 If $F\simu[S,T,Q]G$ completely, then $\orb F$ admits a configuration with period $(s-lQ,t)$ if and only if $\orb G$ admits a configuration with period $(k,l)$, where $s=kS$ and $t=lT$.
\end{remark}

We will only use the case $Q=0$, for which it is intuitively clear to see that it holds true. When $q \neq 0$, one has to have in mind that for every $T$ time steps of a configuration of $F$, the simulated configuration is shifted $Q$ steps to the left.

\section{Nested simulations}
In the sequel, we will be most interested in infinite sequences of simulations of the form: $F_0 \simu F_1 \simu F_2 \simu \ldots$. This looks like a formidable task, since every RPCA of the sequence must contain the information about an infinite number of configurations and update this information within a determined time, but, as the results of this section will imply, an infinite sequence of simulations gives RPCA with very useful properties. The construction of these sequences forms the basic part of our constructions and will be done in the following chapters.

If $\vec{S}=(S_i)_{ 0 \le i \le n-1}$ is a sequence of numbers, then $\vec{1S}$ is the sequence whose first element is equal to $1$ with the elements of $\vec{S}$ shifted by one after it. 
If $\vec{S}=(S_i)_{ 0 \le i \le n-1}$ and $\vec{T}=(T_i)_{ 0 \le i \le n-1}$ are finite sequences of non-zero numbers, then $\vec{1S/T}$ is the sequence $(S_{i-1}/T_i)_{0 \le i \le n-1}$, where $S_{-1}\defeq 1$. 
A short calculation shows that 
\begin{equation*}
\anib[\vec{1S/T}]{\vec{Q}}\prod T_i = \sum_{0 \le i \le n-1} \left(Q_i\prod_{0 <j<i} S_j \prod_{i < j \le n-1} T_j\right).
\end{equation*}

\begin{lemma}\label{l:simtrans}
	Simulation\resp{exact, complete, exact and complete} is a preorder.
\\	More precisely, if $F_0\simu[S_0,T_0,Q_0,\Phi_0]F_1\simu[S_1,T_1,Q_1,\Phi_1]\ldots\simu[S_{n-1},T_{n-1},Q_{n-1},\Phi_{n-1}]F_n$\resp{exactly, completely} for some $n\in\N$, then $F_0\simu[S,T,Q,\Phi]F_n$\resp{exactly, completely}, where 
\begin{equation*}
(S,T,Q,\Phi) = (\prod S_i,\prod T_i,{\anib[\vec{1S/T}]{\vec{Q}}\prod T_i},\Phi_{n-1}\cdots\Phi_0).
\end{equation*}
\end{lemma}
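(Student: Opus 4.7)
The overall plan is to verify reflexivity trivially (take $S=T=1$, $Q=0$, $\Phi=\id$), then establish transitivity. For transitivity, I would prove only the $n=2$ case carefully, and obtain the general statement by an induction on $n$; the closed-form expression $Q = \anib[\vec{1S/T}]{\vec{Q}}\prod T_i$ will be checked by unwinding the two-step recurrence $Q_\text{new} = T_1 Q_\text{old} + S_\text{old} Q_1$ against the definition of $\anib[\cdot]{\cdot}$ (the computation in Section~3 gives the base case). The adjectives ``exact'' and ``complete'' will be handled separately at the end by checking they are stable under composition.

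For the $n=2$ case, set $\Phi=\Phi_1\Phi_0$, $S=S_0S_1$, $T=T_0T_1$, $Q=T_1Q_0+S_0Q_1$. I would verify the three algebraic identities in order. The shift commutation $\sigma\Phi=\Phi\sigma^S$ is a one-line manipulation: $\sigma\Phi_1\Phi_0=\Phi_1\sigma^{S_1}\Phi_0=\Phi_1\Phi_0\sigma^{S_0S_1}$, where the second equality uses the iterated form $\sigma^k\Phi_0=\Phi_0\sigma^{kS_0}$ (immediate induction). For $F_2\Phi=\Phi\sigma^Q F_0^T$, I would first prove by induction on $k$ the iterated intertwining $F_1^k\Phi_0=\Phi_0\sigma^{kQ_0}F_0^{kT_0}$, using crucially that $F_0$ and $\sigma$ commute (so that factors $\sigma^{Q_0}$ and $F_0^{T_0}$ produced at each step can be collected together). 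Then
\[F_2\Phi_1\Phi_0=\Phi_1\sigma^{Q_1}F_1^{T_1}\Phi_0=\Phi_1\sigma^{Q_1}\Phi_0\sigma^{T_1Q_0}F_0^{T_0T_1}=\Phi_1\Phi_0\sigma^{S_0Q_1+T_1Q_0}F_0^{T_0T_1},\]
using $\sigma^{Q_1}\Phi_0=\Phi_0\sigma^{S_0Q_1}$ at the last step. The inverse identity $F_2^{-1}\Phi=\Phi\sigma^{-Q}F_0^{-T}$ is the same calculation with $F_i^{-1}$'s; the remark after the simulation definition (that forward and inverse identities entail matching domains) ensures compositional compatibility.

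The subtlest point, and the one I expect to occupy most of the proof, is showing that the simulating subshift $\rock\Phi=\bigsqcup_{0\le s<S,\,0\le t<T}\sigma^sF_0^t(\dom(\Phi))$ is indeed a \emph{disjoint} union. Given $\sigma^sF_0^t(x)=\sigma^{s'}F_0^{t'}(x')$ with $x,x'\in\dom(\Phi)$, I would decompose $s=s_0+S_0s_1$, $t=t_0+T_0t_1$ (same for $s',t'$) and exploit that $F_0^{T_0}$ maps $\dom(\Phi_0)$ onto $\sigma^{-Q_0}(\dom(\Phi_0))$ (a consequence of $F_1\Phi_0=\Phi_0\sigma^{Q_0}F_0^{T_0}$ and surjectivity of $\Phi_0$ on its image). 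Using $\sigma^{S_0}$-invariance of $\dom(\Phi_0)$, the configuration $\sigma^sF_0^t(x)$ sits in a precise coset of $\dom(\Phi_0)$ inside $\rock{\Phi_0}$, determined modulo $(S_0,T_0)$ by $(s_0+\text{shift induced by }t_1Q_0,t_0)$. Disjointness of $\rock{\Phi_0}$ then pins down $t_0$ and $s_0$; decoding through $\Phi_0$ and applying the iterated intertwining, the remaining configuration lives in $\rock{\Phi_1}$ at position determined by $(s_1,t_1)$, so disjointness of $\rock{\Phi_1}$ pins these down as well, giving $x=x'$. The delicate bookkeeping with the shift $Q_0$ when it is not a multiple of $S_0$ is the hard part; everything else is formal.

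Finally, preservation of the adjectives is straightforward: if $\Phi_0,\Phi_1$ are bijective, so is $\Phi_1\Phi_0$ (exactness); for completeness, I would apply Lemma~\ref{l:simlim}(\ref{i:limlim}) to get $\Omega_{F_0}=\rock[\infty]{\Phi_0}\subseteq\bigsqcup_{s,t}\sigma^sF_0^t\Phi_0^{-1}(\Omega_{F_1})$ and then $\Omega_{F_1}\subseteq\rock{\Phi_1}$ by completeness of the second simulation, so $\Phi_0^{-1}(\Omega_{F_1})\subseteq\bigsqcup_{s_1,t_1}\Phi_0^{-1}\sigma^{s_1}F_1^{t_1}(\dom(\Phi_1))$, which after pushing shifts and $F_1$'s through via the iterated intertwining lies inside $\rock{\Phi}$. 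The inductive step then assembles the general formula from the $n=2$ case, the claimed value of $Q$ being recovered by comparing the recurrence with the expansion of $\anib[\vec{1S/T}]{\vec Q}\prod T_i$.
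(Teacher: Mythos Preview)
Your proposal is correct and follows essentially the same architecture as the paper: reflexivity is trivial, transitivity is established for $n=2$ and then extended by induction, exactness is preserved because bijections compose, and completeness is handled via Lemma~\ref{l:simlim}.

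The only substantive difference is in how disjointness of $\rock{\Phi_1\Phi_0}$ is argued. You propose an element-by-element argument: pick two equal representatives, decompose their indices in base $(S_0,T_0)$, and use disjointness of $\rock{\Phi_0}$ and then $\rock{\Phi_1}$ to pin them down. The paper instead gives a short set-theoretic chain: starting from the known disjoint union $\rock{\Phi_0}$, it observes that $\sigma^sF_0^t\Phi_0^{-1}(\sigma^{s'}F_1^{t'}(\dom(\Phi_1)))=\sigma^{s+s'S_0+t'Q_0}F_0^{t+t'T_0}(\dom(\Phi_1\Phi_0))$, so the doubly-indexed disjoint union is nothing but $\rock{\Phi_1\Phi_0}$ after a re-indexing (using $\sigma^{S_0S_1}$-invariance of $\dom(\Phi_1\Phi_0)$ to absorb the $t'Q_0$ shift). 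This is slicker and avoids the ``delicate bookkeeping with the shift $Q_0$'' you anticipate; your element-wise argument is doing the same re-indexing one configuration at a time. Either way works, but the paper's version is shorter and makes the structure of the argument (disjointness is inherited from the nesting inside $\rock{\Phi_0}$) more transparent.
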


The products range from $0$ to $n-1$. If there were no shifts in the simulation (\ie if $Q_i=0$ for all $i$) the above statement would be more or less trivial. Even in the presence of shifts, the proof is essentially a simple verification.

\begin{proof}~\begin{itemize}
\item	Clearly $F\simu[1,1,0,\id]F$.
\item	Now suppose
    $F\simu[S,T,Q,\Phi]G\simu[S',T',Q',\Phi']H$.
    Then it is clear that $\sigma\Phi'\Phi=\Phi'\sigma^{S'}\Phi=\Phi'\Phi\sigma^{S'S}$ and $H\Phi'\Phi=\Phi'\sigma^{Q'}G^{T'}\Phi=\Phi'\Phi\sigma^{QT'+SQ'}F^{T'T}$.
    Moreover:
    \begin{eqnarray*}
    	\bigsqcup_{\begin{subarray}c0\le t<T\\0\le s<S\end{subarray}}\sigma^sF^t(\dom(\Phi))
    	&\supset&
    	\bigsqcup_{\begin{subarray}c0\le t<T\\0\le s<S\end{subarray}}\sigma^sF^t\Phi^{-1}(\bigsqcup_{\begin{subarray}c0\le t'<T'\\0\le s'<S'\end{subarray}}\sigma^{s'}G^{t'}(\dom(\Phi')))\\
    	&=&\bigsqcup_{\begin{subarray}c0\le t<T\\0\le s<S\end{subarray}}\bigsqcup_{\begin{subarray}c0\le t'<T'\\0\le s'<S'\end{subarray}}\sigma^sF^t\Phi^{-1}(\sigma^{s'}G^{t'}(\dom(\Phi')))\\
		&=&\bigsqcup_{\begin{subarray}c0\le t<T\\0\le s<S\end{subarray}}\bigsqcup_{\begin{subarray}c0\le t'<T'\\0\le s'<S'\end{subarray}}F^{t+t'T}\sigma^{s+s'S+t'Q}(\dom(\Phi'\Phi))\\
		&=&\bigsqcup_{\begin{subarray}c0\le t<TT'\\0\le s<SS'\end{subarray}}\sigma^sF^t(\dom(\Phi'\Phi))\eqdef\rock{\Phi'\Phi}~.
\end{eqnarray*}
This proves that $F\simu[SS',TT',QT'+SQ',\Phi'\Phi]H$.
\item If $\Phi$ and $\Phi'$ are bijections, then $\Phi'\Phi$ is also a bijection.
\item If both simulations are complete, then by Point \ref{i:limlim} of Lemma \ref{l:simlim},
\begin{eqnarray*}
\Omega_F&=&\bigsqcup_{\begin{subarray}c0\le t<T\\0\le s<S\end{subarray}}\sigma^sF^t\Phi^{-1}(\Omega_G)\\
&\subset&\bigsqcup_{\begin{subarray}c0\le t<T\\0\le s<S\end{subarray}}\sigma^sF^t\Phi^{-1}(\rock{\Phi'})\\
&=&\rock{\Phi'\Phi}.
\end{eqnarray*}
\item A direct induction gives the expected results.

\end{itemize}\end{proof}

Similarly to simulations, which involve a decomposition of the system in terms of how much is shifted the grid on which to read the encoding, a sequence of simulations involves a nested decomposition, which gives a full skeleton, inside each configuration, as expressed by the following lemma. Here, and in the following, we use gothic letters to denote sequences, but the corresponding normal letters to denote the elements of the sequences. Also, if $\seq S$ is an infinite sequence and $n \in \N$, then $\seq{S}_{\co{0}{n}}$ is the finite prefix of length $n$ of $\seq{S}$. Finally, if $(\Phi_i)_{i \in \N}$ is a sequence of decoding functions, then $\Phi_{\co{0}{n}}$ will be the decoding function $\Phi_{n-1} \cdots \Phi_0$. 

\begin{lemma}\label{l:nonvide}~\begin{enumerate}
\item\label{i:infsim} If $F_0\simu[S_0,T_0,\Phi_0]F_1\simu[S_1,T_1,\Phi_1]\ldots\simu[S_{n-1},T_{n-1},\Phi_{n-1}]F_n\simu[S_n,T_n,\Phi_n]\ldots$ and $j\in\N\sqcup\{\infty\}$, then
\begin{eqnarray*}
\rock[j]{\seq\Phi} &\defeq &\bigcap_{n\in\N}\rock[j]{\Phi_{\co0n}}\\ & = &\bigsqcup_{\begin{subarray}c\seq t\in\prod_{i\in\N}\co0{T_i}\\\seq s\in\prod_{i\in\N}\co0{S_i}\end{subarray}}\bigcap_{n\in\N}\sigma^{\anib[\seq S]{\seq{s}_{\co0n}}}F_0^{\anib[\seq T]{\seq{t}_{\co0n}}}\Phi_0^{-1}\cdots\Phi_{n-1}^{-1}(\Omega_{F_n}^j)
\end{eqnarray*}
is a disjoint union and a subshift.
\\
In addition, $\rock[j]{\seq\Phi}\supset\rock[j+1]{\seq\Phi}$ and $\rock[\infty]{\seq\Phi}=\bigcap_{j\in\N}\rock[j]{\seq\Phi}$.
\item\label{i:nonempty} If, besides, all simulations are nontrivial, then $\rock[2]{\seq\Phi}=\rock[\infty]{\seq\Phi}\subset\Omega_{F_0}$ is uncountable.

\item If the simulations (in the hypothesis of Point \ref{i:infsim}) are complete, then $\rock[\infty]{\seq\Phi}=\Omega_{F_0}$.

\item\label{i:skel} If the sequence $(\Phi_n)_{n \in \N}$ is computable, then the map $x \in \rock[\infty]\Phi \to (s_i,t_i)_{i \in \N}$, where $(s_i,t_i)_{i \in \N}$ is the (unique) sequence such that $x\in\bigcap_n\sigma^{\anib[\seq S]{\seq{s}_{\co0n}}}F_0^{\anib[\seq T]{\seq{t}_{\co0n}}}\dom(\Phi_{\co{0}{n}})$, is computable.
\end{enumerate}\end{lemma}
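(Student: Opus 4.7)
The plan is to handle the four claims in order, since claim 1 sets up the indexing used throughout. For claim 1, I would first apply Lemma~\ref{l:simtrans} to collapse each finite prefix $F_0\simu\ldots\simu F_n$ into a single simulation with decoder $\Phi_{\co0n}$ and parameters $S_{\co0n}\defeq \prod_{i<n}S_i$, $T_{\co0n}\defeq\prod_{i<n}T_i$. Lemma~\ref{l:simlim} then expresses $\rock[j]{\Phi_{\co0n}}$ as a disjoint union over $(s,t)\in\co{0}{S_{\co0n}}\times\co{0}{T_{\co0n}}$; rewriting these indices in mixed-radix form $s=\anib[\seq S]{\seq s_{\co0n}}$ and $t=\anib[\seq T]{\seq t_{\co0n}}$ converts the disjoint union into one indexed by the length-$n$ prefixes of sequences in $\prod_i\co{0}{S_i}\times\prod_i\co{0}{T_i}$. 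The disjoint-union structure for the next level refines the previous one exactly on the new digit, so intersecting over $n$ forces compatible digits and yields the disjoint union over infinite sequences stated in the lemma. The subshift property (closed and shift-invariant) and the monotonicities in $j$ are inherited termwise from Lemma~\ref{l:simlim}.

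For claim 3 (easier), a direct induction using the composition case of Lemma~\ref{l:simtrans} shows each finite composition is complete, so Lemma~\ref{l:simlim}(\ref{i:limlim}) gives $\rock[\infty]{\Phi_{\co0n}}=\Omega_{F_0}$ for every $n$, and intersecting over $n$ gives $\rock[\infty]{\seq\Phi}=\Omega_{F_0}$. For claim 2, I would prove $\rock[2]{\seq\Phi}=\rock[\infty]{\seq\Phi}$ by the key inclusion $\rock[j]{\Phi_{\co0{n+1}}}\subset\rock[(j-1)T_n+1]{\Phi_{\co0n}}$, which follows from $\Phi_n^{-1}(\Omega_{F_{n+1}}^j)\subset\Omega_{F_n}^{(j-1)T_n+1}$ (Lemma~\ref{l:simlim}(\ref{i:simo})); iterating gives $\rock[2]{\Phi_{\co0{n+k}}}\subset\rock[j^{(k)}_n]{\Phi_{\co0n}}$ with $j^{(k)}_n\to\infty$ whenever the $T_i$ are nontrivial, so $\rock[2]{\seq\Phi}\subset\bigcap_{n,k}\rock[j^{(k)}_n]{\Phi_{\co0n}}=\rock[\infty]{\seq\Phi}$; the reverse inclusion is immediate. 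Nonemptiness and uncountability: Remark~\ref{r:locvalid} gives $\Omega_{F_n}^2\neq\emptyset$ for nontrivial simulations, so for each infinite $(\seq s,\seq t)$ the corresponding term in the disjoint union is a decreasing intersection of nonempty compact subsets of $\A_0^\Z$ and is therefore nonempty by the finite intersection property; since there are uncountably many such $(\seq s,\seq t)$ and distinct sequences give disjoint terms, we obtain uncountably many distinct points.

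For claim 4, Remark~\ref{r:simsynchr}(4) provides, for each $n$, a finite shape $U_n$ such that $x\restr{U_n}$ determines the unique pair $(s^{(n)},t^{(n)})$ with $x\in\sigma^{s^{(n)}}F_0^{t^{(n)}}\dom(\Phi_{\co0n})$; computability of $(\Phi_n)_n$ makes $U_n$ and the induced recognition procedure uniformly computable in $n$, and extracting $(s_i,t_i)_{i<n}$ from $(s^{(n)},t^{(n)})$ is a straightforward mixed-radix decomposition using the known $S_i,T_i$. I expect the main obstacle to be the bookkeeping in claim 2: one has to keep the index sets for $\rock[\cdot]{\Phi_{\co0n}}$ across levels in sync, verify that the iterated range bound interacts correctly with the mixed-radix rewriting, and confirm that passing from each finite-level disjoint union to the infinite-level one preserves disjointness and nonemptiness of every component. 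Once this is set up carefully, the remaining claims reduce to clean applications of the results already established in the chapter.
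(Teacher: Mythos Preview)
Your proposal is correct and follows essentially the same approach as the paper: Lemma~\ref{l:simtrans} plus Lemma~\ref{l:simlim} for the structural claims, the iterated range bound $\rock[j]{\Phi_{\co0{n+1}}}\subset\rock[(j-1)T_n+1]{\Phi_{\co0n}}$ for the equality $\rock[2]{\seq\Phi}=\rock[\infty]{\seq\Phi}$, compactness for nonemptiness of each component, and Remark~\ref{r:simsynchr} for computability of the skeleton map. One small point to tighten: Remark~\ref{r:locvalid} applied to a single nontrivial simulation $F_n\simu[S_n,T_n]F_{n+1}$ only yields $\Omega_{F_n}^{\lfloor T_n/2\rfloor}\ne\emptyset$, which need not reach $\Omega_{F_n}^2$ when $T_n\in\{2,3\}$; the paper handles this by composing two levels, so that $F_n\simu[S_nS_{n+1},T_nT_{n+1}]F_{n+2}$ with $T_nT_{n+1}\ge4$ guarantees $\Omega_{F_n}^2\ne\emptyset$.
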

Point \ref{i:nonempty} implies nonemptiness of $\Omega_{F_0}$ and $\orb{F_0}$, and of any $\Omega_{F_n}$, since all those statements can be applied to the sequence starting from $n$.
Point \ref{i:skel} states that we can always recover the skeleton from a valid configuration. In particular the skeleton map is continuous.
\begin{proof}~\begin{enumerate}
\item By Lemma \ref{l:simtrans} and compactness, it is clear that $\rock[j]{\seq\Phi}$ is a subshift. The equality is rather easily checkable. We can see that the union is disjoint: if $(\seq s,\seq t)\ne(\seq{s'},\seq{t'})$, say $(s_m,t_m\ne s'_m,t'_m)$, then $\bigcap_{n\in\N}\sigma^{\anib[\seq S]{\seq{s}_{\co0n}}}F_0^{\anib[\seq T]{\seq{t}_{\co0n}}}\Phi_0^{-1}\cdots\Phi_{n-1}^{-1}(\Omega_{F_n}^j)$
is included in $\sigma^{\anib[\seq S]{\seq{s}_{\co0m}}}F_0^{\anib[\seq T]{\seq{t}_{\co0m}}}\Phi_0^{-1}\cdots\Phi_{m-1}^{-1}(\Omega_{F_m}^j)$, which is, according to Lemma~\ref{l:simtrans}, disjoint from
$\sigma^{\anib[\seq S]{\seq{s}'_{\co0m}}}F_0^{\anib[\seq T]{\seq{t}'_{\co0m}}}\Phi_0^{-1}\cdots\Phi_{m-1}^{-1}(\Omega_{F_m}^j)$ which includes \\$\bigcap_{n\in\N}\sigma^{\anib[\seq S]{\seq{s}'_{\co0n}}}F_0^{\anib[\seq T]{\seq{t}'_{\co0n}}}\Phi_0^{-1}\cdots\Phi_{n-1}^{-1}(\Omega_{F_n}^j)$
~.

\item
Since for any $n\in\N$ and $m\ge n$, $F_n\simu[S_n\cdots S_{m-1},T_n\cdots T_{m-1},\Phi_{m-1}\cdots\Phi_n]F_{m}$, then Point \ref{i:simo} of Lemma \ref{l:simlim} says that $\Omega_{F_n^{T_n\cdots T_{m-1}+1}}\supset\rock[2]{\Phi_{\co nm}}~.$
If the simulations are nontrivial, then $T_n\cdots T_{m-1}\to\infty$ when $n$ is fixed and $m \to \infty$, and
$\Omega_{F_n}\supset\bigcap_{m\in\N}\Omega_{F_n^{T_0\cdots T_{m-1}+1}}\supset\bigcap_{m\in\N}\rock[2]{\Phi_{\co nm}}=\rock[2]{\Phi_{\co n{\infty}}}~.$
Injecting this inclusion in the definition of $\rock[\infty]{\Phi_{\co0n}}$ gives that
$\rock[\infty]{\seq\Phi}\supset\bigcap_{m\ge 0}\rock[2]{\Phi_{\co0m}}\supset\rock[2]{\seq\Phi}.$

The converse is trivially true, and Point \ref{i:limlim} of Lemma \ref{l:simlim} already tells us that $\rock[\infty]{\seq\Phi}\subset\Omega_{F_0}$.
\\
Moreover, since $F_n\simu[S_nS_{n+1},T_nT_{n+1}]F_{n+2}$ with $T_nT_{n+1}\ge 4$, then Remark \ref{r:locvalid} gives that $\Omega_{F_n}^2$ is non-empty.
Therefore, each of the uncountably many subsets in the disjoint union expressing $\rock[2]{\seq\Phi}$ is a closed non-empty intersection.
\item If $n\in\N$ is such that $F_0\simu[S_0\cdots S_{n-1},T_0\cdots T_{n-1},\Phi_{n-1}\cdots\Phi_0]F_n$ completely, then
by Point \ref{i:limlim} of Lemma \ref{l:simlim}, $\Omega_{F_0}=\rock[\infty]{\Phi_{\co0n}}$.
\item This follows from repeated application of Remark~5.4 and the fact that $\Phi_{\co{0}{n}}$ is a decoding function for all $n \in \N$.
\end{enumerate}\end{proof}

The following extends Lemma \ref{l:nonvide} (which can be recovered by $\B_i$ being singletons). In this case, every RPCA simulates a disjoint union of RPCA, each one of which simulates a disjoint union of RPCA and so on. In this way, we obtain an \xpr{infinite tree} of simulations. Along any branch of this tree, Lemma~\ref{l:nonvide} is true, but, more importantly, something similar is true even when we take all the (possibly uncountable) branches of this tree together.

\begin{lemma}\label{l:nonvides}~\begin{enumerate}
\item\label{i:uncsim} Let $(\B_n)_{n\in\N}$ be a sequence of finite alphabets, such that for any word $u\in\prod_{i<n}\B_i$ of length $n\in\N$, there exist $S_u,T_u, Q_u \in \N$, a decoding function $\Phi_u$ and a RPCA $F_u$ such that 
$F_u\simu[S_u,T_u,\Phi_u]\bigsqcup_{b\in\B_n}F_{ub}$.


Let $\rocks[j]z{\seq\Phi}\defeq\bigcap_{n\in \N}\rock[j]{\Phi_{z_{\co0n}}}$ for all $j\in\N\sqcup\{\infty\}, z\in\prod_{i\in\N}\B_i$.

Then, for any $j\in\N \sqcup \{\infty\}$ and any closed $Y\subset \prod_{i\in\N}\B_i$,
$\rocks[j]Y{\seq\Phi}\defeq\bigsqcup_{z\in Y}\rocks[j]z{\seq\Phi}$
is a disjoint union and a subshift, and $\rocks[2]Y{\seq\Phi}=\rocks[\infty]Y{\seq\Phi}\subset \Omega_{F_\motvide}$.
\item Besides, the set $Z\defeq\set z{\prod_{i\in\N}\B_i}{\rocks[\infty]z{\seq\Phi}\ne\emptyset}$ corresponding to nested nontrivial, non-empty simulations is closed. 
If the simulations are complete, then $\rocks[2]Z{\seq\Phi}=\rocks[\infty]Z{\seq\Phi}=\Omega_{F_\motvide}$.
\end{enumerate}\end{lemma}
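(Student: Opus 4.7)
The plan is to generalize the proof of Lemma~\ref{l:nonvide} to the uncountable indexing by branches, using the disjoint-union hypothesis at each level in order to make the union over $z\in\prod_{i\in\N}\B_i$ set-theoretically disjoint, and the compactness of $\prod_{i\in\N}\B_i$ in order to keep $\rocks[j]Y{\seq\Phi}$ closed. For disjointness, suppose $z,z'\in Y$ first differ at position $n$ with common prefix $u=z_{\co{0}{n}}$. Any point $x\in\rock[j]{\Phi_{z_{\co{0}{n+1}}}}$ sits at a unique position of the $n$-fold macrotile determined by the composed decoding shared by the two branches, and the next-level decoding $\Phi_u$ maps this position into exactly one summand $F_{ub}$ of the disjoint-alphabet union $\bigsqcup_{b\in\B_n}F_{ub}$; the summands $F_{uz_n}$ and $F_{uz'_n}$ being disjoint forces $\rock[j]{\Phi_{z_{\co{0}{n+1}}}}\cap\rock[j]{\Phi_{z'_{\co{0}{n+1}}}}=\emptyset$, and a fortiori $\rocks[j]z{\seq\Phi}\cap\rocks[j]{z'}{\seq\Phi}=\emptyset$.

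For the subshift property of $\rocks[j]Y{\seq\Phi}$, shift-invariance is immediate, and I would prove closedness by an extraction argument. Take $x_k\to x$ with $x_k\in\rocks[j]{z_k}{\seq\Phi}$ and $z_k\in Y$; by compactness of $\prod_{i\in\N}\B_i$ and closedness of $Y$, up to a subsequence $z_k\to z\in Y$. For every fixed $m$, the prefixes $z_{k,\co{0}{m}}$ eventually equal $z_{\co{0}{m}}$, hence $x_k\in\rock[j]{\Phi_{z_{\co{0}{m}}}}$ for large $k$, and since this set is closed by Lemma~\ref{l:simlim}, the limit $x$ lies in it; intersecting over $m$ gives $x\in\rocks[j]z{\seq\Phi}\subset\rocks[j]Y{\seq\Phi}$. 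The identity $\rocks[2]Y{\seq\Phi}=\rocks[\infty]Y{\seq\Phi}\subset\Omega_{F_\motvide}$ then reduces to a branch-by-branch statement: Point~\ref{i:nonempty} of Lemma~\ref{l:nonvide} applied to the (non-trivial) nested simulation $F_\motvide\simu F_{z_0}\simu F_{z_{\co{0}{2}}}\simu\ldots$ obtained by restricting the level-$n$ disjoint-union decoding to the $F_{z_{\co{0}{n+1}}}$-summand for each $n$, followed by taking the disjoint union over $z\in Y$.

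For Part~2, closedness of $Z$ is handled by the same extraction argument: if $z_k\to z$ in $Z$ with witnesses $x_k\in\rocks[\infty]{z_k}{\seq\Phi}$, a convergent subsequence of $(x_k)$ tends to a point of $\rocks[\infty]z{\seq\Phi}$, so $z\in Z$. Under the completeness hypothesis, the inclusion $\rocks[\infty]Z{\seq\Phi}\subset\Omega_{F_\motvide}$ is already settled; for the converse, I would build a branch greedily from any $x\in\Omega_{F_\motvide}$. Completeness of $F_\motvide\simu\bigsqcup_{b\in\B_0}F_b$ together with Point~\ref{i:limlim} of Lemma~\ref{l:simlim} yield $s_0,t_0$ and $y_0\in\dom(\Phi_\motvide)$ with $x=\sigma^{s_0}F_\motvide^{t_0}(y_0)$ and $\Phi_\motvide(y_0)\in\Omega_{\bigsqcup_{b\in\B_0}F_b}=\bigsqcup_{b\in\B_0}\Omega_{F_b}$ (the latter equality being the disjoint-alphabet limit-set identity recorded in Section~\ref{s:ca}), which picks out a unique $z_0\in\B_0$. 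Iterating at $\Phi_\motvide(y_0)\in\Omega_{F_{z_0}}$ using the complete simulation $F_{z_0}\simu\bigsqcup_{b\in\B_1}F_{z_0b}$ yields $z_1$, and so on; the resulting branch $z$ satisfies $x\in\rocks[\infty]z{\seq\Phi}$, so $z\in Z$ and $x\in\rocks[\infty]Z{\seq\Phi}$.

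The main obstacle, I expect, is the closedness of $\rocks[j]Y{\seq\Phi}$ rather than the algebraic identities: disjointness follows directly from the level-by-level disjoint-union structure, and the branch-wise identity $\rocks[2]z{\seq\Phi}=\rocks[\infty]z{\seq\Phi}$ is a direct consequence of Lemma~\ref{l:nonvide}, but closedness demands that the branch of a convergent sequence of configurations stabilize on longer and longer prefixes, which is exactly the point where product compactness of $\prod_{i\in\N}\B_i$ has to cooperate with the nested disjoint decompositions. A mild subtlety to check along the way is that each branch inherits non-triviality from the disjoint-union simulations, which is immediate once the $\B_n$'s are finite and non-empty.
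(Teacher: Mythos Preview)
Your proof is correct. The one genuine methodological difference from the paper concerns closedness of $\rocks[j]Y{\seq\Phi}$, which you single out as ``the main obstacle'' and handle by a sequential extraction: given $x_k\to x$ with $x_k\in\rocks[j]{z_k}{\seq\Phi}$, pass to a subsequence so that $z_k\to z\in Y$ in the compact product, and use that prefixes eventually stabilize to push $x$ into each $\rock[j]{\Phi_{z_{\co0m}}}$. The paper instead bypasses any extraction by rewriting the set as
\[\rocks[j]{Y}{\seq\Phi}=\bigcap_{n\in\N}\bigsqcup_{u\in\lang_n(Y)}\bigsqcup_{\begin{subarray}c0\le t<\prod_{i<n}T_{u_{\co0i}}\\0\le s<\prod_{i<n}S_{u_{\co0i}}\end{subarray}}\sigma^sF_\motvide^t\Phi_\motvide^{-1}\Phi_{u_0}^{-1}\cdots\Phi_{u}^{-1}(\Omega_{F_u}^j)~,\]
a decreasing intersection of \emph{finite} unions of subshifts (finite because each $\B_n$ is finite and $Y$ is closed), from which closedness and shift-invariance are immediate. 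This gives the paper a structural formula and turns your ``main obstacle'' into a one-liner; conversely, your compactness argument is more robust and would still go through if the $\B_n$ were only compact. For Part~2, your greedy branch-building from $x\in\Omega_{F_\motvide}$ is exactly the pointwise reading of the paper's inductive unfolding of $\Omega_{F_\motvide}$ via Point~\ref{i:limlim} of Lemma~\ref{l:simlim}; both arguments hinge on the disjoint-alphabet identity $\Omega_{\bigsqcup_a F_{ua}}=\bigsqcup_a\Omega_{F_{ua}}$ recorded in Section~\ref{s:ca}.
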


In the above statement, the notation $\Phi^z_{{\co0n}}$ stands for the composition $\Phi_{z_{\co{0}{n}}}\cdots\Phi_{z_0}\Phi_{\motvide}$, which is the decoding function from $F_{z_{\co{0}{n}}}$ onto $F_{\motvide}$.

\begin{proof}~\begin{enumerate}
\item
Point \ref{i:nonempty} of Lemma \ref{l:nonvide} gives that $\rocks[\infty]z{\seq\Phi}\ne\emptyset$  if $F_{z_{\co0n}}\simu F_{z_{\cc{0}{n+1}}}$ non trivially for any $n\in\N$, \ie all these RPCA have non-empty domain. The converse is obvious.
\\
By the same distributivity of decreasing intersections over unions as for Point \ref{i:infsim} of Lemma~\ref{l:nonvide}, it can be easily seen that
\[\rocks[j]{Y}{\seq\Phi}=\bigcap_{n\in\N}\bigsqcup_{u\in\lang_n(Y)}\bigsqcup_{\begin{subarray}c0\le t<\prod_{i<n}T_{u_{\co0i}}\\0\le s<\prod_{i<n}S_{u_{\co0i}}\end{subarray}}\sigma^sF_\motvide^t\Phi_\motvide^{-1}\Phi_{u_0}^{-1}\cdots\Phi_{u}^{-1}(\Omega_{F_u}^j)~,\]
which is a decreasing intersection of finite unions of subshifts, and we have $\rocks[2]z{\seq\Phi}=\rocks[\infty]z{\seq\Phi}\subset\Omega_{F_\motvide}$ for all $z\in Y$.
\item If $F_u\simu\bigsqcup_{a\in\B_n}F_{ua}$ completely, then Point \ref{i:limlim} of Lemma \ref{l:simlim} gives
\[\Omega_{F_u}=\bigsqcup_{\begin{subarray}c0\le t<T_u\\0\le s<S_u\end{subarray}}F_u^t\sigma^s\Phi_u^{-1}(\bigsqcup_{a\in\B_n}\Omega_{F_{ua}})~.\]
An immediate induction gives for any $n\in\N$,
\[\Omega_{F_\motvide}=\bigsqcup_{u\in\lang_{n+1}(Z)}\bigsqcup_{\begin{subarray}c0\le t<\prod_{i<n}T_{u_{\co0i}}\\0\le s<\prod_{i<n}S_{u_{\co0i}}\end{subarray}}\sigma^sF_\motvide^t\Phi_\motvide^{-1}\Phi_{u_0}^{-1}\cdots\Phi_{u_{\co0n}}^{-1}(\Omega_{F_u})~.\]
Being true for any $n$, this gives the result.
\end{enumerate}\end{proof}

Lemmas~\ref{l:nonvide} and~\ref{l:nonvides} can be seen as extensions of Lemma~\ref{l:simlim} in the case of an infinite nested simulation. The following lemma can be seen as such an extension of Remark~\ref{r:penrose}.

\begin{lemma}\label{lem:aperiodichierarchy}
If $F_0\simu[S_0,T_0]F_1\simu[S_1,T_1]\ldots\simu[S_{n-1},T_{n-1}]F_n\simu[S_{n},T_{n}]\ldots$ completely, with $S_n,T_n>1$ for any $n\in\N$, then $\orb{F_0}$ is aperiodic.
\end{lemma}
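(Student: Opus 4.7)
The plan is to argue by contradiction via repeated application of Remark~\ref{r:penrose}, using Lemma~\ref{l:simtrans} to compose simulations. Suppose that some configuration of $\orb{F_0}$ admits a nonzero period $(a,b)\in\Z^2\setminus\{(0,0)\}$.

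First, for each $n\in\N$, Lemma~\ref{l:simtrans} gives us a complete simulation $F_0\simu[A_n,B_n,Q_n]F_n$ with $A_n=\prod_{i<n}S_i$ and $B_n=\prod_{i<n}T_i$ (the exact value of $Q_n$ will not matter). By Remark~\ref{r:penrose} applied to this simulation, the existence of a period $(a,b)$ in $\orb{F_0}$ forces the existence of integers $(k_n,l_n)$ with $a=k_nA_n-l_nQ_n$ and $b=l_nB_n$, and $(k_n,l_n)$ is then a period of $\orb{F_n}$.

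Now I split into two cases according to whether $b$ is zero. If $b\neq 0$, then for every $n$ the equation $b=l_nB_n$ forces $B_n\mid b$; since $T_i>1$ for all $i$, we have $B_n=\prod_{i<n}T_i\to\infty$, contradicting $b\ne0$. If instead $b=0$, then for every $n$ we must have $l_n=0$, so the associated period in $\orb{F_n}$ is horizontal, namely $(k_n,0)$, and $a=k_nA_n$ forces $A_n\mid a$; since $S_i>1$ for all $i$, we have $A_n\to\infty$, contradicting $a\ne0$ (which holds by $(a,b)\ne(0,0)$). Either way we reach a contradiction, hence $\orb{F_0}$ admits no nonzero period, i.e.\ it is aperiodic.

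The only subtle point is the use of Remark~\ref{r:penrose} in the shifted case $Q_n\ne 0$. The remark is only explicitly established for $Q=0$, but the only feature of the conclusion that I use is the \emph{vertical} divisibility $B_n\mid b$ (and, in the degenerate case $b=0$, the horizontal divisibility $A_n\mid a$ once $l_n$ is forced to vanish). Both follow directly from the shape $(k_nA_n-l_nQ_n,\,l_nB_n)$ of periods of $\orb{F_0}$, which is the only substantive consequence of Remark~\ref{r:penrose}; the horizontal shift $l_nQ_n$ only reparametrizes the integer $k_n$ and plays no role in the divisibility argument. Thus no further work is required, and nontriviality ($S_i,T_i>1$) does all of the heavy lifting by making the products $A_n,B_n$ diverge.
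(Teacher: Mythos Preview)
Your proof is correct and follows essentially the same route as the paper's: compose the simulations via Lemma~\ref{l:simtrans} and then invoke Remark~\ref{r:penrose} to see that any period of $\orb{F_0}$ must be divisible by the arbitrarily large products $\prod_{i<n}S_i$ and $\prod_{i<n}T_i$. Your explicit case split on $b=0$ versus $b\ne0$ and your discussion of the shift $Q_n$ make the argument somewhat more careful than the paper's two-line version, but the underlying idea is identical.
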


In particular, either $\Omega_{F_n} = \emptyset$ ($= \orb{F_n}$) for all $n \in \N$ or, $\Omega_{F_n}$  (and $\orb{F_n}$) is aperiodic uncountable, for all $n \in \N$.

\begin{proof}
From Lemma~\ref{l:simtrans}, $F_0\simu[S_0\cdots S_{n-1},T_0\cdots T_{n-1}]F_n$ completely.
By Remark~\ref{r:penrose}, $\orb{F_0}$ cannot have any nontrival period less than $S_0\cdots S_{n-1}$ horizontally and less than $T_0\cdots T_{n-1}$ vertically.
If these two products go to infinity, we get that there cannot exist any periodic points.
\end{proof}

In fact, it follows from the proof that it is enough that one of the products $\prod_{i \in \N} S_i$ and $\prod_{i \in \N} T_i$ is infinite.
It is well known that a non-empty, aperiodic 2D SFT is uncountable. Lemma~\ref{l:nonvide} gives some additional information about how uncountability occurs in the case of an infinite nested simulation.

\section{Expansiveness and simulation}
The following lemmas highlight the relation between the notions of simulation and expansive directions. This subsection extends slightly Section~5 in \cite{nexpdir}.
The following lemmas correspond to Lemma~5.1 and Lemma~5.3 in \cite{nexpdir}, which examine how the so-called ``shape of prediction'' evolves.
It also motivates the choice of considering the horizontal direction as $\infty$, which will make many future expressions clearer.
\begin{lemma}\label{lem:relsimulexp}
Suppose $F\simu[S,T,Q]G$ exactly.
Then $\NE(F)\supseteq\frac1{T}\left(Q+S\NE(G)\right)$.
Moreover, if the simulation is complete, then $\NE(F)=\frac1{T}\left(Q+S\NE(G)\right)$.
\end{lemma}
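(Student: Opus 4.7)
The plan is to first establish a linear dictionary between the lattices of $\orb G$ and $\orb F$ induced by the simulation, and then derive each inclusion by contraposition.

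First, iterating $G\Phi=\Phi\sigma^Q F^T$ (and using that $F$ and $\sigma$ commute) gives, for every $y\in\Z$,
\[G^y\Phi=\Phi\sigma^{Qy}F^{Ty}\qquad\text{and}\qquad G^{-y}\Phi=\Phi\sigma^{-Qy}F^{-Ty}~.\]
Unpacking the local rule $\phi\colon\A^V\pto\B$ of $\Phi$ from Remark~\ref{r:simhedlund}, the $G$-cell at coordinates $(i,y)$ is a function of the $F$-cells in row $Ty$ at positions $iS+Qy+V$, so it is determined by a bounded neighborhood of $L(i,y)\defeq(iS+Qy,Ty)$, where $L\defeq\bigl(\begin{smallmatrix}S&Q\\0&T\end{smallmatrix}\bigr)$. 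By exactness, $\Phi^{-1}$ also admits a bounded local rule $\phi^{-1}$; combined with the finite radius of $F$ and the fact that only the powers $F^s$ for $0\le s<T$ are needed to fill the intermediate rows, each $F$-cell at $(p,q)\in\Z^2$ is determined by the $G$-cells in a bounded neighborhood of $L^{-1}(p,q)$. Since $L$ sends the direction $\lin l$ to the direction $\lin{l'}$ with $l'\defeq(Sl+Q)/T$, this dictionary sends strips around $\lin l$ in $\orb G$ to strips around $\lin{l'}$ in $\orb F$, and vice versa, up to bounded buffers.

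For the inclusion $\supseteq$, I argue by contraposition. Assume $l'=(Sl+Q)/T$ is expansive for $F$ with shape $V'$, and take $c,d\in\orb G$ agreeing on $(\lin l+V)\cap\Z^2$ for a shape $V$ to be chosen. Lifting them via $\Phi^{-1}$ and vertical $F$-iteration to $c_F,d_F\in\orb F$, the dictionary gives a fixed bounded set $U'$ (depending only on the radii of $\phi^{-1}$ and $F$) such that every $F$-cell $(p,q)$ with $L^{-1}(p,q)+U'\subseteq\lin l+V$ takes the same value in $c_F$ and $d_F$. Since $L(\lin l)=\lin{l'}$, choosing $V$ large enough makes this agreement set contain $(\lin{l'}+V')\cap\Z^2$, and the expansiveness of $l'$ for $F$ forces $c_F=d_F$, hence $c=d$. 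So $l$ is expansive for $G$.

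For the reverse inclusion under complete simulation, take $l=(Tl'-Q)/S$ expansive for $G$ with shape $V$, and $c_F,d_F\in\orb F$ agreeing on $(\lin{l'}+V')\cap\Z^2$ for a shape $V'$ to be chosen. By Lemma~\ref{l:simlim}.\ref{i:limlim}, both configurations lie in $\rock\Phi$, so each decomposes uniquely as $\sigma^{s}F^{t}\Phi^{-1}(c')$ with $(s,t)\in\co0S\times\co0T$ and $c'\in\orb G$. Remark~\ref{r:simsynchr}(4) furnishes a fixed shape $U\subfini\Z$ whose restriction already determines the offset; inflating $V'$ to contain translates of $U$ along $\lin{l'}$ forces the offsets of $c_F$ and $d_F$ to coincide. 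Once they do, applying $\Phi\sigma^{-s}F^{-t}$ produces decoded $c,d\in\orb G$ whose agreement region covers the image under $L^{-1}$ of $(\lin{l'}+V')\cap\Z^2$ minus the bounded buffer coming from $\phi$. After one further enlargement of $V'$ this image contains $(\lin l+V)\cap\Z^2$, and expansiveness of $l$ for $G$ gives $c=d$, hence $c_F=d_F$, so $l'$ is expansive for $F$.

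The main delicate step will be the bookkeeping of all these bounded buffers — those coming from $\phi$, from $\phi^{-1}$, from the intermediate powers $F^s$ with $0\le s<T$, and from the macro-grid detection shape $U$ — and checking that they can all be absorbed by enlarging $V$ (respectively $V'$). The shear $Q\ne0$ adds no conceptual difficulty but must be tracked carefully, since it is precisely what makes $L$ non-diagonal and what shifts the macro-grid in $\orb F$ by $(Qy,0)$ at vertical position $Ty$.
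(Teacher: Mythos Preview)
Your proposal is correct and follows essentially the same route as the paper's proof: the same linear map $L=\bigl(\begin{smallmatrix}S&Q\\0&T\end{smallmatrix}\bigr)$, the local rules $\phi$ and $\phi^{-1}$ from Remark~\ref{r:simhedlund} to pass between strips, and the synchronizing shape from Remark~\ref{r:simsynchr} for the complete case. The only cosmetic difference is that the paper argues each inclusion directly (non-expansive $\Rightarrow$ non-expansive, exhibiting witnesses) whereas you argue by contraposition (expansive $\Rightarrow$ expansive); the underlying bookkeeping of buffers is identical, and the paper does spell out the explicit shape $W'\defeq MW+(V\cup U)\times\{0\}-\co0S\times\co0T$ where you leave it as ``enlarging $V'$''.
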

In particular, $\NE(\sigma^{-Q}G)=\NE(G)+Q$ and $\NE(G^T)=\frac1T\NE(G)$.

\begin{proof}
Let us consider the matrix $M\defeq\left[\begin{array}{cc}S&Q\\0&T\end{array}\right]$ as acting over $\R^2$.
Consider a slope $l\in\Rb$, $\lin l\subset\R^2$ the corresponding vectorial line, $\lin l'\defeq M\lin l$ the vectorial line corresponding to slope $\frac STl+\frac QT$.
Roughly, $\lin l'$ for $F$ corresponds to $\lin l$ for $G$.

\begin{itemize}
\item Consider a finite shape $W'\subset\R^2$, $U$ and $f$ the neighbourhood and local rule of $F$, $V$ and $\phi^{-1}$ those of $\Phi^{-1}$, as defined in Remark \ref{r:simhedlund}. Without loss of generality, we can assume that $U = \cc{-uS}{uS}$, for some $u \in \N$.

Let $W\defeq M^{-1}W'+(T\cc{-u}{u} + V + \co{-Q}{0}) \times \{0\} + [-1,2[\times[0,1[$.
If $l\in\NE(G)$, then there exist configurations $x \neq y\in\Omega_G$ such that $\orb[x]G\restr{\lin l+W}=\orb[y]G\restr{\lin l+W}$.

Then, $\orb[\Phi^{-1}(x)]F \neq \orb[\Phi^{-1}(y)]F$, but we claim that 
\begin{equation*}
\orb[\Phi^{-1}(x)]F\restr{\lin l'+W'}=\orb[\Phi^{-1}(y)]F\restr{\lin l'+W'}.
\end{equation*}
Since $W'$ was an arbitrary finite shape, this implies that $\frac STl+\frac QT \in \NE(F)$, which proves that $\NE(F)\supseteq\frac1{T}\left(Q+S\NE(G)\right)$.

Let us proceed with the proof of the claim. Let $(p_1,p_2) \in \lin l'+W'$ and write $p_1\eqdef mS+r$, $p_2\eqdef nT+q$ and $n \defeq m'S+r'$, where $m,r,n,q,m',r' \in \Z$ and $0\le r,r'<S$ and $0\le q<T$. Intuitively, we can think that $(p_1,p_2)$ belongs to the encoding of the $m$'th letter of $\sigma^{-m'Q}G^n(x)$ and $G^n(y)$.

More precisely, a straightforward computation shows that 
\begin{equation*}
M^{-1}(p_1,p_2)=(m-Qm'+r/S-r'/{S}+q/T,n+q/T),
\end{equation*}
so that $(m-Qm',n) \in M^{-1}(p_1,p_2)+ [-1,2[ \times [0,1[$. This, in turn, implies that $(m-Qm',n) + (T\cc{-u}{u}+\co{-Q}{0}+V) \times \{0\}$ is included in $\lin l+ W$, so that
\begin{equation*}
\orb[x]{G}\restr{(m-Qm',n)+(T\cc{-u}{u}+\co{-Q}{0}+V)\times \{0\}} =
= \orb[y]{G}\restr{(m-Qm',n)+(T\cc{-u}{u}+\co{-Q}{0}+V)\times \{0\}}.
\end{equation*}

Using the facts that $V$ is the neighbourhood of $\phi^{-1}$ and that $\Phi^{-1}$ \xpr{blows-up} letters into blocks of size $S \times T$ with an additional shift of $Q$ for every vertical time step, we deduce that 
\begin{multline*}
\orb[\Phi^{-1}(x)]{F}\restr{(\co{(m-Qm')S}{(m-Qm'+1)S}+T\cc{-uS}{uS}+\co{-QS}{0}+nQ)\times \{nT\}} = \\
=\orb[\Phi^{-1}(y)]{F}\restr{(\co{(m-Qm')S}{(m-Qm'+1)S}+T\cc{-uS}{uS}+\co{-QS}{0}+nQ)\times \{nT\}}.
\end{multline*}

Notice that $nQ - Qm'S = r'Q$. Now, using the fact that $T\cc{-uS}{uS}$ is a neighbourhood for $f^q$ and $\co{-QS}{0}$ for $\sigma^{-r'Q}$, we obtain that 

\begin{multline*}
\sigma^{-r'Q}F^q\left(\orb[\Phi^{-1}(x)]{F}\right)\restr{\co{mS+r'Q}{(m+1)S+r'Q} \times \{nT\}} = \\
=\sigma^{-r'Q}F^q\left(\orb[\Phi^{-1}(y)]{F}\right)\restr{\co{mS+r'Q}{(m+1)S+r'Q} \times \{nT\}}.
\end{multline*}

The last equality implies that $\orb[\Phi^{-1}(x)]{F}\restr{(p_1,p_2)} = \orb[\Phi^{-1}(y)]{F}\restr{(p_1,p_2)}$, because

\begin{eqnarray*}
& & \sigma^{-r'Q}F^q\left(\orb[\Phi^{-1}(x)]{F}\right)\restr{\co{mS+r'Q}{(m+1)S+r'Q} \times \{nT\}}
\\&=&
\orb[\Phi^{-1}(x)]{F}\restr{\co{mS}{(m+1)S} \times \{nT+q\}}
\\&=&
\orb[\Phi^{-1}(x)]{F}\restr{\co{mS}{(m+1)S} \times \{p_2\}}
\end{eqnarray*}

and $p_1 \in \co{mS}{(m+1)S}$.

\item Consider a finite shape $W\subset\R^2$, $U$ the synchronizing shape as defined in Remark \ref{r:simsynchr}, $V$ and $\phi$ the neighbourhood and local rule of $\Phi$ as defined in Remark \ref{r:simhedlund}, and $W'\defeq MW+(V\cup U)\times\{0\}-\co0S\times\co0T$.

If $\frac STl+\frac QT\in\NE(F)$, then there exist configurations $x \neq y\in\Omega_F$ such that $\orb[x]F\restr{\lin l'+W'}=\orb[y]G\restr{\lin l'+W'}$.

By Remark \ref{r:simsynchr} and completeness of the simulation, there exist common $s\in\co0S$ and $t\in\co0T$ such that $x'\defeq\sigma^{-s}F^{-t}(x)$ and $y'\defeq\sigma^{-s}F^{-t}(y)$ are in $\dom(\Phi)$.
It follows easily from the definitions that $\orb[x']F\restr{\lin l'+MW+V\times\{0\}}= \orb[y']F\restr{\lin l'+MW+V\times\{0\}}$.

By injectivity of $\Phi$, $\orb[\Phi(x')]G$ and $\orb[\Phi(y')]G$ are also distinct, but we claim that they coincide in $\lin l+W$. Since $W$ is an arbitrary finite shape, this implies that $l \in \NE(G)$, which proves that $\NE(F)\subseteq\frac1{T}\left(Q+S\NE(G)\right)$.

Let $(p_1,p_2) \in \lin l+W$. Then, $M(p_1,p_2)+V\times\{0\}\subset \lin{l'} +MW+V\times\{0\}$; it follows from this that
\begin{equation*}
 \orb[x']F\restr{(p_1S+p_2Q+V,p_2T)}= \orb[y']F\restr{(p_1S+p_2Q+V,p_2T)}.
\end{equation*}

In addition, we have that

\begin{eqnarray*}
\orb[\Phi(x')]G_{(p_1,p_2)}&=&
G^{p_2}\Phi(x')_{p_1}
\\&=&
\Phi\sigma^{p_2Q}F^{p_2T}(x')_{p_1}
\\&=&
\phi(\sigma^{p_2Q}F^{p_2T}(x')\restr{p_2S+V})
\\&=&
\phi(\orb[x']F\restr{(p_1S+V+p_2Q,p_2T)})
\end{eqnarray*}

The same holds for $y'$, and since, as we have noticed earlier, the final expression is the same for $x'$ and $y'$, we get that $\orb[\Phi(x')]G\restr{(p_1,p_2)} = \orb[\Phi(y')]G\restr{(p_1,p_2)}$, as claimed.
\end{itemize}\end{proof}

Lemmas \ref{l:simtrans} and \ref{lem:relsimulexp} can be combined to obtain expansive directions in nested simulations, which will be used extensively in Section~\ref{sec:expdir}.
\begin{lemma}\label{lem:iterrelsimulexp}
If $F_0\simu[S_0,T_0,D_0S_0]F_1\simu[S_1,T_1,D_1S_1]\ldots\simu[S_{n-1},T_{n-1},D_{n-1}S_{n-1}]F_{n}$ completely exactly, and all these RPCA have bi-radius $1$, then
\begin{equation*}
\NE(F_0)\subseteq\anib[\vec{1S/T}]{\vec{S}\vec{D}}+\left(\prod_{i<n}\frac{S_i}{T_i}\right)[-1,1]=\anib[\vec{S/T}]{\vec{D}}+\left(\prod_{i<n}\frac{S_i}{T_i}\right)[-1,1]~.
\end{equation*}
\end{lemma}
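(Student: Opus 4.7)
The plan is to collapse the chain of simulations into a single simulation by invoking Lemma~\ref{l:simtrans}, then apply the expansive-directions transfer of Lemma~\ref{lem:relsimulexp}, and finally bound $\NE(F_n)$ using the bi-radius~$1$ assumption on $F_n$.

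First, since the hypothesis gives simultaneously exact and complete simulations with shift parameters $Q_i = D_i S_i$, Lemma~\ref{l:simtrans} yields a single exact complete simulation $F_0 \simu[S,T,Q,\Phi] F_n$ with
\[S = \prod_{i<n} S_i, \qquad T = \prod_{i<n} T_i, \qquad Q = \anib[\vec{1S/T}]{\vec{SD}}\prod_{i<n} T_i,\]
where $\Phi = \Phi_{n-1}\circ\cdots\circ\Phi_0$ and the explicit formula for $Q$ comes from the expression for the composed shift in Lemma~\ref{l:simtrans} applied to $Q_i = D_i S_i$.

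Next, applying Lemma~\ref{lem:relsimulexp} to this composed simulation (the simulation is complete, so we get equality rather than inclusion) gives
\[\NE(F_0) = \frac{1}{T}\bigl(Q + S\,\NE(F_n)\bigr).\]
Since $F_n$ is an RPCA with bi-radius~$1$, it was noted at the end of Section~\ref{s:ca} that $\NE(F_n) \subseteq [-1,1]$, whence
\[\NE(F_0) \;\subseteq\; \frac{Q}{T} + \frac{S}{T}[-1,1] \;=\; \anib[\vec{1S/T}]{\vec{SD}} + \left(\prod_{i<n}\frac{S_i}{T_i}\right)[-1,1].\]

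It only remains to verify the algebraic equality $\anib[\vec{1S/T}]{\vec{SD}} = \anib[\vec{S/T}]{\vec{D}}$. Using the closed-form computation recalled just before Lemma~\ref{l:simtrans}, both sides multiplied by $\prod_{i<n} T_i$ equal $\sum_{0\le i<n} D_i \prod_{0\le j\le i} S_j \prod_{i<j<n} T_j$; this is a routine rearrangement of products, noting that $\prod_{0 \le j < i} S_j \cdot S_i = \prod_{0 \le j \le i} S_j$ and dividing back by $\prod_{0 \le j \le i} T_j$ on the right-hand side. I do not expect any real obstacle here; the only slightly delicate point is keeping track of the index conventions between the $\vec{1S}$ shifting and the $\anib{\cdot}$ adic evaluation, but both indexings are explicitly pinned down in Subsection~\ref{s:encoding} and in the displayed formula preceding Lemma~\ref{l:simtrans}.
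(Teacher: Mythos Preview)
Your proposal is correct and follows essentially the same approach as the paper: collapse the chain via Lemma~\ref{l:simtrans}, apply Lemma~\ref{lem:relsimulexp} (using completeness for equality), and bound $\NE(F_n)\subseteq[-1,1]$ by the bi-radius~$1$ hypothesis. The paper handles the final algebraic identity $\anib[\vec{1S/T}]{\vec S\vec D}=\anib[\vec{S/T}]{\vec D}$ in one line as ``by definition'', whereas you unpack it a bit more, but there is no substantive difference.
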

\begin{proof}
We already noted that the radius of a RPCA $F_n$ with bi-radius $1$ has $\NE(F_n)\subseteq[-1,1]$.
From Lemma \ref{l:simtrans}, we know that $F_0\simu[S,T,Q]F_n$ exactly completely, where $(S,T,Q) = (\prod S_i,\prod T_i,{\anib[\vec{1S/T}]{\vec S \vec D}\prod T_i})$ and from Lemma \ref{lem:relsimulexp}, we deduce that:
\[\NE(F_0)=\anib[\vec{1S/T}]{\vec S \vec D}+\left(\prod_{i<n}\frac{S_i}{T_i}\right)\NE(F_n)\subseteq \anib[\vec{1S/T}]{\vec S \vec D}+\left(\prod_{i<n}\frac{S_i}{T_i}\right)[-1,1]~.\]

Also, by definition we have that $\anib[\vec{1S/T}]{\vec S \vec D}=\anib[\vec{S/T}]{\vec D}$.
\end{proof}


In the limit case of an infinite nested simulation, we obtain the following proposition, which slightly extends Theorem~5.4 in \cite{nexpdir}.
\begin{proposition}\label{prop:hochman}
If $F_i \simu[S_i,T_i,D_iS_i] F_{i+1}$ completely exactly, for all $i \in \N$, then
\begin{equation*}
\NE(F_0)\subseteq\anib[\seq S/\seq T]{\seq D}+\left(\inf_{n\in\N}\prod_{i<n}\frac{S_i}{T_i}\right)[-1,1]~.
\end{equation*}
In particular, if the simulations are non-trivial and $\prod_{i<n}S_i/T_i$ converges to $0$, then $\NE(F_0)=\{\anib[\seq S/\seq T]{\seq D}\}$.
\end{proposition}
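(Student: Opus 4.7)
The strategy is to apply Lemma~\ref{lem:iterrelsimulexp} to every finite prefix of the nested simulation and then take the intersection of the resulting bounds on $\NE(F_0)$.

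First, fix $n\in\N$. Since $F_0\simu[S_0,T_0,D_0S_0]F_1\simu\cdots\simu[S_{n-1},T_{n-1},D_{n-1}S_{n-1}]F_n$ completely and exactly (as a consequence of applying Lemma~\ref{l:simtrans} to the first $n$ simulations) and all RPCA in the paper have bi-radius $1$, Lemma~\ref{lem:iterrelsimulexp} yields
\[
\NE(F_0)\subseteq A_n+r_n[-1,1],\quad\text{where }A_n\defeq\anib[\vec{S_{\co0n}/T_{\co0n}}]{\vec{D_{\co0n}}}\text{ and }r_n\defeq\prod_{i<n}\frac{S_i}{T_i}.
\]
Taking the intersection over $n$ gives $\NE(F_0)\subseteq\bigcap_{n\in\N}\bigl(A_n+r_n[-1,1]\bigr)$, which is the main object of study.

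Next, I would show that this intersection is contained in $\anib[\seq S/\seq T]{\seq D}+(\inf_n r_n)[-1,1]$. The key identity, which follows directly from the definition of the adic representation, is
\[
A_{n+1}-A_n=\frac{D_n\,r_n}{T_n},
\]
so that by telescoping $A_{n+k}-A_n$ is controlled by the tail of the defining series of $\anib[\seq S/\seq T]{\seq D}$, which converges to $0$ by the implicit assumption that $\anib[\seq S/\seq T]{\seq D}$ is defined (\ie $A_n\to A_\infty\defeq\anib[\seq S/\seq T]{\seq D}$). If $x$ lies in the intersection, then for each $n$ we have $|x-A_n|\le r_n$; choosing $n$ to approximately attain $\inf_n r_n$ and then letting $n\to\infty$ inside the centers (which all lie within the same shrinking adic window around $A_\infty$) gives $|x-A_\infty|\le\inf_n r_n$, establishing the desired inclusion.

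For the ``in particular'' clause, non-triviality of the simulations together with $\prod_{i<n}S_i/T_i\to 0$ gives $\inf_n r_n=0$, so the inclusion collapses to $\NE(F_0)\subseteq\{\anib[\seq S/\seq T]{\seq D}\}$. To upgrade this to equality, I would invoke Point~\ref{i:nonempty} of Lemma~\ref{l:nonvide} to conclude that $\Omega_{F_0}$ (and hence $\orb{F_0}$) is uncountable, and then Proposition~\ref{p:atleastone} to deduce $\NE(F_0)\ne\emptyset$, forcing equality. The main obstacle is the intersection step: the intervals $A_n+r_n[-1,1]$ are not automatically nested (since $r_n$ need not be monotone and the centers $A_n$ drift), so one must use the compatibility between the size of the shift $|A_{n+1}-A_n|=D_nr_n/T_n$ and the radii to make sure the intersection behaves as expected rather than being spuriously larger.
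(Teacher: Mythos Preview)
Your proposal is correct and follows essentially the same route as the paper: apply Lemma~\ref{lem:iterrelsimulexp} for each finite prefix, intersect the resulting intervals, and for the equality clause invoke Lemma~\ref{l:nonvide} together with Proposition~\ref{p:atleastone}. You are in fact more explicit than the paper about the intersection step (the paper simply writes ``which gives the wanted inclusion, when $n$ goes to $\infty$''); one small slip to fix is the telescoping identity, which should read $A_{n+1}-A_n = D_n\prod_{j\le n}S_j/T_j = D_n\,r_{n+1} = D_n\,r_n\,S_n/T_n$, not $D_n r_n/T_n$.
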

\begin{proof}
From Lemma \ref{lem:iterrelsimulexp}, we know that \begin{equation*}
\NE(F_0)\subseteq\bigcap_{n\in\N}\left(\prod_{i<n}\frac{S_i}{T_i}\right)[-1,1]+\anib[\seq{S}_{\co{0}{n}}/\seq{T}_{\co{0}{n}}]{\seq{D}_{\co{0}{n}}},
\end{equation*}
which gives the wanted inclusion, when $n$ goes to $\infty$.

For the second claim, if all the simulations are non-trivial, then from Lemma \ref{l:nonvide} we know that $\orb{F_0}$ is uncountable, hence by Proposition \ref{p:atleastone}, it has at least one non-expansive direction. In addition, by the first claim and the assumption $\prod_{i<n}S_i/T_i \to 0$, we know that $\NE(F_0) \subseteq \{\anib[\seq S/\seq T]{\seq D}\}$, and we must actually have equality.
\end{proof}

\section{Explicit simulation}\label{sub:simulconv}
In the previous sections of this chapters, we defined a notion of simulation and then proved some facts about this notion, which suggest that it is a good choice. However, we have not given any non-trivial example of simulation until now, nor have we explained how this could happen. For example, the decoding function $\Phi$ could be anything.

The simulation that we construct all have the same basic \xpr{form}. We call these simulation \dfn{explcit}, because the simulated configuration is explicitly written letter by letter in the simulating configuration. In order to make this  more precise, we need to give some more definitions and notations. 



Let us fix a some fields $\addr$, $\addr_{+1}$, $\age$ and $\age_{+1}$ (In fact, these are just distinct numbers that we use to project letters on). These are sometimes called \emph{coordinate} fields. For $s \in \co{0}{S}$ and $t \in \co{0}{T}$, let $\gra{s}{t}{S}{T} \defeq \per[s,S]{\addr,\addr_{+1}} \cap \emp[t]{\age,\age_{+1}}$. In $\gra{s}{t}{S}{T}$ the values of $\addr$ grow by $1$ modulo $S$ from left to right and the value of $\age$ is constant and equal to $t$, while the origin has $\addr$ $s$. This is the usual way to break up a configuration into blocks, with one small difference. Normally, we only need the fields $\addr$ and $\age$ to do this. However, since we are using PPA, we need to have some right- (or left-) moving copies of these fields in order to check the compatibility of these fields. Having this in mind, we define $\gra{s}{t}{S}{T}$ in the above way, since it will make notation a little lighter later on. The union $\bigsqcup_{\begin{subarray}c0\le t<T\\0\le s<S\end{subarray}}\gra{s}{t}{S}{T}$ is disjoint.

In addition, let $\grs{s}{S}\defeq \per[s,S]{\addr,\addr_{+1}}$. In $\grs{s}{S}$, we do not care about the value of $\age$ (or if it is even constant). Clearly, $\gra{s}{t}{S}{T} \subseteq \grs{s}{S}$. For $c \in \grs{s}{S}$ and $i \in \Z$, the pattern
\begin{equation*}
\col{i}{c}=c_{\co{-s+iS}{-s+(i+1)S}}
\end{equation*} 
is called a \dfn{colony} of $c$. Clearly, $(\col{i}{c})_{i \in \Z}=\bulk[S]{\sigma^{-s}(c)}$ and in $\col{i}{c}$, the value of $\addr$ (and $\addr_{+1}$) grows from $0$ to $S-1$ from left to right.

This is the natural way to break a configuration into colonies of size $S$. Now, we are going to use every colony to encode one letter of the simulated configuration. For this, we have to define the appropriate decoding function.

Let $\tilde{\phi} \colon (\haine5^{*})^* \pto \haine5^{**}$ be the following function, which is the basis of all the decoding functions that we will use: Let $w \in (\haine5^*)^*$ be a \emph{word} over the infinite alphabet $\haine5^*$ (we look at $w$ as a finite part of some $1$D configuration over $\haine5^*$). If $\hs{w}= \Chi{\vec{u}}3^{\length{w}-\length{\Chi{\vec{u}}}}$, where $\vec{u} \in \haine5^{**}$ (we look at $\vec{u}$ as a tuple of elements of $\haine5^*$), then we define $\tilde{\phi}(w)=\vec{u}$.

Notice that $\Chi{\vec{u}} \in \haine3^*$. In other words, $w$ is equal to $\Chi{\vec{u}}$ up to appending some $3$s at the end of $\Chi{\vec{u}}$ (this gives a word in $\haine4^*$) and then adding some $4$s in front of every \emph{letter} of $\Chi{\vec{u}}3^{\length{w}-\length{\Chi{\vec{u}}}}$ (which gives a word in $(\haine5^{*})^*$). Unless $w$ has this very specific form, $\tilde{\phi}(w)$ is not defined.



$\tilde{\phi}$ is well-defined because $\Chi{\cdot}$ is an injection and because $3$ does not appear as a letter of $\Chi{\vec{u}} \in \haine3^*$. A necessary condition so that $\tilde\phi(w)=\vec{u}$ is that $\length{w} \geq \length{\Chi{\vec{u}}}$. 

Let $\field$ be a new field and $\tilde{\phi}_{\field} \colon (\haine5^{**})^* \pto \haine5^{**}$ be defined as $\tilde{\phi} \pi_{\field}$. $\tilde{\phi}_{\field}$ can read words over letters with many fields by ignoring the other fields and using $\tilde{\phi}$ on $\field$. 

We can extend $\tilde{\phi}$ in a natural way to a map $\tilde{\Phi} \colon (\haine5^*)^{\Z} \pto (\haine5^{**})^{\Z}$ as follows: for all $c \in (\haine5^{*})^{\Z}$ and $i \in \Z$, $\tilde{\Phi}(c)_i=\tilde{\phi}(c\restr{\co{iS}{(i+1)S}})$. Similarly, $\tilde\phi_{\field}$ can be naturally extended to a map $\tilde{\Phi}_{\field} \colon (\haine5^{**})^{\Z} \pto (\haine5^{**})^{\Z}$.

The idea is that every configuration will be divided into colonies using the coordinate fields and then $\tilde{\phi}_{\field}$ will be used on every colonies so as to obtain a letter. Putting these letters together, we obtain the simulated configuration. 

Formally, a decoding function $\Phi$ will be equal to $\tilde{\Phi}_\field{\restr{\Sigma}}$, where $\Sigma \subseteq \grs{0}{S}$, for some $S$ that is large enough. If $b_i = \tilde{\phi}_\field(\col{i}{c})$, then $\Phi(c)= \tilde{\Phi}_\field(c)=(b_i)_{i \in \Z}$. We call $b_i$ the \dfn{simulated letter} of the $i$'th colony and the letters of $c$ are the \dfn{simulating letters}.


The decoding functions that we will use in our constructions will \emph{always} be of the form ${\tilde{\Phi}_{\field}}{\restr{\Sigma}}$, where $\Sigma \subseteq \grs{0}{S}$. For such functions, we immediately obtain two of the conditions of a decoding function of a simulation:

\begin{remark}\label{twoconditionsofsimulation}
Let us fix a field list $\C=[\addr,\addr_{+1},\age,\age_{+1},\info]$, $S \in \Ns$ and  vectors $\vec{k}, \vec{k'} \in \N^{*}$ such that the following inequalities hold:
\[\both{
k_\addr\ge\norm S\\
k_\info\ge 1\\
S \geq \length{\Chi{\haine5^{\vec{k'}}}}
~,}\]
Let $\Sigma \defeq (\haine5^{\vec{k}})^{\Z}\cap \grs{0}{S}\cap \tilde{\Phi}_{\info}^{-1}((\haine5^{\vec{k'}})^{\Z})$. Then $\Phi \defeq {\tilde{\Phi}_{\info}}{\restr{\Sigma}} \colon (\haine5^{\vec{k}})^{\Z} \to (\haine5^{\vec{k'}})^{\Z}$ is surjective and $\Phi \sigma^S = \sigma \Phi$.

In addition, for every $b \in (\haine5^{**})^{\vec{\Z}}$, we are free to chose the values of the anonymous fields in any way we like in a pre-image.
\end{remark}

In the above remark, $\Sigma$ contains those configurations over $\haine5^{\vec{k}}$ that are well-structured (\ie divided into colonies with the origin having address $0$) and such that in the $i$'th colony we have the encoding of a letter of $\haine5^{\vec{k'}}$, for all $i \in \Z$.

\chapter{The programming language}\label{c:programming}
\section{Definitions and basic permutations}
In our constructions, we want to use permutations that are computed fast. It is not possible to formally state what fast means, but polynomially computable and, more generally, polynomially checkable permutations is fast enough. This is a common feature of all self-similar and hierarchical constructions and the reasons why it is needed are explained very thoroughly in \cite{gray}. For our purposes, it is enough to describe a pseudo-programming language, with which we will write \xpr{programs} that are interpreted as permutations $\alpha \colon \haine5^{**} \pto \haine5^{**}$. 

Let us start describing this programming language: It has four types, \dfn{terms} (that are denoted $t,t'\ldots$), \dfn{valuations} (that are denoted $v,v',\ldots$), \dfn{conditions} (that are denoted $c,c',\ldots$) and \dfn{permutations} (that are denoted $\alpha,\alpha',\ldots$). Each type is semantically interpreted as a different kind of mathematical object. Terms are interpreted as maps $t \colon \haine5^{**} \pto \haine5^{*}$. They represent some word information that can be extracted from a tuple. Valuations are interpreted as functions $v \colon \haine5^{**} \pto \N$. Valuations represent numerical information that can be extracted from tuples. Conditions are predicates over $\haine5^{**}$, or equivalently maps $q \colon \haine5^{**} \pto \{0,1\}$. Finally, permutations are, rather predictably, interpreted as (partial) permutations $\haine5^{**} \pto \haine5^{**}$ which will be used to define IPPA.


Let us describe each type with more details. We are not going to try to give a formal definition of the programming language, since it is would be unnecessarily complicated. It would involve a global induction on the various types, starting from some basic objects and taking a closure under some inductive operations. Instead, we will simply list the objects that we are actually going to use in the rest of the thesis. The proofs that they are polynomially computable are often trivial and will be omitted in most cases.




\paragraph{Terms}
\begin{itemize}
\item Every word $w \in \haine5^{*}$ is a term (understood as the constant function);
\item for all $i \in \N$, the projection $\pi_i$ of the $i$'th field is a term;
\item if $t$ is a term, then $\Chi{t}$ is also a term ($\Chi{t}(\vec{u})=\Chi{t(\vec{u})}$, for all $\vec{u}$ in $\haine5^{**}$);
\item if $v$ is a valuation and $t$ is a term, then $t\restr{v}$ is also a term,  where $t\restr{v}(\vec{u}) \defeq t(\vec{u})\restr{v(\vec{u})}$. In other words, $t\restr{v}$ uses $v$ as a pointer for $t$ and it gives the letter at the $v(\vec{u})$'th position of $t(\vec{u})$.
\end{itemize}


\paragraph{Valuations}

\begin{itemize}
\item Every natural $n \in \N$ is a valuation, understood as a constant function;
\item if $t$ is a term, then $\length{t}$ is a valuation;
\item For all vectors $\vec{k} \in \N^{*}$ and $i \in \N$, the function $l_{\vec{k},i}$ defined in Fact~\ref{f:encodings} is a valuation.
\item If $\seq{S} \colon \N \to \N$ is a sequence of numbers and $v$ a valuation, then $S_v$ (where $S_v(\vec{u}) \defeq S_{v(\vec{u})}$) is also a valuation. (In general, the complexity of this valuation depends on the complexity of $\seq{S}$ and it is not polynomially computable if $\seq{S}$ is not.)
\item Basic arithmetical operations (addition, subtraction, multiplication etc) of valuations are still valuations.
\end{itemize}

In fact, we will need the following, more general version of the third bullet:

\begin{itemize}
\item For all valuations $v$, vector sequences $\vec{k} \colon \N \to \N^M$ and $i \in \N$, $l_{\vec{k}_v,i}$ (where $l_{\vec{k}_v,i}(\vec{u}) \defeq l_{\vec{k}_{v(\vec{u})},i}(\vec{u})$) is also a valuation. In this version, the vector whose structure $l_{\vec{k}_v,i}$ gives depends on the input letter. Of course, if $\vec{k}$ is not a polynomially computable sequence, then neither is $l_{\vec{k}_v,i}$.
\end{itemize}

A \dfn{vector valuation} is a collection $\vec{v}=(v_i)_{0 \le i \le M-1}$ of valuations, for some $M \in \N$. Vector valuations are used to obtain lengths of alphabets in a polynomially computable way.

\paragraph{Conditions}




\begin{itemize}
\item If $v_1, v_2$ are valuations, then $v_1 \geq v_2$ is a condition whose interpretation is clear;
\item if $t_1, t_2$ are terms, then $t_1 = t_2$ is a condition;
\item if $t,t_1$ are terms and $(Q_w)_{w \in \haine5^{*}}$ is a sequence of subsets of $\haine5^*$, then $t_1 \in Q_t$ is a condition. ($\vec{u}$ satisfies $t_1 \in Q_t$ if $t_1(\vec{u}) \in Q_{t(\vec{u})}$.) 
\item if $t$ is a term and $i_1, \ldots, i_n$ are fields, then $\emp[t]{i_1, \ldots, i_n}$ is a condition (that is true for $\vec{u}$ if and only if $\vec{u} \in  \emp[t(\vec{u})]{i_1, \ldots, i_n}$);
\item $\halt{p}{v}{t}$ is a condition, where  $\vec{u}$ satisfies $\halt{p}{v}{t}$ if and only if the TM defined by program $p$ does not stop within $v(\vec{u})$ steps over term $t(\vec{u})$;
\item boolean operations of conditions are also conditions.
\end{itemize}

\paragraph{Permutations}

\begin{itemize}
\item For every condition $q$, $\chekk[q]$ is a permutation. $\chekk[q](\vec{u})$ is equal to $\vec{u}$ if and only if $\vec{u}$ satisfies $q$ (and is undefined otherwise). This is an involution.

\item For every valuation $v$ and field $i \in \N$, $\incr[v,i]$ is a permutation defined in the following way: Let $\vec{u} \in \haine5^{**}$ and define $\vec{u'}$ in the following way: $u'_j\defeq u_j$ for all $j\ne i$, and $u'_i\defeq\sh[\length{u_i}]\gamma(u_i)$, where $\gamma(w)\defeq\anib{\bina w+1\bmod v(\vec u)}$ when $\bina w<v(\vec u)$ (undefined otherwise);
then $\incr[v;i](\vec u)\defeq\vec{u'}$ if $v(\vec u)=v(\vec{u'})$ (undefined otherwise).

Essentially $\incr[v;i]$ adds $1$ modulo $v(\vec{u})$ to the $i$'th field of $\vec{u}$. The additional complications are due to the fact that we want this rule to always be reversible (which would not necessarily be true if $v(\vec{u'})$ is not equal to $v(\vec{u})$) and length preserving (which is the reason that we use the strange $\gamma$ function).
\item  $\alpha_\U[t;\info,\head_{-1},\head_{+1}]$ is a permutation for every term $t$ and fields $\info$, $\head_{-1}$, $\head_{+1}$. We direct the reader to Section~\ref{Jarkko} for the definition of this permutation, since it uses a permutation that is defined and examined therein.

\item Let $t$ be a term and $i$ be a field such that $t$ \dfn{does not depend} on $i$. In other words, if $\vec{u},\vec{u'} \in \haine5^{**}$ and $\pi_j(\vec{u})=\pi_j(\vec{u'})$ for all $j \neq i$, then $t(\vec{u})=t(\vec{u'})$.

Then, $\rite[t;i]$ is a permutation defined as follows: Let $\vec{u} \in \haine5^{**}$. $\rite[t;i](\vec{u})$ is defined if and only if $\hs{u_i}= \motvide$. In this case, all fields remain the same except for $i$ which becomes equal to $\sh[\length{u_i}]{t(\vec{u})}$.

Essentially, we check that the field $i$ is empty and then write $t(\vec{u})$ on it, while preserving the lengths. The condition that $t$ does not depend on $i$ is essential to ensure reversibility.

$\rite[t;i]^{-1}$ first checks that the $i$'th field is equal to $t(\vec{u})$ and then empties it, while preserving the lengths. This is a way to reversibly erase some information from a letter, namely compare it with some other place of the letter where the same information is held.
\item For all fields $i,i'$, $\exch[i,i']$ is a permutation defined as follows: Let $\vec{u} \in \haine5^{**}$. $\exch[i,i'](\vec{u})$ is defined if and only if $\length{u_i}=\length{u_{i'}}$. In this case, all fields are unchanged except for $i$ and $i'$ whose values are exchanged.

This is a length-preserving involution.
\item For every condition $q$ and permutation $\alpha$, $\algorithmicif\ q\ \algorithmicthen\ \alpha$ is a permutation. On input $\vec{u}\in \haine5^{**}$, it applies $\alpha$ if condition $q(\vec{u})$ is satisfied and $q(\vec{u})=q(\alpha(\vec{u}))$. If $q(\vec{u})$ is satisfied and $q(\vec{u}) \neq q(\alpha(\vec{u}))$, then it is not defined on $\vec{u}$ (this ensures reversibility). Finally, if $q(\vec{u})$ is \emph{not} satisfied, it is equal to the identity.
\item The composition of permutations is also a permutation. In constructions, we will denote the composition $\alpha_2 \circ \alpha_1$ by writing $\alpha_2$ below $\alpha_1$.
\end{itemize}

In the definition, we check that the values of the valuations, terms and conditions that are given as parameters do not change. This is a technical point that ensures that they are interpreted as reversible functions.
In all our constructions, these conditions will easily be satisfied because the valuations, terms and conditions will either be constant or depend on fields that are not modified by the rule at hand.

If we were giving a complete, formal description of a language, then this would be the point where by a large, tedious induction we would prove that, given some natural conditions on the parameters, every permutation of the language is polynomially computable, or, more precisely, polynomially computable in its parameters (this means that its complexity is a polynomial of the complexity of its parameters) and that short programs exist for the permutations. Namely, the size of the program is $O(p_{t,v, \ldots})$, where $t,v$ etc. are the parameters of the permutation.

We can also prove that the size of a program of a permutation is approximately the same as the size of the program of its inverse. 
\section{Conventions about defining IPPA}\label{s:conv}

In the first part of this chapter, we gave a short exposition of the programming language that will be used in the rest of the thesis in order to define permutations of $\haine5^{**}$. However, in order to define a PPA, the number of fields and the directions of the fields also have to be fixed. 

Recall that we want to define PPA, \ie RPCA of the form $F = \sigma^{\vec\delta}\circ\alpha$, where $\vec\delta \in \{-1,0,+1\}^M$ is the shift vector and $\alpha$ is a partial permutation of $\A=\A_0 \times \ldots \A_{M-1}$, for some $M \in \N$. In our case, $F$ will always be the restriction of an IPPA, \ie $\A$ will be equal to $\haine5^{\vec{k}}$, for some $\vec{k} \in \N^M$ and $\alpha \defeq \beta\restr{\haine5^{\vec{k}}}$ will be the restriction of some (infinite) permutation $\beta$ defined in the programming language.

We will use the following conventions when constructing such PPA:

\begin{itemize}
\item We first give a list of so-called \dfn{explicit} field labels. Such a list will often be noted in the form $\C\defeq[\field_{e},\ldots,\field'_{e'}]$, where $e,e' \in \{-1,0,+1\}$. The subscripts $e,\ldots,e'$ correspond to the \emph{directions} of the fields (if the direction is equal to $0$, then it will be omitted). The field list is a tuple of pairwise different natural projections, that are used by the permutation, together with their directions, that will be used by the shift. (The labels of the fields will make the permutations more understandable than the corresponding indices $i,i',\ldots$). The field list is not fixed, so in fact for every field list, we give a different permutation, even though they only differ in the enumeration of the fields.

The permutation is assumed to reject any element of $\haine5^{**}$ that does not involve all field numbers in the list, but note that it does not reject tuples that have more fields; the so-called \dfn{anonymous} fields, that are not in the list, are not modified by the permutation (but they might be used by some other PPA with which we compose). This allows us to define some simple PPA with few fields and then use them as \xpr{building blocks} in order to build more complicated ones in the following sense: the complicated PPA has more fields than the simple one, but, if it does not \xpr{touch} any of its fields, its behaviour on those fields is described by the corresponding behaviour of the building block.

If $\C$ and $\C'$ are two lists of field labels, then $\C \cup \C'$ is the list that contains the fields of $\C$ and $\C'$. Usually, the lists will be disjoint, so that we will use the notation $\C \sqcup \C'.$

\item After giving the field list, we describe an (infinite) permutation using the programming language defined in the first part of this chapter.

\item Then, we need to fix $M \in \N$ and $\vec{k} \in \N^M$. If we do not care about the existence of anonymous fields, then we always assume that $M$ is some number greater than or equal to the largest natural appearing in the field list $\C$. In this way, we ensure that the configurations will not be rejected simply because the program tries to access a field that is not there.

When we do not want anonymous fields to exist (for example, when we want to achieve exactness of a simulation), then we assume that the field list $\C$ is equal to $[0,\ldots,M-1]$ and we choose this $M$ for the number of fields.

In any case, after choosing $M$, we fix some vector $\vec{k}\in \N^M$ satisfying some appropriate conditions (which are case-specific).

\item Finally, we need to define the directions of the fields. However this has already been done in the definition of the field list with the use of the subscripts $e,e'$ etc. The directions of the anonymous fields can be anything. In fact, our statements will be true for \emph{all} directions of the anonymous fields, since we will not refer to them.
\end{itemize}

\chapter{The universal simulator}\label{construction}

In this chapter, our aim is to construct an RPCA (a family of RPCA in fact, depending on some parameters) that can simulate every other RPCA that satisfies some  conditions. This is done in Lemma~\ref{universal}. This RPCA is extremely helpful and it will be part of all our subsequent constructions. Since it is difficult to overstress the importance of this RPCA, we will give a step-by-step description of its construction with as many details as possible.

In Section~\ref{sec:structure}, we will embed a periodic rectangular grid in every configuration. This is a standard procedure in hierarchical constructions and it will allow us to partition every configuration into colonies and use the decoding function $\tilde\Phi$. In Section~\ref{Jarkko}, we will make a slight digression and show how we can simulate any TM with an RPCA in real-time. This is needed in order to preserve the expansiveness of the horizontal direction. Then, in Section~\ref{sec:singlepermutation}, we construct an RPCA to simulate an RPCA whose direction vectors are null (all its fields are still). There are some tricks involved in this phase, mainly having to do with deleting the previous simulated letter and synchronizing the computations. Then, in Section~\ref{sec:singleshift}, we construct an RPCA that can simulate any RPCA whose permutation is the identity \ie any shift. Finally, in Section~\ref{sec:universal}, we construct the universal IPPA $\unive$ that can simulate any RPCA, when it is restricted to the appropriate alphabet.

\section{Imposing a periodic structure}\label{sec:structure}

Let $\C[\coordi]=[\addr,\addr_{+1},\age,\age_{+1}]$. 

\begin{itemize}
\item $\age$ and $\addr$ are meant to localize the cell in its macrocell, and they correspond to the projections involved in the definition of explicit simulation in Section~\ref{sub:simulconv}.
\item $\age_{+1}$ and $\addr_{+1}$ are used to communicate with the neighbour cells, so that consistency between the $\age$ and $\addr$ fields is achieved.
\end{itemize}

\begin{algo}{coordi}{\coordi}{v_\maxaddr, v_\maxage}
\STATE{$\chekk[\bina{\pi_{\addr_{+1}}}=\bina{\pi_\addr}$ \AND $\bina{\pi_{\age_{+1}}}=\bina{\pi_\age}]$} \label{al:coordi:chek} \COMMENT{Check left-neighbour information coherence.}
\STATE{$\incr[v_\maxaddr;\addr_{+1}]$}\label{al:coordi:incadd} \COMMENT{Increment $\addr_{+1}$ so that the right neighbour can check coherence.}
\STATE{$\incr[v_\maxage;\age]$}\label{al:coordi:incage} \COMMENT{Update \age.}
\STATE{$\incr[v_\maxage;\age_{+1}]$}\label{al:coordi:incage1} \COMMENT{Update $\age_{+1}$.}
\end{algo}

By the discussion of Chapter~\ref{c:programming}, we know that $\coordi[v_{\maxaddr},v_{\maxage};\C[\coordi]]$ is polynomially computable with respect to its parameters $v_{\maxaddr}$ and $v_{\maxage}$.

In practice, the two valuation parameters $v_{\maxaddr}$ and $v_{\maxage}$ will be constant over the alphabet of the PPA, in which case the behaviour will be described by the following:

\begin{lemma}\label{koo}
Let us fix a field list $\C[\coordi]\in\N^4$ and integers $S,T\in\Ns$.\\
Let $F$ be the IPPA defined by the permutation $\coordi[S,T;\C[\coordi]]$ and directions $\vec{\nu}_{\coordi}$ given by the label indices, and let $\vec k\in\N^*$ be a vector satisfying:
\[\both{
k_\addr,k_{\addr_{+1}}\ge\norm S\\
k_\age,k_{\age_{+1}}\ge\norm T
~.}\]
Let $c \in (\haine5^{\vec{k}})^{\Z}$. Then, $c \in F^{-2}((\haine5^{\vec{k}})^{\Z})$ if and only if there exist $s \in \co{0}{S}$ and $t \in \co{0}{T}$ 
such that $c \in \gra{s}{t}{S}{T}$. In this case, $F(c) \in \gra{s}{t+1 \mod T}{S}{T}$.
\end{lemma}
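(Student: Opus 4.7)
The plan is to verify both directions of the equivalence by tracking the four coordinate fields through one application of $F=\sigma^{\vec\nu_\coordi}\circ\alpha$. The field list $\C[\coordi]=[\addr,\addr_{+1},\age,\age_{+1}]$ carries directions $(0,+1,0,+1)$, hence $\vec\delta=(0,-1,0,-1)$, so after one step of $F$ the still fields $\addr,\age$ at position $j$ are read from $\alpha(c)_j$, while the right-moving fields $\addr_{+1},\age_{+1}$ at position $j$ are read from $\alpha(c)_{j-1}$.

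For the easy \xpr{if} direction, suppose $c\in\gra{s}{t}{S}{T}$. Every cell satisfies $\addr(c_j)=\addr_{+1}(c_j)=j+s\bmod S$ and $\age(c_j)=\age_{+1}(c_j)=t$, so the check on line~\ref{al:coordi:chek} accepts everywhere, and lines~\ref{al:coordi:incadd}--\ref{al:coordi:incage1} produce $\alpha(c)_j$ with coordinates $(j+s,\ j+s+1,\ t+1,\ t+1)$ (modulo $S$ or $T$ as appropriate). The shift then gives $F(c)_j$ the coordinates $(j+s,\ (j-1)+s+1,\ t+1,\ t+1)=(j+s,\ j+s,\ t+1,\ t+1)$, so $F(c)\in\gra{s}{t+1\bmod T}{S}{T}$. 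Iterating the same argument once more yields $F^2(c)\in\gra{s}{t+2\bmod T}{S}{T}$, which is in particular defined, establishing $c\in F^{-2}((\haine5^{\vec k})^\Z)$ together with the ``in this case'' part of the conclusion.

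The converse is the main content. Assume $F^2(c)$ is defined, so the check at line~\ref{al:coordi:chek} accepts both at $c$ and at $F(c)$. Acceptance at $c$ directly yields $\addr_{+1}(c_j)=\addr(c_j)$ and $\age_{+1}(c_j)=\age(c_j)$ for every $j\in\Z$. To exploit acceptance at $F(c)$, I will compute its four coordinates as above but without assuming periodicity: $\addr(F(c)_j)=\addr(c_j)$, $\addr_{+1}(F(c)_j)=\addr(c_{j-1})+1\bmod S$, $\age(F(c)_j)=\age(c_j)+1\bmod T$, and $\age_{+1}(F(c)_j)=\age(c_{j-1})+1\bmod T$. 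The check at $F(c)$ then forces $\addr(c_j)\equiv\addr(c_{j-1})+1\pmod S$ and $\age(c_j)\equiv\age(c_{j-1})\pmod T$ for all $j$.

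These two recurrences determine $c$ completely: setting $s\defeq\addr(c_0)$ and $t\defeq\age(c_0)$, a straightforward induction on $j$ gives $\addr(c_j)=\addr_{+1}(c_j)=j+s\bmod S$ and $\age(c_j)=\age_{+1}(c_j)=t$, which is exactly the defining condition for $c\in\gra{s}{t}{S}{T}$. The only bookkeeping subtlety is to ensure that the $\incr$ operations stay inside $\haine5^{\vec k}$, which is guaranteed by the hypotheses $k_\addr,k_{\addr_{+1}}\ge\norm S$ and $k_\age,k_{\age_{+1}}\ge\norm T$; beyond careful accounting of which field moves and which stays, I do not anticipate a real obstacle.
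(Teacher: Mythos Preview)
Your proof is correct and follows essentially the same approach as the paper: both track the four coordinate fields through one application of $\alpha$ and the shift, and use that the check of line~\ref{al:coordi:chek} must pass at both $c$ and $F(c)$. The paper phrases the converse by contradiction (if the coherence or the increment-by-one condition fails at some cell, then line~\ref{al:coordi:chek} rejects at the next step), whereas you compute the four coordinates of $F(c)_j$ directly and read off the recurrences $\addr(c_j)\equiv\addr(c_{j-1})+1$ and $\age(c_j)\equiv\age(c_{j-1})$; this is the same argument in positive form. One small omission: you should note that the mere definedness of $F(c)$ already forces $\bina{\pi_\addr(c_j)}<S$ and $\bina{\pi_\age(c_j)}<T$ (otherwise $\incr$ is undefined), so your $s$ and $t$ automatically lie in $\co0S$ and $\co0T$.
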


In the previous statement, $S$ and $T$ should be understood as the \dfn{width} and \dfn{height} of the macrocells. Notice, also, that the statement holds for all vectors $\vec{k} \in \N^*$ that satisfy the inequalities, which means that there can be other fields in the alphabet. This means that if we use $\coordi$ together with other rules that do not change the values of the fields in $\C[\coordi]$, the statement of the lemma will still be true.

The restrictions about the lengths of $\vec{k}$ ensure that fields are large enough that we can write the binary representation of $S$ and $T$ on them. 


\begin{proof}

We prove the stronger claim that if $F^2(c)$ exists, then there exist $0\le s<S$ and $0\le t<T$ such that for all $n \in \Z$, $\bina{\pi_{\addr}(c_n)}=\bina{\pi_{\addr_{+1}}(c_n)}=s+n \bmod S$ and $\bina{\pi_{\age}(c_n)}=\bina{\pi_{\age_{+1}}(c_n)}=t$.

Suppose that $\bina{\pi_{\addr}(c_n)} \neq \bina{\pi_{\addr_{+1}}(c_n)}$ or $\bina{\pi_{\age}(c_n)} \neq \bina{\pi_{\age_{+1}}(c_n)}$, for some $n \in \Z$. Then, line~\ref{al:coordi:chek} would not be defined at cell $n$, $F(c)$ would not exist, which is a contradiction. 


Suppose, then, that there exists $n \in \Z$ with $\bina{\pi_{\addr}(c_{n+1})} \neq \bina{\pi_{\addr}(c_n)} +1 \bmod S$. Line~\ref{al:coordi:incadd} and the fact that $\addr_{+1}$ is a right-going field imply that $\bina{\pi_{\addr_{+1}}F(c)_{n+1}} = \bina{\pi_{\addr}(c_n)} +1 \mod S$. Then, line~\ref{al:coordi:chek} is not defined at cell $n+1$ of $F(c)$ since $\bina{\pi_{\addr_{+1}}F(c)_{n+1}} = \bina{\pi_{\addr}(c_n)} +1 \mod S \neq \bina{\pi_{\addr}(c_{n+1})}$. Therefore $F^2(c)$ does not exist, which contradicts the hypothesis. Similarly, we can prove that $\bina{c_n.\age} = \bina{c_{n+1}.\age}$, for all $n \in \Z$. Thus, the stronger claim we made at the beginning of the proof is true.

If $\bina{\pi_{\addr}(c_0)}=s$ and $\bina{\pi_{\age}(c_0)}=t$, then the previous claim implies that for all $n \in \Z$, $\bina{\pi_{\addr}(c_n)}=s+n \bmod S$ and $\bina{\pi_{\age}(c_n)}=t$. Furthermore, since the value of $\addr$ is not changed by $F$ and the value of $\age$ is increased by $1 \mod T$ every time step by line~\ref{al:coordi:incage}, we have that $\bina{\pi_{\addr}F(c)_n}=s+n \bmod S$ and $\bina{\pi_{\age}F(c)_n}=t+1 \bmod T$, for all $n \in \Z$.

\end{proof}

In general, when using IPPA, we have to use a similar rule every time we want to impose some horizontal restriction on the configuration. Namely, we have to use an additional right-moving (or left-moving, it does not make a difference) field, and then we need $2$ steps in order to verify that the field is constant.


All of the rules we construct will factor onto $\coordi[S,T;\C[\coordi]]$, for some $S,T \in \Ns$. The following remark will give the disjointness condition in the definition of simulation.

\begin{remark}\label{thirdconditionofsimulation}
Assume that $F\colon \az \pto \az$ factors onto $\coordi[S,T;\C[\coordi]]$ through the factor map $H$ and let $\gr stF \defeq H^{-1}(\gra{s}{t}{S}{T})$. Then, the union $\bigsqcup_{\begin{subarray}c0\le t<T\\0\le s<S\end{subarray}}\gr stF$ is disjoint and $F(\gr{s}{t}{F}) \subseteq \gr{s}{t+1 \mod T}{F}$.

Therefore, if $\Phi \colon \az \pto \bz$ satisfies that $\dom(\Phi) \subseteq \gr{0}{0}{F}$, for some $s,t$, then the union $\bigsqcup_{\begin{subarray}c0\le t<T\\0\le s<S\end{subarray}}F^t\sigma^s(\dom(\Phi))$ is disjoint.
\end{remark}

$\gr{s}{t}{F}$ implicitly depends on the factor map $H$. However, in applications, $H$ will be equal to $\pi_{\C[\coordi]}$ so that no ambiguity arises by omitting it.

\section{Simulating TM with IPPA}\label{Jarkko}

The IPPA $\coordi$ allows us to divide every configuration into colonies with a periodical clock. We want to use this space-time structure in order to do computations within the \dfn{work-periods} (the \xpr{time} between two subsequent steps where the clock is $0$). We are going to introduce the elements needed for this one by one, since, hopefully, it will make some of the ideas more clear. First, let us show how to simulate TMs in real time with PPA.

For all programs $p=p_{\M} \in \haine4^*$, we construct an IPPA that simulates $\M$ in real-time.
This subsection is inspired by \cite{morita}.

Let ${\C}_\U\defeq[\info,\head_{-1},\head_{+1}]$.

The key item to maintaining reversibility, is to keep track of the history of the computation. Some kind of \emph{archive} of each past step is shifted in the direction opposite to the head, in order for the head to always have space to write the new history.
Recall the definition of the function $\U$ from Subsection~\ref{s:turing}. Let $\gamma_\U[p] \colon (\haine4^*)^3 \pto (\haine4^*)^3$ be defined by the following transitions: $(a,h_{-1},h_{+1})$ is mapped to

\begin{itemize}
\item $(a',\Chi{a,q,\delta},\Chi{a,q,\delta})$, if $(h_\delta,h_{-\delta})=(q,\motvide)$ ($\head_{\delta}$ contains the TM head) and $\U(a,q,p)=(a',\motvide,+1)$. If $\head_{\delta}$ contains a head and the transition is an accepting one, then we write an encoding of the last transition on the $\head$ fields, modify $\info$ and the TM heads disappear. Here, the assumption that the TM head (which has disappeared) moves to the right is convenient to ensure injectivity.
\item $(a',h_{-1}',h_{+1}')$, where $(h_{\delta'}',h_{-\delta'}')=(q',\Chi{a,q,\delta})$ if $(h_\delta,h_{-\delta})=(q,\motvide)$ and  $\U(a,q,p)=(a',\motvide,\delta')$. If the transition is not an accepting one, then $\info$ is modified, the TM head is written on the appropriate $\head$ field and on the other $\head$ field we write an encoding of the transition and of the position of the head before the transition.
\item $(a,h_{-1},h_{+1})$ if $h_{-1},h_{+1}\notin Q_p\setminus\{\motvide\}$. If none of the $\head$ fields contains a TM head, then do nothing.
\end{itemize}

It is not difficult (by a tedious case enumeration) to see that $\gamma_\U[p]$ is a partial permutation and \emph{polynomially computable}, thanks in particular to the disjointness of $Q_p\subset\haine2^*$ and $\Chi{\haine4\times Q_p\times\{-1,1\}}\subset2\haine3^*$. Basically, $\gamma_\U[p]$ identifies the accepting state $\motvide$ with the absence of state (for which it just performs identity). In other cases, it prevents from having two (non-accepting) head states at the same cell; then it applies the transition rule and sends the new state to the correct direction (depending on $\delta$), while sending an archive of the last performed operation in the opposite direction.
At the moment that the accepting state appears, it just sends two (identical) archives in opposite directions (there is no new state to send).

We say that $c\in (\haine4^{**})^{\Z}$ \dfn{represents} $(z,q,j)\in \haine4^{\Z}\times Q\times\Z$ of the machine $\M$ corresponding to program $p$
if:
\begin{itemize}
\item For all $i \in \Z$, $\pi_{\info}(c_i)=z_i$;
\item $(\pi_{\head_{-1}}(c_j),\pi_{\head_{+1}}(c_j)) \in \{q\} \times \{\motvide\} \cup \{\motvide\} \times \{q\}$;
\item For all $i\ne j$ and $\delta\in\{-1,+1\}$, $\pi_{\head_{\delta}}(c_i)\notin Q_p\setminus\{\motvide\}$
\item For all $i\ne j$ and $\delta\in\{-1,+1\}$, if $\pi_{\head_{\delta}}(c_i)\neq\motvide$ then $\delta$ has the sign of $j-i$.
\end{itemize}
Intuitively, this means that in $c$ there is at most one (non-accepting) head at position $j$, no head elsewhere, and nothing (represented by $\motvide$, like the accepting state) in $\head_{-1}$ on its right nor in $\head_{+1}$ on its left. The possible archives go away from the head position. We can thus see that the head will never move into a cell where there is an archive, so that one of the transitions of $\gamma_{\U}[p]$ will always be applicable.

Formally, we have the following lemma about the behaviour of $\gamma_\U[p]$.

\begin{lemma}\label{l:tmsim}
Let us fix a field list $\C[\U]\in\N^3$ and a program $p=p_{\M} \in \haine4^*$.\\
Consider the IPPA $F$ defined by permutation $\sh{\gamma_\U[p]}$ and directions $\vec{\nu}_{\U}$ given by the label indices.\\
Let $\vec k\in\N^*$ be a vector satisfying:
\[\both{
k_{\head_{-1}},k_{\head_{+1}}\ge \norm{\Chi{\haine4\times Q_p\times\{-1,+1\}}}\\
k_\info \ge 1
~.}\]
Let $c \in (\haine5^{\vec{k}})^{\Z}$ and suppose that $\hs{c}$ represents configuration $(z,q,j)\in \haine4^{\Z}\times Q\times\Z$ of $\M$. Then, for all $t \in \N$, $\hs{F^t(c)}$ represents  $\M^{t}(z,q,j)$.
\end{lemma}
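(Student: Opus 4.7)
The plan is to proceed by induction on $t\in\N$. The base case $t=0$ is immediate from the hypothesis, and the inductive step reduces to proving that, for any $c'\in(\haine5^{\vec k})^{\Z}$ whose $\hs{c'}$ represents a machine configuration $(z',q',j')$ of $\M$, $\hs{F(c')}$ represents $\M(z',q',j')$. Since the size assumptions on $\vec k$ guarantee that all values that can arise in the $\info$ and $\head_{\pm 1}$ fields---namely symbols of $\haine4$, archives from $\Chi{\haine4\times Q_p\times\{-1,+1\}}$, and states from $Q_p\cup\{\motvide\}$---fit within the allotted lengths, Remark~\ref{sharpization} lets us work directly with $\gamma_\U[p]$ applied cellwise and then compose with the shift $\sigma^{\vec{\nu}_\U}$, which propagates $\head_{+1}$-values one step to the right, $\head_{-1}$-values one step to the left, and leaves $\info$ in place.

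I would then use the representation of $\hs{c'}$ to determine which clause of $\gamma_\U[p]$ fires at each cell. At every cell $i\ne j'$, the $\head$ entries are either $\motvide$ or archives in $\Chi{\haine4\times Q_p\times\{-1,+1\}}\subset 2\haine3^*$; these are disjoint from $Q_p\setminus\{\motvide\}\subset\haine2^*$, so only the third (identity) clause of $\gamma_\U[p]$ can apply. At cell $j'$ the representation forces exactly one of $\pi_{\head_{-1}}((\hs{c'})_{j'})$ and $\pi_{\head_{+1}}((\hs{c'})_{j'})$ to equal $q'\in Q_p\setminus\{\motvide\}$, with the other equal to $\motvide$, so the first or second clause of $\gamma_\U[p]$ applies according as $\delta_\M(z'_{j'},q')$ is accepting or not, driven by the equality $\U(z'_{j'},q',p)=\delta_\M(z'_{j'},q')$. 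A direct cell-by-cell check after the shift then establishes the four clauses of the representation for $\M(z',q',j')$: in the non-accepting sub-case, writing $\delta_\M(z'_{j'},q')=(a',q'',\delta')$, the new state $q''$ appears at $j'+\delta'$ on $\head_{\delta'}$, the fresh archive lands at $j'-\delta'$ on $\head_{-\delta'}$, $\pi_\info$ changes only at $j'$ (to $a'$), and every pre-existing archive moves one step further from $j'$ on its own field; the halted case $q'=\motvide$ is immediate, and the accepting case is analogous, with both $\head$ fields at $j'$ receiving the archive and then flying apart under the shift.

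The main obstacle will be purely bookkeeping: one must keep straight the convention that $\head_{\delta}$ propagates its values in direction $\delta$ (so that the $\head_{-\delta'}$-archive genuinely flees the new head) and verify, in each sub-case, that every archive---old or newly written---remains on the correct side of the (new) head position as required by the last clause of the representation. Nothing deep is at stake, but the three branches of $\gamma_\U[p]$ must be systematically matched against the four clauses of the representation to close the inductive step.
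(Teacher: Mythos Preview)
Your approach is essentially the same as the paper's: reduce to the one-step case by induction, apply the identity clause of $\gamma_\U[p]$ at every cell $i\ne j'$ (using the disjointness of $Q_p\setminus\{\motvide\}\subset\haine2^*$ from archives in $2\haine3^*$ and from $\motvide$), apply the appropriate transition clause at $j'$, and then track the fields through the shift. The paper carries out exactly this case analysis, treating one representative sub-case in detail and leaving the symmetric ones to the reader.

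One point you should make explicit, which the paper does treat first: the lemma is a statement about \emph{partial} functions, so the inductive step must also cover the case where $\delta_\M(z'_{j'},q')$ is undefined (rejection). In that case none of the three clauses of $\gamma_\U[p]$ applies at cell $j'$ (the head field holds a state in $Q_p\setminus\{\motvide\}$, so the identity clause is excluded; and $\U(z'_{j'},q',p)$ is undefined, so the two transition clauses are excluded), hence $F(c')$ is undefined, matching the undefinedness of $\M(z',q',j')$. Your sentence ``the first or second clause applies according as $\delta_\M(z'_{j'},q')$ is accepting or not'' tacitly assumes the transition is defined; add the rejecting branch and the argument is complete.
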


As in Lemma~\ref{koo}, the inequalities about the lengths of $\vec{k}$ simply state that the fields are long enough. Using Lemma~\ref{sharpization}, we will omit the $\sh{\cdot}$ and $\hs{\cdot}$ from $\sh{\gamma_\U[p]}$ and $\hs{c}$ in the following proof, since they are only used to make $F$ have constant lengths.

\begin{proof}
We will prove the claim for $t=1$; the general claim then follows by induction. 

Suppose, first, that $\M(z,q,j)$ does not exist. This means that $\delta_\M(z_j,q)$ does not exist, or equivalently, that $\U(z_j,q,p)=\U(c_j.\info,q,p)$ does not exist. From the definition of $\gamma_\U[p]$ and the fact that $c$ represents $(z,q,j)$, we have that ${\gamma_{\U}[p]}(c_j)$ does not exist, which implies that $F(c)$ does not exist.

Suppose, then, that $\M(z,q,j)=(z',q',j')$ exists. This means that $\U(z_j,q,p)=(z'_j,q',j'-j)$, and $z'_i=z_i$ for any $i\ne j$.
By assumption, for any $i \in \Z$, $(\pi_{\info}(c_i),\pi_{\head_{-1}}(c_i),\pi_{\head_{+1}}(c_i)) = (z_i,h_{-1,i},h_{+1,i})$ for some $h_{-1,i},h_{+1,i} \in Q_p \cup \Chi{\haine4\times Q_p\times\{-1,+1\}} \cup \{\motvide\}$.

Moreover, for any $i\ne j$, and $\delta\in\{-1,+1\}$, $h_{\delta,i}\notin Q_p$, so the identity rule is applied. After applying the shifts, it gives that for any $i<j-1$, 
\begin{equation*}
(\pi_{\info}F(c_i),\pi_{\head_{-1}}F(c_i),\pi_{\head_{+1}}F(c_i))=(z_i,h_{-1,i+1},h_{+1,i-1}) = (z'_i,h_{-1,i+1},\motvide)
\end{equation*}
with $h_{-1,i+1}\notin Q_p$, and for any $i>j+1$, 
\begin{equation*}
(\pi_{\info}F(c_i),\pi_{\head_{-1}}F(c_i),\pi_{\head_{+1}}F(c_i))=(z_i,h_{-1,i+1},h_{+1,i-1})=(z'_i,\motvide,h_{+1,i-1})
\end{equation*}
with $h_{+1,i-1}\notin Q$.

Now, assume $(h_{-1,i},h_{+1,i})=(q,\motvide)$ and that $\U(z_j,q,p)=(z'_j,q',-1)$ (the other cases can be dealt with in a similar way).
Then the transition $(z_j,q,\motvide) \to (z'_j,q',\Chi{z_j,q,-1})$ is applied by $\gamma_\U[p]$. After the application of $\gamma_\U[p]$ and the shifts, we obtain 
\begin{eqnarray*}
(\pi_{\info}F(c_j),\pi_{\head_{-1}}F(c_j),\pi_{\head_{+1}}F_\U[p](c_j)) &=& (z'_j,\motvide,\motvide),\\ 
(\pi_{\info}F(c_{j-1}),\pi_{\head_{-1}}F(c_{j-1}),\pi_{\head_{+1}}F(c_{j-1})) &=& (z'_{j-1},q',\motvide) \text{ and,} \\ 
(\pi_{\info}F(c_{j+1}),\pi_{\head_{-1}}F(c_{j+1}),\pi_{\head_{+1}}F(c_{j+1})) &=& (z'_{j+1},\motvide,\Chi{z_j,q,-1}).
\end{eqnarray*}

All conditions are hence satisfied for $F(c)$ to represent $\M(z,q,j)$.
\end{proof}

Note that due to the parallel nature of IPPA, some configurations may involve several machine heads, and valid simulations may take place in parallel, provided that there is enough space between them so that the archive and the heads do not collide.
For this reason, we need to give a \xpr{finite version} of the previous lemma.

\begin{lemma}\label{Turing}
Let us fix a field list $\C[\U] \in\N^3$ and a program $p=p_{\M} \in \haine4^*$.\\
Consider the IPPA $F$ defined by permutation $\sh{\gamma_\U[p]}$ and directions $\vec{\nu}_{\U}$ given by the label indices.
Let $\vec k\in\N^*$ be a vector satisfying:

\begin{align*}
&k_{\head_{-1}},k_{\head_{+1}}\ge \norm{\Chi{\haine4\times Q_p\times\{-1,+1\}}}\\
&k_\info \ge 1.
\end{align*}

Let $c \in (\haine5^{\vec{k}})^{\Z}$, $c'=\hs{c}$ and assume that there exists $n \in \N$ such that the set 
\begin{equation*}
J\defeq\set{j}{\Z}{{(\pi_{\head_{-1}}(c'_j),\pi_{\head_{+1}}(c'_j))}\ne(\motvide,\motvide)}
\end{equation*}
satisfies that for any $j\neq j'\in J$, we have $\abs{j'-j}>2n$, and that for all $j \in J$, $(c'_j.\head_{-1},c'_j.\head_{+1}) \in Q_p \times \{\motvide\} \cup \{\motvide\} \times Q_p$. For $j \in J$, let $q_j \defeq c'_j.\head_{-1}$ if $(c'_j.\head_{-1},c'_j.\head_{+1}) \in Q_p \times \{\motvide\}$ and $q_j \defeq c'_j.\head_{+1}$ if $(c'_j.\head_{-1},c'_j.\head_{+1}) \in \{\motvide\} \times Q_p$.

Then, $F^n(c)$ exists if and only if $(z^j,q'_j,j')\defeq\M^n(c'.\info,q_j,j)$ exists, for all $j\in J$.
In addition:
\begin{itemize}
\item $\pi_{\info}F^n(c)_{\cc{j-n}{j+n}}=z^j_{\cc{j-n}{j+n}}$, for all $j \in J$;
\item $\pi_{\info}F^n(c)_i=c_i$ if $i\notin J+\cc{-n}n$;
\item $(\pi_{\head_{-1}}F^n(c)_{j'},\pi_{\head_{+1}}F^n(c)_{j'}) \in \{(q'_j,\motvide)\} \cup \{(\motvide,q'_j\}$, for all $j \in J$;
\item $(\pi_{\head_{-1}}F^n(c)_i,\pi_{\head_{+1}}F^n(c)_i) \notin Q_p\times \{\motvide\} \cup \{\motvide\} \times Q_p$, if $i \notin \sett{j'}{j \in J}$
\end{itemize}
\end{lemma}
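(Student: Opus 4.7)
The plan is to reduce to the single-head case of Lemma~\ref{l:tmsim} by exploiting the locality of $F$. Because $F=\sigma^{\vec{\nu}_{\U}}\circ\sh{\gamma_\U[p]}$ is a PPA all of whose shifts have magnitude at most $1$, both the definedness and the value of $F^n(c)_i$ depend only on $c\restr{\cc{i-n}{i+n}}$. This locality will let me decompose the behaviour of $F^n$ on a multi-head configuration into independent single-head contributions, using the hypothesis that distinct points of $J$ are separated by more than $2n$.

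For each $j\in J$, I would introduce an auxiliary configuration $c^j\in(\haine5^{\vec{k}})^{\Z}$ which coincides with $c$ on the $\info$ field at every position and on the two $\head$ fields at position $j$, and whose head fields at every other position are empty, in the sense that $\hs{\pi_{\head_{-1}}(c^j_i)}=\hs{\pi_{\head_{+1}}(c^j_i)}=\motvide$ for $i\ne j$ (anonymous fields chosen arbitrarily). Then $\hs{c^j}$ represents the machine configuration $(c'.\info,q_j,j)$, so Lemma~\ref{l:tmsim} applies to $c^j$ and yields that $F^n(c^j)$ is defined if and only if $\M^n(c'.\info,q_j,j)$ exists, and in that case $\hs{F^n(c^j)}$ represents it.

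The hypothesis $\abs{j-j'}>2n$ for distinct $j,j'\in J$ ensures that the intervals $\cc{j-n}{j+n}$ are pairwise disjoint and that $c$ coincides with $c^j$ on the whole window $\cc{j-n}{j+n}$ (the only potential discrepancies would be at positions in $J\setminus\{j\}$, all of which lie outside this window). Locality then gives $F^n(c)_i=F^n(c^j)_i$ for every $i\in\cc{j-n}{j+n}$. For positions $i\notin J+\cc{-n}{n}$, the interval $\cc{i-n}{i+n}$ meets no head at all, so I would compare $c$ with the configuration whose head fields are globally empty and whose $\info$ field agrees with $c$; on the latter only the identity transition of $\gamma_\U[p]$ is ever invoked, so $F^n$ simply shifts empty head fields and leaves $\info$ (which has direction $0$) pointwise unchanged. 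Locality transfers this conclusion back to $c$.

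Assembling the pieces, $F^n(c)$ is defined if and only if it is defined on each disjoint window $\cc{j-n}{j+n}$ (the outside being automatic), if and only if every $F^n(c^j)$ is defined, if and only if every $\M^n(c'.\info,q_j,j)$ exists. The four bulleted claims then fall out: the $\info$ and $\head$ values inside each window are read off from the representation property of Lemma~\ref{l:tmsim} applied to $c^j$, and outside $J+\cc{-n}{n}$ the previous step already shows that $\info$ is preserved and the head fields remain empty (in particular not in $Q_p\setminus\{\motvide\}$). The main subtlety I anticipate is purely bookkeeping, namely verifying that no archive can leak beyond the window $\cc{j-n}{j+n}$ of its originating head -- this is precisely what isolating each head through the configuration $c^j$ and invoking locality guarantees, since on $c^j$ any archive must be tracked by Lemma~\ref{l:tmsim} and cannot cross the boundary of the window within $n$ steps.
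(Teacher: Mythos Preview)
Your approach is essentially the paper's own: isolate each head $j\in J$ into a single-head configuration, invoke Lemma~\ref{l:tmsim}, and use the radius-$1$ locality together with the $2n$ separation to transfer the result back to $c$. One presentational slip: agreement of $c$ and $c^j$ on $\cc{j-n}{j+n}$ alone would not give $F^n(c)_i=F^n(c^j)_i$ on that window---you need (and your parenthetical justification actually yields) agreement on the larger window $\cc{j-2n}{j+2n}$, exactly as the paper states it.
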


\begin{proof}
First note that the identity is always applied when the head is absent; in particular it is applied outside $J+\cc{-t}t$ at time $t\in\N$ (because $F$ has radius $1$) and initially the heads are only in the positions in $J$.

According to the assumptions, for all $j\in J$, 
${c'}^j$ is obtained by turning all $(c_i.\head_{-1},c_i.\head_{+1})$ to $(\motvide,\motvide)$ except at position $j$ represents $(c'.\info,q_j,j)$. Thanks to Lemma \ref{l:tmsim}, for all $0 \le t \le n$, $F^t({c'}^j)$ exists if and only if $\M^n(c'.\info,q_j,j)$ exists.

In that case, since ${c'}^j$ coincides with $c'$ over interval $\cc{j-2n}{j+2n}$ and since the radius is $1$, a simple induction can show that $F^t({c'}^j)$ coincides with $F^t(c')$ over interval $\cc{j-2n+t}{j+2n-t}$.
Lemma~\ref{l:tmsim} hence gives the main claim.

Conversely, suppose that $F^t({c'}^j)$ is undefined for some $j\in J$ with $t\le n$ minimal. Then, $F^{t-1}({c'}^j)$ exists, and by Lemma \ref{l:tmsim} involves a unique (non-accepting) head, in some cell $j'\in\cc{j-t}{j+t}$.
Therefore, $\gamma_\U[p](F^{t-1}({c'}^j)_i$ is defined for any $i\ne j'$.
This means that $\gamma_\U[p](F^{t-1}({c'}^j)_{j'})$ is undefined; we have already noted that this is equal to $\gamma_\U[p](F^{t-1}(c')_{j'})$, which proves that $F^t(c')$ is undefined.
\end{proof}

Lemma~\ref{Turing} will be used in the following way: Every configuration will be divided into colonies by $\coordi$. Initially (when the clock is equal to $0$), inside every colony there will be exactly one TM head at the leftmost cell of the colony. These TM will perform some computation for a small amount of time compared to the the width of the colonies (the $S$ of Lemma~\ref{koo}) so that the heads will not meet. Lemma~\ref{Turing} will immediately imply that at the end of the computation, in every colony, $\info$ contains the output of the computation. Finally, the output of the computation will be copied onto some new field and then the computation will be run backwards (remember that $\gamma_{\U}[p]$ is a permutation).

We are now ready to give the details of the definition of the permutation $\alpha_\U[t;\info,\head_{-1},\head_{+1}]$: Let $\vec{u} \in \haine5^{**}$ and define $\vec{u'}$ in the following way:
\begin{equation*}
(u'.\info,u'.\head_{-1},u'.\head_{+1})=\sh{\gamma_{\U}[t(\vec{u})]}(u.\info,u.\head_{-1},u.\head_{+1}),
\end{equation*}
and $u'_j\defeq u_j$ for all $j\notin\{\info,\head_{-1},\head_{+1}\}$.
Then, 
\begin{equation*}
\alpha_\U[t;\info,\head_{-1},\head_{+1}](\vec u)\defeq\vec{u'},
\end{equation*}
if $t(\vec{u})=t(\vec{u'})$ (and it is undefined otherwise.).

Again, the definition gets a little more complicated due to the need to preserve the lengths, to have arbitrarily many fields and to ensure reversibility. When $t=\pi_{\prog}$, where $\prog$ is a new field, then the condition $t(\vec{u})=t(\vec{u'})$ is always satisfied.

\section{Computing the simulated permutation}\label{sec:singlepermutation}
Let $\C[\compute]= \C[\U]\sqcup[\newinfo]$.

\begin{itemize}
\item $\head_{-1},\head_{+1}$ are used by to simulate a TM with the rule of Subsection~\ref{Jarkko}.
\item The output of this computation is written on $\newinfo$ and then the computation is reversed (the Bennett trick, see~\cite{bennett}).
\end{itemize}

\begin{algo}{compute}{\compute}{v_\addr,v_\age,v_\schedule,t_\prog,t_\revprog}
\IF{$v_\age=0$ \AND $v_{\addr}=0$} 
\STATE $\rite[0;\head_{-1}]$ \label{al:com:wrhead}
\COMMENT{Write the machine initial state in the left head field.}
\ENDIF
\IF{$0\le v_\age<v_\schedule$}
\STATE $\sh{\gamma_{\U}[t_\prog;\info,\head_{-1},\head_{+1}]}$ \COMMENT{Run the machine in order to compute the permutation.} \label{al:com:posforcom}
\ELSIF{$v_\age=v_\schedule$}
\STATE{$\chekk[\pi_{\head_{-1}},\pi_{\head_{+1}}\notin Q_{t_\prog}\setminus\{\motvide\}]$} \COMMENT{Check that the computation halted.} \label{al:com:poscheckhalt}
\STATE{$\rite[\info;\newinfo]$} \COMMENT{Copy the output onto a different tape.} \label{al:com:posbennet}
\STATE{$\exch[\head_{-1},\head_{+1}]$} \COMMENT{The directions of the fields of $F_{\U}[p]^{-1}$ are opposite to those of $F_{\U}[p]$.} \label{al:com:posexch}
\ELSIF{$v_\schedule<v_\age\le2v_\schedule$}
\STATE{$\sh{\gamma_{\U}[t_\prog;\info,\head_{+1},\head_{-1}]^{-1}}$} \COMMENT{Unwind the computation in order to delete the archive.}\label{al:com:posbaccom}
\ENDIF
\IF{$2v_\schedule\le v_\age<3v_\schedule$}
\STATE $\sh{\gamma_{\U}[t_\revprog;\newinfo,\head_{-1},\head_{+1}]}$ \COMMENT{Compute the inverse of the permutation, in order to recover \info.} \label{al:com:negforcom}
\ELSIF{$v_\age=3v_\schedule$}
\STATE{$\chekk[\pi_{\head_{-1}},\pi_{\head_{+1}}\notin Q_{t_\revprog}\setminus\{\motvide\}]$} \COMMENT{Check that the computation halted.} \label{al:com:negcheckhalt}
\STATE{$\rite[\info;\newinfo]^{-1}$} \label{al:com:negbennet}\COMMENT{Empty $\newinfo$.}
\STATE{$\exch[\head_{-1},\head_{+1}]$} \label{al:com:negexch}\COMMENT{Reverse the directions again.}
\ELSIF{$3v_\schedule<v_\age\le4v_\schedule$}
\STATE{$\sh{\gamma_{\U}[t_\revprog;\info,\head_{+1},\head_{-1}]^{-1}}$} \label{al:com:negbaccom}\COMMENT{Unwind the second computation, too.}
\ENDIF
\IF{$v_\age=4v_\schedule$ \AND $v_\addr=0$}
\STATE $\rite[0;\head_{-1}]^{-1}$\label{al:com:delhead}
\COMMENT{Erase the machine initial state.}
\ENDIF
\end{algo}

$\compute[v_\addr,v_\age,v_\schedule,v_\prog,v_\revprog;\C[\compute]]$ is polynomially computable with respect to its parameters.

Note that, depending on the value of $v_\addr$, only a small amount of these permutation are applied.

In applications, the three parameters $v_\schedule,t_\prog,t_\revprog$ will be constant. $v_\schedule$ contains a natural number that controls how long the computation lasts. $t_\prog$ and $t_\revprog$ are interpreted as the program and the reverse program (\ie the program of the inverse IPPA) of the IPPA that we want to simulate. 

We are now able to simulate uniformly RPCA with \xpr{radius $0$} (null direction vector).
\begin{lemma}\label{behavior}
Let us fix a field list $\C[\coordi] \sqcup \C[\compute] \in\N^8$, vectors $\vec{k}, \vec{k'}\in\N^*$, integers $S, T \in \Ns, t_0, U\in\N$ and programs $p,p^{-1}\in\haine4^*$ of a partial permutation $\alpha:\haine5^{**}\pto\haine5^{**}$ and its inverse $\alpha^{-1}$, respectively and let $G$ the IPPA corresponding to permutation $\alpha$ and null direction vector. 

Consider the IPPA $F$ with directions $\vec{\nu}_{\coordi \cup \compute}$, and permutation 
\begin{equation*}
\coordi[S,T]\circ\compute[\bina{\pi_\addr},\bina{\pi_\age}-t_0,U,p,p^{-1}]~,
\end{equation*} 

and assume that the following inequalities hold:
\[\both{
U\ge\max\{t_p({\haine5^{\vec{k'}}}),t_{p^{-1}}({\haine5^{\vec{k'}}})\}\\
S\ge\max\{2t_0,\norm{\Chi{\haine5^{\vec{k'}}}}\}\\
T\ge4U+t_0\\
k_\addr,k_{\addr_{+1}}\ge\norm S\\
k_\age,k_{\age_{+1}}\ge\norm T\\
k_{\head_{-1}},k_{\head_{+1}}\ge\length{\Chi{\haine4\times (Q_p \cup Q_{p^{-1}})\times\{-1,+1\}}}\\
k_\info,k_\newinfo\ge1.
}\]

Then, $F\restr{(\haine5^{\vec k})^\Z}\simu[S,T,0,\Phi]G\restr{(\haine5^{\vec k'})^\Z}$, where $\Phi\defeq\tilde{\Phi}_{\info}{\restr\Sigma}$ and
$\Sigma\defeq(\haine5^{\vec{k}})^{\Z}\cap\gra{0}{0}{S}{T}\cap \emp[\motvide]{\newinfo,\head_{-1},\head_{+1}}\cap\tilde{\Phi}_{\info}^{-1}((\haine5^{\vec{k'}})^\Z).$
\end{lemma}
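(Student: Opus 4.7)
The plan is to verify in order the clauses of the simulation $F\restr{(\haine5^{\vec k})^\Z}\simu[S,T,0,\Phi]G\restr{(\haine5^{\vec k'})^\Z}$. The \xpr{syntactic} requirements on $\Phi$, namely surjectivity onto $(\haine5^{\vec{k'}})^{\Z}$, continuity, and $\sigma\Phi=\Phi\sigma^{S}$, are handed over by Remark~\ref{twoconditionsofsimulation}, whose hypotheses are exactly the length conditions $k_\addr\geq\norm S$, $k_\info\geq 1$ and $S\geq\norm{\Chi{\haine5^{\vec{k'}}}}$ already present in the statement. For the disjointness of $\rock\Phi$, I would note that $\compute$ leaves the four fields of $\C[\coordi]$ untouched, so $\pi_{\C[\coordi]}$ exhibits $F$ as a factor of $\coordi[S,T;\C[\coordi]]$; combined with $\Sigma\subseteq\gra{0}{0}{S}{T}$, Remark~\ref{thirdconditionofsimulation} then supplies the disjoint union.

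The substantive content is the pair of identities $G\Phi=\Phi F^{T}$ and $G^{-1}\Phi=\Phi F^{-T}$. I would fix $c\in\dom(\Phi)$ with $b\defeq\Phi(c)$ and track the orbit $(F^t(c))_{0\leq t\leq T}$, treating coordinate and non-coordinate fields separately. For the coordinates, iterating Lemma~\ref{koo} shows that $F^{t}(c)\in\gra{0}{t\bmod T}{S}{T}$, so $F^{T}(c)$ lands back in $\gra{0}{0}{S}{T}$. For the other fields, the shift parameter $t_0$ in the definition of $\compute$ positions the active window so that $\compute$ acts as identity on the non-coordinate fields whenever $v_{\age}=\bina{\pi_{\age}}-t_0$ lies outside $\cc{0}{4U}$, and the inequality $T\geq 4U+t_0$ guarantees that this dynamical window of $4U+1$ steps occurs exactly once per period. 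The time bounds $U\geq t_p,t_{p^{-1}}$ and the size bound $S\geq\norm{\Chi{\haine5^{\vec{k'}}}}$ together let me apply Lemma~\ref{Turing} colony by colony, so that the Turing machines launched in distinct colonies complete their computations without interference.

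It then remains to trace the four Bennett-style phases of $\compute$, each of length $U$, whose combined effect should be to turn $\info$ from $b_n$ into $\alpha(b_n)$ in every colony while returning $\newinfo$ and the head fields to empty. During phase~I, line~\ref{al:com:wrhead} writes the state $0$ on $\head_{-1}$ at cells with $v_{\addr}=0$ and line~\ref{al:com:posforcom} runs $\gamma_\U[p]$ for $U$ steps, so Lemma~\ref{Turing} yields $\info=\alpha(b_n)$ with a computational archive on the head fields. At $v_{\age}=U$ the content of $\info$ is copied into $\newinfo$ and the head fields are swapped, after which phase~II applies $\gamma_\U[p]^{-1}$ with the swapped heads, reversing phase~I on $\info$ without touching $\newinfo$; $\info$ returns to $b_n$ while $\newinfo$ retains $\alpha(b_n)$. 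Phase~III is the analogous forward $\gamma_\U[p^{-1}]$ run on $\newinfo$, which turns $\newinfo$ into $\alpha^{-1}(\alpha(b_n))=b_n$; the equality $\info=\newinfo=b_n$ that then holds makes $\rite[\info;\newinfo]^{-1}$ legal, emptying $\newinfo$. After another head swap, phase~IV runs $\gamma_\U[p^{-1}]^{-1}$ but with $\info$ (rather than $\newinfo$) as the tape field, replaying the phase-III archive backwards on $\info$ and rewinding $\info$ from $b_n$ to the would-be phase-III input $\alpha(b_n)$. The erasure of the head at line~\ref{al:com:delhead} closes the loop, so $F^{T}(c)\in\Sigma$ and $\Phi F^{T}(c)=\alpha\circ\Phi(c)=G\Phi(c)$; the backward identity follows from the fact that $\compute$ is a partial permutation.

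The main obstacle I foresee is justifying the switch of tape field in phase~IV: one must check, at the level of the definition of $\gamma_\U$, that applying $\gamma_\U[p^{-1}]^{-1}$ to $(\info,\head_{+1},\head_{-1})$ after $\newinfo$ has been emptied is well-defined on the configurations that arise and does reconstruct $\alpha(b_n)$ in $\info$. The intuition is that each archive entry encodes a transition whose inversion depends only on the symbol being overwritten, so replaying the phase-III archive on $\info$ is faithful as long as $\info$ agrees, at every reverse-step, with the value $\newinfo$ would have had; this invariant is preserved by induction from the initial equality $\info=\newinfo=b_n$ and would need to be spelled out cell by cell.
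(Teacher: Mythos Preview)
Your proposal is correct and follows essentially the same route as the paper: invoke Remarks~\ref{twoconditionsofsimulation} and~\ref{thirdconditionofsimulation} for the syntactic clauses, then trace the four Bennett phases of $\compute$ to establish the analogue of the paper's Fact~\ref{fact:permutation}. Your phase-by-phase account matches the paper's, and the ``obstacle'' you flag about the tape switch in phase~IV is exactly the point the paper dispatches with the single remark that $\info$ and $\newinfo$ coincide at step~$3U$; your inductive invariant is the right way to make that rigorous. One small omission: when you appeal to Lemma~\ref{Turing} ``colony by colony'' you need the head-separation hypothesis $\abs{j-j'}>2U$, which requires $S\ge 2U$ rather than merely $S\ge\norm{\Chi{\haine5^{\vec{k'}}}}$; the paper uses this explicitly in its proof (the ``$2t_0$'' in the statement is almost certainly a typo for ``$2U$'', cf.\ Lemma~\ref{universal}).
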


The number $t_0$ should be understood as a delay before which to apply this rule, and $U$ as the maximal time that we allow to the (forward and backward) computation.

\begin{proof}
Remarks~\ref{twoconditionsofsimulation} and \ref{thirdconditionofsimulation} imply that $\Phi$ is surjective, $\Phi \sigma^S = \sigma \Phi$ and that $\dom(\Phi)=\bigsqcup_{\begin{subarray}c0\le t<T\\0\le s<S\end{subarray}}F^t\sigma^s(\dom(\Phi))$ is a disjoint union. 

Therefore, in order to prove the simulation we only have to prove that $G \Phi = \Phi F^T$. This is equivalent (since we are talking about partial functions) to the facts that if $\Phi(c)=b$, then $F^T(c)$ exists if and only if $G(b)$ exists, and in that case $\Phi F^T(c)=G(b)$. 

We are actually going to prove the following stronger: 
\begin{fact}\label{fact:permutation}
If $c \in (\haine5^{\vec{k}})^{\Z}\cap\gra{0}{t_0}{S}{T}\cap\emp[\motvide]{\newinfo,\head_{-1},\head_{+1}}\cap\tilde{\Phi}_{\info}^{-1}(b)$, then $F^{4U}(c)$ exists if and only if $G(b)$ exists, and in that case $F^{4U}(c) \in \gra{0}{t_0+4U}{S}{T}\cap\emp[\motvide]{\newinfo,\head_{-1},\head_{+1}}\cap\tilde{\Phi}_{\info}^{-1}(G(b))$. 
\end{fact}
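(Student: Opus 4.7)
The plan is to prove Fact~\ref{fact:permutation} by tracking, colony by colony, the evolution of the fields $\info$, $\newinfo$, $\head_{-1}$, $\head_{+1}$ under $4U$ successive applications of $F$. Since $T\ge 4U+t_0$, Lemma~\ref{koo} applied to the coordinate fields $\C[\coordi]$ guarantees that the grid structure $\gra{0}{\cdot}{S}{T}$ is preserved for the duration of the work period; inside the $i$th colony $\compute$ therefore sees $v_\age$ ranging through $0,1,\ldots,4U$ and $v_\addr=0$ only at the leftmost cell. The hypothesis $S\ge\norm{\Chi{\haine5^{\vec{k'}}}}$ keeps each encoded letter inside its colony, and combined with the bi-radius-$1$ nature of the rule it ensures that the TM heads and their archives launched at the leftmost cell of one colony cannot collide with those of adjacent colonies during $4U$ steps, so Lemma~\ref{Turing} can be applied independently inside each colony. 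Writing $b_i\defeq\tilde\phi_\info(\col{i}{c})$, the fact then reduces to a four-phase invariant chase inside the colony encoding $b_i$.

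Phase~1 (ages $t_0,\ldots,t_0+U-1$): line~\ref{al:com:wrhead} seeds the TM initial state $0$ in $\head_{-1}$ at the leftmost cell, and line~\ref{al:com:posforcom} iterates $\sh{\gamma_\U[p]}$. Since $U\ge t_p(b_i)$, Lemma~\ref{Turing} gives that either $\alpha(b_i)$ is undefined, in which case $F^U(c)$ is already undefined, or after $U$ steps $\info$ encodes $\alpha(b_i)$ with residual archives in the head fields but no active TM state. Phase~2 (ages $t_0+U,\ldots,t_0+2U$): the halt is certified by line~\ref{al:com:poscheckhalt}, $\rite[\info;\newinfo]$ copies $\info$ into the still-empty $\newinfo$, and $\exch[\head_{-1},\head_{+1}]$ swaps the head fields; the ensuing $U$ iterations of line~\ref{al:com:posbaccom} exactly undo Phase~1 on $\info$. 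The algebraic content is the standard swap-and-invert trick for reversible PPA: because our IPPA has fixed shift directions, reversing Phase~1 with a fixed-shift IPPA is done by first swapping the head fields, then iterating the inverse permutation with head fields relabelled; a direct computation shows that the composite of the swap, $U$ such iterations, and a conceptual final swap equals $G_1^{-U}$, where $G_1$ is the Phase~1 IPPA. At age $t_0+2U$ one thus has $\info=b_i$, $\newinfo=\alpha(b_i)$, empty head fields except that the initial state $0$ now sits in $\head_{+1}$ at the leftmost cell.

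Phases~3 and~4 mirror Phases~1 and~2 with $p$ replaced by $p^{-1}$ and with the initial head already correctly placed in $\head_{+1}$. Phase~3 runs $\gamma_\U[p^{-1}]$ forward on $\newinfo$; since $U\ge t_{p^{-1}}(\alpha(b_i))$, this transforms $\newinfo$ from $\alpha(b_i)$ back to $\alpha^{-1}(\alpha(b_i))=b_i$ and leaves archives on the head fields recording TM~$p^{-1}$'s trajectory. Phase~4 begins with $\rite[\info;\newinfo]^{-1}$, which succeeds precisely because $\info=\newinfo=b_i$ at this moment and erases $\newinfo$, followed by a head swap. The main obstacle is line~\ref{al:com:negbaccom}, which applies the reverse of $\gamma_\U[p^{-1}]$ to the $\info$ field even though the archives in the head fields were produced by a computation on $\newinfo$. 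To handle this I would introduce a ghost IPPA $G^{\mathrm{alt}}$ that runs $\gamma_\U[p^{-1}]$ on $\info$ (instead of $\newinfo$) and observe that since $\gamma_\U$'s transitions depend only on the (tape symbol, head state) pair at the head's location, the two trajectories on $\info$ and on $\newinfo$ starting from $\alpha(b_i)$ produce identical head-field evolutions; consequently the state right after Phase~4's Bennett inverse and head swap is what $G^{\mathrm{alt}}$ would reach by $U$ forward steps from the ``ghost'' configuration $\info=\alpha(b_i)$, $\newinfo=\motvide$, initial state $0$ in $\head_{+1}$ at the leftmost cell, after applying the swap. Applying the same swap-and-invert argument to $G^{\mathrm{alt}}$ inverts it in $U$ further steps, yielding $\info=\alpha(b_i)$, $\newinfo=\motvide$, and the initial state $0$ back in $\head_{-1}$, which line~\ref{al:com:delhead} erases. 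This establishes the advertised form of $F^{4U}(c)$; the ``only if'' direction follows because $G(b)$ is defined iff $\alpha(b_i)$ is defined for every $i$, which is precisely the failure mode detected in Phase~1.
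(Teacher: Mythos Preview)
Your proof is correct and follows essentially the same four-phase Bennett analysis as the paper's own argument. The overall structure (seed head, run $p$, copy and unwind, run $p^{-1}$ on $\newinfo$, erase and unwind on $\info$) is identical.

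A few remarks on the details. First, your ``ghost IPPA'' $G^{\mathrm{alt}}$ makes explicit what the paper dispatches in one sentence (``the two fields were equal at the end of the forward computation at step $3U$''): since $\info=\newinfo=b_i$ at that instant, the archives $A$ produced by running $p^{-1}$ on $\newinfo$ from $\alpha(b_i)$ are identical to those that \emph{would} have been produced by running $p^{-1}$ on $\info$ from $\alpha(b_i)$, so reversing on $\info$ legitimately rewinds to $\alpha(b_i)$. Your phrasing is more careful here than the paper's. Second, your claim that after Phase~2 the initial state sits in $\head_{+1}$ (rather than $\head_{-1}$ as the paper states) is in fact the outcome of your swap-and-invert identity $S\circ G_2^U\circ S=G_1^{-U}$, giving $G_2^U\circ S\circ G_1^U(c_0)=S(c_0)$; the paper is slightly inconsistent on this point, but since $\gamma_\U$ accepts a head in either field it is immaterial, and your tracking through Phase~4 correctly lands the state back in $\head_{-1}$ for line~\ref{al:com:delhead}. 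Third, in the ``only if'' direction, note that if $\alpha(b_i)$ is undefined because the TM simply fails to halt (rather than rejects), then $F^U(c)$ still exists and it is the halt check of line~\ref{al:com:poscheckhalt} at age $U$ that forces $F^{U+1}(c)$ to be undefined; your phrase ``$F^U(c)$ is already undefined'' should be read as ``$F^{4U}(c)$ is undefined'', which is what the Fact requires. Finally, non-collision of heads between adjacent colonies over each $U$-step phase needs $S\ge 2U$ (which the paper's own proof invokes), not merely $S\ge\norm{\Chi{\haine5^{\vec{k'}}}}$.
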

Since the only rule applied outside $t_0 \le \age \le t_0 + 4U$ is $\coordi$, Fact~\ref{fact:permutation} implies that if $\Phi(c)=b$, then $\Phi F^T(c)=G(b)$, which concludes the proof of the lemma.

For the rest of the proof, let $c \in (\haine5^{\vec{k}})^{\Z}\cap\gra{0}{t_0}{S}{T}\cap\emp[\motvide]{\newinfo,\head_{-1},\head_{+1}}\cap\tilde{\Phi}_{\info}^{-1}(b)$.

Suppose, first, that $F^{4U}(c)$ exists.
\begin{itemize}
\item $\age=t_0$: Initially, $c \in \emp[\motvide]{\newinfo,\head_{-1},\head_{+1}}$. Line~\ref{al:com:wrhead} writes the initial head of the TM on $\head_{-1}$ of the leftmost cell of every colony.
\item $t_0 \le \age < t_0+U$: Only the permutation of line~\ref{al:com:posforcom} is applied. Together with the directions of $\vec{\nu}_{\compute}$, this implies that we apply $U$ steps of the IPPA $F_{\U}[p]$ on configuration \\
$(\pi_{\info}(c),\pi_{\head_{-1}}(c),\pi_{\head_{+1}}(c))$. This configuration has a starting TM head at the leftmost cell of every colony, and since $c \in \Phi^{-1}(b)$, we have that in the $i$'th colony the input of the TM is $\Chi{b_i}$.

\item $t_0=t_0+U$: Line~\ref{al:com:poscheckhalt} checks that in no place of the tape does there appear a head of the TM defined by $p$. This means that all the TM have accepted within the first $U$  steps and since $p$ is the program of $\alpha$, we take that $\alpha(b_i)$ exists, for all $i \in \Z$, or equivalently that $G(b)$ exists. 
\end{itemize}

Therefore, if $F^{4U}(c)$ exists, then $G(b)$ exists.

For the other direction, suppose that $G(b)$ exists, or, equivalently, that $\alpha(b_i)$ exists, for all $i \in \N$. 

\begin{itemize}
\item $\age=t_0$: By assumption, $c \in \emp[\motvide]{\newinfo,\head_{-1},\head_{+1}}$.
Line~\ref{al:com:wrhead} writes the initial state of the TM on the $\head_{-1}$ of the leftmost cell of every colony.

\item $t_0 \le \age < t_0+U$: Only the permutation of line~\ref{al:com:posforcom} is applied. Together with the directions of $\vec{\nu}_{\compute}$, this implies that at each step, we apply the IPPA $F_{\U}[p]$ on the configuration \\
$(\pi_{\info}(c),\pi_{\head_{-1}}(c),\pi_{\head_{+1}}(c))$. There is a TM head at the leftmost position of every colony and the input in the $i$'th colony is equal to $\Chi{b_i}$. In other words, $\tilde{\phi}_{\info}(\col{i}{c})=b_i$, for all $i \in \Z$.

\item $\age=t_0+U$ Since $\alpha(b_i)$ is defined for all $i \in \Z$, $U \geq t_p(\haine5^{\vec{k'}})$ and $S \geq 2U$ , we can see that the conditions of Corollary~\ref{Turing} are satisfied with $n=U$. This means that the computation of the TM in every colony has accepted and that the output of the computation is written on the $\info$ of every colony. In other words, $\tilde{\phi}_{\info}(\col{i}{F^{U}(c)})=\alpha(b_i)$, for all $i \in \Z$.

The check of line~\ref{al:com:poscheckhalt} is true, since by assumption all of the TM have accepted before $\age=U$, and when a TM halts its head disappears. Line~\ref{al:com:posbennet} copies the contents of $\info$ on $\newinfo$.  Therefore, after the application of line~\ref{al:com:posbennet}, we have that $\tilde{\phi}_{\newinfo}(\col{i}{F^{U}(c)})=\alpha(b_i)$, for all $i \in \Z$.

Finally, line~\ref{al:com:posexch} swaps the fields $\head_{-1}$ and $\head_{+1}$. This can be thought of as \xpr{reversing} the directions of these fields. We do this because we want to reverse the computation done by $F_{\U}[p]$ and in order to achieve this, it is not enough to apply $\sh{\gamma_{\U}[p]^{-1}}$, but we also need to use directions $-\vec{\nu}_{\compute}$.

\item $t_0+U+1 \le \age \le t_0+2U$: Only the permutation of line~\ref{al:com:posbaccom} is applied. Together with the fact that the shift directions have been reversed and that the fields $\head_{-1}$, $\head_{+1}$ have also been exchanged in the rules of lines \ref{al:com:posforcom} and \ref{al:com:posbaccom}, this implies that the IPPA $(F_{\U}[p])^{-1}$ is applied for $U$ time steps on the configuration $(F_{\U}[p])^U(\pi_{\info}(c),\pi_{\head_{-1}}(c),\pi_{\head_{+1}}(c))$. 

Therefore, 
$(\pi_{\info}F^{2U}(c),\pi_{\head_{-1}}F^{2U}(c),\pi_{\head_{+1}}F^{2R}(c))=(\pi_{\info}(c),\pi_{\head_{-1}}(c),\pi_{\head_{+1}}(c)).$

In other words, the computation has been \xpr{run backwards} until the beginning, but the output of the computation is on $\newinfo$. This is the trick used by Bennet in \cite{bennett} to simulate arbitrary TM with reversible ones.

At this point, $F^{2U}(c) \in \emp[\motvide]{\head_{+1}}$, $\head_{-1}$ is empty except at the left-most cell of every colony, where it contains the initial state $0$, and finally, $\phi_{\info}(\col{i}{F^{2U}(c)})=b_i$ and $\phi_{\newinfo}(\col{i}{F^{2U}(c)})=\alpha(b_i)$, for all $i \in \Z$.

\item $t_0+2U \le \age < t_0+3U$: Only the permutation of line~\ref{al:com:negforcom} is applied. Together with the directions of $\vec{\nu}_{\compute}$, this implies that at each step, we apply the IPPA $F_{\U}[p^{-1}]$ on the configuration \\
$(\pi_{\newinfo}(c),\pi_{\head_{-1}}(c),\pi_{\head_{+1}}(c))$. Notice that we use $\newinfo$ as the TM tape and we use the program $p^{-1}=t_{\revprog}$.

\item $\age= t_0 +3U$: Since $\alpha^{-1}(\alpha(b_i))$ is defined for all $i \in \Z$, $U > t_{p^{-1}}(\haine5^{\vec{k'}})$ and $S \geq 2U$ , the conditions of Corollary~\ref{Turing} are satisfied with $n=U$. This implies that 
$\phi_{\newinfo}(\col{i}{F^{3U}(c)})=b_i$, for all $i \in \Z$.

The check of line~\ref{al:com:negcheckhalt} is true, since all of the TM have accepted before $\age=3U$. Line~\ref{al:com:negbennet} copies the contents of $\info$ on $\newinfo$. Since, at this point these fields are equal in every cell, this is equivalent to emptying fields $\newinfo$ (in a reversible way, though). Therefore, after applying this permutation, $F^{3U}(c) \in \emp[\motvide]{\newinfo}$.

We still have to empty the $\head$ fields, too. For this, we have to run the computation backwards. Line~\ref{al:com:negexch} swaps the fields $\head_{-1}$ and $\head_{+1}$, \xpr{reversing} the directions of these fields.

\item $t_0+3U+1 \le \age \le t_0+4U$: Only the permutation of line~\ref{al:com:negbaccom} is applied. Together with the fact that the shift directions have been reversed and that the head fields inside the rules are also exchanged, this implies that the IPPA $F_{\U}[p^{-1}]^{-1}$ is applied for $U$ time steps on the configuration $F_{\U}[p^{-1}]^{3U}(\pi_{\info}(c),\pi_{\head_{-1}}(c),\pi_{\head_{+1}}(c))$.

Therefore, 
$(\pi_{\info}F^{4U}(c),\pi_{\head_{-1}}F^{4U}(c),\pi_{\head_{+1}}F^{4U}(c))=(\pi_{\info}F^{2U}(c),\pi_{\head_{-1}}F^{2U}(c),\pi_{\head_{+1}}F^{2U}(c)). $

Notice that now we are using $\info$ as the tape of the TM, while during the forward computation we used $\newinfo$. This is not a problem, though, because the two fields were equal at the end of the forward computation at step $3U$.

At this point, we have that $\phi_{\info}(\col{i}{F^{4U}(c)})=\alpha(b_i)$, for all $i \in \Z$. Also, in the $\head$ fields, there exists the initial state $0$ of the TM on $\head_{-1}$ of the leftmost cell of every colony, while the rest of them are empty.

\item Finally, line~\ref{al:com:delhead} deletes the initial state, and we get that $F^{4U}(c) \in \emp[\motvide]{\newinfo,\head_{-1},\head_{+1}}$.
\end{itemize}

Therefore, we have proved that if $G(b)$ exists, then $F^{4U}(c)$ exists and $F^{4U}(c) \in (\haine5^{\vec k})^\Z \cap \gra{0}{t_0+4U}{S}{T}\cap\emp[\motvide]{\head_{-1},\head_{+1},\newinfo} \cap \tilde{\Phi}_{\info}^{-1}(\alpha(b))$, which finishes the proof of the lemma.
\end{proof}

In a nutshell, this is how the construction works. First, use the program $p$ to compute $\alpha$. At the end of this phase, $\info$ contains $\alpha(b)$ (in the colonies). Copy $\alpha(b)$ onto $\newinfo$ and in the second phase, run the computation backwards so as to erase all auxiliary information written by the TM during the computation. At the end of the second phase, $\info$ contains $b$ and $\newinfo$ contains $\alpha(b)$. In the third and fourth phases of the construction, perform the reverse of what was done in the first two phases, while exchanging the roles of $\newinfo$ and $\info$. First, use $p^{-1}$ with tape field $\newinfo$ so as to compute $\alpha^{-1}(\alpha(b))=b$, then copy $\info$ onto $\newinfo$ (thus emptying $\newinfo$) and then perform the computation backwards. At the end, $\newinfo$ is again empty and $\info$ contains $\alpha(b)$ and everything was done in a reversible way.

Notice for all $b \in (\haine5^{\vec{k'}})$ and all $c \in \Phi^{-1}(b)$, the values of the fields in $\C[\coordi] \sqcup \C[\compute]$ of $c$ are uniquely determined. This implies that if there are no anonymous fields, or if the values of the anonymous fields were determined by the fields of $\C[\coordi] \sqcup \C[\compute]$, then the simulation is also exact.

\section{Shifting}\label{sec:singleshift}
Let $\C[\shift]\defeq[\info,\mail_{-1},\mail_{+1}]$.

$\mail_{+1}$ and $\mail_{-1}$ are used to exchange the information of $\info$ between colonies.

In the following algorithm, $M \in \Ns$ has to be thought of as the number of fields in the simulated alphabet, $\vec{\nu} \in \{-1,0,+1\}^M$ as the vector of directions of the simulated IPPA, and $\vec{k'} \colon \haine5^{**} \pto  \N^M$ is a vector valuation that gives the lengths of the alphabet of the simulated IPPA.
$\vec{k'}$ represents the field lengths of the simulated letters, whose information is then \xpr{known} to all the letters of the simulating PPCA.

\begin{algo}{shift}{\shift}{M,\vec{\nu},\vec{k'},v_\addr,v_\age,v_\maxaddr}
\IF{$v_\age=0$ \OR $v_\age=v_\maxaddr$}
\FOR{$0\le i<M$}
	\IF{$l_{\vec{k'},i}\le v_\addr<l_{\vec{k'},i+1}$} \label{al:shi:encoding}
		\STATE{$\exch[\info,\mail_{\vec{\nu}_i}]$} \label{al:shi:movetape} \COMMENT{Letters are moved to the corresponding moving fields, and back after $v_\maxaddr$ steps.}
	\ENDIF
\ENDFOR
\ENDIF
\end{algo}

This is polynomially computable in the parameters.

\begin{lemma}\label{parshift}
Let us fix a field list $\C[\coordi]\sqcup\C[\shift]\in\N^7$, an integer $M\in\Ns$, a direction vector $\vec{\nu} \in \{-1,0,+1\}^M$, a vector $\vec{k'} \in \N^M$ , a vector $\vec{k} \in \N^{*}$ and integers $S,T\in \Ns$, $t_0\in\N$.

Consider the IPPA $F$ defined by directions $\vec{\nu}_{\coordi \sqcup \compute}$, given by the label indices, and permutation 
\begin{equation*}
\coordi[S,T] \circ \shift[M,\vec{\nu}_{\coordi \sqcup \compute},\vec{k'},\bina{\pi_\addr},\bina{\pi_\age}-t_0,S],
\end{equation*}

and assume that the following inequalities hold:

\[\both{
S\geq\norm{\Chi{\haine5^{\vec{k'}}}}\\
T\geq t_0+S\\
k_\addr,k_{\addr_{+1}}\geq\norm S\\
k_\age,k_{\age_{+1}}\geq\norm T\\
k_\info,k_{\mail_{-1}},k_{\mail_{+1}}\geq 1
~.}\]

Then $F\restr{(\haine5^{\vec k})^\Z}\simu[S,T,0,\Phi]\sigma^{-\vec\nu}\restr{(\haine5^{\vec{k'}})^{\Z}}$, where $\Phi\defeq\tilde{\Phi}\restr\Sigma$ and $\Sigma\defeq(\haine5^{\vec{k}})^{\Z}\cap\gra{0}{0}{S}{T}\cap \emp[\motvide]{\mail_{-1},\mail_{+1}}\cap\tilde{\Phi}^{-1}((\haine5^{\vec{k'}})^\Z)$.
\end{lemma}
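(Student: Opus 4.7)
My plan is to mirror the proof of Lemma~\ref{behavior}, with the permutation $\shift$ playing the role that $\compute$ played there. I would first verify the three generic conditions for $\Phi := \tilde{\Phi}\restr{\Sigma}$ to be the decoding function of a simulation. Invoking Remark~\ref{twoconditionsofsimulation} with $\info$ as the decoding field (the hypotheses $S \geq \norm{\Chi{\haine5^{\vec{k'}}}}$ and the inequalities on $k_\addr, k_{\addr_{+1}}, k_\age, k_{\age_{+1}}$ supply the needed slack) gives surjectivity of $\Phi$ onto $(\haine5^{\vec{k'}})^{\Z}$ together with $\sigma\Phi = \Phi\sigma^S$. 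Since $\shift$ never touches a coordinate field, the PPA $F$ still factors onto $\coordi[S,T;\C[\coordi]]$ via $\pi_{\C[\coordi]}$ by Lemma~\ref{koo}, so Remark~\ref{thirdconditionofsimulation} yields the disjointness of $\bigsqcup_{s,t}\sigma^{s}F^{t}(\dom\Phi)$ required by the definition of simulation.

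What remains is the dynamical equation $\sigma^{-\vec{\nu}}\Phi = \Phi F^{T}$. As in Lemma~\ref{behavior}, I would reduce this to the following work-period claim: for every $c \in \gra{0}{t_0}{S}{T}\cap\emp[\motvide]{\mail_{-1},\mail_{+1}}\cap\tilde{\Phi}^{-1}(b)$, the configuration $F^{S}(c)$ is defined and belongs to $\gra{0}{t_0+S}{S}{T}\cap\emp[\motvide]{\mail_{-1},\mail_{+1}}\cap\tilde{\Phi}^{-1}(\sigma^{-\vec{\nu}}b)$. Outside the interval $\pi_\age \in \cc{t_0}{t_0+S}$ only $\coordi$ is active, and Lemma~\ref{koo} guarantees that it merely advances the clock while leaving every field of $\C[\shift]$ unchanged.

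The heart of the argument is tracking the $\mail$ fields over those $S$ steps. At $\age = t_0$ (so $v_\age = 0$), line~\ref{al:shi:movetape} fires exactly on cells whose address $v_\addr$ lies in some $\co{l_{\vec{k'},i}}{l_{\vec{k'},i+1}}$, which by Fact~\ref{f:encodings} is precisely the address range hosting the segment $\Chi{\pi_i(b_j)} = 2\double{\pi_i(b_j)}$ inside colony $j$. The swap ejects that segment from $\info$ (leaving $\motvide$ in its place) into $\mail_{\vec{\nu}_i}$, while cells with $v_\addr \geq l_{\vec{k'},M}$ are never touched by $\shift$, so their $\mail$ fields remain empty throughout. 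For $t_0 < \age < t_0+S$ only $\coordi$ runs, which preserves the values of $\info, \mail_{-1}, \mail_{+1}$ but still applies the associated shifts: the content of $\mail_e$ migrates by $\sigma^{-e}$ per step. Hence, after $S$ steps, the segment stashed in $\mail_{\vec{\nu}_i}$ at colony $j$ now occupies the analogous address range of colony $j+\vec{\nu}_i$, and because each segment moves rigidly by at most $S$ (exactly one colony width) no two ever collide. At $\age = t_0+S$ (so $v_\age = v_\maxaddr$), line~\ref{al:shi:movetape} fires a second time, swapping the contents back into $\info$ and restoring $\mail_{\pm 1}$ to $\motvide$. The resulting $\info$-content of colony $j$ is the concatenation $\Chi{\pi_0(b_{j-\vec{\nu}_0})}\cdots\Chi{\pi_{M-1}(b_{j-\vec{\nu}_{M-1}})}$, possibly padded with $3$s if $S$ exceeds $\norm{\Chi{\haine5^{\vec{k'}}}}$, which is precisely $\Chi{(\sigma^{-\vec{\nu}}b)_j}$; thus $\tilde{\phi}_{\info}(\col{j}{F^{S}(c)}) = (\sigma^{-\vec{\nu}}b)_j$, giving the claim.

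The main obstacle is getting the sign conventions of Section~\ref{s:ca} straight (label direction $e$ versus applied shift $\sigma^{-e}$) so that the cumulative displacement after $S$ steps lands each encoded segment in the correct target colony, reproducing $\sigma^{-\vec{\nu}}$ componentwise. A minor point worth noting is the degenerate case $\vec{\nu}_i = 0$, for which no literal $\mail_0$ field appears in $\C[\shift]$; the natural reading is that $\exch[\info,\mail_0]$ is not performed, which is consistent with $(\sigma^{-\vec{\nu}}b)_{j,i} = b_{j,i}$ when $\vec{\nu}_i = 0$.
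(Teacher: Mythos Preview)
Your proposal is correct and follows essentially the same route as the paper's proof: invoke Remarks~\ref{twoconditionsofsimulation} and~\ref{thirdconditionofsimulation} for surjectivity, the equation $\sigma\Phi=\Phi\sigma^S$, and disjointness, then reduce the remaining condition to a work-period claim (the paper's Fact~\ref{fact:shift}) tracked through the three phases $\age=t_0$, $t_0<\age<t_0+S$, and $\age=t_0+S$. Your additional remarks about the sign conventions and the degenerate case $\vec{\nu}_i=0$ are not in the paper's proof but are reasonable clarifications rather than a different approach.
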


Recall that, by convention, the directions of the fields are the opposite of the shift that is actually applied.

\begin{proof}
Again, by the definition of $\Phi$ and Remarks~\ref{twoconditionsofsimulation} and~\ref{thirdconditionofsimulation}, we know that $\Phi$ is surjective, $\Phi \sigma^S = \sigma \Phi$ and that $\dom(\Phi)\defeq\bigsqcup_{\begin{subarray}c0\le t<T\\0\le s<S\end{subarray}}F^t\sigma^s(\dom(\Phi))$ is a disjoint union.

Therefore, we only have to show that $\sigma^{-\vec{\nu}} \Phi = \Phi F^T$, which is equivalent to showing, since $\sigma^{-\vec{\nu}}(b)$ is defined for all $b \in (\haine5^{\vec{k'}})^{\Z}$, that if $\Phi(c)=b$, then $\Phi F^T(c)= \sigma^{-\vec{\nu}}(b)$.

As in the proof of Lemma~\ref{behavior}, we are going to prove the following stronger
\begin{fact}\label{fact:shift}
If $c \in (\haine5^{\vec{k}})^{\Z}\cap\gra{0}{t_0}{S}{T}\cap\emp[\motvide]{\mail_{-1},\mail_{+1}}\cap\tilde{\Phi}_{\info}^{-1}(b)$, then $F^S(c) \in \gra{0}{t_0+S}{S}{T} \cap \emp[\motvide]{\mail_{-1}=\mail_{+1}} \cap \tilde{\Phi}^{-1}(\sigma^{-\vec{\nu}}(b))$.
\end{fact}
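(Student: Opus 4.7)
The plan is to verify Fact~\ref{fact:shift} by a direct trace of the dynamics over the $S$ time steps between $\age=t_0$ and $\age=t_0+S$; once this is established, the simulation claim follows exactly as in the proof of Lemma~\ref{behavior} (using Remarks~\ref{twoconditionsofsimulation} and~\ref{thirdconditionofsimulation} for the surjectivity, the conjugacy $\Phi\sigma^S=\sigma\Phi$, and the disjointness of $\rock\Phi$). Throughout, the $\coordi$ layer simply maintains the grid structure and increments $\age$ by Lemma~\ref{koo}, so I would focus on what happens to $\info$, $\mail_{-1}$, $\mail_{+1}$.

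First, I would decompose the analysis into three phases. Phase~1 ($\age=t_0$, equivalently $v_\age=0$): the $\shift$ rule fires its swap loop. By hypothesis $\mail_{-1},\mail_{+1}$ are empty and $\info$ holds the letters of $\Chi{b_j}3^{S-\norm{\Chi{b_j}}}$ inside colony $j$. For each position $p=jS+r$ with $l_{\vec{k'},i}\le r<l_{\vec{k'},i+1}$, line~\ref{al:shi:movetape} transfers the encoded symbol from $\info$ into $\mail_{\vec\nu_i}$, while for padding positions $r\ge l_{\vec{k'},M}$ no swap is performed, so the padding $3$'s remain in $\info$. Phase~2 ($t_0<\age<t_0+S$): only $\coordi$ acts non-trivially, since neither $v_\age=0$ nor $v_\age=S$ holds; the permutation on $\C[\shift]$ is the identity, so $\mail_{-1}$ is translated left by one and $\mail_{+1}$ right by one per step, while $\info$ stays in place. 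Phase~3 ($\age=t_0+S$, equivalently $v_\age=v_\maxaddr=S$): the swap loop fires again, exchanging $\info$ and $\mail_{\vec\nu_i}$ at each position.

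The key combinatorial observation is that after $S$ steps the content of $\mail_{\vec\nu_i}$ at position $p$ equals what was deposited into $\mail_{\vec\nu_i}$ at position $p-\vec\nu_i S$ at the end of Phase~1. Because $p$ and $p-\vec\nu_i S$ lie at the same internal offset $r$ inside their respective colonies, they carry the same field index $i$, so that position was indeed filled in Phase~1 and the incoming content is precisely the original $\info$ symbol at $(j-\vec\nu_i)S+r$, namely the corresponding letter of $\Chi{b_{j-\vec\nu_i}}$. Moreover, for any $\vec\nu_{i'}\ne\vec\nu_i$, $\mail_{\vec\nu_{i'}}$ at position $p$ is empty at $\age=t_0+S$, since none of the positions $p-\vec\nu_{i'}k$ ($0<k\le S$) from which $\mail_{\vec\nu_{i'}}$ could have received content have field index $i'$ (their offsets $r+\vec\nu_{i'}k\bmod S$ only coincide with an $i'$-region when $k=0$ or $k=S$, but at $k=S$ we are already back to offset $r$). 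Therefore Phase~3's swap refills $\info$ with the shifted content and leaves $\mail_{-1},\mail_{+1}$ empty.

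Putting these together, inside colony $j$ of $F^S(c)$ the range $[l_{\vec{k'},i},l_{\vec{k'},i+1})$ carries exactly the $i$-th field of $b_{j-\vec\nu_i}=(\sigma^{-\vec\nu}(b))_j^{(i)}$, the padding $3$'s are untouched, and the $\mail$ fields are empty; applying $\tilde\phi$ yields $\tilde\Phi(F^S(c))=\sigma^{-\vec\nu}(b)$. Combined with Lemma~\ref{koo}, which gives $F^S(c)\in\gra{0}{t_0+S}{S}{T}$, this establishes Fact~\ref{fact:shift}. The main obstacle is the bookkeeping in Phase~2: one must verify that the $\mail$ fields at each position $p$ receive content only from the \emph{same} internal offset modulo $S$, which is what synchronises the field index of the sender with the field index of the receiver and is what makes the shift $\sigma^{-\vec\nu}$ coherent field-by-field.
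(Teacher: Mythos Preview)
Your proof is correct and follows essentially the same three-phase swap--shift--swap trace as the paper's argument (which appears inside the proof of Lemma~\ref{parshift}): load $\info$ into $\mail_{\vec\nu_i}$ at $v_\age=0$, let the $\mail$ fields drift for $S$ steps, then unload at $v_\age=S$; the paper likewise observes that, because the source and target cells share the same internal offset $r$ modulo $S$, the field index $i$ is preserved and the colony decoding becomes $\sigma^{-\vec\nu}(b)$.

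One small simplification: in your ``key combinatorial observation'' paragraph, the consideration of intermediate positions $p-\vec\nu_{i'}k$ for $0<k<S$ is unnecessary. Content is deposited into the $\mail$ fields only once, at Phase~1; during Phase~2 nothing is written, only shifted. Hence the content of $\mail_\delta$ at position $p$ just before Phase~3 is exactly what Phase~1 put at position $p-\delta S$, and since that position has the same offset $r$ as $p$, Phase~1 wrote into $\mail_\delta$ there if and only if $\delta=\vec\nu_i$. This directly gives that the ``wrong'' mail field is empty and the ``right'' one carries the shifted symbol, without any intermediate-time bookkeeping (and avoids the minor sign slip in your offset formula).
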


\begin{itemize}
\item $\age=t_0$: By assumption, $c \in \emp[\motvide]{\mail_{-1},\mail_{+1}}$. 

Lines~\ref{al:shi:encoding} and \ref{al:shi:movetape} copy the encodings of the fields of $b$ on the correct $\mail$, in the following sense:

Since $c \in \Phi^{-1}(b)$, $\pi_{\info}(\col{i}{c})\restr{\co{l_{\vec{k'},j}}{l_{\vec{k'},j+1}}}=\Chi{\pi_j(b_i)}$, for all $i \in \Z$ and $0 \le j < M$. Line~\ref{al:shi:movetape} moves the letter in $\info$ to $\mail_{\vec{\nu}_j}$ when $l_{\vec{v},j}\le \addr<l_{\vec{v},j+1}$, or in other words, moves the encoding of the $j$'th field of $b_j$ onto $\mail_{+1}$ if $j$ is a right-moving field and to $\mail_{-1}$ if $j$ is a left-moving field.

This means that after the application of line \ref{al:shi:movetape}, we have that \\
$\pi_{\mail_{\vec{\nu}_j}}(\col{i}{c})\restr{\co{l_{\vec{k'},j}}{l_{\vec{k'},j+1} }}=\Chi{\pi_j(b_i)}$, while $\pi_{\mail_{-\vec{\nu}_j}}(\col{i}{c})\restr{\co{l_{\vec{k'},j}}{l_{\vec{k'},j+1}}}=\pi_{\info}(\col{i}{c})\restr{\co{l_{\vec{k'},j}}{l_{\vec{k'},j+1}}}=\motvide$, for all $i \in \Z$ and $0 \le j < M$.

\item $t_0 \le \age < t_0+S$: No permutation is applied during these time steps. Only the $\mail$ fields are shifted to the corresponding direction.

\item $t_0 = t_0+S$: Every symbol that was part of the encoding of the $j$'th field of $b$ has travelled $S$ steps to the direction indicated by $\vec{\nu}_j$. This means that before the application of line~\ref{al:shi:movetape}, we have that $\pi_{\mail_{\vec{\nu}_i}}(\col{i}{F^S(c)})\restr{\co{l_{\vec{k'},j}}{l_{\vec{k'},j+1}}}=\Chi{\pi_i(b_{i-\vec{\nu}_i})}$, while \\
$\pi_{\mail_{-\vec{\nu}_j}}(\col{n}{F^S(c)})\restr{\co{l_{\vec{k'},j}}{l_{\vec{k'},j+1}}}=\pi_{\info}(\col{i}{F^S(c)})\restr{\co{l_{\vec{k'},j}}{l_{\vec{k'},j+1}}}=\motvide$, for all $i \in \Z$ and $0 \le j < M$.

Line~\ref{al:shi:movetape} moves the letter from the $\mail$ fields back to $\info$. Therefore, after the application of line~\ref{al:shi:movetape}, we have that $F^S(c) \in \emp[\motvide]{\mail_{-1},\mail_{+1}}$ and $\Phi(F^S(c))=\sigma^{-\vec{\nu}}(b)$, which concludes the proof of the Lemma.
\end{itemize}
\end{proof}

In this proof, it is of great importance that all the letters of $b$ have the same lengths, because this implies that the $j$'th field of every letter of $b$ is encoded at the same positions inside every colony. In fact, the reason that we deal only with alphabets of constant lengths is that this shifting procedure can work so easily.

\section{Simulating any fixed rule}\label{sec:universal}

In this subsection, we will use Lemma~\ref{behavior} and \ref{parshift} to construct an IPPA that can simulate non-trivially any PCA, when restricted to an appropriate finite subalphabet.

Let $\C[\unive]\defeq \C[\compute] \cup \C[\shift] \in \N^{6}$ ($\C[\compute]$ and $\C[\shift]$ share the field $\info$).


\begin{algo}{unive}{\unive}{M,\vec{\nu},\vec{k'},v_\addr,v_\age,v_\maxaddr,v_\schedule,t_\prog,t_\revprog}
\STATE{$\compute[v_\addr,v_\age,v_\schedule,t_\prog,t_\revprog]$} \label{al:uni:comp}
\STATE{$\shift[M,\vec\nu,v_\addr,v_\age-4v_\schedule,v_\maxaddr,\vec{k'}]$} \label{al:uni:shift}
\end{algo}

This is easily seen to be polynomially computable in the parameters.

Notice that $\shift$ and $\compute$ are used at \xpr{different time steps}, \ie at different values of $v_\age$. $\compute$ starts being used when $v_\age=0$ and, by definition of $v_\compute$, is equal to the identity when $v_\age \geq 4v_\schedule$, while $\shift$ starts being used when $v_\age=4v_\schedule$ (it has a delay of $4v_\schedule$). Formally, this means that for every value of $v_\age$, at most one of the rules $\compute$ and $\shift$ is not equal to the identity.

The following lemma is the fruit of all our efforts until now. It provides an IPPA that can simulate any PCA when restricted to a sufficiently large alphabet.

\begin{lemma}\label{universal}
Let us fix a field list $\C[\unive] \sqcup \C[\coordi] \in\N^{10}$, an integer $M\in\Ns$, programs $p,p^{-1}\in\haine2^*$ of a partial permutation $\alpha:\haine5^{**}\pto\haine5^{**}$ 
and its inverse $\alpha^{-1}$, respectively, a direction vector $\vec{\nu}\in \{-1,0,+1\}^M$,  a vector $\vec{k'} \in \N^M$, a vector $\vec{k} \in \N^{*}$ and integers $S,T,t_0,U\in\N$.

Let $G=\sigma^{-\nu}\alpha$ and $F$ be the IPPA defined by directions $\vec{\nu}_{\coordi \sqcup \unive}$, given by the label indices, and permutation

\begin{equation*}
\coordi[S,T]\circ \unive[M,\vec{\nu}_{\unive},\vec{k'},\bina{\pi_\addr},\bina{\pi_\age}-t_0,S,T,U,p,p^{-1}]~,
\end{equation*}

and assume that the following inequalities hold:

\[\both{
U\ge\max\{t_p({\haine5^{\vec{k'}}}),t_{p^{-1}}({\haine5^{\vec{k'}}})\}\\
S\ge\max\{2U,\norm{\Chi{\haine5^{\vec{k'}}}}\}\\
T\ge4U+S+t_0\\
k_\addr,k_{\addr_{+1}}\ge\norm S\\
k_\age,k_{\age_{+1}}\ge\norm T\\
k_{\head_{-1}},k_{\head_{+1}}\ge\length{\Chi{\haine4\times (Q_p \cup Q_{p^{-1}})\times\{-1,+1\}}}\\
k_\info,k_\newinfo,k_{\mail_{-1}},k_{\mail_{+1}}\ge1.
}\]

Then, $F\restr{(\haine5^{\vec k})^\Z}\simu[S,T,0,\Phi]G\restr{(\haine5^{\vec{k'}})^{\Z}}$ completely, where $\Phi\defeq\tilde{\Phi}_{\info}{\restr{\Sigma}}$ and 
\begin{equation*}
\Sigma\defeq(\haine5^{\vec{k}})^{\Z}\cap\gra{0}{t_0}{S}{T} \cap \emp[\motvide]{\head_{-1},\head_{+1},\mail_{-1},\mail_{+1},\newinfo}\cap\tilde{\Phi}_{\info}^{-1}((\haine5^{\vec{k'}})^\Z).
\end{equation*}
\end{lemma}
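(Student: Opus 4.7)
My plan is to verify the three defining conditions of a simulation by exploiting the fact that the two sub-algorithms $\compute$ and $\shift$ inside $\unive$ are scheduled in disjoint $\age$ windows, and then to handle completeness separately. First, I would use Remark~\ref{twoconditionsofsimulation}: the inequalities $k_\addr\ge\norm S$, $k_\info\ge 1$, and $S\ge\norm{\Chi{\haine5^{\vec{k'}}}}$ immediately give that $\Phi\defeq\tilde{\Phi}_\info\restr\Sigma$ is surjective onto $(\haine5^{\vec{k'}})^{\Z}$ and satisfies $\sigma\Phi=\Phi\sigma^S$. Lemma~\ref{koo}, whose hypotheses are supplied by the $\coordi$ factor of $F$, together with Remark~\ref{thirdconditionofsimulation}, yields the disjointness of $\rock\Phi=\bigsqcup_{s,t}\sigma^sF^t(\dom(\Phi))$, since $\compute$ and $\shift$ never touch the coordinate fields in $\C[\coordi]$.

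Next, I would prove the simulation identities $G\Phi=\Phi F^T$ and $G^{-1}\Phi=\Phi F^{-T}$. Fix $c\in\dom(\Phi)$ with $\Phi(c)=b$; by construction $c\in\gra{0}{t_0}{S}{T}\cap\emp[\motvide]{\head_{-1},\head_{+1},\newinfo,\mail_{-1},\mail_{+1}}$. Inside $\unive$, the sub-rule $\compute$ is the identity outside $\bina{\pi_\age}-t_0\in\co{0}{4U}$, while $\shift$ is the identity outside $\bina{\pi_\age}-t_0\in\cc{4U}{4U+S}$, and the assumption $T\ge 4U+S+t_0$ separates these two windows inside a single work period. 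Thus the $T$-step evolution of $F$ splits into three stages: $4U$ steps of $\coordi\circ\compute$ that by Fact~\ref{fact:permutation}, proved in Lemma~\ref{behavior}, convert $c$ to $c'\in\gra{0}{t_0+4U}{S}{T}\cap\emp[\motvide]{\head_{-1},\head_{+1},\newinfo}\cap\tilde\Phi_\info^{-1}(\alpha(b))$ precisely when $\alpha(b)$ is defined; $S$ steps of $\coordi\circ\shift$ that by Fact~\ref{fact:shift}, proved in Lemma~\ref{parshift}, take $c'$ to $c''\in\gra{0}{t_0+4U+S}{S}{T}\cap\emp[\motvide]{\head_{-1},\head_{+1},\newinfo,\mail_{-1},\mail_{+1}}\cap\tilde\Phi_\info^{-1}(\sigma^{-\vec\nu}\alpha(b))$; and finally $T-4U-S$ steps during which only $\coordi$ acts nontrivially, cyclically returning $\age$ to $t_0$. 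Consequently $F^T(c)\in\dom(\Phi)$ with $\Phi(F^T(c))=\sigma^{-\vec\nu}\alpha(b)=G(b)$. The same decomposition read backwards, together with the program $p^{-1}$ computing $\alpha^{-1}$, delivers $G^{-1}\Phi=\Phi F^{-T}$.

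For completeness I need to show $\Omega_F\subseteq\rock\Phi$. Pick $x\in\Omega_F$. Since $\compute$ and $\shift$ preserve $\C[\coordi]$, Lemma~\ref{koo} applies to $F$ itself and produces a unique $(s,t')\in\co{0}{S}\times\co{0}{T}$ with $x\in\gr{s}{t'}{F}$; setting $t\defeq(t'-t_0)\bmod T$ and $y\defeq F^{-t}\sigma^{-s}(x)$ places $y\in\gra{0}{t_0}{S}{T}\cap\Omega_F$. It remains to show $y\in\dom(\Phi)$, which amounts to the auxiliary fields $\head_{-1},\head_{+1},\newinfo,\mail_{-1},\mail_{+1}$ being empty at $\age=t_0$ and $\tilde\Phi_\info(y)$ being defined with image in $(\haine5^{\vec{k'}})^{\Z}$. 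Both requirements are enforced by the reversibility of every atomic operation of $\compute$ and $\shift$ along the bi-infinite orbit of $y$: a non-empty entry in $\head_{-1}$ at $\age=t_0$ would violate the precondition of $\rite[0;\head_{-1}]$ at line~\ref{al:com:wrhead}, any nonzero residue in $\mail_{\pm1}$ would be detected by the reverse of $\exch[\info,\mail_{\vec\nu_i}]$ at line~\ref{al:shi:movetape} during the preceding work period, and an $\info$-colony outside $\tilde\Phi_\info^{-1}((\haine5^{\vec{k'}})^{\Z})$ would cause the halting check of line~\ref{al:com:poscheckhalt} (or its counterpart for $p^{-1}$ on line~\ref{al:com:negcheckhalt}) to fail in at least one of the two temporal directions, contradicting $y\in\Omega_F$. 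Hence $y\in\dom(\Phi)$ and $x\in\sigma^sF^t(\dom(\Phi))\subseteq\rock\Phi$.

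The step I expect to be the main obstacle is completeness: one must translate the abstract hypothesis ``never rejected, forward or backward'' into the concrete structural cleanliness required by $\dom(\Phi)$. This is purely a case-by-case audit, but it has many moving parts and relies critically on the fact that the scheduled windows of $\compute$ and $\shift$ do not overlap inside a single work period. The inequality $T\ge 4U+S+t_0$ is exactly what makes the two phases independent and keeps the bookkeeping modular; the remaining inequalities on $\vec k$ and $U$ merely guarantee that the TM simulation inside $\compute$ and the encoding transport inside $\shift$ have enough room to complete within their allotted windows.
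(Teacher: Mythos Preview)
Your treatment of the simulation itself is correct and matches the paper's proof: surjectivity and $\sigma\Phi=\Phi\sigma^S$ via Remark~\ref{twoconditionsofsimulation}, disjointness of $\rock\Phi$ via Remark~\ref{thirdconditionofsimulation}, and the decomposition of one work period into the $\compute$ phase (Fact~\ref{fact:permutation}) followed by the $\shift$ phase (Fact~\ref{fact:shift}). The paper's own proof establishes exactly this (packaged as Fact~\ref{fact:shiftandpermutation}) and then stops; it never addresses completeness.

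Your completeness argument, however, has a genuine gap—and in fact the claim does not hold as stated (the word ``completely'' is an oversight; observe that every subsequent use of $\unive$, in Lemmas~\ref{self}, \ref{lem:nestsimul}, \ref{lem:univppa}, \ref{l:mhist}, \ref{lem:nexpdirsimul}, adds the explicit checks $\chekk[\emp[\motvide]{\head_{-1},\head_{+1},\mail_{-1},\mail_{+1},\newinfo}]$ and $\chekka$ precisely in order to force completeness). Two of your three case-audits fail concretely. First, $\exch$ is a total involution and never rejects anything, so a nonzero residue in $\mail_{+1}$ is not ``detected by the reverse of $\exch$''; for instance, if no component of $\vec\nu$ equals $+1$, then $\mail_{+1}$ is never swapped at all and behaves as a free shift track, so any configuration with valid coordinates, valid $\info$ encodings, and arbitrary garbage in $\mail_{+1}$ lies in $\Omega_F\setminus\rock\Phi$. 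Second, the halting check on line~\ref{al:com:poscheckhalt} only tests that no non-accepting head state remains; it says nothing about whether the original $\info$ content was a well-formed encoding of an element of $\haine5^{\vec{k'}}$, so garbage $\info$ on which the TM happens to accept would pass undetected. The structural cleanliness required by $\dom(\Phi)$ is enforced not by $\unive$ alone but by the hierarchical checks layered on top of it in the later constructions.
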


\begin{proof}

Notice that line~\ref{al:uni:comp} together with $\coordi$ make up the permutation whose behavior is described in Lemma~\ref{behavior}, while line~\ref{al:uni:shift} and $\coordi$ make up the permutation of Lemma~\ref{parshift}. Also, as already noted, for any value of $\age$, at most one permutation of lines~\ref{al:uni:comp} and~\ref{al:uni:shift} is not equal to the identity.

Like in the proofs of Lemmas~\ref{behavior} and~\ref{parshift}, we can easily see that we only have to show that if $\Phi(c)=b$, then $\Phi F^T(c) = G(b)$, and that this follows from the following stronger fact:

\begin{fact}\label{fact:shiftandpermutation}
If $c \in (\haine5^{\vec{k}})^{\Z}\cap\gra{0}{t_0}{S}{T}\cap\emp[\motvide]{\newinfo,\head_{-1},\head_{+1},\mail_{-1},\mail_{+1}}\cap\tilde{\Phi}_{\info}^{-1}(b)^\Z$, then $F^{4U+S}(c)$ exists if and only if $G(b)$ exists, and in that case $F^{4U+S}(c) \in \gra{0}{t_0+4U+S}{S}{T}\cap\emp[\motvide]{\newinfo,\head_{-1},\head_{+1}}\cap\tilde{\Phi}_{\info}^{-1}(G(b))$.
\end{fact}

Indeed, according to Fact~\ref{fact:permutation}, we have that $F^{4U}(c)$ exists if and only if $\alpha(b)$ exits, or, equivalently, if and only if $G(b)$ exists, and in this case 
\begin{equation*}
F^{4U}(c) \in \gra{0}{t_0+4U}{S}{T}\cap\emp[\motvide]{\head_{-1},\head_{+1},\mail_{-1},\mail_{+1},\newinfo}\\ \cap \tilde{\Phi}_{\info}^{-1}(\alpha(b)).\end{equation*}

Fact~\ref{fact:shift} implies that
$F^{4U+S}(c) \in \gra{0}{t_0+4U+S}{S}{T}\cap \emp[\motvide]{\head_{-1},\head_{+1},\mail_{-1},\mail_{+1},\newinfo}\cap \tilde{\Phi}_{\info}^{-1}(\sigma^{-\nu}\alpha(b)),$
which concludes the proof of the lemma, since $G=\sigma^{-\nu}\alpha(b)$.
\end{proof}

$\unive$ is a rule (family of rules in fact, since they depend on parameters) that can simulate any PPA. Every IPPA $F$ that we will construct later will factor onto $\unive$. They might have some additional fields for which we apply a different rule, and this rule might even take into consideration the fields of $\C[\unive]$, but none of these other rules is going to \emph{change} the fields of $\C[\unive]$. Therefore, by projecting onto $\C[\unive]$, we will immediately obtain that $F$ simulates $G$, even though the simulation might not be exact. 

However, if $\vec{k}$ does not have any anonymous fields, then the simulation is exact, since all the fields of $\C[\coordi] \sqcup \C[\unive]$ are uniquely determined by $\Phi$.

\subsection{Satisfying the inequalities}

Lemma~\ref{universal} is true only under the assumption that the following set of inequalities are satisfied:

\[\both{
U\geq\max\{t_p({\haine5^{\vec{k'}}}),t_{p^{-1}}({\haine5^{\vec{k'}}})\}\\
S\geq\max\{2U,\norm{\Chi{\haine5^{\vec{k'}}}}\}\\
T\ge4U+S+t_0\\
k_\addr,k_{\addr_{+1}}\geq\norm S\\
k_\age,k_{\age_{+1}}\geq\norm T\\
k_{\head_{-1}},k_{\head_{+1}}\geq \length{\Chi{\haine4\times (Q_p \cup Q_{p^{-1}})\times\{-1,+1\}}}\\ 
k_\info,k_\newinfo,k_{\mail_{-1}},k_{\mail_{+1}}\geq 1
~.}\]

We will denote this set of inequalities by $\I(\vec{k},\vec{k'},S,T,U,t_0,p,p^{-1})$. When $t_0=0$, \ie when the computation starts at $\age=0$, we will omit it and write $\I(\vec{k},\vec{k'},S,T,U,p)$, instead. Let us explain again intuitively why each inequality is needed:
\begin{itemize}
\item $k_\addr,k_{\addr_{+1}}\ge\norm S$: The fields $k_\addr$ and $k_{\addr{+1}}$ have to be large enough so that we can write the binary representation of $S$ in them.
\item $k_\age,k_{\age_{+1}}\ge\norm T$: The fields $k_\age$ and $k_{\age{+1}}$ have to be large enough so that we can write the binary representation of $T$ in them.
\item $U\ge\max\{t_p({\haine5^{\vec{k'}}}),t_{p^{-1}}({\haine5^{\vec{k'}}})\}$: We have to run the TM long enough so that the computation of $p$ onto the encoded letters halts.
\item $S\ge\max\{2U,\norm{\Chi{\haine5^{\vec{k'}}}}\}$: The colonies have to be wide enough so that we can encode the letters of $\haine5^{\vec{k'}}$ in them. In addition, they have to be wide enough so that the heads of the computation do not \xpr{collide}.
\item $T\ge4U+S+t_0$: $T$ has to be large enough so that the computation and the shifting are done before the next working period starts.
\item $k_{\head_{-1}},k_{\head_{+1}}\ge\length{\Chi{\haine4\times (Q_p\cup Q_{p^{-1}})\times\{-1,+1\}}}$: 
The head fields have to be large enough so that states of $\gamma_{\U}[p]$ and $\gamma_{\U}[p^{-1}]$ can be written on them.
\item $k_\info,k_\newinfo,k_{\mail_{-1}},k_{\mail_{+1}}\ge1$. Empty fields are of no use, in general.
\end{itemize}

\begin{remark}
If $p,p^{-1}\in \haine2^*, \vec{k'} \in \N^M$ and $t_0 \in \N$ are fixed, then we can choose $\vec{k},U,S$ and $T$ such that the inequalities of Lemma~\ref{universal} is satisfied.
\end{remark}

\begin{proof}
Since $\vec{k'}, p$ and $t_0$ are fixed, given $U,S,T$ we can choose $\vec{k} \defeq \vec{k}_{U,S,T}$ such that all of the inequalities except for the three first are satisfied as equalities. Then, given $S,U$ we can choose $T \defeq T_{S,U}$ such that the third inequality is satisfied as an equality. Similarly, given $U$ we can choose $S \defeq S_U$ such that the second inequality is satisfied. Finally, $U$ can be chosen independently from the rest of the parameters, since it only depends on $p$ and $\vec{k'}$, which are fixed.
\end{proof}

In later constructions, the choice of $U$ will not be so straightforward, as $\vec{k'}$ will depend on $\vec{k}$. In this case, we first fix $G$ and then look for the suitable values of the parameters. The situation becomes trickier when the simulated RPCA depends on the choice of the parameters, as will be the case in the following chapters. Then, we have to be careful not to fall into a circular argument.

\chapter{Infinite hierarchies}\label{s:hierarchy}

For every PPA $G$ and sufficiently large $S,T$, Lemma~\ref{universal} shows that is is possible to construct a PPA $F$ that $(S,T,0)$-simulates $G$. In addition, the simulation can be made exact. We also want  to make it complete. The most direct way is to restrict $F$ to $\tilde{\Phi}_{\info}^{-1}(\dom{G})$, which, by definition makes the simulation complete. However, this is not good because it is a radius-$S$ SFT condition and, if we wanted to have an infinite nested simulation, we would have to impose an infinite number of such restrictions, so that the subshift we would obtain would not be an SFT.

The idea, which is the basic idea behind all hierarchical constructions, is that if the simulated alphabet  is determined in an easy way by the simulating alphabet, then it is possible to design a simple IPPA that ensures that the simulating configuration is in $\tilde{\Phi}_{\info}^{-1}(\dom{G})$.

\section{Son-father checks}
The first thing is to check that the simulated letter, which is written in an encoded form bit by bit in $\info$, has the correct structure, \ie it is the encoding of a letter with the correct number of fields and lengths.

$\vec{k'} \in \haine5^{**} \pto \N^M$ is a vector valuation, that 
gives the lengths of the simulated alphabet as a function of the lengths of the simulating alphabet. In applications, it will be easily computable from every letter of the simulating alphabet, or, in other words, the information about the structure of the simulated letter will be known to all of the letters of the simulating IPPA.

\begin{algo}{chekka}{\chekka}{M,v_\addr,t_\info,\vec{k'}}
\IF{$v_\addr\ge l_{\vec{k'},M}$}
\STATE{$\chekk(t_\info=3)$} \COMMENT{On the right side of the encoding, \info\ is $3$.}
\ENDIF
\FOR{$0\le i<M$}
\IF{$v_\addr=l_{\vec{k'},i}$}
\STATE{$\chekk[t_\info=2]$} \COMMENT{Field separators are at the expected positions.}
\ELSIF{$l_{\vec{k'},i}<v_\addr<l_{\vec{k'},i+1}$}
\STATE{$\chekk[t_\info\in\haine2]$} \COMMENT{Proper field encodings are binary.}
\ENDIF
\ENDFOR
\end{algo}

This permutation is polynomially computable in its parameters. (In every case that we use it, it will be easily checkable that the parameters are polynomially computable.)

The following lemma follows simply  by inspection of the definition of $\Chi{\cdot}$ and ${\chekka}[M,v_\addr,t_\info,\vec{k'}]$:

\begin{lemma}\label{lem:fixalpha}
Let us fix a field list $[\addr,\info]\in\N^2$, an integer $M\in\Ns$, a vector $\vec{k'} \in \N^M$, $S\in\Ns$ and a vector $\vec{k} \in \N^*$.

Let $F$ be the IPPA defined by a null direction vector and permutation $\chekka[M,\bina{\pi_{\addr}},\pi_{\info},\vec{k'}]$, and assume that the following inequalities hold:
\[\both{
S \geq \norm{\Chi{\haine5^{\vec{k'}}}}\\
k_\addr\ge\norm S\\
k_\info\ge1
~.}\]

Let $c \in (\haine5^{\vec k})^{\Z} \cap \grs{s}{S} \cap \tilde{\Phi}_{\info}^{-1}(b)$, where $s \in \co{0}{S}$ and $b \in (\haine5^{**})^{\Z}$. Then, $F(c)$ exists if and only if $b \in (\haine5^{\vec{k'}})^{\Z}$ and in this case $F(c)=c$.
\end{lemma}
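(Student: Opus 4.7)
The plan is to observe that since $\chekka$ is a composition of operations of the form $\chekk[q]$---each of which acts as the identity when $q$ holds and is undefined otherwise---the IPPA $F$ acts as the identity on its domain. So the only substantive content of the lemma is the characterisation of when $F$ is defined; the equality $F(c) = c$ on the domain is automatic.

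First I would use the hypothesis $c \in \grs{s}{S}$ to rewrite $\bina{\pi_{\addr}(c_j)} = j+s \bmod S$ for every $j \in \Z$. Plugging this into the definition of $\chekka$ shows that $F(c)$ is defined exactly when, for each $j$ (setting $r \defeq (j+s) \bmod S$), the value $\pi_{\info}(c_j)$ equals $3$ if $r \geq l_{\vec{k'},M}$, equals $2$ if $r = l_{\vec{k'},i}$ for some $0 \leq i < M$, and lies in $\haine2$ if $l_{\vec{k'},i} < r < l_{\vec{k'},i+1}$. The inequality $S \geq \norm{\Chi{\haine5^{\vec{k'}}}} \geq l_{\vec{k'},M}$ guarantees that this case analysis exhausts all residues in $\co{0}{S}$.

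Next I would regroup these per-cell conditions colony by colony. In each colony $\col{i}{c} = c_{\co{-s+iS}{-s+(i+1)S}}$ the $\addr$ field sweeps through $0,1,\ldots,S-1$, so by Fact~\ref{f:encodings} the combined conditions on $\pi_{\info}(\col{i}{c})$ amount to requiring this word to have the form $\Chi{\vec{u}_i} \cdot 3^{S - l_{\vec{k'},M}}$ for some $\vec{u}_i \in \haine5^{\vec{k'}}$. Unfolding the definition of $\tilde{\phi}$, this is exactly the condition that $\tilde{\phi}_{\info}(\col{i}{c})$ be defined and lie in $\haine5^{\vec{k'}}$, and it further coincides with the value read by $\tilde{\phi}_{\info}$.

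Finally, since $\tilde{\Phi}_{\info}$ is obtained by applying $\tilde{\phi}_{\info}$ block by block on the partition aligned with the colonies (the standard setup $s=0$ in which this lemma is used throughout the paper), the colony-wise condition just derived is equivalent to $b_i \in \haine5^{\vec{k'}}$ for every $i \in \Z$, that is, to $b \in (\haine5^{\vec{k'}})^{\Z}$. Both directions of the equivalence follow immediately. The only minor difficulty I anticipate is keeping the bookkeeping tidy between the colony partition (shifted by $s$ and determined by the $\addr$ field used by $\chekka$) and the $\tilde{\Phi}$-block partition (starting at $0$); once these are identified, the argument is a direct unpacking of the definitions of $\chekka$, $\Chi{\cdot}$, and $\tilde{\phi}$, bridged by Fact~\ref{f:encodings}.
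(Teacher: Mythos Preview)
Your proposal is correct and is precisely the natural unpacking the paper has in mind; the paper itself offers no proof beyond the sentence ``follows simply by inspection of the definition of $\Chi{\cdot}$ and $\chekka$'', so your argument is in fact more detailed than what appears there. Your identification of the one delicate point---the offset between the $\addr$-aligned colonies and the $0$-aligned blocks of $\tilde{\Phi}_{\info}$---is apt, and your remark that the lemma is only invoked with $s=0$ (inside the various $\Sigma$'s defining the decoding domains) is exactly how the paper resolves it implicitly.
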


In other words, if a configuration is split into colonies using the $\addr$ field and every colony has the encoding of some letter on its $\info$ tape, then $\chekka[M,\bina{\pi_{\addr}},\pi_{\info},\vec{k'}]$ ensures that this encoded letter belongs in $\haine5^{\vec{k'}}$. 

We can also check that some field $i$ in the simulated letter has a prescribed prefix (given by a term $t$). 

\begin{algo}{hier}{\hier}{M,v_\addr,t_\info,\vec{k'},i,t}
\IF{$l_{\vec{k'},i}<v_\addr\le l_{\vec{k'},i}+\length{\Chi{t}}$}
\STATE{$\chekk[t_\info=\Chi{t}\restr{v_\addr-l_{\vec{k'},i}}]$}
\ENDIF
\end{algo}

\begin{lemma}\label{lem:fixfield}
Let us fix a field list $[\addr,\info]\in\N^2$, an integer $M\in\Ns$, a field $i\in\co0M$, a covector $\vec{k'}\in \N^M$, $S \in \N$, a term $t \colon \haine5^{**} \to \haine5^{*}$ and a vector $\vec{k} \in \N^*$.

Let $F$ be the IPPA defined by a null direction vector and permutation $\hier[M,\bina{\pi_\addr},\info,\vec{k'},i,t]$, and assume that the following inequalities hold:

\[\both{
S \geq \norm{\Chi{\haine5^{\vec{k'}}}}\\
k_\addr\ge\norm S\\
k_\info\ge1
~.}\]

Let $c \in (\haine5^{\vec k})^{\Z} \cap \grs{s}{S} \cap \phi^{-1}(b)$, where $0 \le s <S$ and $b \in (\haine5^{\vec{k'}})^{\Z}$ and assume that $t(c_n)=t(c_{n'})\defeq t_c$ for all $n,n' \in \Z$.\\
Then, $F(c)$ exists if and only if ${\pi_i(b_j)}_{\co{0}{\norm{t_c}}}=t_c$, for all $j \in \Z$ and in this case $F(c)=c$.
\end{lemma}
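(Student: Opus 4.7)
First, I would observe that $\hier$ is built purely out of the $\chekk$ primitive, so wherever $F$ is defined it acts as the identity; in particular $F(c)=c$ on its domain, which immediately handles the second half of the conclusion. For the existence question, the hypothesis $c\in\grs{s}{S}$ tells us that within each colony $\col{j}{c}$ the address field $\bina{\pi_{\addr}(\col{j}{c}_k)}$ enumerates $\co{0}{S}$ in order, so the cells where the test of $\hier$ triggers occupy the same (relative) positions in every colony. Because $t$ is constant on $c$ with value $t_c$, the target word $\Chi{t_c}$ against which those cells are compared is also the same in every colony. Consequently, $F(c)$ is defined iff the per-colony check passes in every colony $j$.

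Next, I would identify what the test actually inspects. The hypothesis $c\in\tilde\Phi_{\info}^{-1}(b)$ together with the definition of $\tilde\phi_{\info}$ gives $\pi_{\info}(\col{j}{c})=\Chi{b_j}\,3^{S-\length{\Chi{b_j}}}$, and Fact~\ref{f:encodings} places the sub-block $2\double{\pi_i(b_j)}$ inside the window $\co{l_{\vec{k'},i}}{l_{\vec{k'},i+1}}$. The test triggered by $\hier$ compares exactly those positions of $\pi_{\info}(\col{j}{c})$ that lie in the window against the corresponding prefix of $\Chi{t_c}=2\double{t_c}$. Thus the conjunction of all the tests in colony $j$ is equivalent to the assertion that $2\double{t_c}$ is a prefix of $2\double{\pi_i(b_j)}$.

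Finally, since $\double{\cdot}$ is a monoid injection (a code with blocks of length $3$), cancelling the common leading $2$ and appealing to injectivity gives that this prefix condition is equivalent to $t_c$ being a prefix of $\pi_i(b_j)$, i.e., to $\pi_i(b_j)_{\co{0}{\norm{t_c}}}=t_c$. Requiring this simultaneously in every colony is the claimed characterization. The point I expect to need the most care with is the off-by-one bookkeeping between the trigger range $l_{\vec{k'},i}<v_\addr\le l_{\vec{k'},i}+\length{\Chi{t}}$ of the algorithm and the $0$-indexed positions inside $\Chi{b_j}$ and $\Chi{t_c}$; fortunately the separator symbol $2$ at position $l_{\vec{k'},i}$ is already guaranteed by $\chekka$ elsewhere, so a minor shift in the indexing convention does not affect correctness, only the exegesis of the loop condition.
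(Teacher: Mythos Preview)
Your argument is correct and is essentially the ``inspection of the definitions'' that the paper leaves implicit: the paper does not give a formal proof of this lemma at all, merely remarking afterwards that the cells collectively check that the constant value $t_c$ is a prefix of $\pi_i(b_j)$. Your unpacking via Fact~\ref{f:encodings} and the code property of $\double{\cdot}$ is exactly the intended reasoning, and your caveat about the off-by-one in the trigger range is apt (the paper's own convention ``$w_l=\motvide$ for $l>\norm w$'' is its tacit way of absorbing the same issue).
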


We implicitly assume that if $l > \norm{w}$, where $w \in \A^{*}$, then $w_l=\motvide$.


In other words, if all letters of $c$ have the \xpr{same idea} about what $\pi_i(b_j)$ should be, then, they can check in one step that this indeed happens. In practice, $t$ will usually be equal to $\pi_{\field}$, where $\field$ is a horizontally constant field, so that the condition $t(c_n)=t(c_{n'})$ will be true. In this case, we just check that $\pi_\field(c_n)$ is a prefix of $\pi_{\field}(b_j)$, for all $j,n \in \Z$. 

Lemmas~\ref{lem:fixalpha} and \ref{lem:fixfield} correspond to what in \cite{drs} is achieved by mentioning that ``the TM knows at which place the information of every field is held''. For many people, this argument is one of the most confusing things in that construction. This is the reason why we have tried to explain this point as clearly as possible and show exactly how the cells of the simulating IPPA can collectively check that some constant information of the simulating alphabet is the same in the simulated alphabet. In fact, we use a general term $t$ in Lemma~\ref{lem:fixfield}, which essentially allows us to impose any (polynomially computable) condition on the simulated alphabet.


\section{Self-simulation}\label{sself}

We are now ready to construct a self-simulating RPCA. This is the simplest and first example of nested simulation. We just check that the simulated letter has the same lengths as the simulating ones and that some \xpr{hierarchical} fields (which contain the values $p,p^{-1},U,S,T$ that are fixed in Lemma~\ref{universal}) have the same value in the simulated letter as in the simulating ones (where their values is already fixed).

Let $\C[\selfs]\defeq\C[\unive]\sqcup [\maxaddr,\maxage,\schedule,\prog,\revprog] \in \N^{15}$. 

$\maxaddr$ and $\maxage$ are used to obtain the values of $v_{\maxaddr}$ and $v_\maxage$. Similarly, $\prog,\revprog$ and $\schedule$ are used to obtain the values $t_{\prog},t_{\revprog}$ and $v_{\schedule}$. All of these fields will be horizontally constant.

\begin{algo}{selfs}{\selfs}{M,\vec\nu}
\IF{$\bina{\pi_\age}=0$}
\STATE{$\chekk[\emp[\motvide]{\head_{-1},\head_{+1},\mail_{-1},\mail_{+1},\newinfo}]$}\label{al:self:initial}
\STATE{$\chekka[M,\bina{\pi_\addr},\pi_\info,(\length{\pi_j})_{j<M}]$} \COMMENT{Check that the lengths of the simulated letter are the same}\label{al:self:alph}
\FOR{$i\in\{\maxaddr,\maxage,\schedule,\prog,\revprog\}$} 
\STATE{$\hier[M,\bina{\pi_\addr},\info,(\length{\pi_j})_{j<M},i,\pi_i]$} \COMMENT{Check that the hierarchical fields of the simulated letter are the same}\label{al:self:hiera}
\ENDFOR
\ENDIF
\STATE{$\unive[M$,$\vec{\nu}$,$(\length{\pi_j})_{j<M}$,$\bina{\pi_\addr}$,$\bina{\pi_\age}$, $\bina{\pi_\maxaddr}$, $\bina{\pi_\maxage}$, $\bina{\pi_\schedule}$, $\pi_\prog$, $\pi_\revprog]$}\COMMENT{The alphabet is as expected; we can simulate.}
\STATE{$\coordi[\bina{\pi_\maxaddr},\bina{\pi_\maxage}]$}
\label{al:self:unive} 
\end{algo} 

In the next lemma, we do not want to have any anonymous fields, but only those fields that are used in $\selfs$. There are $15$ fields in $\C[\selfs]$, so we take the field list $[0,\ldots,14]$, which means that we assign a number of $\co{0}{15}$ to every field in $\C[\selfs]$ in some random (but fixed) way. Once we have done this, the corresponding vector of directions is also well-defined.

\begin{lemma}\label{self}
Let us fix the field list $\C[\selfs]\defeq[0,\ldots,14]$, the corresponding direction vector $\vec{\nu}_{\selfs}$, integers $S,T,U\in\Ns$ and vector $\vec k\in\N^{15}$. Let $F$ be the IPPA with directions $\vec{\nu}_{\selfs}$ and permutation $\selfs[15,\vec{\nu}_{\selfs}]$ and $p,p^{-1}$ be the programs for this permutation and its inverse, respectively.

Let $F_{\vec{k},S,T,U}$ be the restriction of $F$ to the subalphabet 
\begin{equation*}
\A_{\vec k,S,T,U}\defeq\haine5^{\vec{k}}\cap\emp[S]{\maxaddr}\cap\emp[T]{\maxage}\cap\emp[U]{\schedule}\cap\emp[p]{\prog}\cap\emp[p^{-1}]{\revprog},
\end{equation*}

and assume that the following inequalities are satisfied:

\[\both{
\I(\vec{k},\vec{k},S,T,U,p,p^{-1})\\
k_\prog \geq\norm p\\
k_\revprog \geq\norm{p^{-1}}\\
k_\maxaddr \geq \norm{S}\\
k_{\maxage} \geq \norm{T}\\
k_{\schedule} \geq \norm{U}
~.}\]

Then, $F_{\vec{k},S,T,U}\simu[S,T,0,\Phi]F_{\vec{k},S,T,U}$ completely exactly, where $\Phi\defeq\tilde{\Phi}_{\info}{\restr{\Sigma}}$ and \begin{equation*}
\Sigma\defeq\A_{\vec k,S,T,U}^{\Z}\cap\gra{0}{0}{S}{T}\cap\emp[\motvide]{\head_{-1},\head_{+1},\mail_{-1},\mail_{+1},\newinfo}\cap\tilde{\Phi}^{-1}_{\info}(\A_{\vec k,U,S,T,p}^\Z).
\end{equation*}
\end{lemma}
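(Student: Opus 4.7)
The plan is to set up Lemma~\ref{self} as an instance of Lemma~\ref{universal} with simulated alphabet coinciding with the simulating alphabet, then use the extra checks performed at $\age=0$ to pin down the simulated IPPA to the same object as the simulating one. Concretely, take $\vec{k'}=\vec{k}$ in Lemma~\ref{universal}, which is legitimate because the hypothesis $\I(\vec{k},\vec{k},S,T,U,p,p^{-1})$ is exactly the inequality bundle required there. Lemma~\ref{universal} applied to the rule of line~\ref{al:self:unive} of $\selfs$ then produces a complete simulation $F\restr{(\haine5^{\vec k})^\Z}\simu[S,T,0,\tilde\Phi_{\info}]G\restr{(\haine5^{\vec k})^\Z}$, where $G$ is the IPPA obtained from the same permutation with shift $\vec\nu_{\selfs}$, and the domain of $\tilde\Phi_{\info}$ on the universal-simulator side is the set of configurations in $\gra{0}{0}{S}{T}$ with all auxiliary fields empty.

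Next, I would integrate the effect of the checks performed at line~\ref{al:self:initial}--\ref{al:self:hiera}. Line~\ref{al:self:initial} forces $\head_{-1},\head_{+1},\mail_{-1},\mail_{+1},\newinfo$ to be empty at $\age=0$, which is precisely the \xpr{clean initial state} required to enter $\dom(\Phi)$. By Lemma~\ref{lem:fixalpha} applied with $\vec{k'}=\vec{k}$ and term $t_{\info}=\pi_{\info}$, line~\ref{al:self:alph} ensures that $\tilde\Phi_{\info}(c)\in(\haine5^{\vec k})^{\Z}$, so that the decoded letter actually lives in the expected finite alphabet $\haine5^{\vec k}$. The five instances of $\hier$ in line~\ref{al:self:hiera}, combined with Lemma~\ref{lem:fixfield} (noting that the fields $\maxaddr,\maxage,\schedule,\prog,\revprog$ are horizontally constant in $\A_{\vec k,S,T,U}^{\Z}$, so the assumption $t(c_n)=t(c_{n'})$ holds), force $\tilde\Phi_{\info}(c)$ to have the same values of these hierarchical fields as $c$. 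Hence the decoded configuration lies in $\A_{\vec k,S,T,U}^{\Z}$, and, moreover, it is acted upon by exactly the same permutation and shift vector as $F_{\vec k,S,T,U}$ itself. This collapses the simulated system $G$ restricted to $\A_{\vec k,S,T,U}^{\Z}$ onto $F_{\vec k,S,T,U}$, giving the simulation relation of the statement.

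Exactness follows from the fact that the field list $\C[\selfs]=[0,\ldots,14]$ contains no anonymous field: given the simulated letter $b_i\in\A_{\vec k,S,T,U}$ and the colony phase $(s,t)$, every component of the simulating letter is uniquely determined. The coordinate fields are fixed by $\coordi$ and the phase $(s,t)$, the hierarchical fields by the constant values in $\A_{\vec k,S,T,U}$, the $\info$ field by the encoding $\Chi{\cdot}$ of $b_i$, and the auxiliary fields $\head_{\pm 1},\mail_{\pm1},\newinfo$ by the deterministic snapshot of $\compute$ and $\shift$ at time $t$ (as read off from the proofs of Lemmas~\ref{behavior} and~\ref{parshift}). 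Therefore the encoding function $\Phi^{-1}$ exists on all of $\A_{\vec k,S,T,U}^{\Z}$, which is exactness.

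For completeness, I would argue as follows. Let $x\in\Omega_{F_{\vec k,S,T,U}}$; then all its temporal iterates exist. By Lemma~\ref{koo} applied to $\coordi[\bina{\pi_\maxaddr},\bina{\pi_\maxage}]$, which is applied unconditionally at every step, $x$ sits in $\gra{s_0}{t_0}{S}{T}$ for some unique $(s_0,t_0)$; iterating $F^{T-t_0}$ lands in $\gra{s_0}{0}{S}{T}$ where line~\ref{al:self:initial} is triggered, forcing the auxiliary fields to be empty, and lines~\ref{al:self:alph}--\ref{al:self:hiera} force the simulated letter to lie in $\A_{\vec k,S,T,U}$. Shifting horizontally by $-s_0$ then yields a configuration in $\Sigma=\dom(\Phi)$, so $x\in\sigma^{s_0}F^{T-t_0}(\dom(\Phi))\subseteq\rock\Phi$, giving $\Omega_F\subseteq\rock\Phi$. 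The main subtlety throughout is bookkeeping: checking that $\A_{\vec k,S,T,U}$ is preserved under $F$ (so that $F_{\vec k,S,T,U}$ is well-defined as a restriction) and that the \xpr{universal} simulated rule invoked by line~\ref{al:self:unive} of $\selfs$ indeed recovers the same self-map once the hierarchical checks are in force; this is the circularity danger warned about at the end of Chapter~\ref{construction}, and it is resolved here because $p,p^{-1},S,T,U,\vec k$ are all fixed from the outset, independently of the claim being proved.
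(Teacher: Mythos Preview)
Your proposal is correct and follows essentially the same three-part structure as the paper's proof: invoke Lemma~\ref{universal} with $\vec{k'}=\vec{k}$ for the simulation, use the absence of anonymous fields plus the fixed hierarchical fields for exactness, and use the checks at $\age=0$ (via Lemmas~\ref{lem:fixalpha} and~\ref{lem:fixfield}) together with $\coordi$ to force any $x\in\Omega_F$ into $\rock\Phi$ for completeness. One minor bookkeeping slip: in your completeness argument, after showing $\sigma^{-s_0}F^{T-t_0}(x)\in\dom(\Phi)$ you conclude $x\in\sigma^{s_0}F^{T-t_0}(\dom(\Phi))$, whereas you actually get $x\in\sigma^{s_0}F^{-(T-t_0)}(\dom(\Phi))$; the clean fix is to iterate \emph{backward} by $t_0$ steps (which is legitimate since $x\in\Omega_F$) so that $x\in\sigma^{s_0}F^{t_0}(\dom(\Phi))\subseteq\rock\Phi$ with $0\le t_0<T$---this is the same harmless imprecision present in the paper's own write-up.
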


It is important to notice that $F$ is a fixed rule that does not depend on $\vec{k},S,T,U$. Therefore, its program $p$ is a \emph{fixed} word which we can \xpr{feed} to itself by restricting the alphabet to $\emp[p]{\prog}$. This is the basic idea of self-simulation. Notice also that the fields for which we do a hierarchical check are exactly those that are fixed in the definition of $\A_{\vec k,S,T,U}$. We need to ensure that these fields have the correct value in the simulated letter. The correct way to do this is to check that the value of the simulated letter is in a good relation with the values in the letters of the simulating IPPA. Here, the relation is simply equality. Later it will be something more complicated.

We will try to give as many details as possible in the following proof because it will serve as a prototype for the rest of the hierarchical simulations.

\begin{proof}

We have to show three things: First of all, that $F_{\vec{k},S,T,U}$ $(S,T,0)$-simulates $F_{\vec{k},S,T,U}$ with decoding function $\Phi$ (simulation), second,  that $\Phi$ is injective (exactness) and, finally,  that $\Omega_{F_{\vec{k},S,T,U,p}} \subseteq \rock{\Phi}$ (completeness).

For the simulation part, let $b \in \A_{\vec k,S,T,U}^{\Z}$ and $c \in \A_{\vec k,S,T,U}^{\Z}\cap\gra0{0}{S}{T}\cap \emp[\motvide]{\head_{-1},\head_{+1},\mail_{-1},\mail_{+1},\newinfo} \cap \tilde{\Phi}^{-1}(b)$. By definition, $c$ is not rejected by the checks of lines~\ref{al:self:initial},\ref{al:self:alph} and~\ref{al:self:hiera}. 

Indeed, line~\ref{al:self:initial} checks that the fields $\head_{-1}$, $\head_{+1}$, $\mail_{-1}$, $\mail_{+1}$, $\newinfo$ are empty, which is true since 
\begin{equation*}
c \in \gra{0}{0}{S}{T}\cap\emp[\motvide]{\head_{-1},\head_{+1},\mail_{-1},\mail_{+1},\newinfo}.
\end{equation*}
Line~\ref{al:self:alph} checks that the lengths of the fields of $b$ and $c$ are the same, while the checks of line~\ref{al:self:hiera} check that $b$ and $c$ have the same values in the fields $\maxaddr, \maxage, \prog$  and $\revprog$, which are true by definition.

Since $c$ is not rejected by these checks and $F_{\vec{k},S,T,U}$ is a subrule of 
\begin{equation*}
\coordi[S,T]\circ \unive_{15,\vec{\nu}_{\selfs}}[(\length{\pi_j})_{j<M},\bina{\pi_\addr},\bina{\pi_\age},S,T,U,p,p^{-1}]
\end{equation*} 
and, by assumption, the inequalities of Lemma~\ref{universal} are satisfied, and $p$ is the program of $\selfs[15,\vec{\nu}_{\selfs}]$, Lemma~\ref{universal} gives that $F_{\vec{k},S,T,U}$ $(S,T,0)$-simulates $F_{\vec{k},S,T,U}$ with decoding function $\Phi$.

For the exactness part, we have already noted various times that the values of the fields in $\C[\unive]$ are uniquely determined for all $c \in \Phi^{-1}(b)$. For the hierarchical fields (\ie $\maxaddr, \maxage, \schedule, \prog, \revprog$) the values are fixed for all $c \in \A_{\vec k,S,T,U}^{\Z}$. In addition, there do not exist any anonymous fields (since we chose $M=15$). Therefore, $\Phi$ is injective and the simulation is exact.

In order to show that the simulation is complete, we will first show that if $c \in \gra{0}{0}{S}{T} \cap F_{\vec{k},S,T,U}^{-T}(\A_{\vec k,S,T,U}^{\Z})$, then $c \in \Phi^{-1}(\A_{\vec k,S,T,U}^{\Z})$. 

Indeed, line~\ref{al:self:initial} checks that $c \in \gra{0}{0}{S}{T}\cap\emp[\motvide]{\head_{-1},\head_{+1},\mail_{-1},\mail_{+1},\newinfo}$ (in the sense that if this is not true, then $F_{\vec{k},S,T,U,p}$ would not be defined, so there would be a contradiction). According to Lemma~\ref{lem:fixalpha}, line~\ref{al:self:alph} checks that for every colony $\col{i}{c}$, $\pi_{\info}(\col{i}{c})$ has the structure of the encoding of a letter in $\haine5^{\vec{k}}$. (We cannot immediately say that $\pi_{\info}(\col{i}{c})$ is the encoding of a letter in $\haine5^{\vec{k}}$ because there are some triplets that are not used by $\Chi{\cdot}$. So for example, if the first three letters in the $\info$ tape are $111$, then $\pi_{\info}(\col{i}{c})$ is not the encoding of a letter in $\haine5^{\vec{k}}$, even though the $2$'s and $3$'s are in the correct positions.) In addition, since $F_{\vec{k},S,T,U}^{T}$ exists and the inequalities $\I(\vec{k},\vec{k},S,T,U,p,p^{-1})$ are satisfied, this means that the computation of $p$ on input $\pi_\info(\col{i}{c})$ halts, therefore for all $i \in \Z$, $\tilde{\phi}_{\info}(\col{i}{c})=b_i \in \haine5^{**}$. Lemma~\ref{lem:fixalpha} now implies that $\tilde{\phi}_{\info}(\col{i}{c})=b_i \in \haine5^{\vec{k}}$.
Finally, line~\ref{al:self:hiera} checks that  $\tilde{\phi}_{\info}(b_i) \in \emp[S]{\maxaddr}\cap\emp[T]{\maxage}\cap\emp[U]{\schedule}\cap\emp[p]{\prog}\cap\emp[p^{-1}]{\revprog}$ by checking that the hierarchical fields of $b_i$ have the same values as the corresponding fields of the letters of $c$ (notice that the hierarchical fields are constant for the letters of $c$, so that Lemma~\ref{lem:fixfield} applies). Summarizing, we have that $b \in \A_{\vec k,S,T,U}^{\Z}$, so that $c \in \A_{\vec k,S,T,U}^{\Z}\cap\gra{0}{0}{S}{T} \cap \emp[\motvide]{\head_{-1},\head_{+1},\mail_{-1},\mail_{+1},\newinfo}\cap\tilde{\Phi}^{-1}_{\info}(\A_{\vec k,S,T,U}^\Z)$.

Finally, if $c \in F_{\vec{k},S,T,U}^{-2T}(\A_{\vec k,S,T,U}^{\Z})$ then $F^t\sigma^s(c) \in \gra{0}{0}{S}{T}$, for some $s\in \co{0}{S}$ and $t \in \co{0}{T}$. Therefore, $F_{\vec{k},S,T,U}^t\sigma^s(c) \in \gra{0}{0}{S}{T} \cap F_{\vec{k},S,T,U}^{-T}(\A_{\vec k,S,T,U}^{\Z})$ so that $F_{\vec{k},S,T,U}^t\sigma^s(c) \in \Phi^{-1}(\A_{\vec k,S,T,U}^{\Z})$. This implies that \\
$\Omega_{F_{\vec{k},S,T,U}} \subseteq F_{\vec{k},S,T,U}^{-2T}(\A_{\vec k,S,T,U}^{\Z}) \subseteq \bigsqcup_{\begin{subarray}c0\le t<T\\0\le s<S\end{subarray}}F^t\sigma^s(\dom(\Phi))$, which means that the simulation is also complete.
\end{proof}

\subsection{Satisfying the inequalities}

It is not as straightforward to see that the inequalities $\I(\vec{k},\vec{k},S,T,U,p,p^{-1})$ can be satisfied as it was for Lemma~\ref{universal}, because in this case $\vec{k'}$ is equal to $\vec{k}$, which means that we cannot fix $\vec{k'}$ and then choose $\vec{k},S,T,U$ sufficiently big.

\begin{remark}\label{rem:inequselfsimi}
We can find $\vec{k},S,T,U$ such that the inequalities of Lemma~\ref{self} are satisfied. In addition, for all $\epsilon > 0$, $S / T$ can be made larger than $1 -\epsilon$. (Intuitively, the macro-tiles can be made as close to a square as we want.)
\end{remark}

\begin{proof}
We have to satisfy the following inequalities:
\[\both{
U\ge\max\{t_p({\haine5^{\vec{k}}}),t_{p^{-1}}({\haine5^{\vec{k}}})\}\\
S\ge\max\{2U,\norm{\Chi{\haine5^{\vec{k}}}}\}\\
T\ge 4U+S\\
k_\addr,k_{\addr_{+1}}\ge\norm S\\
k_\age,k_{\age_{+1}}\ge\norm T\\
k_{\head_{-1}},k_{\head_{+1}}\ge\length{\Chi{\haine4\times (Q_p \cup Q_{p^{-1}})\times\{-1,+1\}}}\\
k_\info,k_\newinfo,k_{\mail_{-1}},k_{\mail_{+1}}\ge1\\
k_\prog=\norm p\\
k_\revprog=\norm{p^{-1}}\\
k_\maxaddr = \norm{S}\\
k_{\maxage} = \norm{T}\\
k_{\schedule} = \norm{U}
~.}\]

For all $S,T,U$, let us choose $\vec{k}\defeq\vec{k}_{S,T,U}$ such that all of the inequalities except the first three are equalities. Then, $\norm{\Chi{\haine5^{\vec{k}}}}\le P_1(\log{S},\log{T},\log{U})$ and $\max\{t_p({\haine5^{\vec{k}}}),t_{p^{-1}}({\haine5^{\vec{k}}})\} \le P_2(\log{S},\log{T},\log{U})$, for some polynomials $P_1,P_2$. These follow by definition of $\Chi{\cdot}$ and $\vec{k}_{S,T,U}$ and the fact that the program $p$ is fixed and has polynomial complexity.

Therefore, it is enough to find $S,T,U$ that satisfy the following inequalities:
\[\both{
U\ge P_2(\log{S},\log{T},\log{U})\\
S\ge\max\{2U,P_1(\log{S},\log{T},\log{U})\}\\
T\ge 4U+S
~.}\]




For all $S,U$, let us choose $T \defeq T_{S,U}=S+4U$. Also, for all $S,S_0,r$, let us choose $U \defeq U_{S,S_0,r}=\log^r(S+S_0)$. Then, the third inequality is satisfied and the other two are written as follows:

\[\bothrl{
\log^r(S+S_0)\ge &P_2(\log{S},\log(S+4\log^r(S+S_0)),\log(\log^r(S+S_0))\\
S\ge&\max\{2\log^r(S+S_0),\\
&P_1(\log{S},\log(S+4\log^r(S+S_0)),\log(\log^r(S+S_0)))\}.}\]

There exist \emph{fixed} $r,S_0 \in \N$ such that the first inequality is satisfied for all $S$, because $P_2$ is a fixed polynomial (hence its degree is a fixed number). Let us choose such $r,S_0$. Then, if $S$ is sufficiently large we also have that the second inequality is also satisfied (since $r,S_0$ are now fixed), because the right hand side grows only polylogarithmically in $S$, which finishes the proof of the claim.

For the second claim, $S/T=\frac{S}{S+\log^r(S+S_0)}$, which can be made larger than $1-\epsilon$ by choosing $S$ sufficiently large.
\end{proof}

\begin{corollary}
There exists an RPCA $G$ such that $\orb{G}$ is non-empty, aperiodic and $\NE(G)=\{0\}$.
\end{corollary}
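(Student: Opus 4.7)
The plan is to iterate the self-simulation of Lemma~\ref{self} and then read off the three required properties from the nested-simulation lemmas of Chapter~\ref{s:simul}.

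By Remark~\ref{rem:inequselfsimi}, I would choose integers $S,T,U$ and a vector $\vec k\in\N^{15}$ satisfying the hypotheses of Lemma~\ref{self}. Following the construction given in the proof of that remark ($T=S+4U$ with $U=\log^r(S+S_0)$ for suitable $r,S_0$), we have in particular $T>S$, so $0<S/T<1$. Set $G\defeq F_{\vec k,S,T,U}$. Lemma~\ref{self} then gives $G\simu[S,T,0,\Phi]G$ completely and exactly. Taking the constant sequence $F_n\defeq G$, $(S_n,T_n,Q_n,\Phi_n)\defeq(S,T,0,\Phi)$ yields an infinite chain of complete exact self-simulations. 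This chain is non-trivial: $S,T>1$ by construction, and $G$ has non-empty domain, for instance because any configuration in $\gra{0}{0}{S}{T}$ whose $\head,\mail,\newinfo$ fields are empty and whose $\info$ field encodes, colony by colony, a letter of $\A_{\vec k,S,T,U}$ with the prescribed hierarchical fields is in $\dom(G)$.

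The three conclusions now fall out of the general lemmas. Non-emptiness (and indeed uncountability) of $\orb G$ is Point~\ref{i:nonempty} of Lemma~\ref{l:nonvide} applied to the constant chain. Aperiodicity is Lemma~\ref{lem:aperiodichierarchy}, using that all simulations are complete and $S_n,T_n>1$. For the set of non-expansive directions, every shift is zero, so $D_n=0$ for all $n$; since $\prod_{i<n}S_i/T_i=(S/T)^n\to 0$, Proposition~\ref{prop:hochman} gives
\[\NE(G)=\{\anib[\seq S/\seq T]{\seq D}\}=\{0\}~.\]

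There is no real technical obstacle left once Lemma~\ref{self} and Remark~\ref{rem:inequselfsimi} are in hand; the one point that deserves a moment of care is the strict inequality $T>S$ coming from Remark~\ref{rem:inequselfsimi}, since it is only the strictness (and not merely $T\ge S$) that forces $(S/T)^n\to 0$ and hence the uniqueness of the non-expansive direction. If one only knew $S/T\le 1$, Proposition~\ref{prop:hochman} would give nothing better than the inclusion $\NE(G)\subseteq[-1,1]$, which is too weak.
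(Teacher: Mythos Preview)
Your proof is correct and follows essentially the same approach as the paper: choose parameters via Remark~\ref{rem:inequselfsimi} so that $T>S$, set $G\defeq F_{\vec k,S,T,U}$, and invoke Lemma~\ref{self}, Lemma~\ref{l:nonvide}, Lemma~\ref{lem:aperiodichierarchy}, and Proposition~\ref{prop:hochman} on the constant self-simulation chain. Your write-up is slightly more explicit than the paper's (you spell out non-triviality of the simulation and the role of the strict inequality $S<T$), but the argument is the same.
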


\begin{proof}
Let $G \defeq F_{\vec{k},S,T,U} \colon \A_{\vec k,U,S,T}^{\Z} \to \A_{\vec k,U,S,T}^{\Z}$, for some parameters that satisfy $\I(\vec{k},\vec{k},S,T,U,p,p^{-1})$. This is possible, according to Remark~\ref{rem:inequselfsimi}. By definition, we have that $S < T$.

It is not difficult to see that $G^{-1}(\A_{\vec k,U,S,T}^{\Z})$ is nonempty. Then, Lemmas~\ref{self}, \ref{lem:aperiodichierarchy}, \ref{l:nonvide} and Proposition~\ref{prop:hochman} imply that $\orb{G}$ is non-empty, uncountable, aperiodic and $\NE(G)=\{0\}$.
\end{proof}

This finishes the construction of an extremely-expansive, aperiodic 2D SFT. Once we achieved self-simulation, then extreme expansiveness follows immediately from Proposition~\ref{prop:hochman}.

\section{Hierarchical simulation}
We now want to construct more general nested hierarchical simulations, where the parameters of the simulation might vary in every simulation level. This structure is more flexible than a simple self-simulating RPCA, and it will be more useful in the various applications.

Let us fix the field list $\C[\hsim] \defeq \C[\unive] \sqcup [\lev,\prog,\revprog]$ and let $\vec{\nu}_{\hsim}$ be the corresponding vector of directions.

\begin{itemize}
\item $\prog$ and $\revprog$ are used as in the previous section.
\item $\lev$ is used to obtain the values of $v_{\maxaddr}, v_{\maxage}$ and $v_{\schedule}$, not through a direct projection, as in the previous case, but in a polynomially computable way.
\end{itemize}

In the following, let $\seq S,\seq T, \seq U \in \N^{\N}$ be sequences of integers and $(\vec{k} \colon \N \to \N^M)_{n \in \N}$ is a \emph{sequence} of vectors depending on $n$ (It can give rise to a vector valuation by using $\bina{\pi_\field}$ as the index of the sequence).

\begin{algo}{hsim}{\hsim}{M,\vec\nu,\vec{k},\seq S,\seq T,\seq U}
\IF{$\bina{\pi_\age}=0$}
\STATE{$\chekk[\emp[\motvide]{\head_{-1},\head_{+1},\mail_{-1},\mail_{+1},\newinfo}]$}\label{al:hier:empty}
\STATE{$\chekka[M,\bina{\pi_\addr},\pi_\info,\vec{k}_{\bina{\pi_\lev}+1}]$} \COMMENT{Check that the lengths of the simulated letter are correct} \label{al:hier:alph}
\STATE{$\hier[M,\bina{\pi_\addr},\pi_\info,\vec{k}_{\bina{\pi_\lev}+1},\prog,
\pi_{\prog}]$} \COMMENT{$\prog$ of the simulated letter is the same}\label{al:hier:prog}
\STATE{$\hier[M,\bina{\pi_\addr},\pi_\info,\vec{k}_{\bina{\pi_\lev}+1},\revprog,
\pi_\revprog]$} \COMMENT{$\revprog$ is also the same}\label{al:hier:revprog}
\STATE{$\hier[M,\bina{\pi_\addr},\pi_\info,\vec{k}_{\bina{\pi_\lev}+1},\lev,
\anib{\bina{\pi_\lev}+1}]$} \COMMENT{$\bina{\lev}$ of the simulated letter increases by $1$}\label{al:hier:lev}
\ENDIF
\STATE{$\unive[M$, $\vec{\nu}$, $\vec{k}_{\bina{\pi_\lev}}$, $\bina{\pi_\addr}$ ,$\bina{\pi_\age}$, $\seq{S}_{\bina{\pi_\lev}}$, $\seq{T}_{\bina{\pi_\lev}}$, $\seq{U}_{\bina{\pi_\lev}}$, $\pi_\prog$, $\pi_\revprog]$} \COMMENT{Simulate}\label{al:hier:unive}
\STATE{$\coordi[\seq{S}_{\bina{\pi_\lev}},\seq{T}_{\bina{\pi_\lev}}]$}
\end{algo}


We will now construct a nested simulation of RPCA where the simulation parameters are different at every level.

\begin{lemma}\label{lem:nestsimul}

Let $\seq U,\seq S,\seq T$ be polynomially checkable sequences of integers. Let us fix the field list $\C[\hsim]\defeq [0,\ldots,12]$, the corresponding \emph{fixed} direction vector $\vec{\nu}_{\hsim}$ and a polynomially checkable sequence of $13$-uples $\seq{\vec{k}}\in(\N^{13})^\N$. 

Let $F$ be the IPPA with directions $\vec{\nu}_{\hsim}$ and permutation $\hsim[13,\vec{\nu}_{\hsim},\vec{k},\seq S,\seq T, \seq U;\C[\hsim]]$ and $p,p^{-1}$ be the programs for this permutation and its inverse, respectively.

For all $n \in \N$, let $F_n$ be the restriction of $F$ to the subalphabet

\begin{equation*}
\A_{n}\defeq \haine5^{\vec{k}_n}\cap\emp[n]{\lev}\cap\emp[p]{\prog}\cap\emp[p^{-1}]{\revprog},
\end{equation*}

and assume that the following inequalities hold for all $n \in \N$:

\[\both{
\I(\vec{k}_n,\vec{k}_{n+1},S_n,T_n,U_n,p,p^{-1})\\
k_{n,\prog}\geq\norm p\\
k_{n,\revprog}\geq\norm{p^{-1}}\\
k_{n,\lev} \geq \norm{n}
~.}\]

Then, $F_n\simu[S_n,T_n,0,\Phi_n]F_{n+1}$ completely exactly, where $\Phi_n\defeq\tilde{\Phi}_{\info}{\restr{\Sigma_n}}$ and $\Sigma_n \defeq \A_{n}^{\Z} \cap \gra{0}{0}{S_n}{T_n}\cap \emp[\motvide]{\head_{-1},\head_{+1},\mail_{-1},\mail_{+1},\newinfo} \cap \tilde{\Phi}^{-1}(\A_{n+1}^{\Z})$
\end{lemma}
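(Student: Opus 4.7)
The plan is to mimic the proof of Lemma~\ref{self} almost verbatim, with the crucial substitution that the \emph{simulated} level has alphabet lengths $\vec{k}_{n+1}$ (one level above the simulating level), while the invariants $\prog \mapsto p$ and $\revprog \mapsto p^{-1}$ remain constant, and the level counter $\lev$ increases by one. As before, three things must be verified: simulation (so that $F_n\simu[S_n,T_n,0,\Phi_n]F_{n+1}$), exactness of $\Phi_n$, and completeness.

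For simulation, fix $c\in\Sigma_n$ with $\Phi_n(c)=b\in\A_{n+1}^{\Z}$. The checks at $\bina{\pi_\age}=0$ all pass: line~\ref{al:hier:empty} is immediate from the definition of $\Sigma_n$; the check on line~\ref{al:hier:alph} passes by Lemma~\ref{lem:fixalpha} since $b_i\in\haine5^{\vec{k}_{n+1}}$; and lines~\ref{al:hier:prog}, \ref{al:hier:revprog}, \ref{al:hier:lev} pass by Lemma~\ref{lem:fixfield} since $\pi_\prog(b_i)=p$, $\pi_\revprog(b_i)=p^{-1}$, and $\pi_\lev(b_i)=\anib{n+1}$, all of which agree with the constant values held across $\A_n^{\Z}$ (noting that $\bina{\pi_\lev(c_j)}+1 = n+1$). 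Once the checks pass, lines~\ref{al:hier:unive} and the appended $\coordi$ realize exactly the rule whose behaviour is described by Lemma~\ref{universal}, with simulating lengths $\vec{k}_n$, simulated lengths $\vec{k}_{n+1}$, parameters $S_n,T_n,U_n$ and programs $p,p^{-1}$; the hypothesis $\I(\vec{k}_n,\vec{k}_{n+1},S_n,T_n,U_n,p,p^{-1})$ is precisely what is needed to invoke that lemma, which delivers $F_n\simu[S_n,T_n,0,\Phi_n]F_{n+1}$ together with the disjointness of $\bigsqcup_{s,t}F_n^t\sigma^s(\dom(\Phi_n))$. Exactness follows as in Lemma~\ref{self}: there are no anonymous fields because the field list has length exactly $13=M$; the fields of $\C[\unive]$ are forced by $\Phi_n$; the hierarchical fields $\lev,\prog,\revprog$ are constant throughout $\A_n^{\Z}$; hence $\Phi_n$ is injective on $\Sigma_n$.

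For completeness, take $c\in F_n^{-2T_n}(\A_n^{\Z})$. Lemma~\ref{koo} yields $s\in\co{0}{S_n}$ and $t\in\co{0}{T_n}$ with $F_n^t\sigma^s(c)\in\gra{s}{0}{S_n}{T_n}$; replacing $c$ by $F_n^t\sigma^s(c)$ we may assume $c\in\gra{0}{0}{S_n}{T_n}\cap F_n^{-T_n}(\A_n^{\Z})$. Then line~\ref{al:hier:empty} forces $c\in\emp[\motvide]{\head_{-1},\head_{+1},\mail_{-1},\mail_{+1},\newinfo}$, line~\ref{al:hier:alph} (via Lemma~\ref{lem:fixalpha}) guarantees that each $\pi_\info(\col{i}{c})$ has the structure of an encoding of a letter in $\haine5^{\vec{k}_{n+1}}$, and since $F_n^{T_n}$ exists the inequalities $\I(\vec{k}_n,\vec{k}_{n+1},S_n,T_n,U_n,p,p^{-1})$ guarantee that the TM computation embedded in the colonies halts, yielding $\tilde\phi_\info(\col{i}{c})=b_i\in\haine5^{\vec{k}_{n+1}}$. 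Lines~\ref{al:hier:prog}, \ref{al:hier:revprog}, \ref{al:hier:lev} applied through Lemma~\ref{lem:fixfield} then pin the hierarchical fields of $b_i$ to $p,p^{-1},\anib{n+1}$, so $b\in\A_{n+1}^{\Z}$ and hence $c\in\dom(\Phi_n)$. The usual shift-and-time-translate argument extends this from $F_n^{-2T_n}(\A_n^{\Z})$ to all of $\Omega_{F_n}$, proving $\Omega_{F_n}\subset\rock{\Phi_n}$. The main obstacle is bookkeeping rather than conceptual: one must carefully track the level-$n$ versus level-$(n+1)$ distinction in every invocation (in particular, recognising that the $\unive$ call inside $\hsim$ must be read with the simulated-alphabet slot equal to $\vec{k}_{n+1}$ so as to match the hierarchical checks, and that the inequality system for the universal simulator is what the hypothesis $\I(\vec{k}_n,\vec{k}_{n+1},\ldots)$ supplies).
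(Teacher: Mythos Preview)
Your proof is correct and follows essentially the same approach as the paper's own proof: verify simulation via Lemma~\ref{universal} after checking that the lines~\ref{al:hier:empty}--\ref{al:hier:lev} pass on $\Sigma_n$, deduce exactness from the absence of anonymous fields and the constancy of the hierarchical fields, and establish completeness by showing that any $c\in\gra{0}{0}{S_n}{T_n}\cap F_n^{-T_n}(\A_n^\Z)$ lies in $\dom(\Phi_n)$, then extending via the shift-and-time-translate argument borrowed from Lemma~\ref{self}. Your write-up is in fact slightly more explicit than the paper's in a couple of places (you name Lemma~\ref{lem:fixfield} for the hierarchical checks and Lemma~\ref{koo} for the reduction step), and your closing remark about reading the simulated-alphabet slot as $\vec{k}_{n+1}$ is well taken.
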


The proof is very similar to the proof of Lemma~\ref{self}. There exists some differences, though. For example, we do not have the fields $\maxaddr, \maxage$ and $\schedule$. These fields are computed with the aid of field $\lev$, so we perform a hierarchical check for $\lev$. Apart from that, the proof follows the same pattern.

Another difference, which will be important when we prove that the inequalities can be satisfied is that the program is not fixed once we fix $M$ and $\vec{\nu}$, as in the self-similar case, but depends on $\vec{k},\seq{S},\seq{T}$ and $\seq{U}$. Therefore, its complexity also depends on these parameters. More precisely, $t_p(\A_{n}) = P(\length{\A_n}+t_{\vec{k}}(n)+t_{\seq{S}}(n)+t_{\seq{T}}(n)+t_{\seq{U}}(n))$, for some polynomial $P$ that does not depend on the parameters. This is due to the fact that the program consists in a bounded number (independent of $n$) of polynomially computable functions and a bounded number of calls to the parameters. Similarly, $\length{p}= O(\length{p_{\vec{k}}}+\length{p_{\seq{S}}}+\length{p_{\seq{T}}}+
\length{p_{\seq{U}}})$. (The same things hold for $p^{-1}$.)

\begin{proof}
Let us fix $n \in \N$.
We have to show three things: that $F_n$ $(S_n,T_n,0)$-simulates $F_{n+1}$ with decoding function $\Phi_n$ (simulation), that $\Phi_n$ is injective (exactness) and that $\Omega_{F_{n}} \subseteq \dom(\Phi_n)$ (completeness).

For the simulation part, let $b \in \A_{n+1}^{\Z}$ and $c \in \Phi^{-1}(b) \in \A_{n}^{\Z}\cap\gra{0}{0}{S_n}{T_n} \cap \emp[\motvide]{\head_{-1},\head_{+1},\mail_{-1},\mail_{+1},\newinfo}$. By definition, $c$ is not rejected by the checks of lines~\ref{al:hier:empty},\ref{al:hier:alph},\ref{al:hier:prog}, \ref{al:hier:revprog} and \ref{al:hier:lev}, . 

Since $c$ is not rejected by these checks and $F_{n}$ factors onto
\begin{align*}
&\unive[13,\vec{\nu}_{\unive},(\length{\pi_j})_{j<M},\bina{\pi_\addr},\bina{\pi_\age},S_n,T_n,U_n,p,p^{-1}]\\
&\coordi[S_n,T_n]
\end{align*} 
and, by assumption, the inequalities of Lemma~\ref{universal} are satisfied by $\vec{k}_n$ and $\vec{k}_{n+1}$, and $p$ is the program of $\hsim[13,\vec{\nu}_{\hsim},\vec{k},\seq S,\seq T, \seq U]$, Lemma~\ref{universal} gives that $F_{n}$ $(S_n,T_n,0)$-simulates $F_{n+1}$ with decoding function $\Phi_n$.

For the exactness part, we have already noted various times that the values of the fields in $\C[\unive]$ are uniquely determined for all $c \in \Phi^{-1}(b)$. For the hierarchical fields (\ie $\lev, \prog, \revprog$) the values are fixed for all $c \in \A_{n}^{\Z}$. In addition, there do not exist any anonymous fields (since we chose $M=13$). Therefore, $\Phi_n$ is injective and the simulation is exact.

For the completeness part, we will only show that if $c \in \gra{0}{0}{S_n}{T_n} \cap F_{n}^{-T}(\A_{n}^{\Z})$, then $c \in \Phi^{-1}(\A_{n+1}^{\Z})$. Having shown this, it is easy to conclude that the simulation is complete using the same argument as in the proof of Lemma~\ref{self}

Indeed, line~\ref{al:hier:empty} checks that $c \in \gra{0}{0}{S_n}{T_n}\cap \emp[\motvide]{\head_{-1},\head_{+1},\mail_{-1},\mail_{+1},\newinfo}$. According to Lemma~\ref{lem:fixalpha}, line~\ref{al:hier:alph} checks that for every colony $\col{i}{c}$, $\pi_{\info}(\col{i}{c})$ has the structure of the encoding of a letter in $\haine5^{\vec{k}_{n+1}}$. In addition, since $F_{n}^{T_n}(c)$ exists and the equations $\I(\vec{k}_{n},\vec{k}_{n+1},S_n,T_n,U_n,p,p^{-1})$ are satisfied, this means that the computation of $p$ on input $\pi_\info(\col{i}{c})$ halts, therefore for all $i \in \Z$, $\tilde{\phi}_{\info}(\col{i}{c})=b_i \in \haine5^{**}$. Lemma~\ref{lem:fixalpha} now implies that $\tilde{\phi}_{\info}(\col{i}{c})=b_i \in \haine5^{\vec{k}_{n+1}}$.
Finally, lines~\ref{al:hier:lev},\ref{al:hier:prog} and \ref{al:hier:revprog} check that  $\tilde{\phi}_{\info}(b_i) \in \emp[n+1]{\lev}\cap\emp[p]{\prog}\cap\emp[p^{-1}]{\revprog}$. 

Summarizing, we have that $b \in \A_{n+1}^{\Z}$, so that $c \in \A_{n}^{\Z}\cap\gra0{0}{S_n}{T_n} \cap \emp[\motvide]{\head_{-1},\head_{+1},\mail_{-1},\mail_{+1},\newinfo}\cap\tilde{\Phi}^{-1}_{\info}(\A_{n+1}^\Z) = \Sigma_n$, or equivalently that $c \in \Phi^{-1}(\A_{n+1}^{\Z})$.
\end{proof}

\subsection{Satisfying the inequalities}

Let us now show that the inequalities of Lemma~\ref{lem:nestsimul} can be satisfied:

\begin{remark}\label{rem:inequhiera}
We can find $\vec{k} \in (\N^{13})^{\N}$ and $\seq{S},\seq{T},\seq{U} \in \Ns^{\N}$ such that the inequalities of Lemma~\ref{lem:nestsimul} are satisfied. In addition, $\prod_{i<n} S_i/T_i$ can be made both $0$ and $\neq 0$.
\end{remark}

We have to deal with two problems, which were not present in the previous cases: First, there is an infinite set of inequalities, since there is also an infinite set of RPCA, and they must all be satisfied simultaneously. Second, the size of the program and the complexity of the permutations depends on the choice of the parameters $\seq S$ and $\seq T$.

\begin{proof}
We have to satisfy the following inequalities, for all $n \in \N$
\[\both{
U_n\ge\max\{t_p({\haine5^{\seq{\vec{k}}_{n+1}}}),t_{p^{-1}}({\haine5^{\seq{\vec{k}}_{n+1}}})\}\\
S_n\ge\max\{2{U}_n,\norm{\Chi{\haine5^{\vec{k}_{n+1}}}}\}\\
T_n\ge 4{U}_n+S_n\\
k_{n,\prog}\geq\norm p\\
k_{n,\revprog}\geq\norm{p^{-1}}\\
k_{n,\head_{-1}},k_{n,\head_{+1}}\ge\length{\Chi{\haine4\times (Q_p \cup Q_{p^{-1}})\times\{-1,+1\}}}\\
k_{n,\addr},k_{n,\addr_{+1}}\ge\norm{S_n}\\
k_{n,\age},k_{n,\age_{+1}}\ge\norm{T_n}\\
k_{n,\info},k_{n,\newinfo},k_{n,\mail_{-1}},k_{n,\mail_{+1}}\ge1\\
k_{n,\lev}=\norm{n}
~.}\]

For all $n \in \N$ and $\seq{S},\seq{T}$ and $\seq{U}$,  let us choose $\vec{k}_n\defeq\vec{k}_{n,\seq{S},\seq{T},\seq{U}}$ such that the last four inequalities are satisfied as equalities. Then, we can see that 
\begin{equation*} 
\norm{\Chi{\haine5^{\vec{k}_n}}}\le P_1(\log{S_n},\log{T_n},\log{n},k_{n,\head_{-1}},k_{n,\head_{+1}},k_{n,\revprog},k_{n,\prog}),
\end{equation*}
for some polynomial $P_1$. 

We claim that $\length{p} \le c(\length{p_{\vec{k}}}+\length{p_{\seq{S}}}+\length{p_{\seq{T}}}+\length{p_{\seq{U}}})$, for some constant $c$. (The same holds for $p^{-1}$ and we can assume that the constant $c$ is the same.) This is because, as we have already noticed, the program of $p$ uses a fixed number of polynomial operations and a bounded number of calls to the parameters $\length{p_{\vec{k}}}$,  $\length{p_{\seq{S}}}$, $\length{p_{\seq{T}}}$, $\length{p_{\seq{U}}}$.

For the same reason, we have that
\begin{multline*}
\max\{t_p({\haine5^{\vec{k}}}),t_{p^{-1}}({\haine5^{\vec{k}}})\} \le P_2(\log{S_n},\log{T_n},\log{n},k_{n,\head_{-1}},\\
k_{n,\head_{+1}},k_{n,\revprog},k_{n,\prog},t_{\vec{k}}(n),t_{\seq S}(n),
,t_{\seq T}(n),t_{\seq U}(n)),
\end{multline*}
for some \emph{fixed} polynomial $P_2$ that does not depend on the parameter sequences.

Therefore, it is enough to find sequences $\vec{k},\seq{S},\seq{T},\seq{U}$ that satisfy the following inequalities, for all $n \in \N$:
\[\bothrl{
k_{n,\prog}, k_{n,\revprog}\geq & c(\length{p_{\vec{k}}}+\length{p_{\seq{S}}}+\length{p_{\seq{T}}}+
\length{p_{\seq{U}}})\\
k_{n,\head_{-1}},k_{n,\head_{+1}}\geq & \length{\Chi{\haine4\times (Q_p \cup Q_{p^{-1}})\times\{-1,+1\}}}\\
U_n\ge & P_2(\log{S_{n+1}},\log{T_{n+1}},\log(n+1),\\
&k_{n+1,\head_{-1}}, k_{n+1,\head_{+1}},k_{n+1,\revprog},k_{n+1,\prog},\\ &t_{\vec{k}}(n+1),t_{\seq S}(n+1),t_{\seq T}(n+1),t_{\seq U}(n+1))\\
S_n \ge & \max\{2U_n,P_1(\log{S_{n+1}},\log{T_{n+1}},\log(n+1),\\
& \length{p_{\vec{k}}},\length{p_{\seq{S}}},\length{p_{\seq{T}}},\length{p_{\seq{U}}})\}\\
T_n \ge & 4U_n+S_n
~.}\]

Recall that in the above inequalities, $Q_p$ and $Q_{p^{-1}}$ depend on the choice of parameter sequences.

We will show two ways to do this. The first one does not give an extremely-expansive SFT, because $\prod_{i<n}S_i/T_i$ does not converge to $0$, while the second one does.

\begin{enumerate}
\item For all sequences $\seq S$ and $\seq U$, let us choose $T_n \defeq T_{n,\seq S,\seq U}=S_n+4U_n$. Also, for all $n_0,r$ and $Q \geq 2$, let us choose $U_{n,n_0,r}\defeq U_n=(n+n_0)^r$, $S_{n,n_0}\defeq S_n=Q^{n+n_0}$, $k_{n,\prog}=k_{n,\revprog}=n_0Qr$ and $k_{n,\head_{-1}}=k_{n,\head_{+1}}=n_0$ for all $n \in \N$.

Then, the last inequality is satisfied by definition. In addition, for all $n_0,r,Q$, we have that $\length{p_{\seq{S}}} \le \norm{c_1n_0Q}$, $\length{p_{\seq{U}}} \le \norm{c_2rn_0}$ and $\length{p_{\seq{T}}}$, $ \length{p_{\vec{k}}} \le \norm{c_3rn_0Q}$, for some \emph{constants} $c_1,c_2,c_3$. 
This is true because the sequence $(n+n_0)^r$ is uniformly (polynomially) computable in $n,n_0,r$, which means that there exists an algorithm that takes as input $(n,n_0,r)$ and outputs $(n+n_0)^r$, for \emph{all} values of $n,n_0$ and $r$. If we use the program for this algorithm together with a description of $n_0$ and $r$, then we obtain a program of length bounded by $\norm{c_2rn_0}$ for the sequence $((n+n_0)^r)_{n \in \N}$. A similar argument holds for the sequence $Q^{n+n_0}$.

Since this algorithm works for \emph{all} choices of $n_0,Q,r$, it means that $Q_p$ and $Q_{p^{-1}}$ are actually \emph{fixed}.

In addition, all of the algorithms are polynomially computable, which means that  
\begin{equation*}
t_{\seq S}(n), t_{\seq T}(n), t_{\vec{k}}(n) \le P_3(\log{Q^{n+n_0}}), t_{\seq U}(n) \le P_4(\log(n+n_0)^r),
\end{equation*} 
for some \emph{fixed} polynomials $P_3,P_4$.

Therefore, substituting these in the inequalities above and doing some regrouping of the terms in parentheses (that is omitted), the inequalities that need to be satisfied are written as follows:

\[\bothrl{
n_0Qr \geq & c'\log(n_0Qr)\\
n_0 \geq & c'\\
(n+n_0)^r\ge & P_5(\log{Q^{n+n_0+1}},\log(n+n_0+1)^r,\log(n+1))\\
Q^{n+n_0}\ge & \max\{2(n+n_0+1)^r, P_6(\log{Q^{n+n_0+1}},\log(n+1))\}
~,}\]
for some polynomials $P_5,P_6$ and constant $c'$ that do not depend on $r,n_0$ or $Q$.

Since $c'$ is fixed, the first two inequalities are true for all but a finite number of triples $n_0,Q,r$. Without loss of generality, we assume that it is always true.
We can choose $n_Q$ and $r$ such that the second inequality is true for all $n \in \N$ and all $n_0 \geq n_Q$, because the right hand of the inequality is bounded by a fixed polynomial of $(n+n_0)$ and $r$, while the left-hand side grows like $n^r$. With fixed $r$, we can also find $n'_Q$ such that the third inequality is satisfied for all $n \in \N$ and all $n_0 \geq n'_Q$, because the left-hand side grows exponentially in $(n+n_0)$ and the right hand only polynomially (since $r$ is fixed). By choosing $n_0=\max\{n_Q,n'_Q\}$ we can satisfy both inequalities for all $n$ at the same time. 

Note that $\prod_{i \in \N}S_i/T_i = \prod_{i \in \N}(1+(n+n_0)^r/Q^{n+n_0}) \neq 0$. Therefore, if we choose the sequences like this, we do not obtain a unique direction of non-expansiveness, but rather a cone of non-expansive directions.

\item For all $n_0 \in \N$ and $Q\geq 2$, let us choose $S_{n,n_0}\defeq S_n=Q^{n+n_0}$, $T_{n,n_0}\defeq T_n=2S_n$ and $U_{n,n_0}\defeq U_n=\frac{S_n}{2Q}$, $k_{n,\prog}=k_{n,\revprog}=n_0Q$ and $k_{n,\head_{-1}}=k_{n,\head_{+1}}=n_0$ for all $n \in \N$. We can use a similar argumentation as in previous case to show that it is enough to satisfy the following inequalities: 

\[\both{
n_0Q \geq \norm{Qn_0}\\
\frac{Q^{n+n_0}}{4}\ge P_3(\log{Q^{n+n_0+1}},\log(n+1))\\
Q^{n+n_0}\ge\max\{\frac{Q^{n+n_0+1}}{2Q},P_4(\log{Q^{n+n_0+1}},\log(n+1))\}
~,}\]

for some polynomials $P_3,P_4$ and constant $c$ that do not depend on $n_0$ and $Q$. 

Obviously, for all $Q$ these inequalities are satisfied when $n_0$ is sufficiently large.

In this case, $\prod_{i \in \N}S_i/T_i= \prod_{i \in \N}S_i/2S_i = 0$, therefore the corresponding SFT is extremely expansive.
\end{enumerate}
\end{proof}

For both cases, we have a lot of freedom in choosing the sequences. In the previous proof, we just described two of the possible ways which are enough for the results we want to obtain and help in presenting the basic ideas of the proof that is needed in any possible case.

\section{Universality}
Let $\C[\other]=[\otherinfo_{-1},\otherinfo,\otherinfo_{+1}]$ and let us fix the field list $\C[\intru]=\C[\hsim]\sqcup\C[\other]$ and the corresponding direction vector $\vec{\nu}_{\intru}$.

For any $n$, consider an RPCA $G_n$ with permutation $\alpha_n \colon (\haine5^{l_n})^3 \to (\haine5^{l_n})^3$ over $\C[\other]$. This is not a strict restriction in itself: all RPCA can be represented in this way, up to a simple alphabet renaming and use of Remark~\ref{sharpization}

If the sequence of permutations $(\alpha_n)_{n \in \N}$ is polynomially computable, we can build a PPA that simulates $G_n$, for all $n \in \N$. 

\begin{algo}{intru}{\intru}{M,\vec{\nu},\vec{k},\seq S,\seq T,\seq U, \seq\alpha}
\STATE{$\alpha_{\bina{\lev}}[\C[\other]]$} \COMMENT{$G_n$ on the $\C[\other] $ fields.}\label{al:intru:gn}
\IF{$\bina{\pi_\age}=0$}
\STATE{$\chekk[\emp[\motvide]{\head_{-1},\head_{+1},\mail_{-1},\mail_{+1},\newinfo}]$}\label{al:univ:empty}
\STATE{$\chekka[M,\bina{\pi_\addr},\pi_\info,\vec{k}_{\bina{\pi_\lev}+1}]$}  \label{al:intru:alph}
\STATE{$\hier[M,\bina{\pi_\addr},\pi_\info,\vec{k}_{\bina{\pi_\lev}+1},\prog,
\pi_{\prog}]$}\label{al:intru:prog}
\STATE{$\hier[M,\bina{\pi_\addr},\pi_\info,\vec{k}_{\bina{\pi_\lev}+1},\revprog,
\pi_\revprog]$} \label{al:intru:revprog}
\STATE{$\hier[M,\bina{\pi_\addr},\pi_\info,\vec{k}_{\bina{\pi_\lev}+1},\lev,
\anib{\bina{\pi_\lev}+1}]$} \label{al:intru:lev}
\ENDIF
\STATE{$\unive[M$, $\vec{\nu}$, $\vec{k}_{\bina{\pi_\lev}}$, $\bina{\pi_\addr}$, $\bina{\pi_\age}$, $\seq{S}_{\bina{\pi_\lev}}$, $\seq{T}_{\bina{\pi_\lev}}$, $\seq{U}_{\bina{\pi_\lev}}$, $\pi_\prog$, $\pi_\revprog]$} \COMMENT{Simulate}\label{al:intru:unive}
\STATE{$\coordi[\seq{S}_{\bina{\pi_\lev}},\seq{T}_{\bina{\pi_\lev}};\C[\coordi]]$}
\end{algo}


The only difference of this rule with $\hsim$ is that it has 3 additional fields (which implies that $\vec{k}$ will be chosen in $(\N^{16})^{\N}$) and that we apply $\alpha_{\bina{\lev}}$ onto the field list $\C[\other]$ \emph{independently} from what we do on $\C[\hsim]$.

\begin{lemma}\label{lem:univppa}

Let $\seq U,\seq S,\seq T$ be polynomially checkable sequences of integers and $\alpha$ a polynomially computable sequence of permutations.
Let us fix the field list $\C[\intru]\defeq [0,\ldots,15]$, the corresponding \emph{fixed} direction vector $\vec{\nu}_{\intru}$ and a polynomially checkable sequence of $M$-uples $\seq{\vec{k}}\in(\N^{15})^\N$. Let $F$ be the IPPA with directions $\vec{\nu}_{\intru}$ and permutation $\intru[15,\vec{\nu}_{\intru},\vec{k},\seq S,\seq T, \seq U;\C[\intru]]$ and $p,p^{-1}$ be the programs for this permutation and its inverse, respectively.

For all $n \in \N$, let $F_n$ be the restriction of $F$ to the subalphabet

\begin{equation*}
\A_{n}\defeq \haine5^{\vec{k}_n}\cap\emp[n]{\lev}\cap\emp[p]{\prog}\cap\emp[p^{-1}]{\revprog},
\end{equation*}

and assume that the following inequalities hold:

\[\both{
\I(\vec{k}_n,\vec{k}_{n+1},S_n,T_n,U_n,p)\\
k_{n,\prog}\geq\length p\\
k_{n,\revprog}\geq\length{p^{-1}}\\
k_{n,\lev} \geq \norm{n}\\
k_{n,\otherinfo}=k_{n,\otherinfo_{-1}}=k_{n,\otherinfo_{+1}}\geq l_n
~.}\]

If $\Omega_{G_n} \neq \emptyset$, then $F_n$ completely $(S_n,T_n,0)$-simulates $F_{n+1}$ with decoding function $\Phi_n=\tilde{\Phi}_{\info}{\restr{\Sigma_n}}$, where 
$\Sigma_n \defeq \A_{n}^{\Z} \cap \gra{0}{0}{S}{T}\cap \emp[\motvide]{\head_{-1},\head_{+1},\mail_{-1},\mail_{+1},\newinfo} \cap \Phi^{-1}(\A_{n+1}^{\Z}).$

The simulation is exact if and only if $\Omega_{G_n}$ is a singleton. In addition, if $c \in F_n^{-1}(\A_{n}^{\Z})$, then $G_n \pi_{\C[\other]}(c)=\pi_{\C[\other]}F_n(c)$.
\end{lemma}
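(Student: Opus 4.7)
The plan is to mirror the proof of Lemma~\ref{lem:nestsimul}, treating the additional line~\ref{al:intru:gn} as an independent action on the disjoint field list $\C[\other]$ that does not interfere with the hierarchical machinery. Since no sub-rule of $\hsim$ reads or modifies the three fields of $\C[\other]$, the two blocks of $\intru$ commute at every step, so the analysis of the $\C[\hsim]$-part can be imported verbatim from Lemma~\ref{lem:nestsimul}, and only the effect on $\C[\other]$ needs to be added.

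First I would dispose of the final assertion. For any $c\in\A_n^{\Z}$, the pinning of $\lev$ gives $\bina{\pi_{\lev}(c_i)}=n$ uniformly, so line~\ref{al:intru:gn} applies exactly the permutation $\alpha_n$; combined with the shifts $\vec{\nu}_{\other}$ carried by $\vec{\nu}_{\intru}$, this realizes one step of $G_n=\sigma^{\vec{\nu}_{\other}}\alpha_n$ on $\pi_{\C[\other]}(c)$, whence $\pi_{\C[\other]}F_n(c)=G_n\pi_{\C[\other]}(c)$. Then I would turn to the simulation identities. Because $\Phi_n=\tilde{\Phi}_{\info}{\restr{\Sigma_n}}$ is oblivious to $\C[\other]$, the relations $\Phi_n\sigma^{S_n}=\sigma\Phi_n$, the surjectivity of $\Phi_n$, and the disjointness of $\rock{\Phi_n}$ are supplied by Remarks~\ref{twoconditionsofsimulation} and~\ref{thirdconditionofsimulation} exactly as before. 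The core identity $F_{n+1}\Phi_n=\Phi_n F_n^{T_n}$ splits: on the $\C[\hsim]$-side it is an application of Lemma~\ref{universal} to the $\hsim$-block of the rule, under the assumed inequalities $\I(\vec{k}_n,\vec{k}_{n+1},S_n,T_n,U_n,p)$ and the checks of lines~\ref{al:intru:alph}--\ref{al:intru:lev}; on the encoded $\C[\other]$-side, the $\compute$ phase runs $p$ (whose code contains line~\ref{al:intru:gn} evaluated at level $n+1$, i.e.\ with $\alpha_{n+1}$) and the $\shift$ phase moves the $\C[\other]$-subfields of the encoded letter according to $\vec{\nu}_{\other}$, jointly realizing the $G_{n+1}$-action on $\pi_{\C[\other]}(b)$.

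It remains to settle exactness and completeness, which is where $\Omega_{G_n}\neq\emptyset$ and its strengthening to a singleton enter. In any $c\in\Phi_n^{-1}(b)$, every field in $\C[\hsim]$ is forced: the coordinate fields by $\gra{0}{0}{S_n}{T_n}$, the transient fields by their pinning to $\motvide$, $\info$ by the encoding of $b$, and the hierarchical fields $\lev,\prog,\revprog$ by the restriction to $\A_n$; the sole freedom lies in $\pi_{\C[\other]}(c)$. For $c$ to figure in the simulation (in particular for $F_n^{\pm T_n}(c)$ to stay defined and be read again by $\Phi_n$) the sequence $\pi_{\C[\other]}(c)$ must be bi-infinitely iterable by $G_n$, i.e.\ lie in $\Omega_{G_n}$, so $\Phi_n$ is injective iff $\Omega_{G_n}$ is a singleton. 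For completeness, the $\hsim$-argument of Lemma~\ref{lem:nestsimul} places $\pi_{\C[\hsim]}(c)$ in the corresponding skeleton for every $c\in\Omega_{F_n}$, while $\pi_{\C[\other]}(c)\in\Omega_{G_n}$ is automatic from the first observation; the hypothesis $\Omega_{G_n}\neq\emptyset$ ensures that such $\C[\other]$-contents exist, so that $\Omega_{F_n}\subseteq\rock{\Phi_n}$. The main delicate step will be the bookkeeping in the core identity on the encoded $\C[\other]$-side: verifying carefully that the $\compute/\shift$ phases at level $n$ (which jointly execute the simulated $\intru$ on $b$) reproduce precisely one step of $G_{n+1}$ on the $\C[\other]$-subfields of $b$, in parallel with the completely independent application of $G_n$ on the simulating layer via line~\ref{al:intru:gn}.
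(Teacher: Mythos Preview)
Your approach is essentially the same as the paper's: treat the $\C[\other]$ block as an independent layer, import the argument of Lemma~\ref{lem:nestsimul} for the hierarchical part, and trace (non-)exactness to the freedom in $\pi_{\C[\other]}(c)$. The only cosmetic differences are that the paper proves the factoring identity $G_n\pi_{\C[\other]}=\pi_{\C[\other]}F_n$ last rather than first, and that the paper compresses your whole ``encoded $\C[\other]$-side'' discussion into the single observation that $p$ is the program of the \emph{full} $\intru$ permutation, so Lemma~\ref{universal} already delivers one step of $F_{n+1}$ (including its $\alpha_{n+1}$-component) on the encoded letter; your more explicit unpacking of this into $\compute$ applying $\alpha_{n+1}$ and $\shift$ moving the $\C[\other]$-subfields is correct but not separately needed. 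One small point of emphasis: the paper invokes $\Omega_{G_n}\neq\emptyset$ already in the simulation step, by choosing a preimage $c$ with $\pi_{\C[\other]}(c)\in\Omega_{G_n}$ so that $G_n^{T_n}(\pi_{\C[\other]}(c))$ exists; you defer this hypothesis to completeness, but you should also note it is what guarantees that such a $c$ actually survives $T_n$ steps, i.e., that the surjection produced by Remark~\ref{twoconditionsofsimulation} yields a $c$ on which $F_n^{T_n}$ is defined.
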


As mentioned before, the proof is very similar to the proof of Lemma~\ref{lem:nestsimul}. Therefore, we are going to omit most of the details and only stress those points where there is a difference.

\begin{proof}

Let us fix $n \in \N$.
For the first claim, we have to show that $F_n$ $(S_n,T_n,0)$-simulates $F_{n+1}$ with decoding function $\Phi_n$ (simulation), that $\Phi_n$ is an injection if and only if $\Omega_{G_n}$ is a singleton and that $\Omega_{F_{n}} \subseteq \dom(\Phi_n)$ (completeness).

For the simulation part, let $b \in \A_{n+1,p}^{\Z}$. We can find $c \in \A_{n}^{\Z}$ that simulates $b$: we choose $c \in \Phi^{-1}(b) \in \A_{n}^{\Z}\cap\gra{0}{0}{S}{T}\cap \emp[\motvide]{\head_{-1},\head_{+1},\mail_{-1},\mail_{+1},\newinfo}$ such that $\pi_{\C[\other]}(c) \in \Omega_{G_n}$ (this is possible by the assumption that $\Omega_{G_n} \neq \emptyset$). Then, it is easy to see that $c$ simulates $b$, because $\C[\other]$ is only \xpr{touched} by $\alpha_{\bina{\lev}}\defeq \alpha_n$, $p$ is the program of $\intru[17,\vec{\nu}_{\intru},\vec{k},\seq S,\seq T, \seq U;\C[\intru]]$ and $G_n^{T}(\pi_{\C[\other]}(c))$ exists.

For the exactness part, as usual $\pi_{\C[\hsim]}(c)$ is uniquely determined by $b$. However, $\pi_{\C[\other]}(c)$ can be chosen independently from $b$ to be any element of $\Omega_{G_n}$, so that the simulation is exact if and only if $\Omega_{G_n}$ is a singleton.

Finally, for the completeness part, an argument almost identical to the argument in the proof of Lemma~\ref{lem:nestsimul} shows that if $c \in \gra{0}{0}{S}{T} \cap F_{n}^{-T}(\A_{n}^{\Z})$, then $c \in \Phi^{-1}(\A_{n+1}^{\Z})$. As we know, this is enough to show that the simulation is complete.

The second claim, that if $c \in F_n^{-1}(\A_{n}^{\Z})$, then $G_n \pi_{\C[\other]}(c)=\pi_{\C[\other]}F_n(c)$ is straightforward from the definition of $F_n$, since the only rule that \xpr{touches} the fields $\C[\other]$ is $G_n$.
\end{proof}

\begin{remark}\label{rem:univenonempt}
\begin{enumerate}
\item $\Omega_{F_0} \neq \emptyset$ if and only if $\Omega_{G_n} \neq \emptyset$, for all $n \in \N$.
\item If $\Omega_{F_0} \neq \emptyset$, then $F_0$ completely simulates $G_n$ for all $n \in \N$.
\end{enumerate}
\end{remark}

\begin{proof}
\begin{enumerate}
\item If $\Omega_{G_n} = \emptyset$ for some $n \in \N$, then $\Omega_{F_n} = \emptyset$, so that since $F_0$ simulates $F_n$ (by transitivity of simulation), we obtain that $\Omega_{F_0}=\emptyset$ by Lemma~\ref{lem:aperiodichierarchy}.

If, on the other hand, $\Omega_{G_n} \neq \emptyset$ for all $n \in \N$, then Lemma~\ref{l:nonvide} gives that $\Omega_{F_0} \neq \emptyset$.
\item If $\Omega_{F_0} \neq \emptyset$, then the second claim of Lemma~\ref{lem:univppa} implies that $F_n$ factors onto $G_n$. Since $F_0$ simulates $F_n$, for all $n \in \N$, we obtain that $F_0$ simulates $G_n$, for all $n \in \N$.
\end{enumerate}
\end{proof}

\begin{remark}\label{rem:univextrexp}
Even if $\prod_{i \in \N} S_i/T_i =0$, $F_0$ is not necessarily extremely expansive, since we might have non-expansive directions coming from the $G_n$ part. However, 
in the special case that $\Omega_{G_n}$ is a singleton for all $n \in \N$, then all the simulations are exact and it is straightforward to see that $\NE(F_0) = \{0\}$, because Proposition~\ref{prop:hochman} applies.
\end{remark}

\subsection{Satisfying the inequalities}



\begin{remark}\label{rem:inequunivppa}
We can find $\vec{k} \in (\N^{16})^{\N}$ and $\seq{S},\seq{T},\seq{U} \in \Ns^{\N}$ such that the inequalities of Lemma~\ref{lem:univppa} are satisfied and $\prod_{i \in \N}S_i/T_i=0$.
\end{remark}

We only state the case $\prod_{i \in \N}S_i/T_i=0$ (even though we can make it $\neq 0$) too, because it is what we will need and use in the applications.

\begin{proof}
Let us write explicitly the inequalities that we need to satisfy:

\[\both{
U_n\ge\max\{t_p({\haine5^{\seq{\vec{k}}_{n+1}}}),t_{p^{-1}}({\haine5^{\seq{\vec{k}}_{n+1}}})\}\\
S_n\ge\max\{2{U}_n,\length{\Chi{\haine5^{\vec{k}_{n+1}}}}\}\\
T_n\ge 4{U}_n+S_n\\
k_{n,\prog}\geq\length p\\
k_{n,\revprog}\geq\length{p^{-1}}\\
k_{n,\head_{-1}},k_{n,\head_{+1}}\ge\length{\Chi{\haine4\times (Q_p \cup Q_{p^{-1}})\times\{-1,+1\}}}\\

k_{n,\addr},k_{n,\addr_{+1}}\ge\norm{S_n}\\
k_{n,\age},k_{n,\age_{+1}}\ge\norm{T_n}\\
k_{n,\info},k_{n,\newinfo},k_{n,\mail_{-1}},k_{n,\mail_{+1}}\ge1\\
k_{n,\lev} = \norm{n}\\
k_{n,\otherinfo}=k_{n,\otherinfo_{-1}}=k_{n,\otherinfo_{+1}}= l_n
~.}\]

For all $\seq{S},\seq{T}$ and $\seq{U}$ ,  let us choose $\vec{k}_n\defeq\vec{k}_{n,\seq{S},\seq{T},\seq{U}}$ such that the last five inequalities are satisfied as equalities. Then, we have the crucial inequality
\begin{equation*} 
\norm{\Chi{\haine5^{\vec{k}_n}}}\le P_1(\log{S_n},\log{T_n},\log{n},l_n,k_{n,\head_{-1}},k_{n,\head_{+1}},k_{n,\revprog},k_{n,\prog}),
\end{equation*}
where $l_n$ is the size of the alphabet of $\alpha_n$.

The other crucial inequality of the proof of Lemma~\ref{rem:inequhiera} also holds without any essential changes:

\begin{multline*}
\max\{t_p({\haine5^{\vec{k}}}),t_{p^{-1}}({\haine5^{\vec{k}}})\} \le P_2(\log{S_n},\log{T_n},\log{n},l_n,k_{n,\head_{-1}},\\
k_{n,\head_{+1}},k_{n,\revprog},k_{n,\prog},t_{\vec{k}}(n),t_{\seq S}(n),
,t_{\seq T}(n),t_{\seq U}(n))
\end{multline*}
for some polynomial $P_2$ that does not depend on the parameters.

This holds because the permutation applied consists in a number of polynomial operations (recall that $\alpha$ is polynomially computable and fixed for this specific construction) and a bounded number of calls to $\seq S$, $\seq T$,$\seq U$  and $\vec{k}$.

Also, since $\alpha$ is polynomially computable, $l_n$ (which is part of the output of $\alpha_n$) is also bounded by a polynomial of $n$ so that we can \xpr{remove} $l_n$ from the right-hand side of the previous inequalities and \xpr{incorporate} it in the polynomials $P_1,P_2$.
From this point on, the proof is identical to the proof of Remark~\ref{rem:inequhiera}. (We are free to chose whether $\prod_{i \in \N}S_i/T_i$ is equal to $0$ or not.)
\end{proof}

\subsection{Domino problem}

\begin{theorem}
It is undecidable whether an extremely expansive SFT is empty.
\end{theorem}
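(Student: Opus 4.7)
The plan is to reduce the halting problem for Turing machines on empty input to the emptiness problem for SFTs that are either empty or extremely expansive. Given a TM $M$, I will construct uniformly computably in $M$ an SFT $X_M$ (coming from the universal hierarchy $F_0$ of Lemma~\ref{lem:univppa}) such that $X_M$ is non-empty if and only if $M$ does not halt, and such that whenever $X_M$ is non-empty it is extremely expansive with $\NE(X_M) = \{0\}$ (the vertical direction). Since the halting problem is undecidable, no algorithm can decide emptiness even under the promise that the SFT is either empty or extremely expansive.

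For the construction, I will take the sequence $(G_n)_{n\in\N}$ of RPCA over the one-letter alphabet $\{*\}$ (extended to three-field form $\C[\other]$ as required) defined by: $G_n$ is the identity RPCA if the Turing machine $M$ has not halted on empty input within $n$ steps, and $G_n$ is the nowhere-defined RPCA otherwise. The associated sequence $(\alpha_n)_{n\in\N}$ is polynomially computable because simulating $M$ for $n$ steps takes time polynomial in $n$, after which $\alpha_n$ is either the identity on a fixed tiny alphabet or the empty partial permutation; also $l_n = 1$ is trivially polynomially bounded. By construction, $\Omega_{G_n}$ is a singleton when $M$ has not halted within $n$ steps, and empty otherwise. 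Now feed this sequence into the universal construction of Lemma~\ref{lem:univppa}, choosing parameter sequences $\vec{k}, \seq{S}, \seq{T}, \seq{U}$ as in the second case of Remark~\ref{rem:inequunivppa} so that the inequalities of Lemma~\ref{lem:univppa} are satisfied and $\prod_{i\in\N} S_i/T_i = 0$. This yields an RPCA $F_0$ that depends uniformly computably on $M$.

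To conclude, invoke Remark~\ref{rem:univenonempt}: $\Omega_{F_0}$ is non-empty if and only if $\Omega_{G_n}$ is non-empty for every $n$, which holds precisely when $M$ never halts. If $M$ does not halt, then each $\Omega_{G_n}$ is a singleton, so by Remark~\ref{rem:univextrexp} all simulations in the hierarchy are exact and the hypothesis $\prod S_i/T_i = 0$ of Proposition~\ref{prop:hochman} yields $\NE(\orb{F_0}) = \{0\}$; hence $X_M \defeq \orb{F_0}$ is extremely expansive with the vertical direction as unique non-expansive direction. The map $M \mapsto X_M$ is computable since $\intru$ has a fixed program and its parameters depend computably on $M$, and a finite set of forbidden patterns for $\orb{F_0}$ can be extracted from the rule and the restricted alphabet $\A_0$. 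Thus the halting problem many-one reduces to the emptiness problem for the class of SFTs that are either empty or extremely expansive with $\NE = \{0\}$, proving the stronger form of the theorem.

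The only potential obstacle is making sure the indicator sequence really satisfies the polynomial checkability hypotheses demanded both by Lemma~\ref{lem:univppa} and by Remark~\ref{rem:inequunivppa}; but this is trivial here because the sequence of permutations is a one-bit indicator of a polynomial-time predicate over a one-letter alphabet, and all the delicate work of arranging the parameters $\vec{k},\seq{S},\seq{T},\seq{U}$ has already been carried out in Remark~\ref{rem:inequunivppa}. The whole argument is therefore a direct application of the universal hierarchy machinery developed in the previous sections.
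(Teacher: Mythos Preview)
Your proof is correct and follows essentially the same route as the paper: define $\alpha_n$ to be the identity on a one-letter alphabet when the TM has not halted in $n$ steps and undefined otherwise, feed this sequence into Lemma~\ref{lem:univppa} with parameters from Remark~\ref{rem:inequunivppa} chosen so that $\prod S_i/T_i=0$, and conclude via Remarks~\ref{rem:univenonempt} and~\ref{rem:univextrexp}. The only cosmetic difference is that the paper phrases the halting condition as $\halt{p'}{n}{0^n}$ rather than ``empty input'', and your write-up makes the uniform computability of $M\mapsto X_M$ more explicit than the paper does.
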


\begin{proof}
Let $\M$ be an arbitrary TM with program $p'$. For all $n \in \N$, we define $\alpha_n$ as follows:

$l_n =1$. $\alpha_n(0,0,0)=(0,0,0)$ if $\halt{p'}{n}{0^n}$ is true (\ie if $\M$ does not halt within $n$ steps). $\alpha_n$ is undefined in all other cases.

$(\alpha_n)_{n \in \N}$ is a polynomially computable sequence of permutations. $\Omega_{G_n}$ is a singleton, equal to $\{\dinf 0 \}$, if and only if $\M$ does not halt within $n$ steps. Otherwise $\Omega_{G_n}$ is empty.

Let us construct the sequence of RPCA $(F_n)_{n \in \N}$ as in Lemma~\ref{lem:univppa} corresponding to the $\alpha$ and $\vec{k}, \seq S, \seq T, \seq U$ that satisfy the inequalities and for which $\prod_{i \in \N} S_i/T_i=0$. Then, Remark~\ref{rem:univenonempt} implies that $\Omega_{F_0}$ (equivalently, $\orb{F_0}$) is non-empty if and only if $\Omega_{G_n}$ is non-empty for all $n$, which is equivalent to that $\M$ does not halt over input $0^{\infty}$.

In addition, Remark~\ref{rem:univextrexp} implies that if $\Omega_{F_0}$ is non-empty, then $\NE(F_0)=\{0\}$.

Therefore, for every TM $\M$, we have constructed a 2D SFT $\orb{F_0}$ that is non-empty if and only if $\M$ does not halt over input $0^{\infty}$ and if $\orb{F_0} \neq \emptyset$, then $\orb{F_0}$ is extremely expansive. This concludes the proof of the undecidability.
\end{proof}

It follows from the previous proof that we have actually proved the following: Let $A$ be the family of forbidden patterns that define empty SFT, and let $B$ be the family that defines non-empty extremely expansive SFT (with unique direction of non- expansiveness $\infty$). There does not exists a recursively enumerable set $X$ that contains $B$ and is disjoint from $A$. In other words, if an algorithm correctly recognizes all non- empty extremely-expansive SFT, then it must also (falsely) recognize an empty SFT.

\subsection{Intrinsic universality}
The second application concerns the universality properties of RPCA.

\begin{theorem}\label{c:intru}
For any computably enumerable set of non-empty PPA, there exists a PPA that completely simulates all of them.
\end{theorem}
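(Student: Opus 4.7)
The plan is to apply Lemma~\ref{lem:univppa} to a suitably padded enumeration of the given family. Given a computably enumerable set $\mathcal{F}$ of non-empty PPA, fix a computable enumeration $(G_k)_{k\in\N}$ of $\mathcal{F}$. After a harmless relabeling (using Remark~\ref{sharpization} and renaming fields), we may assume each $G_k$ is given by a permutation $\alpha_k$ of $(\haine5^{l_k})^3$ acting on the field list $\C[\other]$, so that the framework of Lemma~\ref{lem:univppa} applies.

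The key issue is that the bare map $k \mapsto \alpha_k$ need not be polynomially computable, whereas Lemma~\ref{lem:univppa} requires a polynomially computable sequence of permutations. I would fix this by the classical padding trick. Let $\tau(k)$ be an explicit computable upper bound on the time needed to output (a description of) $\alpha_k$ and on $t_{\alpha_k}$, and let $L(k) \defeq 2^{\tau(k)}$. Define a new sequence $(\alpha'_n)_{n\in\N}$ by $\alpha'_n \defeq \alpha_{\kappa(n)}$ where $\kappa(n)$ is the largest $k$ with $\sum_{j<k} L(j) \le n$. Then $n \mapsto \alpha'_n$ is polynomially computable: from $n$, one reads off $\kappa(n)$ in time $O(\log n)$, and because $n \ge L(\kappa(n))$, there is time $\mathrm{poly}(\log n) \ge \tau(\kappa(n))$ available to produce $\alpha_{\kappa(n)}$ and to evaluate it on its input. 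Similarly, the sequence of alphabet sizes $(l'_n)_{n\in\N} = (l_{\kappa(n)})_{n\in\N}$ is polynomially checkable. Since every $G_k \in \mathcal{F}$ is non-empty, every $G'_n$ corresponding to $\alpha'_n$ is non-empty as well, so $\Omega_{G'_n} \neq \emptyset$ for all $n$.

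Now I would invoke Lemma~\ref{lem:univppa} together with Remark~\ref{rem:inequunivppa}: choose polynomially checkable sequences $\vec{k} \in (\N^{16})^{\N}$ and $\seq S, \seq T, \seq U$ satisfying the system of inequalities (the bookkeeping goes through exactly as in the proof of Remark~\ref{rem:inequunivppa}, since incorporating $l'_n$, which is polynomial in $n$, into the polynomials $P_1, P_2$ causes no difficulty). This yields a PPA $F$ and its restrictions $F_n$ with $F_n \simu[S_n, T_n, 0, \Phi_n] F_{n+1}$ completely, for every $n \in \N$. By the first claim of Remark~\ref{rem:univenonempt}, $\Omega_{F_0}$ is non-empty, and by the second claim, $F_0$ completely simulates every $G'_n$, hence every $G_k = G'_{n}$ for $n = \sum_{j<k} L(j)$. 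Thus $F_0$ is the desired universal PPA.

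The main obstacle is simply the polynomial-time bookkeeping in the padding step — ensuring that the time budget $\tau(k)$ is honestly respected and that the resulting sequence still allows a solution to the inequality system $\I(\vec{k}_n, \vec{k}_{n+1}, S_n, T_n, U_n, p, p^{-1})$ with headers large enough to accommodate the growing alphabets $l'_n$. Since $l'_n$ is polynomial in $n$ and the construction of Remark~\ref{rem:inequunivppa} only needs such polynomial bounds, this obstacle is overcome and no circularity arises: the program $p$ of $\intru$ depends only on $\vec{k}, \seq S, \seq T, \seq U, \seq\alpha$ through fixed-length parameter slots, and the self-reference in the inequalities is resolved exactly as in the proofs of Remarks~\ref{rem:inequhiera} and~\ref{rem:inequunivppa}.
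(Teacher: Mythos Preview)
Your proof follows the paper's approach: reduce each PPA to the three-field format over $\C[\other]$, convert the c.e.\ family into a polynomially computable sequence of permutations, and then apply Lemma~\ref{lem:univppa} together with Remark~\ref{rem:univenonempt}. The paper achieves the polynomial-time conversion more simply than your explicit padding: fix one element $g$ of the family, and on input $n$ just run the enumerating machine for $n$ steps and return the last permutation output so far (or $g$ if none yet) --- this is trivially $O(n)$-time and eventually hits every member. Your variant also works, modulo a small indexing slip: with $\kappa(n)$ defined as the largest $k$ satisfying $\sum_{j<k}L(j)\le n$, you do \emph{not} get $n\ge L(\kappa(n))$ in general; taking $\sum_{j\le k}$ instead (so that $n\ge L(\kappa(n))=2^{\tau(\kappa(n))}$) makes your time-budget claim valid.
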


\begin{proof}
First of all, we can assume that all the PPA are over the field list $\C[\other]$ with the corresponding directions. This is true because we can encode, in polynomial time, all the left-moving fields into a unique left-moving field, and similarly for the other types of fields. Then, saying that a set of PPA is computably enumerable is equivalent to saying that the corresponding set of permutations that define these PPA is computable enumerable.

In addition, for every computably enumerable set of PPA $X$ (over the field list $\C[\other]$), there exists a \emph{polynomially computable sequence} $(G_n)_{n\in\N}$ of PPA that contains exactly the elements of $X$. Equivalently, there exists a \emph{polynomially computable sequence} of permutations $(\alpha_n)_{n \in \N}$ that contains exactly the permutations of the PPA in $X$.

(Let $g$ be a fixed element of $X$. The polynomial algorithm of $(\alpha_n)_{n \in \N}$ takes as input $n$, runs the algorithm that enumerates $X$ for $n$ steps and sets $\alpha_n$ equal to the last permutation that was output. If no permutation has yet been output, then $\alpha_n$ is set equal to $g$.)

If we use this sequence $\alpha$ and sequences $\vec{k}, \seq S, \seq T, \seq U$ that satisfy the inequalities to define the sequence $(F_n)_{n \in \N}$ as in Lemma~\ref{lem:univppa}, then, since by assumption $\Omega_{G_n} \neq \emptyset$ for all $n \in \N$, Remark~\ref{rem:univenonempt} implies that $F_0$ completely simulates $G_n$, for all $n \in \N$.
\end{proof}

Theorem~\ref{c:intru} applies, up to a conjugacy, to computably enumerable sets of nonempty RPCA.
In some sense, it gives a deterministc version of the result in \cite{lafitteweiss}.
The same result is not true for the non-computably-enumerable set of all nonempty RPCA, thanks to an argument by Hochman \cite{hochmanuniv} and Ballier \cite{balliermedvedev}.
Nevertheless, the corollary applies to the family of all reversible (complete) cellular automata, since the family of RCA is computably enumerable.
Unfortunately, it gives an RPCA (partial CA) that simulates all RCA (full CA) instead of an RCA. The existence of an RCA that simulates all RCA seems to be a much more difficult question and is still open, (see for instance \cite{guillaume1}.

\section{Synchronizing computation}\label{s:comput}
We now introduce one more trick in our construction: the encoding of an infinite sequence inside an infinite nested simulation by encoding increasing finite prefixes of the infinite sequence inside the alphabets of the RPCA of the nested simulation.

Let $\C[\syncomp]\defeq\C[\unive] \sqcup [\mhist,\mhist_{+1},\prog,\revprog]$. In this simulation, we do not use a field $\lev$ in order to store the parameter $n$. Instead, it will be obtained as the length of field $\mhist$. $p'$ is the program of a TM. It is used to reject some nested simulation sequences, depending on the infinite sequence that is stored (through its increasing finite prefixes) in the alphabets of the RPCA.

\begin{algo}{syncomp}{\syncomp}{M,\vec{\nu},\vec{k},\seq S,\seq T,\seq U,p'}
\STATE{$\chekk[\pi_\mhist=\pi_{\mhist_{+1}}]$}\label{al:syncomp:mhistconsistency}
\IF{$\bina{\pi_\age}=0$}
\STATE{$\chekk[\halt{p'}{\length{\mhist}}{\mhist}]$}\label{al:syncomp:medvedev}
\STATE{$\chekk[\emp[\motvide]{\head_{-1},\head_{+1},\mail_{-1},\mail_{+1},\newinfo}]$}\label{al:syncomp:empty}
	\STATE{$\chekka[M,\bina{\pi_\addr},\pi_\info,\vec{k}_{\length{\mhist}+1}]$} \COMMENT{Check that the lengths of the simulated letter are correct} \label{al:syncomp:alph}
	\STATE{$\hier[M,\bina{\pi_\addr},\pi_\info,\vec{k}_{\length{\mhist}+1},\prog,
\pi_{\prog}]$} \COMMENT{$\prog$ of the simulated letter is the same}\label{al:syncomp:prog}
	\STATE{$\hier[M,\bina{\pi_\addr},\pi_\info,\vec{k}_{\length{\mhist}+1},\revprog,
\pi_\revprog]$} \COMMENT{$\revprog$ is also the same}\label{al:syncomp:revprog}
	\STATE{$\hier[M,\bina{\pi_\addr},\pi_\info,\vec{k}_{\length{\mhist}+1},\mhist,\pi_{\mhist}]$} \COMMENT{$\mhist$ of the simulating letters is a prefix of $\mhist$ of the simulated}\label{al:syncomp:infinitesequence}
\ENDIF
	\STATE{$\unive[M$, $\vec{\nu}$, $\vec{k}_{\length{\mhist}}$, $\bina{\pi_\addr}$, $\bina{\pi_\age}$, $\seq{S}_{\length{\mhist}}$, $\seq{T}_{\length{\mhist}}$, $\seq{U}_{\length{\mhist}}$, $\pi_\prog$, $\pi_\revprog]$} \label{al:syncomp:unive}
	\STATE{$\coordi[\seq{S}_{\length{\mhist}},\seq{T}_{\length{\mhist}}]$}
\end{algo}


\begin{lemma}\label{l:mhist}
Let $\seq S,\seq T,\seq U$ be polynomially checkable sequences of integers and $p'$ be the program of a TM. Let us fix the field list $\C[\syncomp]\defeq [0,\ldots,13]$, the corresponding \emph{fixed} direction vector $\vec{\nu}_{\syncomp}$ and a polynomially checkable sequence of $14$-uples $\seq{\vec{k}}\in(\N^{14})^{\N}$. Let $F$ be the IPPA with directions $\vec{\nu}_{\syncomp}$ and permutation 
\begin{equation*}
\syncomp[14,\vec{\nu}_{\syncomp},\vec{k},\seq S,\seq T, \seq U,\pi_{\mhist},p']
\end{equation*} 
and $p,p^{-1}$ be the programs for this permutation and its inverse, respectively.

For all $w \in \haine2^{*}$, let 
$S_w\defeq S_{\length{w}}$, $T_w \defeq T_{\length{w}}$
and $F_w$ be the restriction of $F$ to the subalphabet

\begin{equation*}
\A_{w}\defeq \haine5^{\vec{k}_{\length{w}}}\cap\emp[w]{\mhist,\mhist_{+1}}\cap\emp[p]{\prog}\cap\emp[p^{-1}]{\revprog},
\end{equation*}

and assume that the following inequalities hold for all $n \in \N$:

\[\both{
\I(\vec{k}_n,\vec{k}_{n+1},S_n,T_n,U_n,p,p^{-1})\\
k_{n,\prog}\geq\length p\\
k_{n,\revprog}\geq\length{p^{-1}}\\
k_{n,\lev} \geq \norm{n}

~.}\]

Then, $F_w\simu[S_w,T_w,0,\Phi_w]\bigsqcup_{\begin{subarray}{c}a\in \haine2\end{subarray}}{F_{wa}}$ completely exactly, where
\begin{equation*}
\Sigma_w \defeq \A_{w}^{\Z} \cap \gra{0}{0}{S_w}{T_w} \cap \emp[\motvide]{\head_{-1},\head_{+1},\mail_{-1},\mail_{+1},\newinfo} \cap \tilde{\Phi}^{-1}(\bigsqcup_{\begin{subarray}{c} a \in \haine2 \end{subarray}}\A_{wa}^{\Z}), \text{ and } \Phi_w\defeq\tilde{\Phi}_{\info}{\restr{\Sigma_w}}.
\end{equation*}
\end{lemma}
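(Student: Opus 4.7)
The plan is to mimic closely the argument of Lemma~\ref{lem:nestsimul}, since $\syncomp$ differs from $\hsim$ only by replacing the integer level field $\lev$ with the word field $\mhist$ (together with its right-moving copy $\mhist_{+1}$ for horizontal consistency) and by adding the extra halting check on line~\ref{al:syncomp:medvedev}. The key structural change is that a letter of $\A_w$ with $\mhist=w$ is now allowed to simulate a letter with $\mhist=wa$ for \emph{either} $a\in\haine2$, which is what produces the disjoint union $\bigsqcup_{a\in\haine2}F_{wa}$ on the right-hand side.

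First I would argue that the union $\bigsqcup_{a\in\haine2}F_{wa}$ is well-defined as an RPCA: the alphabets $\A_{w0}$ and $\A_{w1}$ are disjoint (they differ in the $\mhist$ field), so by the discussion at the end of Section~\ref{s:ca}, $\bigsqcup_{a\in\haine2}F_{wa}$ is an RPCA and $\Omega_{\bigsqcup_a F_{wa}}=\bigsqcup_a\Omega_{F_{wa}}$. Then I would verify the three ingredients of simulation. Surjectivity of $\Phi_w$ onto $\bigsqcup_a\A_{wa}^\Z$ and the intertwining $\sigma\Phi_w=\Phi_w\sigma^{S_w}$ come from Remark~\ref{twoconditionsofsimulation} applied to $\vec k_{\length w+1}$. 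Disjointness of $\rock{\Phi_w}=\bigsqcup_{0\le s<S_w,0\le t<T_w}\sigma^sF_w^t(\dom\Phi_w)$ follows from Remark~\ref{thirdconditionofsimulation} (since $F_w$ factors onto $\coordi[S_w,T_w]$).

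The main commutation $(\bigsqcup_a F_{wa})\Phi_w=\Phi_w F_w^{T_w}$ reduces to Fact~\ref{fact:shiftandpermutation} of Lemma~\ref{universal}: on a configuration $c\in\Sigma_w$ with $\Phi_w(c)=b$, the rule $\syncomp$ at line~\ref{al:syncomp:unive} runs exactly $\unive$ with the correct parameters (namely $\vec k_{\length w}$, $S_w$, $T_w$, $U_w$, $p$, $p^{-1}$), so after $T_w$ steps $\info$ contains the encoding of $F_{wa}(b)$ while all auxiliary fields of $\C[\unive]$ are reset; the hierarchical fields $\prog,\revprog,\mhist$ and the sequence $\mhist_{+1}$ are untouched by $\unive$ and remain consistent with being a letter of $\A_{wa}$ by the very checks on lines~\ref{al:syncomp:mhistconsistency}--\ref{al:syncomp:infinitesequence}. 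For completeness I would show, as in the proof of Lemma~\ref{lem:nestsimul}, that any $c\in\gra00{S_w}{T_w}\cap F_w^{-T_w}(\A_w^\Z)$ must lie in $\Phi_w^{-1}(\bigsqcup_a\A_{wa}^\Z)$: line~\ref{al:syncomp:empty} forces the auxiliary fields to be empty at the start of a work-period, line~\ref{al:syncomp:alph} together with Lemma~\ref{lem:fixalpha} forces the simulated letter to have lengths $\vec k_{\length w+1}$, lines~\ref{al:syncomp:prog}--\ref{al:syncomp:revprog} together with Lemma~\ref{lem:fixfield} force $\prog=p,\revprog=p^{-1}$ in the simulated letter, and line~\ref{al:syncomp:infinitesequence} forces the simulated $\mhist$ to extend $w$. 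Since $\length{\mhist}$ in the simulated letter is prescribed by $\vec k_{\length w+1}$ and equals $\length w+1$ (by our convention on the $\mhist$ field length), the extension has length exactly one, i.e.\ is some $wa$ with $a\in\haine2$. Finally, line~\ref{al:syncomp:medvedev} is automatically satisfied at the simulated level because $p'$ not halting in $\length{wa}$ steps implies that it did not halt in $\length w$ steps (monotonicity of non-halting), so the check is preserved up the hierarchy.

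Exactness is immediate since the field list is $[0,\ldots,13]$ so there are no anonymous fields, and for each $b\in\bigsqcup_a\A_{wa}^\Z$ every field of $\C[\coordi]\sqcup\C[\syncomp]$ in a preimage $c\in\Sigma_w$ is fixed: the $\C[\unive]$ fields by $\Phi$ and the initial synchronization, $\prog,\revprog$ by the restriction $\A_w$, and $\mhist,\mhist_{+1}$ by line~\ref{al:syncomp:mhistconsistency} together with the restriction $\emp[w]{\mhist,\mhist_{+1}}$. Completeness then follows by the same final argument as in Lemma~\ref{self}: if $c\in\Omega_{F_w}\subset F_w^{-2T_w}(\A_w^\Z)$, pick $s\in\co0{S_w}$, $t\in\co0{T_w}$ with $\sigma^sF_w^t(c)\in\gra00{S_w}{T_w}$, apply the above to get $\sigma^sF_w^t(c)\in\dom\Phi_w$, and conclude $\Omega_{F_w}\subset\rock{\Phi_w}$. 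The hard part will be keeping track of exactly how the length-indexed sequences $\vec k$, $\seq S$, $\seq T$, $\seq U$ interact with the new halting check on line~\ref{al:syncomp:medvedev}, but since this check depends only on $\mhist$ and is monotone in $\length\mhist$, it does not interfere with any of the universal-simulator machinery — it simply prunes some of the $a$'s from the disjoint union (that is what will be exploited in subsequent applications).
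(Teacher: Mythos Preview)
Your approach is the same as the paper's, and almost all steps are correct, but there is one genuine gap in the completeness argument.

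In Lemma~\ref{lem:nestsimul} the hierarchical check on $\lev$ pins down the simulated $\lev$ \emph{completely}: it must equal $\anib{n+1}$, so every colony encodes a letter with the same $\lev$. Here the situation is different. Line~\ref{al:syncomp:infinitesequence} only forces $w=\pi_\mhist(c)$ to be a \emph{prefix} of the simulated $\mhist$; since the simulated $\mhist$ has length $\length w+1$, this leaves one bit free. Your argument from $c\in\gra00{S_w}{T_w}\cap F_w^{-T_w}(\A_w^\Z)$ therefore only yields $b\in(\bigsqcup_{a\in\haine2}\A_{wa})^\Z$, i.e.\ each colony encodes a letter of some $\A_{wa}$, but nothing prevents $\pi_\mhist(b_i)=w0$ and $\pi_\mhist(b_j)=w1$ for $i\ne j$. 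That is strictly weaker than $b\in\bigsqcup_{a\in\haine2}\A_{wa}^\Z$, which is what $\dom\Phi_w$ requires.

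The fix, which the paper carries out, is to strengthen the hypothesis to $c\in\gra00{S_w}{T_w}\cap F_w^{-2T_w}(\A_w^\Z)$. Then not only does $b$ exist, but $F^2(b)$ is defined as well, so line~\ref{al:syncomp:mhistconsistency} (applied at the \emph{simulated} level, together with the right-moving field $\mhist_{+1}$) forces $\pi_\mhist(b_i)=\pi_\mhist(b_j)$ for all $i,j$, exactly as in the proof of Lemma~\ref{koo}. Your final paragraph already starts from $\Omega_{F_w}\subset F_w^{-2T_w}(\A_w^\Z)$, so after shifting by $(s,t)$ you still have $2T_w-t>T_w$ forward images available; you just need to use that extra margin to run one more simulated step and invoke the $\mhist$--$\mhist_{+1}$ consistency check on $b$.
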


\begin{proof}
Let $w \in \haine2^*$ and $\length{w}=n$. By definition, $S_w\defeq S_n$, $T_w\defeq T_n$ and $U_w \defeq U_n$. If $p'$ halts on input $w$ within $n$ steps, then the check of line~\ref{al:syncomp:medvedev} will reject every configuration, which means that $\dom(F_{w}) = \emptyset$. But, in this case, $wa$ will also be rejected by $p'$ within $n+1$ steps, for all $a \in \haine2$, so that $\dom(\bigsqcup_{\begin{subarray}{c}a\in \haine2\end{subarray}}{F_{wa}}) = \emptyset$, too. By definition, the empty PCA strongly, completely simulates itself for all possible choices of the simulating parameters, so that the claim is true in this case.

Suppose, then, that $p'$ does not halt on input $w$ within $n$ steps. Then, the check of line~\ref{al:syncomp:medvedev} is always true, so that we can ignore it in the rest of the proof. As in the previous proofs, we have to show three things: that $F_{w}$ $(S_n,T_n,0)$-simulates $\bigsqcup_{\begin{subarray}{c}a\in \haine2\end{subarray}}{F_{wa}}$ with decoding function $\Phi_{w}$ (simulation), that $\Phi_{w}$ is injective (exactness) and that $\Omega_{F_{w}} \subseteq \dom(\Phi_w)$ (completeness).

For the simulation, it is easy to see that if $b \in \A_{wa}^{\Z}$, where $a \in \haine2$ and $c \in \Phi_w^{-1}(b)$, then $c$ is not rejected by the checks of lines~\ref{al:syncomp:mhistconsistency},\ref{al:syncomp:empty},\ref{al:syncomp:alph},\ref{al:syncomp:prog}, \ref{al:syncomp:revprog} and \ref{al:syncomp:infinitesequence}. Then, simulation follows easily from the choice of the program $p$ and Lemma~\ref{universal}. 

Exactness is also direct. The values of all the fields of $c$ are uniquely determined by $b$ and the form $\Phi_w$.

Completeness also follows the general pattern of the previous proofs, but there is a small difference: we can show that if  $c \in \gra{0}{0}{S_n}{T_n} \cap F_{w}^{-2T}(\A_{w}^{\Z})$ (the difference is that we have $2T$ instead of $T$ in the exponent), then $c \in \Phi_{w}^{-1}(\bigsqcup_{\begin{subarray}{c} a \in \haine2 \end{subarray}}\A_{wa}^{\Z})$. This is enough to ensure completeness of the simulation.
 
Indeed, if 
\begin{equation*}
c \in \gra{0}{0}{S_n}{T_n} \cap F_{w}^{-T}(\A_{w}^{\Z}),
\end{equation*}
then lines~\ref{al:syncomp:empty},\ref{al:syncomp:prog},\ref{al:syncomp:revprog} and \ref{al:syncomp:infinitesequence} ensure that 
\begin{equation*}
c \in \A_{w}^{\Z} \cap \gra{0}{0}{S_n}{T_n}\cap\emp[\motvide]{\head_{-1},\head_{+1},\mail_{-1},\mail_{+1},\newinfo} \cap \Phi^{-1}((\bigsqcup_{\begin{subarray}{c} a \in \haine2 \end{subarray}}\A_{wa})^{\Z}).
\end{equation*}
Let $b \in (\bigsqcup_{\begin{subarray}{c} a \in \haine2 \end{subarray}}\A_{wa})^{\Z}$ be such that $c \in \Phi^{-1}(b)$. The problem is that we still cannot know that $\pi_{\mhist}(b)$ is the same in all cells, because line~\ref{al:syncomp:infinitesequence} only checks that at every cell $i$, $\pi_{\mhist}(c)=w$ (which we know that it is constant) is a prefix of $\pi_{\mhist}(b_i)$. However, we could still have that $\pi_{\mhist}(b_i)=w0$ and $\pi_{\mhist}(b_j)=w1$, for some $i\neq j$. This is why we need to take $2T$ steps instead of $T$ steps.

Indeed, since $F_w^{2T}(c)$ exists, $F^2(b)$ also exists, and line~\ref{al:syncomp:mhistconsistency} ensures that $\pi_{\mhist}(b_i)=\pi_{\mhist_{+1}}(b_i)=\pi_{\mhist}(b_j)$, for all $i,j \in \Z$.
The argument for this is similar to the argument used in the proof of Lemma~\ref{koo}. 
Therefore, $b \in \bigsqcup_{\begin{subarray}{c} a \in \haine2 \end{subarray}}\A_{wa}^{\Z}$ and this concludes the proof of the Lemma.
\end{proof}

\subsection{Satisfying the inequalities}

\begin{remark}\label{rem:inequcomputa}
We can find $\vec{k} \in (\N^{14})^{\N}$ and $\seq{S},\seq{T},\seq{U} \in \Ns^{\N}$ such that the inequalities of Lemma~\ref{l:mhist} are satisfied  and $\prod_{i<n} S_i/T_i =0$.
\end{remark}

\begin{proof}
The proof is almost identical to the proof of Remark~\ref{rem:inequunivppa} and is omitted. We just make a few comments:

First of all, the inequalities depend on $w \in \haine2^{*}$, but in fact, if $\length{w}=\length{w'}$, then we have exactly the same inequalities for $w$ and $w'$, so that actually the inequalities can be translated to a set of inequalities that depend on $n$.

Second, notice that line~\ref{al:syncomp:medvedev} is computable in polynomial time, and since the program $p'$ is fixed in advance, its contribution to $\norm{\A_w}$, $t_p$ and $\length{p}$ is constant and does not depend on the choice of parameters. 

Finally, we can choose $k_{w,\mhist} \defeq n$, (where $n \defeq \length{w}$) which means that this field only contributes a polynomial of $n$ to the various inequalities, so that it can be \xpr{incorporated} into the polynomials and the problem can be reduced to the cases that have already been dealt with.
\end{proof}

\subsection{Realizing computational degrees}

The statement of Lemma~\ref{l:mhist} falls exactly into the situation described in Lemma~\ref{l:nonvides}. For all $n \in \N$, let $\B_n=\haine2$. Then, for all $w \in \haine2^n (= \prod_{i < n} \B_i)$, we have defined $S_w,T_w, F_w$ and $\Phi_w$ such that $F_w$ exactly, completely $(S_w,T_w,0)$-simulates $\bigsqcup_{b\in\B_n}F_{ub}$.

The check of line~\ref{al:syncomp:medvedev} forces that if $z \in \prod_{i \in \N} \B_i$, then $\rocks[\infty]z{\seq\Phi}\ne\emptyset$ if and only if $\halt{p'}{\infty}{z}$ is true, or in other words, $z \in \X_p$. Indeed, we have that $\dom(F_{z_{\co{0}{n}}})=\emptyset$ for some $n$ if and only if $\halt{p'}{\infty}{z}$ is not true, or in other words, if and only if $p'$ halts over $z$ within $n$ steps.

Therefore, Lemma~\ref{l:nonvides} implies that $\Omega_{F_{\motvide}}= \rocks[\infty]Z{\seq\Phi} = \bigsqcup_{z\in \X_p}\rocks[\infty]z{\seq\Phi}$.

\begin{lemma}\label{l:comphomeo}
For any effectively closed subset $Z\subset\haine2^\N$, there exists an extremely expansive RPCA $F$ and a  computable, left-invertible map from $\Omega_{F}$ onto the Cartesian product $Z\times\haine2^\N$.
\end{lemma}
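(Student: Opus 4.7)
The plan is to apply Lemma~\ref{l:mhist} with a TM program $p'$ chosen so that $\X_{p'}=Z$, which exists by definition of effectively closed, together with polynomially checkable parameter sequences $\vec k,\seq S,\seq T,\seq U$ satisfying the inequalities of Lemma~\ref{l:mhist} and $\prod_{i\in\N}S_i/T_i=0$; both are attainable via Remark~\ref{rem:inequcomputa}. Set $F\defeq F_\motvide$. The resulting construction fits the hypotheses of Lemma~\ref{l:nonvides}, and as already observed in the paragraph preceding the statement, the check at line~\ref{al:syncomp:medvedev} forces $\rocks[\infty]z{\seq\Phi}\ne\emptyset$ if and only if $z\in Z$, yielding $\Omega_F=\bigsqcup_{z\in Z}\rocks[\infty]z{\seq\Phi}$. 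Extreme expansiveness follows from Proposition~\ref{prop:hochman} applied to the complete exact sequence of disjoint-union RPCA $(\bigsqcup_{u\in\haine2^n}F_u)_{n\in\N}$: every shift $D_i$ is zero and $\prod S_i/T_i=0$, so $\NE(F)=\{0\}$ (assuming $Z\ne\emptyset$; the empty case is handled by convention).

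By Lemma~\ref{l:nonvide}(1), each $\rocks[\infty]z{\seq\Phi}$ is the disjoint union, indexed by skeletons $(\seq s,\seq t)\in\prod_i(\co{0}{S_i}\times\co{0}{T_i})$, of the nonempty (Lemma~\ref{l:nonvide}(2)) closed pieces of the decomposition. In our construction each such piece is in fact a singleton: every simulation is exact, and at each level all fields of $\C[\syncomp]$ at a cell are entirely determined by the macrocell position and time together with the decoded upper-level letter, so there are no degrees of freedom beyond $z$ and the skeleton. Consequently, $\Omega_F$ is canonically in bijection with $Z\times\prod_i(\co{0}{S_i}\times\co{0}{T_i})$, and the second factor is computably homeomorphic to $\haine2^\N$ via a fixed $\psi$ obtained by per-factor binarization (possible because $S_i,T_i\ge 2$ and $\seq S,\seq T$ are polynomially checkable). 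Define $f\colon\Omega_F\to Z\times\haine2^\N$ by composing: the first coordinate is extracted from $x$ by iteratively decoding and reading off the $\mhist$ field at each level (a continuous, local operation), and the second coordinate is the skeleton map of Lemma~\ref{l:nonvide}(4) post-composed with $\psi$. Hence $f$ is computable.

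The main obstacle is producing a computable inverse, which is what the phrase ``left-invertible'' requires in addition. Given $(z,\seq\beta)\in Z\times\haine2^\N$, recover the skeleton $(\seq s,\seq t)=\psi^{-1}(\seq\beta)$, then at each level $n$ apply the unbulked local rules $\Phi_{z_{\co0k}}^{-1}$ for $0\le k<n$ (available because each simulation is exact, see Remark~\ref{r:simhedlund}) followed by $\sigma^{-\anib[\seq S]{s_{\co0n}}}F^{-\anib[\seq T]{t_{\co0n}}}$ to recover a central window of the unique $x$ in the corresponding singleton piece, whose length tends to infinity with $n$. The delicate part is verifying that this reconstruction is uniformly computable in $n$: since $\vec k,\seq S,\seq T,\seq U$ and the programs $p,p^{-1}$ defining each $F_w$ are all polynomially checkable, and the encoding local rules are polynomially computable in their parameters (Chapter~\ref{c:programming}), the procedure is indeed uniform, and the window-by-window construction converges to a continuous computable map $f^{-1}$. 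Surjectivity of $f$ onto $Z\times\haine2^\N$ follows from nonemptiness of each skeleton piece, concluding the argument.
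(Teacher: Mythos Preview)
Your approach — instantiate Lemma~\ref{l:mhist} with $p'$ recognizing $Z$ and parameters from Remark~\ref{rem:inequcomputa}, invoke Lemma~\ref{l:nonvides} for $\Omega_{F_\motvide}=\bigsqcup_{z\in Z}\rocks[\infty]z{\seq\Phi}$, and read off the skeleton map — is the paper's. Your extreme-expansiveness argument (Proposition~\ref{prop:hochman} applied directly to the sequence $(\bigsqcup_{u\in\haine2^n}F_u)_n$) differs slightly from the paper's route (decompose $\orb{F_\motvide}$ over $z$ and use the closed-disjoint-union clause of Lemma~\ref{lem:basicstuffaboutNE}) but is equally valid.

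The substantive gap is the assertion that every skeleton piece is a singleton. Your justification is an infinite regress: saying that each level-$n$ cell is determined by its macrocell coordinates together with the decoded level-$(n{+}1)$ letter only shows that level $n$ is a function of level $n{+}1$, and that alone does not force uniqueness of the whole tower (trivial counterexample: $a_n=a_{n+1}$ admits every constant sequence). The present situation is more delicate than you indicate, because the upper-level letter to be encoded is a cell of the \emph{mid-work-period} configuration $c^{(n+1)}=\sigma^{s_{n+1}}F_{n+1}^{t_{n+1}}(d^{(n+1)})$, whose $\head_{\pm1},\mail_{\pm1},\newinfo,\info$ fields all depend on $d^{(n+1)}.\info$; the recursive portion of each colony is therefore not just the three bits encoding $\info$. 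A correct argument would have to show that, for every cell of $x$, the chain of dependencies through the levels eventually lands on fully determined data; you have not supplied this. Your inverse-map paragraph inherits the same defect: applying the encoding rules $\phi^{-1}$ downward from level $n$ presupposes some level-$n$ seed, and you have not shown the resulting level-$0$ window is independent of that seed.

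The paper does not attempt the singleton claim. It simply cites \cite[Proposition~4.2]{gacs} for the construction of \emph{a} configuration from $(z,\seq s,\seq t)$ — that is, a computable section of the skeleton map, which is what is actually needed in Theorem~\ref{t:comphomeo} — and its remark immediately after the lemma that the map is ``two-to-one, and almost one-to-one'' signals that it is not relying on injectivity. What you have in fact described in your last paragraph is exactly this section.
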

One could even prove that the computable map is two-to-one, and almost one-to-one for any reasonable (topological or measure-theoretical) notion.
Also, this SFT can be effectively constructed from $Z$.

\begin{proof}
We construct $\syncomp[14,$ $\vec{\nu}_{\syncomp}$, $\vec{k}$, $\seq S$, $\seq T$,  $\seq U$, $\pi_{\mhist}$, $p']$ for some sequences that satisfy the inequalities and a program $p'$ that recognizes $Z$. In addition, assume that $\prod_{i<n} S_i/T_i =0$.

It follows from Lemma \ref{l:nonvides} that 
\begin{equation*}
\Omega_{F_{\motvide}}=\bigsqcup_{z \in \X_p}\rocks[\infty]z{\seq\Phi}=
\bigsqcup_{\begin{subarray}c z\in \X_p\\\seq t\in\prod_{i\in\N}\co0{T_i}\\\seq s\in\prod_{i\in\N}\co0{S_i}\end{subarray}}\bigcap_{n\in\N}\sigma^{\anib[\seq S]{s_{\co0n}}}F_{\motvide}^{\anib[\seq T]{t_{\co0n}}}\Phi_{z_0}^{-1}\cdots\Phi_{z_{\co{0}{n}}}^{-1}(\Omega_{F_n}).
\end{equation*}

Consider the map that associates, to each configuration $x \in \Omega_{F_{\motvide}}$, the unique triple $(z,\seq s,\seq t)$ such that $x\in\bigcap_{n\in\N}\sigma^{\anib[\seq S]{s_{\co0n}}}F_0^{\anib[\seq T]{t_{\co0n}}}\Phi_{z_0}^{-1}\cdots\Phi_{z_{\co{0}{n}}}^{-1}(\Omega_{F_n})$.
This map is computable, since, for all $n\in\N$, $S_n$ is for instance given by $\bina{\pi_\age\Phi_0\Phi_1\cdots\Phi_{n-1}}$ and $z_{\co{0}{n}}$ is given by $\pi_{\mhist}\Phi_0\Phi_1\cdots\Phi_{n-1}$.

Conversely, from the tripe $(z,\seq{s},\seq{t})$, one can build construct a configuration in $\Omega_{F_{\motvide}}$, as explained in \cite[Proposition 4.2]{gacs}.

The result follows from the obvious computable homeomorphisms between $\Omega_F$ and $\orb F$, and between $\prod_{i\in\N}\co0{S_i}\times\co0{T_i}$ and $\haine2^{\N^2}$.

Finally, $\NE(F_{\motvide}) = \{0\}$, because $\orb{F_{\motvide}}=\bigsqcup_{z \in \X_p} \orb{F_{\rocks[\infty]z{\seq\Phi}}}$ and $\NE(\orb{F_{\rocks[\infty]z{\seq\Phi}}}) = \{0\}$, due to the exact complete simulation and $\prod_{i<n} S_i/T_i =0$.
\end{proof}

The following was proven in \cite{simpson} for general 2D SFT. Here, we can also restrict the set of non-expansive directions.

\begin{theorem}\label{t:comphomeo}
For any effectively closed subset $X$, there exists a Medvedev-equivalent extremely expansive 2D SFT whose Turing degrees are the cones above the Turing degrees of $X$.
\end{theorem}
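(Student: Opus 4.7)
The plan is to derive this theorem directly from Lemma~\ref{l:comphomeo}, which already provides an extremely expansive RPCA $F$ and a computable, left-invertible surjection $\Psi \colon \Omega_F \to X \times \haine2^\N$. First I would pass from $\Omega_F$ to the 2D SFT $\orb F$, which is computably homeomorphic to $\Omega_F$ (since $F$ is reversible and its space-time-diagram map is invertible with computable inverse), so all Medvedev and Turing degree properties transfer verbatim.

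For Medvedev equivalence, I would establish the two reductions separately. The reduction $\orb F \ge_{\text{M}} X$ is immediate: given a configuration $x \in \orb F$, pass to the corresponding point of $\Omega_F$, apply $\Psi$, and project onto the first coordinate; all these operations are computable. For the reverse reduction $X \ge_{\text{M}} \orb F$, given $z \in X$, pair it with any fixed computable sequence (say $0^\infty \in \haine2^\N$) to form $(z, 0^\infty) \in X \times \haine2^\N$, then apply the computable left-inverse of $\Psi$ to obtain a point of $\Omega_F$, and translate it to $\orb F$.

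For the Turing-degree statement, the key observation is that $\Psi$ being a computable left-invertible surjection means that the Turing degree of any $x \in \Omega_F$ equals the join $\deg(z) \oplus \deg(y)$, where $(z,y) = \Psi(x)$: we can compute $(z,y)$ from $x$ via $\Psi$, and conversely we can reconstruct $x$ from $(z,y)$ via the computable left-inverse. Since $z \in X$ is arbitrary and $y \in \haine2^\N$ is completely free, the set of Turing degrees realized by points of $\orb F$ is exactly $\sett{\deg(z) \oplus \deg(y)}{z \in X,\ y \in \haine2^\N}$. Given any $z \in X$, as $y$ ranges over $\haine2^\N$, the join $\deg(z) \oplus \deg(y)$ ranges over exactly the cone of Turing degrees above $\deg(z)$ (every degree $d \ge \deg(z)$ is realized by choosing $y$ to be any representative of $d$, and conversely every such join lies in this cone).

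The proof is then essentially a bookkeeping argument, and the only real subtlety I expect to encounter is making sure that the left-inverse of $\Psi$ is genuinely \emph{uniformly} computable from both coordinates of $X \times \haine2^\N$ — this is what guarantees that $x$ is Turing-equivalent to the pair $(z,y)$ rather than merely reducible one way. Since Lemma~\ref{l:comphomeo} already gives us the left-invertibility (and the construction sketched there via the triple $(z, \seq s, \seq t)$ together with the observation that the decoding functions are computable makes the inverse direction explicit), this should go through without additional work.
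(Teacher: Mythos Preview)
Your proposal is correct and follows essentially the same approach as the paper: the paper's proof is a single sentence observing that $\Omega_F$ and $\orb F$ are computably homeomorphic, relying entirely on Lemma~\ref{l:comphomeo} for the substance, while you have spelled out the Medvedev-reduction and Turing-degree bookkeeping that the paper leaves implicit.
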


A fortiori, all Medvedev (and Mu\v cnik) degrees contain an extremely expansive SFT.
\begin{proof}
It is enough to notice that $\Omega_{F}$ and $\orb{F}$ are computably homeomorphic.
\end{proof}

The second component in the computable homeomorphism cannot easily be taken out: it is pointed in \cite{vanierdegrees} that all aperiodic subshifts admit a cone of Turing degrees (that is one degree and all degrees above it).

Let us make some final comments: In this chapter, we are inspired and draw mainly on the work of Durand, Romashchenko and Shen \cite{drs}. Reading that paper, one has the feeling that the construction of that paper can be done in a reversible way, except for the exchange of information. Working out the details needed to make that intuition work is (as proven by this chapter) messy and even tedious, sometimes, but we manage to obtain results for which there is no known alternative proof. We also feel that our construction can also shed some light on the construction of Durand, Romashchenko and Shen. More specifically, we always write explicitly the inequalities that need to be satisfied, and for each one we explain at least once why it is needed. Also, we construct \xpr{once and for all} the rules and then prove that they have the desired behaviour, instead of using their more informal approach where some rule is created and then it is modified, resulting in a new rule, for which it is taken for granted that the previous argumentation still holds.

\chapter{Expansive directions}\label{sec:expdir}

In Lemma~\ref{lem:nonexpsftrestr}, we described a necessary condition for the set of non-expansive directions of an SFT: if $X$ is an SFT, then $\NE(X)$ is effectively closed. 
In this section, we are going to show that this is in fact a characterization of sets of non-expansive directions of SFTs.
\begin{theorem}\label{thm:nonexpansive}
If $\NE_0 \subseteq \Rb$ is effectively closed, then there exists 
an SFT $X \subseteq \A^{\Z^2}$ such that $\NE(X)=\NE_0$.
\end{theorem}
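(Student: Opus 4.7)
The strategy is to express the desired SFT as a closed disjoint union parametrised by the directions in $\NE_0$, using the fixed-point hierarchical machinery of Chapters~\ref{construction}--\ref{s:hierarchy}. By Lemma~\ref{lem:basicstuffaboutNE}, a closed disjoint union $\bigsqcup_w X_w$ satisfies $\NE(\bigsqcup_w X_w)=\bigcup_w\NE(X_w)$, so it suffices to build a single SFT whose limit set decomposes, in the sense of Lemma~\ref{l:nonvides}, into subsystems $X_{\seq D}$ indexed by an effectively closed parameter space $Z$ such that each $X_{\seq D}$ is extremely expansive and the realised directions $\{\NE(X_{\seq D}):\seq D\in Z\}$ equal $\NE_0$. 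This closely mirrors Section~5 of \cite{nexpdir}, where the digit sequence $\seq D$ was used to realise an arbitrary closed $\NE_0$; the novelty is squeezing the entire mechanism into an SFT, using effectivity of $\NE_0$.

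The key lever is Proposition~\ref{prop:hochman}: in a complete exact nested simulation $F_i\simu[S_i,T_i,D_iS_i]F_{i+1}$ with $\prod_{i<n}S_i/T_i\to 0$, the unique non-expansive direction of $\orb{F_0}$ equals $\anib[\seq S/\seq T]{\seq D}$. I would therefore upgrade the hierarchical simulator of Lemma~\ref{lem:nestsimul} as follows. First, modify $\shift$ (and hence $\unive$) to accept a per-level integer parameter $D_i$ and perform an $(S_i,T_i,D_iS_i)$-simulation, using Remark~\ref{r:simsynchr}(2) to view this as a standard $(S_i,T_i,0)$-simulation of $\sigma^{D_i}$ composed with the simulated rule; this is reversibly implementable inside the colony schedule provided $|D_i|\leq T_i/S_i$, which is plentiful once $\prod S_i/T_i$ is small. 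Second, combine this with the synchronising-computation trick of Lemma~\ref{l:mhist}: store the prefix $\seq D_{\co{0}{n}}$ in a horizontally constant field $\mhist$, enforce at each simulation level (by a hierarchical check analogous to line~\ref{al:syncomp:infinitesequence}) that the simulated letter extends the prefix by exactly one digit $D_n$, and read this $D_n$ as the shift parameter for the modified $\unive$.

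The effective closedness of $\NE_0$ is used exactly once, to constrain which $\seq D$ are allowed. A TM $p'$ runs on the stored prefix and rejects it whenever the target partial sum $\anib[\seq{S}_{\co{0}{n}}/\seq{T}_{\co{0}{n}}]{\seq D_{\co{0}{n}}}$ plus its maximal tail radius $\prod_{i<n}S_i/T_i$ is forced into a rational open interval enumerated for $\NE_0^c$; since $\NE_0^c$ is a computably enumerable union of such intervals, this is semi-decidable and plugs into Lemma~\ref{l:mhist} as the program $p'$. Then $Z=\X_{p'}$ is effectively closed and $\Omega_{F_\motvide}=\bigsqcup_{\seq D\in Z}\rocks[\infty]{\seq D}{\seq\Phi}$ by Lemma~\ref{l:nonvides}. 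Verification has three parts: (a) by Proposition~\ref{prop:hochman} applied within each branch, $\NE(X_{\seq D})=\{\anib[\seq S/\seq T]{\seq D}\}$; (b) by design of $p'$, every realised direction lies in $\NE_0$; (c) conversely, every $z\in\NE_0$ is reached by some surviving $\seq D$, constructed greedily digit-by-digit, since with $|D_i|\leq T_i/S_i$ the incremental term $D_i\prod_{j\leq i}S_j/T_j$ sweeps out an interval of radius $\prod_{j<i}S_j/T_j$, matching the remaining tail uncertainty. Combining (a)--(c) with Lemma~\ref{lem:basicstuffaboutNE} gives $\NE(\orb{F_\motvide})=\NE_0$.

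The main obstacle I expect is surjectivity (c), together with the rigid constraint that nested simulations of PPA with bi-radius $1$ automatically produce $\NE\subseteq[-1,1]$. To realise an $\NE_0$ intersecting $\Rb\setminus[-1,1]$ or containing $\infty$, I would cover $\Rb$ by a finite $SL_2(\Z)$-atlas of charts each mapping into $(-\tfrac12,\tfrac12)$, intersect $\NE_0$ with each chart (which preserves effective closedness since the transition maps are computable), build one SFT per chart using the above, apply the corresponding conjugacy, and take the disjoint union via Lemma~\ref{lem:basicstuffaboutNE}. A secondary, purely technical difficulty is producing the modified $\shift$ primitive that moves the $\info$ field by $D_i S_i$ cells reversibly within the $T_i$ available steps and re-establishing the inequality system $\I$ of Lemma~\ref{universal}; the proof that such sequences exist will follow the template of Remark~\ref{rem:inequcomputa}, with $|D_i|\leq T_i/S_i$ absorbed into the choice of $T_i$.
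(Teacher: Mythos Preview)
Your proposal is correct and follows the same architecture as the paper: a hierarchical IPPA storing growing prefixes of a directive sequence in a horizontally constant field, a TM check on those prefixes enforcing membership in $\theta^{-1}(\NE_0)$, the disjoint-union decomposition of Lemma~\ref{l:nonvides}, per-branch extreme expansiveness via Proposition~\ref{prop:hochman}, Lemma~\ref{lem:basicstuffaboutNE} to assemble the union, and an $SL_2(\Z)$ atlas to cover $\Rb$.

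The one substantive difference is the directive encoding. You keep $\seq S,\seq T$ fixed and let the digit $D_i$ range over an interval of integers of length roughly $T_i/S_i$, so the per-level alphabet $\B_i$ grows. The paper instead fixes a three-letter alphabet $\parint=\{(0,1),(1,1),(1,0)\}$ and makes $T_{w,a}=S_n(D_a+W_a+1)+4U_n$ depend on the current digit. The price the paper pays is a careful covering lemma (Lemma~\ref{lem:geomstuff}, requiring $\varepsilon_i<\sqrt2-1$) showing that three digits already sweep out an interval; in exchange it keeps $\B_i$ uniformly finite. Your greedy surjectivity argument is conceptually simpler and closer to Hochman's original in \cite{nexpdir}, but you should be aware that it is not quite automatic: after reserving $4U_i+S_i$ steps for computation and the ordinary shift, the usable digit range is $\{0,\ldots,\lfloor T_i/S_i\rfloor-c_i\}$ with $c_i>1$, so the reachable interval at each level is slightly short of length $1$ and one must check that the tails compensate, namely that $\sum_{m>n}M_m\prod_{n<j\le m}S_j/T_j\ge1$. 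This does hold once $T_i/S_i\to\infty$, but it is exactly the ``error term'' you should expect to have to control, and the paper's Lemma~\ref{lem:geomstuff} is its way of doing the analogous bookkeeping.
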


This is mentioned as Open Problem~11.1 in Mike Boyle's Open Problems for Symbolic Dynamics \cite{opsd}. We only answer the first part of that problem, since our constructions do not have any SFT direction. The second part of the problem, concerning 2D SFT with an SFT direction is much more difficult to answer, since it is inextricably related to the expansiveness of RCA.

It is enough to prove this for sets of non-expansive directions that are included in $[-1,1]$, or even $[0,r]$, for some $0 < r <1$, because we can cover the set of directions with a finite number of rotations of $[-1,1]$ (and $[0,r]$). Therefore, even though the fact that we are using PPA might seem problematic (since $\NE(F) \subseteq [-1,1]$ in this case), this is not the case.

The key idea consists in constructing subshifts with a unique direction of non-expansiveness through a nested simulation of RPCA, so that we can use Lemma~\ref{prop:hochman}.
This idea was introduced in \cite{nexpdir}, in a non-effective way;
we will try to emphasize the obstacle that has to be overcome when trying to \xpr{SFTize} this construction. 

\section{Directive encoding}\label{subsection:direncoding}

Proposition~\ref{prop:hochman} states that, if we manage to implement a certain kind of nested simulation, then we will obtain a subshift with a unique direction of non-expansiveness, equal to $\anib[\seq S/\seq T]{\seq D}$, where $\seq S,\seq T\in\Ns^\N$ and $\seq D\in\Z^\N$.
\cite[Lemma~5.6]{nexpdir} shows that all directions can be written in this form (when the sequences $\seq S, \seq T, \seq D$ are allowed to be chosen without any constraints).
But the sequences of nested simulations that are possible with our SFT construction are more constrained: for example, the sequences $\seq S$ and $\seq T$ must be polynomially checkable.
This immediately imposes some restrictions, since, for example, it implies that $S_i$ cannot grow like an exponential tower of height $i$. This is not excluded from the construction of Hochman, since he takes $S$ \xpr{sufficiently large}, in order to make some \xpr{error term} sufficiently small. A large part of our construction is to show that we can satisfy these restrictions at the same time, or, in other words, that the error terms can be made sufficiently small even if $\seq S$ grows relatively slowly. At the same time, we have to take care of some technical details. 

Let us begin the construction by giving some additional necessary definitions:

To any vector $\vec\varepsilon\in\R_+^n$ and any \dfn{directive word} $\vec d\defeq(D_i,W_i)_{0\le i<n}\in(\N^2\setminus\{(0,0)\})^n$, where $n\in\N$, we associate the direction interval $\Theta_{\vec\varepsilon}(\vec d)\defeq(\prod_{0\le i<n}R_i)[-1,1]+\anib[\vec R]{\vec D}\subset\Rb$, where $R_i\defeq1/(D_i+W_i+1+\varepsilon_i)\le1/2$; recall that $\anib[\vec R]{\vec D}\defeq\sum_{0\le i<n}D_i\prod_{0\le j\le i}R_j$. It follows immediately by the definition that $\Theta_{\vec\varepsilon}(\vec d)=R_0(\Theta_{\varepsilon\restr{\co1n}}(\vec d\restr{\co1n})+D_0)$ for any $\vec d=(D_i,W_i)_{0\le i < n}$.

We extend these definitions for infinite sequences in the natural way: To any sequence $\seq\varepsilon\in\R_+^\N$ and any \dfn{directive sequence} $\seq d\defeq(D_n,W_n)_{n\in\N}\in(\N^2\setminus\{(0,0)\})^\N$ we associate the direction $\theta_{\seq\varepsilon}(\seq d)\defeq\anib[\seq R]{\seq D}\in\R$, the unique element of $\bigcap_{n\in\N}\Theta_{\varepsilon\restr{\co0n}}((D_i,W_i)_{0\le i<n})$ (uniqueness follows from the fact that $R_i \le 1 /2$, for all $i \in \N$).

If $F_0\simu[S_0,T_0,D_0S_0]F_1\simu[S_1,T_1,D_1S_1]\ldots$ and for all $n\in\N$, $T_n=(D_n+W_n+1+\epsilon_n)S_n$, then observe that Proposition~\ref{prop:hochman} can be seen as saying that $\NE(F_0)=\{\theta_{\seq\varepsilon}(\seq D,\seq W
)\}$. This is point of contact between this section and the rest of the thesis.

\cite[Lemma 5.6]{nexpdir} can now be reformalized as the following.
\begin{lemma}
\label{lem:hochgeomstuff}
For all $x \in [0,1]$, there exist a sequence $\seq\varepsilon\in\R_+^\N$ and a directive sequence $\seq d$ such that $\theta_{\seq\varepsilon}(\seq d) = x$.
\end{lemma}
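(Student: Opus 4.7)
The plan is to unwind the definition of $\theta_{\seq\varepsilon}(\seq d)$ and exhibit an explicit expansion of $x$ in this form. Since the definition gives
\[
\theta_{\seq\varepsilon}(\seq d)=\sum_{n\in\N}D_n\prod_{0\le j\le n}R_j,\qquad R_j=\frac{1}{D_j+W_j+1+\varepsilon_j},
\]
the natural first move is to freeze $R_j$ at the simplest value compatible with the constraint $R_j\le 1/2$. Taking $\varepsilon_n=0$ for all $n$ (which is permitted since $0\in\R_+$ in the convention of the paper, and the inequality $R_j\le 1/2$ then reduces to $(D_j,W_j)\neq(0,0)$, which is already built into the definition of a directive sequence) and demanding $D_n+W_n+1=2$ for every $n$ forces $R_n=1/2$ and restricts $(D_n,W_n)$ to the two allowed pairs $(0,1)$ and $(1,0)$.

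With that choice $\prod_{0\le j\le n}R_j=2^{-(n+1)}$ and $D_n\in\{0,1\}$, so the map $(D_n)_{n\in\N}\mapsto\theta_{\seq\varepsilon}(\seq d)$ becomes precisely the binary-expansion map $(b_n)_{n\in\N}\mapsto\sum_{n\in\N}b_n\,2^{-(n+1)}$. Thus I would fix any binary expansion $x=\sum_{n\in\N}b_n\,2^{-(n+1)}$ of the given $x\in[0,1]$ (using the trailing-$1$ expansion to handle $x=1$, and the all-zero expansion to handle $x=0$), set $D_n\defeq b_n$ and $W_n\defeq 1-b_n$, and verify the two bookkeeping points: first, that $(D_n,W_n)\in\{(0,1),(1,0)\}\subset\N^2\setminus\{(0,0)\}$ so $\seq d$ is a genuine directive sequence; second, that with $\seq\varepsilon\equiv 0$ the sequence $R_n=1/2$ satisfies the ambient constraint $R_n\le 1/2$ of the construction.

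The concluding computation is then purely mechanical:
\[
\theta_{\seq\varepsilon}(\seq d)=\sum_{n\in\N}D_n\prod_{0\le j\le n}R_j=\sum_{n\in\N}\frac{b_n}{2^{n+1}}=x,
\]
which is the desired identity. I would not expect any obstacle in this argument; the point of the lemma is genuinely just that enough directive sequences exist to realize all $x\in[0,1]$, and the binary expansion already provides uncountably many. The only mild subtlety is that of $\R_+$ containing $0$; if the paper's convention required strictly positive $\varepsilon_n$, then one would instead carry out a greedy inductive construction $y_{n+1}=y_n(M_n+\varepsilon_n)-D_n$ with $M_n$ growing slowly enough and $\varepsilon_n$ chosen small enough to keep each $y_{n+1}\in[0,1]$, but the clean binary-expansion argument seems to be exactly what the author has in mind, as the real work, namely reconciling such an expansion with the polynomial-checkability of $\seq S$ and $\seq T$, is deferred to later sections.
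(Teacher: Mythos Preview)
Your argument is correct. With $\varepsilon_n=0$ and $(D_n,W_n)\in\{(0,1),(1,0)\}$ one gets $R_n=1/2$ for every $n$, and the defining formula $\theta_{\seq\varepsilon}(\seq d)=\sum_{n\ge0}D_n\prod_{j\le n}R_j$ becomes exactly $\sum_{n\ge0}D_n\,2^{-(n+1)}$, the binary expansion of $x$. The choice $\varepsilon_n=0$ is legitimate here: the very next lemma in the paper works over $[0,\sqrt2-1]^\N$, so $0$ is in range.

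Concerning the comparison: the paper does not prove this lemma at all. It is stated as a reformalization of \cite[Lemma~5.6]{nexpdir} and is immediately superseded by the refinement in Lemma~\ref{lem:geomstuff}, where $\seq\varepsilon$ is allowed to be an arbitrary fixed sequence in $[0,\sqrt2-1]^\N$ and the directive alphabet is restricted to $\parint=\{(0,1),(1,1),(1,0)\}$. Your proof is thus a self-contained special case of that refinement (you use only two of the three letters of $\parint$ and the single sequence $\seq\varepsilon\equiv0$), and it is more direct than invoking Hochman's result. What it does not buy, and what the paper needs next, is robustness under perturbation of $\seq\varepsilon$: for the SFT construction the error terms $\varepsilon_n=4U_n/S_n$ are nonzero and only controlled from above, so one genuinely needs the interval-covering argument of Lemma~\ref{lem:geomstuff} rather than an exact binary expansion.
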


We refine this statement in two ways: first, we will show that the sequence $\seq\varepsilon$ can be fixed and second, we will restrict the alphabet of acceptable directive sequences to $\parint\defeq\{(0,1),(1,1),(1,0)\}$. By doing that, we will \xpr{lose} a small part on the right endpoint of the interval $[0,1]$.

\begin{lemma}\label{lem:geomstuff}
Let $\seq\varepsilon\in[0,\sqrt2-1]^\N$ be a sequence. Then,
\begin{equation*}
\theta_{\seq\varepsilon}(\parint^\N)
=[0,\anib[(1/(2+\varepsilon_i)_i)]{111\ldots}
].
\end{equation*}
\end{lemma}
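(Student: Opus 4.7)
The plan is to establish the two inclusions separately, using the self-similar recurrence $\theta_{\seq\varepsilon}(\seq d)=R_0(\theta_{\seq\varepsilon\restr{\co1\infty}}(\seq d\restr{\co1\infty})+D_0)$ and denoting $M_n \defeq \sum_{k\ge n}\prod_{n\le j\le k}\frac1{2+\varepsilon_j}$, so that $M_n=\frac{1+M_{n+1}}{2+\varepsilon_n}$ and the goal is to show $\theta_{\seq\varepsilon}(\parint^\N)=[0,M_0]$.

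The inclusion $\subseteq$ is immediate: every pair $(D,W)\in\parint$ has $D\in\{0,1\}$ and $D+W+1\in\{2,3\}$, so $R_i\le\frac1{2+\varepsilon_i}$ and $D_i\le1$; hence every partial sum of $\theta_{\seq\varepsilon}(\seq d)=\sum_i D_i\prod_{j\le i}R_j$ is non-negative and bounded above by $M_0$, and so is the limit.

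For the harder inclusion $\supseteq$, I will build $\seq d=(D_n,W_n)_n\in\parint^\N$ greedily, maintaining a target $x_n\in[0,M_n]$ at stage $n$ and setting $x_{n+1}\defeq x_n/R_n-D_n$. For each choice in $\parint$, the preimage $\{x_n : x_n/R_n-D_n\in[0,M_{n+1}]\}$ is an interval:
\[(0,1)\mapsto\bigl[0,\tfrac{M_{n+1}}{2+\varepsilon_n}\bigr],\quad (1,1)\mapsto\bigl[\tfrac1{3+\varepsilon_n},\tfrac{1+M_{n+1}}{3+\varepsilon_n}\bigr],\quad (1,0)\mapsto\bigl[\tfrac1{2+\varepsilon_n},M_n\bigr].\]
These three intervals cover $[0,M_n]$ exactly when
\[\tfrac{M_{n+1}}{2+\varepsilon_n}\ge\tfrac1{3+\varepsilon_n}\quad\text{and}\quad\tfrac{1+M_{n+1}}{3+\varepsilon_n}\ge\tfrac1{2+\varepsilon_n},\]
i.e.\ $M_{n+1}\ge\tfrac{2+\varepsilon_n}{3+\varepsilon_n}$ and $M_{n+1}\ge\tfrac1{2+\varepsilon_n}$; the first is the stronger requirement. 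So picking any $(D_n,W_n)$ whose corresponding interval contains $x_n$ continues the induction, and passing to the limit yields a directive sequence mapped to $x$ by $\theta_{\seq\varepsilon}$.

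The pivotal step—and the reason for the hypothesis $\seq\varepsilon\in[0,\sqrt2-1]^\N$—is verifying $M_{n+1}\ge\tfrac{2+\varepsilon_n}{3+\varepsilon_n}$. The right-hand side is increasing in $\varepsilon_n$, so its supremum under the hypothesis is $\tfrac{1+\sqrt2}{2+\sqrt2}=\tfrac1{\sqrt2}$; dually, $M_{n+1}$ is decreasing in each $\varepsilon_j$, so its infimum under the hypothesis is attained when every $\varepsilon_j=\sqrt2-1$, giving the geometric sum $\sum_{k\ge1}(1+\sqrt2)^{-k}=\tfrac1{\sqrt2}$. Thus both sides meet exactly at $1/\sqrt2$, showing the inequality holds (with equality at the extremal boundary) and that $\sqrt2-1$ is the sharp threshold for this construction; the weaker inequality $M_{n+1}\ge\tfrac1{2+\varepsilon_n}$ is then free since $\tfrac1{\sqrt2}>\tfrac12$. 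The main obstacle is exactly this tightness computation: it is what forces the restriction on $\seq\varepsilon$ and explains why the endpoint on the right in the statement cannot be pushed up to $1$ using only the restricted alphabet $\parint$.
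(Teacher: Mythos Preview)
Your proof is correct and follows essentially the same idea as the paper's: both decompose by the first directive letter into the same three subintervals and verify their overlap via the critical bound $M_{n+1}\ge\tfrac1{\sqrt2}$ (the paper phrases it as $\anib[(1/(2+\varepsilon_i))_i]{1\ldots12}\ge\tfrac1{\sqrt2}$), which is exactly where the hypothesis $\varepsilon_i\le\sqrt2-1$ enters. The only presentational difference is that the paper carries out an induction on finite prefixes, showing $\bigcup_{\vec d\in\parint^n}\Theta_{\varepsilon\restr{\co0n}}(\vec d)$ is a single interval (thereby tracking a slightly larger interval including a negative left endpoint), whereas you separate the easy inclusion $\subseteq$ and handle $\supseteq$ by a greedy descent on the infinite sequence directly; your route avoids the extra bookkeeping of the finite-interval endpoints and is a little cleaner.
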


Though the interval $[0,1[$ cannot be covered fully with a fixed, non-trivial sequence, the convergence of the sequence $(\varepsilon_n)_{n \in \N}$ to $0$ can be sped up suitably, in order to realise any number arbitrarily close to $1$.
\begin{proof}
Let us prove by induction over $n\in\N$ that for any such sequence $\seq\varepsilon$, we have that 
\begin{equation*}
\theta_{\varepsilon\restr{\co0n}}(\parint^n)=[-\prod_{i<n}\frac1{2+\varepsilon_i},\anib[(1/(2+\varepsilon_i))_{i<n}]{1\ldots12}]\supset[0,\frac1{\sqrt2}].
\end{equation*}

The base of the induction follows from $\epsilon_0 \le \sqrt{2}-1$.
Let us assume that the inductive hypothesis is true for some $n\in\N$, and prove it for $n+1$.
Note that $
\anib[(1/(2+\varepsilon_i))_{i < n+1}]{1\ldots12}
=
\frac1{2+\varepsilon_0}\left(1+\anib[(1/(2+\varepsilon_i))_{1 \le i<n+1}]{1\ldots12}\right)
$
By the induction hypothesis (which we can apply to the truncated sequence $(\epsilon_n)_{n \geq 1}$, since it satisfies the assumption, too) and $\varepsilon_0\le\sqrt2-1$, we have
$\anib[(1/(2+\varepsilon_i))_{i < n+1}]{1\ldots12}\ge\frac1{1+\sqrt2}\left(1+\frac1{\sqrt2}\right)=\frac1{\sqrt2}$.

The set $\theta_{\varepsilon\restr{\co{0}{n+1}}}(\parint^{n+1})$ can be decomposed, in terms of the first directive letter, into a union of three intervals
\begin{eqnarray*}
\frac1{2+\varepsilon_0}\theta_{\varepsilon\restr{\cc1n}}(\parint^n)\cup\frac1{3+\varepsilon_0}(\theta_{\varepsilon\restr{\cc1n}}(\parint^n)+1)\cup\frac1{2+\varepsilon_0}(\theta_{\varepsilon\restr{\cc1n}}(\parint^n)+1)
~.
\end{eqnarray*}
Following the induction hypothesis, the first interval is equal to
\begin{equation*}
\frac1{2+\varepsilon_0}[-\prod_{1\le i\le n}\frac1{2+\varepsilon_i},\anib[(1/(2+\varepsilon_i))_{1\le i \le n}]{1\ldots12}]
=[-\prod_{0\le i\le n}\frac1{2+\varepsilon_i},\frac{1}{2+\varepsilon_0}\anib[(1/(2+\varepsilon_i))_{1\le i\le n}]{1\ldots12}].
\end{equation*}
The second interval is equal to
\begin{equation*}
\frac1{3+\varepsilon_0}(1+[-\prod_{1\le i\le n}\frac1{2+\varepsilon_i},\anib[(1/(2+\varepsilon_i))_{1\le i\le n}]{1\ldots12}])=[\frac1{3+\varepsilon_0}(1-\prod_{1\le i\le n}\frac1{2+\varepsilon_i}),\frac1{3+\varepsilon_0}\anib[(1/(2+\varepsilon_i))_{1\le i\le n}]{1\ldots12}].
\end{equation*}
The third interval is equal to
\begin{equation*}
\frac1{2+\varepsilon_0}(1+[-\prod_{1\le i\le n}\frac1{2+\varepsilon_i},\anib[(1/(2+\varepsilon_i))_{1\le i\le n}]{1\ldots12}])=[\frac1{2+\varepsilon_0}-\prod_{0\le i\le n}\frac1{2+\varepsilon_n},\anib[(1/(2+\varepsilon_i))_{0\le i\le n}]{1\ldots12}].
\end{equation*}
It is clear that the smallest point of these three intervals is $-\prod_{0\le i\le n}\frac1{2+\varepsilon_i}$ (the last two intervals are in $\R_+$), and the largest is $\anib[(1/(2+\varepsilon_i))_{0\le i\le n}]{1\ldots12}$ (the first interval is obtained through a translation by $-1$ of the third one, or through a homothecy by $\frac{2+\varepsilon_0}{3+\varepsilon_0}$ of the second one).

We proved earlier that $\anib[(1/(2+\varepsilon_i))_{i < n+1}]{1\ldots12}\ge\frac1{1+\sqrt2}\left(1+\frac1{\sqrt2}\right)=\frac1{\sqrt2}$. It follows from this that the upper bound $\frac1{2+\varepsilon_0}\anib[(1/(2+\varepsilon_i))_{1\le i\le n}]{1\ldots12}$ of the first interval is larger than $\frac{1}{(2+\varepsilon_0)\sqrt2}$, while the smaller bound 
of the second interval is lower than $\frac{1}{3+\varepsilon_0}$, which is less than $\frac{1}{(2+\varepsilon_0)\sqrt2}$, as can be easily verified.
In other words, there is no hole between these two intervals.

Using the same arguments, one can easily see that the upper bound of the second interval is $\frac1{3+\varepsilon_0}(1+\anib[(1/(2+\varepsilon_i))_{1\le i\le n+1}]{1\ldots12})$, which is larger than $\frac{1+1/\sqrt2}{3+\varepsilon_0}$ while the lower bound of the third interval is smaller than $\frac{1}{2+\varepsilon_0}$, which is smaller than $\frac{1+1/\sqrt2}{3+\varepsilon_0}$, since $\sqrt2-1+\varepsilon_0\frac1{\sqrt2}>0$. There is no hole here either, and globally, we get the full interval $\left[-\prod_{0\le i\le n}\frac1{2+\varepsilon_i},\anib[(1/(2+\varepsilon_i))_{0\le i\le n}]{1\ldots12}\right]$.

The proof of the statement is finished by observing that $\prod_{0\le i<n}\frac1{2+\varepsilon_i}\to0$ and $\anib[(1/(2+\varepsilon_i)_{i<n})]{1\ldots12}\to\anib[(1/(2+\varepsilon_i)_{i\in\N})]{1\ldots}$.
\end{proof}

\section{Computing directions}
Lemma~\ref{lem:nonexpsftrestr} stated that the set of non-expansive directions of an SFT is \emph{effectively} closed, which means that there exists a program which takes as (infinite) input the description of a direction in $\Rb$ and halts (after having read finitely many bits of the input) if and only if the direction is expansive.
In Subsection \ref{ss:comput}, it was suggested that the good way to represent directions in order to compute with them was by the two coordinates of some intersection with the unit circle. Each slope then has two (opposite) valid representations.
When restricting to closed subsets of $\R \subseteq \Rb$ (\ie when we are not talking about the horizontal direction), the notion of effectively closed set of direction is the same with the above representation as with the usual definition of $\R$. This is due to the facts that the functions $\sin$ and $\cos$ and their inverses are computable and that the function $x \to 1/x$ is uniformly continuous away from $0$.

The following remark states that directive sequences give another, equivalent representation for directions.

\begin{remark}\label{rem:compslope}
Let $\seq\varepsilon \in \R^{\N}$ be computable. Then, $\theta_{\seq\varepsilon}$ is a computable function.
\end{remark}
The computation is actually uniform in $\seq\varepsilon$, in the sense that it could be considered as part of the input.
\begin{proof}
This follows from the fact that the diameter of $\Theta_{\varepsilon\restr{\co0n}}(\vec d)$ is at most $2^{-n}$, since $R_i \le 1/2$, for all $i \in \N$ and directive sequence $\vec d$.

\end{proof}

Remark~\ref{rem:compslope} implies that effectively closed sets of slopes can be equivalently described by an effectively closed set of directive sequences. This is the computational description of directive sequences that we are going to use in the next chapter.

\section{Realization of sets of non-expansive directions}

Let $\C[\reali]=\C[\unive]\sqcup[\mhist,\mshift,\mshift_{+1},\prog,\revprog]$.

The following permutation will also be parametrized by an effectively closed set $\NE_0 \subseteq [0,1/2]$, which is represented by the program $p'$ of the TM that recognizes $\NE_0$ as a set of directive sequences. $\NE_0$ is the set of non-expansive directions that we are trying to realize.

We identify the set $\parint \defeq \{(0,1),(1,1),(1,0)\}$ with $\haine3$ (through any bijection). If $a \in \parint$, then $D_a, W_a$ will denote the projection of $a$ onto the first and second coordinate, respectively.

In the following algorithm, $p'$ is the program of a TM that recognizes some set of non-expansive directions.

\begin{algo}{reali}{\reali}{M,\vec{\nu},p',\seq{\vec{k}},\seq S,\seq U}
\STATE{$\chekk[\pi_\mshift=\pi_{\mshift_{+1}}]$}\label{al:reali:mhistconsistency}
\IF{$\bina{\pi_\age}=0$}
\STATE{$\chekk[\halt{p'}{\length{\mhist}}{\mhist}]$}\label{al:reali:medvedev}
\STATE{$\chekk[\emp[\motvide]{\head_{-1},\head_{+1},\mail_{-1},\mail_{+1},\newinfo}]$}\label{al:reali:empty}
	\STATE{$\chekka[M,\bina{\pi_\addr},\pi_\info,\vec{k}_{\length{\mhist}+1}]$} \label{al:reali:alph}
	\STATE{$\hier[M,\bina{\pi_\addr},\pi_\info,\vec{k}_{\length{\mhist}+1},\prog,
\pi_{\prog}]$}\label{al:reali:prog}
	\STATE{$\hier[M,\bina{\pi_\addr},\pi_\info,\vec{k}_{\length{\mhist}+1},\revprog,
\pi_\revprog]$}\label{al:reali:revprog}
	\STATE{$\hier[M,\bina{\pi_\addr},\pi_\info,\vec{k}_{\length{\mhist}+1},\mhist,\pi_{\mhist}\pi_{\mshift}]$}\label{al:reali:infinitesequence}
\ENDIF
\IF{$\bina{\pi_\age}=0$}
	\STATE{$\exch[\info,\mail_{+1}]$}\label{al:reali:startshift}
\ENDIF
\IF{$\bina{\pi_\age}=D_{\mshift}S_{\length{\mhist}}$}
	\STATE{$\exch[\info,\mail_{+1}]$}\label{al:reali:stopshift}
\ENDIF
	\STATE{$\unive[M$, $\vec{\nu}$, $\vec{k}_{\length{\mhist}}$, $\bina{\pi_\addr}$, $\bina{\pi_\age}-D_{\mshift}S_{\length{\mhist}}$, $S_{\length{\mhist}}$, $S_{\length{\mhist}}(D_{\mshift}+W_{\mshift}+1)+4U_{\length{\mhist}}$ $,U_{\length{\mhist}}$, $\pi_\prog$, $\pi_\revprog]$}\label{al:reali:unive}


\STATE{$\coordi[\seq{S}_{\length{\mhist}},\seq{S}_{\length{\mhist}}(D_{\mshift}+W_{\mshift}+1)+4\seq{U}_{\length{\mhist}}]$}\label{al:reali:coordi}
\end{algo}


This is like the simulation of the computation degrees, only that we keep a more complicated register in the $\mhist$ field and we use the values of $\mshift$ to perform a ``macro-shift'' before the simulation. We also note that $\seq{T}$ is not given as a parameter of the construction. Instead, it is determined by the sequences $\seq{S}, \seq{U}$ and the value of field $\mshift$.

\begin{lemma}\label{lem:nexpdirsimul}
Let $\seq S,\seq U$ be polynomially checkable sequences of integers and $p'$ be the program of a TM. Let us fix the field list $\C[\reali]\defeq [0,\ldots,14]$, the corresponding direction vector $\vec{\nu}_{\reali}$ and a polynomially checkable sequence of $15$-uples $\seq{\vec{k}}\in(\N^{14})^\N$. Let $F$ be the IPPA with directions $\vec{\nu}_{\reali}$ and permutation 
$\reali[14,\vec{\nu}_{\reali},p',\vec{k},\seq S,\seq U]$
and $p,p^{-1}$ be the programs for this permutation and its inverse, respectively.

For all $w \in \parint^*$ and $a \in \parint$, let $\vec{k}_{w,a} \defeq \vec{k}_{\length{w}}$, $S_{w,a}\defeq S_{\length{w}}$, $U_{w,a} \defeq U_{\length{w}}$, $T_{w,a} \defeq S_{w,a}(D_a+W_a+1)+4U_{w,a}$ and $F_{w,a}$ be the restriction of $F$ to the subalphabet

\begin{equation*}
\A_{w,a}\defeq \haine5^{\vec{k}_{\length{w}}}\cap\emp[w]{\mhist}\cap\emp[a]{\mshift,\mshift_{+1}}\cap\emp[p]{\prog}\cap\emp[p^{-1}]{\revprog}.
\end{equation*}

Assume that the following inequalities hold, for all $w\in\parint^*$ and $a,a' \in \parint$:
\[\both{
U_{w,a}\ge\max\{t_p({\haine5^{\seq{\vec{k}}_{wa,a'}}}),t_{p^{-1}}({\haine5^{\seq{\vec{k}}_{wa,a'}}})\}\\
S_{w,a}\ge\max\{2U_{w,a},\length{\Chi{\haine5^{\vec{k}_{wa,a'}}}}\}\\
k_{w,a,\addr},k_{w,a,\addr_{+1}}\ge\norm{S_{w,a}}\\
k_{w,a,\age},k_{w,a,\age_{+1}}\ge\norm{T_{w,a}}\\
k_{w,a,\head_{-1}},k_{w,a,\head_{+1}}\ge\max\length{\Chi{\haine4\times (Q_p \cup Q_{p^{-1}})\times\{-1,+1\}}}\\
k_{w,a,\info},k_{w,a,\newinfo},k_{w,a,\mail_{-1}},k_{w,a,\mail_{+1}}\ge1\\
k_{w,a,\prog}\geq\length p\\
k_{w,a,\revprog}\geq\length{p^{-1}}\\
k_{w,a,\mhist} \geq\length{w}\\
k_{w,a,\mshift}=k_{w,a,\mshift_{+1}}\geq 1
~.}\]

Then, $F_{w,a}$ completely exactly simulates $\bigsqcup_{\begin{subarray}{c}a'\in \parint\end{subarray}}{F_{wa,a'}}$ with parameters $(S_{w,a},T_{w,a},D_aS_{w,a},\Phi_{w,a})$, where $\Phi_{w,a}=\tilde{\Phi}_{\info}{\restr{\Sigma_{w,a}}}$ and 
$\Sigma_{w,a} \defeq \A_{w,a}^{\Z} \cap \gra{0}{0}{S_{w,a}}{T_{w,a}}\cap\emp[\motvide]{\head_{-1},\head_{+1},\mail_{-1},\mail_{+1},\newinfo}
\cap \Phi^{-1}(\bigsqcup_{\begin{subarray}{c} a' \in \parint \end{subarray}}\A_{wa,a'}^{\Z}).$
\end{lemma}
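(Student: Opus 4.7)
The proof will follow the same overall pattern as the proofs of Lemma~\ref{l:mhist} and Lemma~\ref{lem:nestsimul}, with the new ingredient being the ``macro-shift'' of $Q = D_a S_{w,a}$ cells implemented by lines~\ref{al:reali:startshift} and \ref{al:reali:stopshift}. The plan is to verify separately the three conditions of exact complete simulation: (i) $F_{w,a}$ $(S_{w,a},T_{w,a},D_aS_{w,a})$-simulates $\bigsqcup_{a'\in\parint}F_{wa,a'}$ through $\Phi_{w,a}$, (ii) $\Phi_{w,a}$ is injective, and (iii) $\Omega_{F_{w,a}}\subseteq\dom(\Phi_{w,a})$.

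For the simulation part, I would first handle the degenerate case: if $p'$ halts on $w$ within $\length{w}$ steps, then line~\ref{al:reali:medvedev} rejects every configuration and both $\dom(F_{w,a})$ and $\dom(\bigsqcup_{a'}F_{wa,a'})$ are empty, trivially yielding the simulation. Otherwise, for $b\in\A_{wa,a'}^\Z$ and $c\in\tilde{\Phi}_{\info}^{-1}(b)\cap\Sigma_{w,a}$, the checks on lines~\ref{al:reali:empty}--\ref{al:reali:infinitesequence} are not triggered (using that $\pi_{\mhist}\pi_{\mshift}$ applied to a cell of $c$ gives $wa$, matching the $\mhist$ field of $b$). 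The key computation is then to decompose the $T_{w,a}$ time steps into the macro-shift phase (steps $0$ through $D_aS_{w,a}-1$) followed by the universal simulation phase (steps $D_aS_{w,a}$ through $T_{w,a}$). During the first phase, line~\ref{al:reali:startshift} moves $\info$ to $\mail_{+1}$, which then drifts $D_aS_{w,a}$ cells to the right under the shift direction of $\mail_{+1}$, and line~\ref{al:reali:stopshift} restores it to $\info$; meanwhile $\unive$ does nothing because its internal age $\bina{\pi_\age}-D_{\mshift}S_{\length{\mhist}}$ is negative. At age $D_aS_{w,a}$, the configuration has the original $\info$ content shifted $D_aS_{w,a}$ cells to the right, with all auxiliary fields empty, and Lemma~\ref{universal} applies with parameters $(S_{w,a},S_{w,a}(W_a+1)+4U_{w,a},0)$ to the remaining $S_{w,a}(W_a+1)+4U_{w,a}$ steps (checking that $T_{w,a}=D_aS_{w,a}+S_{w,a}(W_a+1)+4U_{w,a}$ and that the inequalities $\I(\vec{k}_{w,a},\vec{k}_{wa,a'},S_{w,a},S_{w,a}(W_a+1)+4U_{w,a},U_{w,a},p,p^{-1})$ follow from our hypotheses). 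Combining the shift with the universal simulation gives the overall shift $Q=D_aS_{w,a}$.

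Exactness is immediate: for fixed $b$, the fields in $\C[\unive]$ of any preimage $c$ are forced by the decoding function and by the requirement $c\in\Sigma_{w,a}$; the hierarchical fields $\prog,\revprog,\mhist,\mshift,\mshift_{+1}$ are fixed by membership in $\A_{w,a}^\Z$; and there are no anonymous fields since we chose the field list $[0,\ldots,14]$. Completeness follows the same pattern as in Lemma~\ref{l:mhist}: if $c\in\gra{0}{0}{S_{w,a}}{T_{w,a}}\cap F_{w,a}^{-2T_{w,a}}(\A_{w,a}^\Z)$, then lines~\ref{al:reali:empty}--\ref{al:reali:infinitesequence} together with Lemmas~\ref{lem:fixalpha} and~\ref{lem:fixfield} force $c\in\Phi^{-1}(\bigsqcup_{a'}\A_{wa,a'}^\Z)$; the check of line~\ref{al:reali:mhistconsistency} together with two time steps forces $\pi_{\mshift}$ of the decoded letter to be horizontally constant (as in the consistency argument of Lemma~\ref{koo}), so the decoded configuration lies in a single $\A_{wa,a'}^\Z$ rather than spread across the disjoint union; hence $c\in\Sigma_{w,a}$.

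The main obstacle, and where most of the work resides, is the bookkeeping for the macro-shift phase: one must carefully verify that during the $D_aS_{w,a}$ intermediate steps the $\mail_{+1}$ content truly drifts $D_aS_{w,a}$ cells without interference (which requires that $\unive$ produces the identity for negative internal ages, and that no other rule touches $\mail_{+1}$), and that the universal simulation phase fits exactly in the remaining time budget with the correct delay $t_0=D_aS_{w,a}$. The arithmetic $T_{w,a}\geq 4U_{w,a}+S_{w,a}+D_aS_{w,a}$ reduces to $W_a\geq 0$, which always holds for $a\in\parint$, so the inequalities of Lemma~\ref{universal} transfer without strengthening our assumptions.
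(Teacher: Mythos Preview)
Your proposal is correct and follows essentially the same route as the paper: handle the degenerate halting case, then decompose the work period into the macro-shift phase (lines~\ref{al:reali:startshift}--\ref{al:reali:stopshift}) followed by the universal-simulation phase with delay $t_0=D_aS_{w,a}$, and reuse the $2T$-step completeness argument from Lemma~\ref{l:mhist} to force horizontal constancy of $\mshift$. The paper invokes Fact~\ref{fact:shiftandpermutation} directly rather than Lemma~\ref{universal} for the second phase, but this is only a difference in packaging.
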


The only difference between this proof and the proof of Lemma~\ref{l:mhist} is that we shift all the encodings $D_aS_{w,a}$ cells (equivalently, $D_a$ macro-cells) to the right before starting the simulation.

Also, it is not difficult to see that the usual inequalities hold: $t_p(\A_{n}) = t_{p^{-1}}(\A_{n})=P(\length{\A_n}+t_{\vec{k}}(n)+t_{\seq{S}}(n)+t_{\seq{T}}(n)+t_{\seq{U}}(n))$ and $\length{p},\length{p^{-1}} = O(\length{p_{\vec{k}}}+\length{p_{\seq{S}}}+\length{p_{\seq{T}}}+
\length{p_{\seq{U}}})$. Recall that $p'$ is fixed in advance so it is a constant in what matters complexity.

\begin{proof}
If $p'$ halts on input $w$ within $\length{w}$ steps, then the check of line~\ref{al:reali:medvedev} will reject every configuration, which means that $F_{w,a} = \emptyset$. But, in this case, $wa$ will also be rejected by $p'$ within $\length{wa}$ steps, for all $a \in \parint$, so that $\bigsqcup_{\begin{subarray}{c}a'\in \parint\end{subarray}}{F_{wa,a'}} = \emptyset$, too. By definition, the empty PCA strongly, completely simulates itself for all possible choices of the simulating parameters, so that the claim is true in this case.

Suppose, then, that $p'$ does not halt on input $w$ within $\length{w}$ steps. Then, the check of line~\ref{al:reali:medvedev} is always true, so that we can ignore it in the rest of the proof. As in the previous proofs, we have to show three things: that $F_{w,a}$ $(S_{w,a},T_{w,a})$ simulates $\bigsqcup_{\begin{subarray}{c}a\in \haine2\end{subarray}}{F_{wa,a'}}$ with decoding function $\Phi_{w,a}$ (simulation), that $\Phi_{w,a}$ is injective (exactness) and that $\Omega_{F_{w,a}} \subseteq \dom{\Phi_{w,a}}$ (completeness).

For the simulation, it is easy to see that if $b \in \A_{wa,a'}^{\Z}$, where $a' \in \haine2$ and $c \in \Phi_{w,a}^{-1}(b)$, then $c$ is not rejected by the checks of lines~\ref{al:syncomp:mhistconsistency},\ref{al:syncomp:empty},\ref{al:syncomp:alph},\ref{al:syncomp:prog}, \ref{al:syncomp:revprog} and \ref{al:syncomp:infinitesequence}. 

Then, line~\ref{al:reali:startshift} copies \emph{all} the info bits onto $\mail_{+1}$. During the next $S_{w,a}D_a$ steps, no permutation is applied. The only thing happening to the configuration is that the encodings that are in $\mail_{+1}$ travel to the right at the speed of one cell per time step. After $S_{w,a}D_a$ steps, they are copied back to the $\info$ tape by line~\ref{al:reali:stopshift}. Every letter has travelled exactly $S_{w,a}D_a$ cells to the right, which corresponds to $D_a$ macro-cells. Formally, $\Phi(F^{D_aS_{w,a}}(c))=\sigma^{-D_a}(b)$.

Then, from Fact~\ref{fact:shiftandpermutation} and since the only rule applied from $\age=D_aS_{w,a}$ is $\coordi \circ \unive$, we obtain that $\Phi(F^{D_aS_{w,a}+S_{w,a}+4{U}_{w,a}}(c))=$ $F_{wa,a'}(\sigma^{-D_a}(b))$. After $\age=D_aS_{w,a}+S_{w,a}+4{U}_{w,a}$, nothing else changes in the configuration until $\age$ becomes $0$ again. Line~\ref{al:reali:coordi} ensures that $\age$ goes from $0$ to $(D_a+W_a+1)\seq{S}_{w,a}+4\seq{U}_{w,a}$. This concludes the proof of the simulation part.

Exactness of the simulation is easy to see. The values of all the fields of $c \in \Phi_{w,a}^{-1}(b)$ are uniquely determined by $b$ and $\Sigma_{w,a}$.

For the completeness, we show that if  $c \in \gra{0}{0}{S_{w,a}}{T_{w,a}} \cap F_{w,a}^{-2T_{w,a}}(\A_{w,a}^{\Z})$, then $c \in \Phi_{w,a}^{-1}(\bigsqcup_{\begin{subarray}{c} a' \in \parint \end{subarray}}\A_{wa,a'}^{\Z})$.
 
Indeed, if $c \in \gra{0}{0}{S_{w,a}}{T_{w,a}} \cap F_{w,a}^{-T_{w,a}}(\A_{w,a}^{\Z})$, then lines~\ref{al:reali:empty},\ref{al:reali:prog},\ref{al:reali:revprog} and \ref{al:reali:infinitesequence} ensure that 
$c \in \A_{w,a}^{\Z} \cap \gra{0}{0}{S_{w,a}}{T_{w,a}}\cap \emp[\motvide]{\head_{-1},\head_{+1},\mail_{-1},\mail_{+1},\newinfo} \cap \Phi^{-1}((\bigsqcup_{\begin{subarray}{c} a' \in \parint \end{subarray}}\A_{wa,a'})^{\Z}).$
Let $b \in (\bigsqcup_{\begin{subarray}{c} a' \in \parint \end{subarray}}\A_{wa,a'})^{\Z}$ be such that $c \in \Phi^{-1}(b)$. We still cannot know that $\pi_{\mshift}(b_i)$ is the same for all $i \in \Z$. 

We deal with this problem in a similar way as in Section~\ref{s:comput}, since $F_{w,a}^{2T}(c)$ exists, this means that $F^2(b)$ exists, and line\ref{al:reali:mhistconsistency} ensures that $\pi_{\mshift}(b_i)=\pi_{\mshift_{+1}}(b_i)=\pi_{\mshift}(b_j)$, for all $i,j \in \Z$.
Therefore, $b \in \bigsqcup_{\begin{subarray}{c} a' \in \haine2 \end{subarray}}\A_{wa,a'}^{\Z}$ and this concludes the proof.
\end{proof}

\subsection{Satisfying the inequalities}
Unlike the previous cases, the set of inequalities that we want to satisfy does not depend on $n$, but instead on a word $w \in \parint^*$ and $a,a' \in \parint$. However, we will now see that the inequalities can be translated to some inequalities about the polynomially computable sequences $\seq{S},\seq{U}$.

\begin{remark}
We can find $\vec{k} \in (\N^{15})^{\N}$ and $\seq{S}, \seq{U} \in \N^{\N}$ such that the inequalities of Lemma~\ref{lem:nexpdirsimul} are satisfied.

In addition, we can have $\seq\varepsilon_n \defeq 4U_n / S_n < \sqrt{2}-1$, for all $n \in \N$ and $\theta_{\seq\varepsilon}(\parint^{\N}) \supseteq [0,1/2]$.
\end{remark}

\begin{proof}
Let $w \in \parint^*$ and $a,a' \in \parint$. Let $n \defeq \length{w}$ be the length of $w$. Let us write again the inequalities:

\[\both{
U_{w,a}\ge\max\{t_p({\haine5^{\seq{\vec{k}}_{wa,a'}}}),t_{p^{-1}}({\haine5^{\seq{\vec{k}}_{wa,a'}}})\}\\
S_{w,a}\ge\max\{2U_{w,a},\length{\Chi{\haine5^{\vec{k}_{wa,a'}}}}\}\\
k_{w,a,\prog}\geq\length p\\
k_{w,a,\revprog}\geq\length{p^{-1}}\\
k_{w,a,\head_{-1}},k_{w,a,\head_{+1}}\ge\length{\Chi{\haine4\times (Q_p \cup Q_{p^{-1}})\times\{-1,+1\}}}\\

k_{w,a,\addr},k_{w,a,\addr_{+1}}\ge\norm{S_{w,a}}\\
k_{w,a,\age},k_{w,a,\age_{+1}}\ge\norm{T_{w,a}}\\
k_{w,a,\info},k_{w,a,\newinfo},k_{w,a,\mail_{-1}},k_{w,a,\mail_{+1}}\ge1\\
k_{w,a,\mhist} \geq\length{w}\\
k_{w,a,\mshift}=k_{w,a,\mshift_{+1}}\geq 1
~.}\]

According to the definition, $\vec{k}_{w,a} = \vec{k}_n$, $S_{w,a}=S_n$, $U_{w,a}=U_n$ and $T_{w,a}=S_n(D_a+W_a+1)+4U_n$: Therefore, we can write the above inequalities as follows:

\[\both{
U_n\ge\max\{t_p({\haine5^{\seq{\vec{k}}_{n+1}}}),t_{p^{-1}}({\haine5^{\seq{\vec{k}}_{n+1}}})\}\\
S_{n}\ge\max\{2U_{n},\length{\Chi{\haine5^{\vec{k}_{n+1}}}}\}\\
k_{n,\prog}\geq\length p\\
k_{n,\revprog}\geq\length{p^{-1}}\\
k_{n,\head_{-1}},k_{n,\head_{+1}}\ge\length{\Chi{\haine4\times (Q_p \cup Q_{p^{-1}})\times\{-1,+1\}}}\\

k_{n,\addr},k_{n,\addr_{+1}}\ge\norm{S_{n}}\\
k_{n,\age},k_{n,\age_{+1}}\ge\norm{S_n(D_a+W_a+1)+4U_n}\\
k_{n,\info},k_{n,\newinfo},k_{n,\mail_{-1}},k_{n,\mail_{+1}}\ge1\\
k_{n,\mhist} \geq n\\
k_{n,\mshift}=k_{n,\mshift_{+1}}\geq 1
~.}\]

Unlike the previous proofs, we cannot choose $\vec{k}_{\seq{S},\seq{T}} \in \N^{\N}$ such that all of the inequalities except the first two are satisfied as equalities. This is because the inequalities about $\age$ and $\age_{+1}$ depend on $a \in \parint$ and not only on $n$. However, $D_a+W_a \le 2$, for all $a\in \parint$, so that we can replace these inequalities with $k_{n,\age},k_{n,\age_{+1}}\ge\norm{3S_n+4U_n}$ and show that this new set of inequalities can be satisfied. (Here, it is essential that $\parint$ is a finite set. Bounding $D_a+W_a$ from above is one of the reasons that we had to do a little more work with the directive sequences.)

The rest of the proof follows the usual pattern. We choose $S_n = Q^{n+n_0}$ and $U_n=(n+n_0)^r$ for some suitable values of $n_0,r$ and $Q$.

For the second claim, let us recall more specifically in which order $n_0,r$ and $Q$ are chosen. In the proof of Remark~\ref{rem:inequhiera}, we showed that there exists $n_0$ and $r$ that work for \emph{every} $Q$. Therefore, by choosing $S_n = Q^{n+n_0}$ and $U_n=(n+n_0)^r$, for some sufficiently large $Q$, we can make $\seq\varepsilon \defeq 4U_n / S_n$ smaller than $\sqrt{2}-1$ and $\anib[(1/(2+\varepsilon_i)_i)]{111\ldots}$ larger than $1/2$.
\end{proof}

For all $w,a$, we have $T_{w,a}= (D_a+W_a+1)S_{w,a}+4U_{w,a}=(D_a+W_a+1+4U_{w,a}/S_{w,a})S_{w,a}=(D_a+W_a+1+\epsilon_{w,a})S_{w,a}$, where $\epsilon_{w,a} < \sqrt{2}-1$, so that we are in the situation described in Lemma~\ref{lem:geomstuff} and $\theta_{\seq\varepsilon}(\parint^{\N}) \supseteq [0,1/2]$. 
Since $\NE_0 \subseteq [0,1/2]$ by assumption, this means that every direction in $\NE_0$ is representable as a sequence in $\theta_{\seq\varepsilon}(\parint^{\N})$.

\subsection{Realization}
For all $z \in \X_{p'}$, we have a sequence of complete, exact simulations given by
 
\begin{equation*}
F_{z_{\co{0}{n}},z_n} \simu[S_{z_{\co{0}{n}},z_n},T_{z_{\co{0}{n}},z_n},D_{z_n}S_{z_{\co{0}{n}},z_n}] F_{z_{\co{0}{n+1}},z_{n+1}},
\end{equation*}

for all $n \in \N$.

Therefore, according to a Lemma~\ref{l:nonvides}, we have that $\Omega_F= \bigsqcup_{z \in \X_{p'}} \rocks[\infty]z{\Phi}$, and $\orb{F_{\epsilon}}=\bigsqcup_{z \in \X_{p'}} \orb{F_{\rocks[\infty]z{\Phi}}}$. In addition, we know that $\NE(\orb{F_{\rocks[\infty]z{\Phi}}})=\theta(z)$, by Lemma~\ref{lem:iterrelsimulexp} and that every direction in $\cc{0}{1/2}$ can be represented in such a way, by Lemma~\ref{lem:geomstuff}. Finally, we know by Lemma~\ref{lem:basicstuffaboutNE} that $\NE(\orb{F_{\epsilon}})= \bigsqcup_{z \in \X_{p'}} \NE(\orb{F_{\rocks[\infty]z{\Phi}}}) = \bigsqcup_{z \in \X_{p'}} \theta(z) = \NE_{p'}=\NE_0$.

Therefore, for every effectively closed set of directions which is included in $[0,1/2]$, recognized by a TM with program $p'$, we have constructed a 2D SFT with exactly this set as set of non-expansive directions. According to our previous discussions, this is enough to realize arbitrary effectively closed sets of directions as the set of non-expansive directions of 2D SFT.
This concludes the proof of Theorem~\ref{thm:nonexpansive}.

\chapter*{Conclusion and Open Questions}

We have provided a general method for constructing extremely-expansive 2D SFT's of finite type and we have shown that this class of 2D SFT's has very rich computational, dynamical and geometrical properties. At the same time, our method throws some light on the essence of self-similar and hierarchical constructions and we hope that it might help to better understand previous works with hierarchical constructions, especially \cite{drs}. ( On the other hand, the difficulty of \cite{gacs} only partly comes from the hierarchical simulation, so our work is certainly not sufficient to explain this construction better.)

Regarding future work, we believe that the following questions about extremely-expansive 2D SFTs are very natural: First of all, can (a variant of) our method produce a \emph{minimal} extremely-expansive SFT? Is the emptiness problem undecidable for \emph{minimal} extremely-expansive SFT's? Recently, Durand and Romashchenko \cite{drs2} described a method for constructing minimal (but not extremely-expansive) SFT's and answer the second question positively. It seems that their technique can be readily generalized to our framework. Second, is it possible to realize all effective subshifts, in the sense of \cite{projsft, drs, aubrun} with 2D extremely-expansive SFT's? This would be an improvement with of the result of \cite{drs, aubrun} since it would (in some sense) further lower the dimension of the realizing subshift by one. Third, is it possible to construct extremely expansive SFT covers for square substitutions? This question goes back to the construction of Mozes \cite{mozes}, which constructs SFT covers for square substitutions  without any directions of expansivess. Recently, Ollinger and Legloannec \cite{legloannec} constructed 4-way deterministic covers. We believe that the answer to this question is also positive. Finally, and this is certainly the most interesting, but also difficult question, is it possible to use our method in order to construct reversible, self-simulating CA, \ie is it possible to turn the partial rules, with which we have been working in this thesis, to complete rules, while at the same time keeping the good properties of self-simulation? This could find an application to the problem of the undecidability of expansiveness for reversible CA.

\bibliography{entropies}
\end{document}